\documentclass[12pt]{amsart}
\usepackage{amscd,amssymb,amsthm,amsmath,amssymb,mathrsfs,enumerate,url,hyperref}
\usepackage{graphicx}
\usepackage[matrix,arrow,curve]{xy}
\usepackage[margin=2cm]{geometry}
\usepackage[normalem]{ulem}
\usepackage{verbatim}

\newtheorem{theorem}{Theorem}

\newtheorem*{theorem*}{Theorem}
\newtheorem{proposition}[theorem]{Proposition}
\newtheorem*{proposition*}{Proposition}
\newtheorem{lemma}[theorem]{Lemma}
\newtheorem*{lemma*}{Lemma}
\newtheorem{corollary}[theorem]{Corollary}
\newtheorem*{corollary*}{Corollary}

\newtheorem*{problem*}{Problem}

\theoremstyle{definition}

\newtheorem*{construction*}{Construction}
\newtheorem*{example*}{Example}

\newtheorem*{observation*}{Observation}

\theoremstyle{remark}
\newtheorem{remark}[theorem]{Remark}
\newtheorem*{remark*}{Remark}

\makeatletter\@addtoreset{equation}{section} \makeatother

\author{Ivan Cheltsov, Igor Krylov, Sione Ma'u}

\thanks{Throughout this paper, all varieties are assumed to be complex and projective.}

\title{$G$-birationally rigid cubic threefolds}

\begin{document}

\begin{abstract}
We classify pairs $(X,G)$ consisting of a (possibly singular) cubic threefold $X\subset\mathbb{P}^4$ and a finite subgroup $G\subset\mathrm{Aut}(X)$ such that $X$ is $G$-birationally rigid, i.e., $X$ is a $G$-Mori fiber space (over a point), and $X$ is not $G$-birational to any $G$-Mori fibre space that is not $G$-biregular to $X$.
\end{abstract}

\address{ \emph{Ivan Cheltsov}\newline
\textnormal{University of Edinburgh, Edinburgh, Scotland}
\newline
\textnormal{\texttt{i.cheltsov@ed.ac.uk}}}

\address{\emph{Igor Krylov}
\newline
\textnormal{Institute for Basic Science, Pohang, Korea}
\newline
\textnormal{\texttt{krylov.igor.o@gmail.com}}}

\address{\emph{Sione Ma'u}
\newline
\textnormal{University of Auckland, Auckland, New Zealand}
\newline
\textnormal{\texttt{s.mau@auckland.ac.nz}}}

\maketitle

\tableofcontents

\section{Introduction}
\label{section:introduction}

The notion of birational rigidity originated in the~work of Iskovskikh and Manin \cite{IskovskikhManin}, where they implicitly proved that every smooth hypersurface of degree $4$ in $\mathbb{P}^4$ is birationally superrigid and, in particular, non-rational. Recall that a Fano variety $X$ is \emph{birationally rigid} if $X$ is a Mori fiber space over a point (that is, $X$ has terminal singularities and the~class group $\mathrm{Cl}(X)$ has rank~$1$), and $X$ is the~only Mori fiber space in its birational equivalence class. If, in addition, 
$$
\mathrm{Bir}(X)=\mathrm{Aut}(X),
$$
then $X$ is said to be \emph{birationally superrigid}. Birational rigidity is an obstruction to rationality.

Problems concerning the~irrationality of Fano varieties defined over non-algebraically closed fields lead naturally to an equivariant generalization of the~notion of birational rigidity. To formulate this notion, we fix a Fano variety $X$ that have at most terminal singularities, and we fix a finite subgroup $G\subset \mathrm{Aut}(X)$. Then the~Fano variety $X$ is said to be $G$-birationally rigid if the~following two conditions are satisfied:
\begin{enumerate}
\item $X$ is a~$G$-Mori fiber space over a~point. i.e., $X$ has terminal singularities and $\mathrm{rk}\,\mathrm{Cl}(X)^G=1$,
\item if $Y$ is a $G$-Mori fiber space that is $G$-birational to $X$, then $Y$ is $G$-biregular to $X$.
\end{enumerate}
If, in addition, we have 
$$
\mathrm{Bir}^G(X)=\mathrm{Aut}^G(X),
$$
then $X$ is said to be $G$-birationally superrigid. The~first result on $G$-birational rigidity of Fano varieties has been implicitly proved by Segre and Manin in  \cite{Segre,Manin1968}, where they studied irrationality of smooth cubic surfaces defined over an algebraically non-closed field.

\begin{theorem}[{Segre--Manin Theorem}]
\label{theorem:Manin-Segree}
Let $X$ be a~smooth cubic surface in $\mathbb{P}^3$, and let $G$ be a finite subgroup in $\mathrm{Aut}(X)$ such that $\mathrm{rk}\,\mathrm{Cl}(X)^G=1$. Then $X$ is $G$-birationally rigid.
\end{theorem}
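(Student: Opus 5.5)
We will use the method of maximal singularities, i.e. the Noether--Fano inequality in its $G$-equivariant form. Suppose $\chi\colon X\dashrightarrow Y$ is a $G$-birational map to a $G$-Mori fibre space $Y$ that is not $G$-biregular. Take a $G$-invariant very ample complete linear system $\mathcal{H}_Y$ on $Y$: either pulled back from the base, if $Y$ is a $G$-conic bundle or $G$-del Pezzo fibration, or a multiple of $-K_Y$, if $Y$ is a $G$-del Pezzo surface over a point. Let $\mathcal{M}$ be its strict transform on $X$. Since $\mathrm{rk}\,\mathrm{Cl}(X)^G=1$, we have $\mathcal{M}\sim_{\mathbb{Q}} -nK_X$ for some positive rational $n$, and $\mathcal{M}$ is a $G$-invariant mobile linear system. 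The Noether--Fano inequality then says that the pair $(X,\frac{1}{n}\mathcal{M})$ is not canonical. On a surface this means there is a point $P\in X$ with $\mathrm{mult}_P(\mathcal{M})>n$.

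The $G$-invariance of $\mathcal{M}$ forces the whole $G$-orbit of $P$ to consist of such maximal points, so the argument splits according to the orbit size $r=|G\cdot P|$. Take two general members $M_1,M_2\in\mathcal{M}$. They have no common components, so
$$
3n^2=M_1\cdot M_2\geqslant\sum_{Q\in G\cdot P}\mathrm{mult}_Q(M_1)\,\mathrm{mult}_Q(M_2)>rn^2 .
$$
Hence $r\leqslant 2$.

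\textbf{Case $r=1$.} Here $P$ is $G$-fixed. If $P$ lies on a line of $X$, then at most three lines pass through $P$, and the group permutes them. Let $L$ be one of these lines. Then
$$
n=\mathcal{M}\cdot L\geqslant\mathrm{mult}_P(\mathcal{M})>n
$$
unless $L$ is a fixed component of $\mathcal{M}$, which is impossible because $\mathcal{M}$ is mobile. So $P$ lies on no line. In that case projection from $P$ gives a $G$-equivariant birational involution of $X$. Untwisting $\chi$ by this involution lowers the degree $n$ (the standard Segre--Manin computation), and induction on $n$ finishes this case. Alternatively, one can observe that $G$ fixes a point off the lines, so $G$ fixes the tangent hyperplane section; this is a $G$-invariant cuspidal or nodal cubic curve, and if $\mathrm{rk}\,\mathrm{Cl}(X)^G=1$ this gives either a contradiction or the Geiser-type involution.

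\textbf{Case $r=2$.} Let $P_1,P_2$ be the two points. The line through them meets $X$ in a third point or lies on $X$. If it lies on $X$, its class is $G$-invariant, which contradicts $\mathrm{rk}\,\mathrm{Cl}(X)^G=1$ because a line is not proportional to $-K_X$. Otherwise, if the plane-section argument does not immediately contradict, blowing up the pair gives a degree $1$ del Pezzo surface with an equivariant Bertini involution. The untwisting works as in the previous case: composing with the involution strictly decreases $n$.

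The main obstacle is the untwisting step: showing that composition with the Geiser or Bertini involution strictly decreases $n$. This requires computing how the involution acts on $\mathrm{Pic}$ of the blow-up, namely $-K\mapsto$ a multiple of $-K$ minus the exceptional classes. One then checks that the new degree equals $2n-\mathrm{mult}_P$ (or its analogue for the $r=2$ case), which is less than $n$. After finitely many steps no maximal singularity remains, so $\chi$ is biregular up to these involutions. This gives $G$-birational rigidity; superrigidity fails exactly when such $P$ exist.
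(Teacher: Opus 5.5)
\noindent The paper does not prove this statement; it only quotes it from Segre and Manin. Your outline is the classical argument behind it: equivariant Noether--Fano, the bound $r\leqslant 2$ from $M_1\cdot M_2=3n^2$, and untwisting by Geiser and Bertini involutions. It is the surface analogue of the framework the paper uses for threefolds. The difference is that on cubic surfaces maximal points genuinely occur, namely $G$-fixed points off the lines and length-two orbits in general position, so they must be untwisted rather than excluded.

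Your case $r=1$ is fine. Having no line through $P$ is exactly the condition for the blow-up to be a del Pezzo surface of degree $2$, and the Geiser involution gives $n'=2n-\mathrm{mult}_P(\mathcal{M})<n$. The case $r=2$, however, has a gap as written.

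To use the Bertini involution you need the blow-up $\widetilde X$ of $P_1,P_2$ to be a del Pezzo surface of degree $1$. This means there is no irreducible curve $C\subset X$ with $\deg C\leqslant \mathrm{mult}_{P_1}C+\mathrm{mult}_{P_2}C$. Ruling out the line $P_1P_2$ is not enough. You must also exclude:
\begin{itemize}
\item a line of $X$ through $P_1$ or $P_2$;
\item a conic of $X$ through both points;
\item $P_2\in T_{P_1}X$ or $P_1\in T_{P_2}X$, where the tangent section is a singular plane cubic.
\end{itemize}
Each of these falls to the test you already used for $r=1$. Put $m=\mathrm{mult}_{P_i}(\mathcal{M})>n$; this is the same for both points since they form a $G$-orbit. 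No curve $C$ is a base curve of the mobile system, so $n\deg C=\mathcal{M}\cdot C$ would have to be at least $m$, $2m$, $3m$ respectively, which is impossible.

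Once general position is established, the computation is as follows. The Bertini involution $\beta$ acts on $\mathrm{Pic}(\widetilde X)$ by $D\mapsto 2(D\cdot K_{\widetilde X})K_{\widetilde X}-D$. From $\widetilde{\mathcal M}\sim\pi^*(-nK_X)-m(E_1+E_2)$ you get $\pi_*\beta_*\widetilde{\mathcal M}\subset|-(5n-4m)K_X|$, so $n$ drops strictly. You should also justify two further points:
\begin{itemize}
\item the involutions are $G$-equivariant, because they are defined by $|-K_{\widetilde X}|$ and $|-2K_{\widetilde X}|$ and so commute with the lifted $G$-action;
\item the descent terminates, because $3n=-K_X\cdot\mathcal{M}\in\mathbb{Z}$.
\end{itemize}

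Two smaller corrections. First, a system pulled back from the base of a conic bundle is not very ample; it is simply the $\mu=0$ case of Noether--Fano. Second, drop the ``alternative'' remark in case $r=1$. A $G$-invariant nodal or cuspidal member of $|-K_X|$ is perfectly compatible with $\mathrm{rk}\,\mathrm{Cl}(X)^G=1$, and it is precisely the situation in which the Geiser involution is needed.
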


\begin{remark}
\label{remark:del-Pezzo}
Cubic surfaces are del Pezzo surfaces of degree $3$. Segre--Manin Theorem has a natural generalization for del Pezzo surfaces of degree $\leqslant 2$. For the~classification of all $G$-birationally rigid del Pezzo surfaces, see \cite{Cheltsov2008,DolgachevIskovskikh2009,Cheltsov2014,Sakovich,Pinardin2024,Yasinsky2025,PSY,CheltsovTschinkelZhang2026}.
\end{remark}

In this paper, we generalize Segree--Manin theorem for three-dimensional cubics.

\begin{theorem}[Main Theorem]
\label{theorem:main}
Let $X$ be a smooth cubic threefold in $\mathbb{P}^4$, and let $G$ be a finite subgroup in $\mathrm{Aut}(X)$. Then the~following conditions are equivalent:
\begin{itemize}
\item[$(\mathrm{a})$] $\mathbb{P}^4$ contains no $G$-invariant planes, and $G\not\simeq C_5\rtimes C_4, C_3\times(C_5\rtimes C_4), C_{11}\rtimes C_5$;
\item[$(\mathrm{b})$] $X$ is $G$-birationally superrigid;
\item[$(\mathrm{c})$] $X$ is $G$-birationally rigid.
\end{itemize}
\end{theorem}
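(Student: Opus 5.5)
The equivalence splits into (b)$\Rightarrow$(c), which holds by definition, plus two nontrivial implications: (c)$\Rightarrow$(a) and (a)$\Rightarrow$(b).

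\emph{(c)$\Rightarrow$(a).} I will argue by contraposition and build explicit $G$-equivariant birational maps to other $G$-Mori fibre spaces.

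If $\Pi\subset\mathbb{P}^4$ is a $G$-invariant plane, then projection from $\Pi$ gives a $G$-equivariant rational map $X\dashrightarrow\mathbb{P}^1$. Its fibres are residual quadric surfaces in the hyperplanes containing $\Pi$. Blowing up the plane cubic curve $\Pi\cap X$ and running a $G$-MMP over $\mathbb{P}^1$ produces a $G$-quadric bundle, or a $G$-del Pezzo fibration, over $\mathbb{P}^1$. This is not $G$-biregular to $X$, so rigidity fails. The same applies, with a little care, when $\Pi\subset X$ or $\Pi\cap X$ is singular.

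For the three exceptional groups $C_5\rtimes C_4$, $C_3\times(C_5\rtimes C_4)$ and $C_{11}\rtimes C_5$, I expect the relevant cubics to be the Klein cubic (for $C_{11}\rtimes C_5$) and a cubic with a $C_5$-symmetry such as the Fermat-type family. I would locate a $G$-orbit of small length, for instance $5$ or $11$ points, or a $G$-invariant curve, whose blow-up yields a weak Fano threefold. Its anticanonical model then gives a Sarkisov link to a different Fano threefold, possibly another cubic or a $G$-conic bundle. This requires explicit computations of the orbits and of the $G$-invariant linear systems.

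\emph{(a)$\Rightarrow$(b).} I will use the Noether--Fano method. Suppose there is a $G$-invariant mobile linear system $\mathcal{M}\subset|nH|$ such that $(X,\tfrac{1}{n}\mathcal{M})$ is not canonical. Let $Z$ be a $G$-orbit of centres of non-canonical singularities. The task is to exclude every possible $Z$.

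\emph{Curves.} Standard degree arguments apply: if $M_1\cdot M_2$ has multiplicity $>n^2$ along $Z$, then $\deg Z<3$. This leaves $G$-orbits of lines and conics, or $G$-invariant lines and conics. A $G$-invariant line or conic spans a $G$-invariant line or plane in $\mathbb{P}^4$, and since $G$ is finite and the action is linearly reductive, this forces a $G$-invariant plane, which (a) excludes. Orbits of several lines, for example two disjoint lines, need further care. There I would use the fact that non-canonicity along a curve forces high multiplicity, and that $M_1\cdot M_2\cdot H=3n^2$ bounds the total degree of the curves in the orbit. Alternatively, one checks that the span of the orbit yields an invariant subspace meeting $X$ in a way that gives a contradiction.

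\emph{Points.} Take a point orbit $Z$. By Corti's inequality, $\mathrm{mult}_P(M_1\cdot M_2)>4n^2$. Intersecting with a general hyperplane section through $P$, or with the tangent hyperplane section $T_P\cap X$, gives $\deg\ge 4n^2|Z|/\ldots$. The standard bound then yields $|Z|\le 3n^2\cdot 3/4n^2$-type estimates, hence orbits of length at most $2$, or a slightly larger bound after using cones of lines through $P$.

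Short orbits span invariant linear subspaces of dimension at most $1$ or $2$, and these lead to invariant planes. The main obstacle is the residual case: orbits of a few points, or of lines, spanning $\mathbb{P}^4$ or a hyperplane. Here one has to classify finite subgroups of $\mathrm{Aut}(X)\subset PGL_5$ with no invariant plane, via the possible decompositions of the $5$-dimensional representation: irreducible, $4+1$, or $3+2$ with no invariant plane, the last of which is impossible. I would then check, group by group, which of them admit orbits of length $\le 5$ or so. I expect exactly the three exceptional groups to survive, which would match the necessity direction.
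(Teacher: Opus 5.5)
\textbf{The Noether--Fano step uses the wrong threshold, and this invalidates the sufficiency argument.} Since $-K_X\sim 2H$, failure of $G$-birational superrigidity gives a $G$-invariant mobile $\mathcal{M}\subset|nH|$ with $(X,\frac{2}{n}\mathcal{M})$ not canonical. Your condition on $(X,\frac{1}{n}\mathcal{M})$ is the one for an index-one Fano such as the quartic threefold, and it is strictly stronger than what you are given.

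With the correct threshold, a curve centre only satisfies $\mathrm{mult}_Z(\mathcal{M})>\frac{n}{2}$. Then $3n^2=H\cdot M_1\cdot M_2>\frac{n^2}{4}\deg Z$ gives only $\deg Z\leqslant 11$. At a smooth point, Corti's inequality gives $\mathrm{mult}_P(M_1\cdot M_2)>n^2$, not $4n^2$. With your bound, a general hyperplane section through $P$ would already exclude every point centre.

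The error is fatal, as the Klein cubic with $G\simeq C_{11}\rtimes C_5$ shows. There is no invariant plane, line or conic, and no orbit of two curves since $|G|$ is odd, so your criteria would declare $X$ superrigid. Yet the paper's $G$-Sarkisov link to the smooth degree-$14$ Fano threefold $\mathrm{Gr}(2,6)\cap\mathbb{P}^9$ is given by $\mathcal{M}\subset|\mathcal{O}_X(7)|$ with $\mathrm{mult}_C(\mathcal{M})=4\in(\frac{7}{2},7)$ along a $G$-invariant pentagon of lines $C$. This is a maximal singularity that your criterion cannot detect.

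\textbf{Once the bounds are corrected, the real work is missing from your plan.} You must exclude $G$-irreducible curves of degree up to $11$ and point orbits spanning $\mathbb{P}^4$, and checking group by group for short orbits does not do this. The paper proceeds in three steps:
\begin{itemize}
\item It uses the Wei--Yu classification and Magma/GAP to show that every $G$ satisfying (a) contains one of $C_3^4\rtimes C_5$, $\mathfrak{A}_5$, $C_3^2\rtimes C_4$, $\mathfrak{S}_3^2$ whose restricted representation still has no invariant plane. Superrigidity then only needs proving for these four groups.
\item For each, it bounds point orbits by tie-breaking plus Nadel vanishing, giving $|Z|\leqslant h^0(\mathcal{O}_X(1))=5$ or $\leqslant h^0(\mathcal{O}_X(2))=15$. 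It then excludes the survivors using explicit orbit lists.
\item It classifies invariant curves of degree $<12$. For $\mathfrak{A}_5$ these include six or ten lines, rational octics and genus-$6$ curves of degree $10$; for $C_3^2\rtimes C_4$, nine lines and degree-$9$ curves of genus $1$ or $4$. It kills each using $\alpha_G(S)=2$ for the invariant cubic surface, Kawamata subadjunction, and nef test classes such as $3\widetilde{H}-F$ on the blow-up or on invariant K3 surfaces.
\end{itemize}

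\textbf{For the three exceptional groups you give no construction, and the mechanism you guess is wrong.} The obstruction is not a short point orbit; the links are centred at curves. For $C_5\rtimes C_4$ and $C_3\times(C_5\rtimes C_4)$ the centre is two disjoint lines on the invariant Clebsch cubic surface. For $C_{11}\rtimes C_5$ it is the pentagon above.

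A minor slip: in the plane case the fibres of $X\dashrightarrow\mathbb{P}^1$ are cubic surfaces, not residual quadrics, since a smooth cubic threefold contains no plane. The conclusion via relative $G$-MMP is unaffected.
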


In this theorem, we consider $G$ as a subgroup of $\mathrm{PGL}_5(\mathbb{C}$), since the~$G$-action lifts to $\mathbb{P}^4$.

All possibilities for the~group $G$ in Theorem~\ref{theorem:main} are known \cite{WeiYu2020}. Namely, if $G$ is a finite group that faithfully  acts on a smooth cubic threefold, it follows from \cite{WeiYu2020} that $G$ is isomorphic to a~subgroup of one of the~following six finite groups:
\begin{enumerate}
\item $C_4^3\rtimes\mathfrak{S}_5$, which is the~automorphism group of the~Fermat cubic threefold
\begin{equation}
\label{equation:Fermat}\tag{$\clubsuit$}
\big\{x_0^3+x_1^3+x_2^3+x_3^3+x_4^3=0\big\}\subset\mathbb{P}^4;
\end{equation}
\item $\mathrm{PSL}_2(\mathbf{F}_{11})$, which is the~automorphism group of the~Klein cubic threefold
\begin{equation}
\label{equation:Klein}\tag{$\spadesuit$}
\big\{x_0x_1^2+x_1x_2^2+x_2x_3^2+x_3x_4^2+x_4x_0^2=0\big\}\subset\mathbb{P}^4;
\end{equation}
\item $((C_3^2\rtimes C_3)\rtimes C_4)\times\mathfrak{S}_3$, which is the~finite group with GAP ID [648,541];
\item $C_3\times\mathfrak{S}_5$, which is the~automorphism group of the~cubic threefold
\begin{equation}
\label{equation:360-119}\tag{$\maltese$}
\big\{x_0+x_1+x_2+x_3+x_4=0,x_0^3+x_1^3+x_2^3+x_3^3+x_4^3+x_5^3=0\big\}\subset\mathbb{P}^5;
\end{equation}
\item $C_{24}$;
\item $C_{16}$.
\end{enumerate}
Using this classification and the proof of Theorem~\ref{theorem:main}, we obtain the following classification.

\begin{corollary}
\label{corollary:main}    
Let $X$ be a smooth cubic threefold in $\mathbb{P}^4$, and let $G$ be a finite subgroup in $\mathrm{Aut}(X)$. Then $X$ is $G$-birationally rigid if and only if one of the following cases holds:
\begin{enumerate}
\item[$(\mathrm{i})$] $X$ is the Fermat cubic \eqref{equation:Fermat}, and $G$ is a subgroup listed in Figure~\ref{figure:26} such that \mbox{$G\not\simeq C_5\rtimes C_4$}, and $\mathbb{P}^4$ contains no $G$-invariant planes (generators of a subgroup conjugated to $G$ can be found using Magma code provided in Appendix~\ref{subsection:plane-free-groups});
\item[$(\mathrm{ii})$] $X$ is the Klein cubic \eqref{equation:Klein}, and $G\simeq\mathrm{PSL}_2(\mathbf{F}_{11}),\mathfrak{A}_5$;
\item[$(\mathrm{iii})$] $X$ is the cubic \eqref{equation:360-119}, and $G\simeq C_3\times\mathfrak{S}_5, C_3\times\mathfrak{A}_5, \mathfrak{S}_5, \mathfrak{A}_5$,
\item[$(\mathrm{iv})$] $X$ is the cubic
$$
\Big\{x_0^3+x_1^3+x_2^3+x_3^3+x_4^3+ax_3(x_0x_2+x_1x_4)=0\Big\}\subset\mathbb{P}^4
$$
for some $a\in \mathbb{C}$, and $G\simeq C_3^2\rtimes C_4, \mathfrak{S}_3^2,\mathfrak{S}_3\wr C_2$ such that $\mathbb{P}^4$ does not contain $G$-invariant planes (generators of a subgroup conjugated to $G$ can be found in Section~\ref{section:36-9-36-10}).  
\end{enumerate}
\end{corollary}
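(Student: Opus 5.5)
The plan is to derive Corollary~\ref{corollary:main} from Theorem~\ref{theorem:main} together with the classification of \cite{WeiYu2020}. By Theorem~\ref{theorem:main}, $X$ is $G$-birationally rigid if and only if condition (a) holds. So the task is purely group-theoretic: list all pairs $(X,G)$, with $G$ a subgroup of one of the six maximal groups, such that $\mathbb{P}^4$ has no $G$-invariant plane and $G\not\simeq C_5\rtimes C_4$, $C_3\times(C_5\rtimes C_4)$, $C_{11}\rtimes C_5$.

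First I rule out the groups that always fix a plane. If $G$ is abelian (e.g.\ a subgroup of $C_{24}$ or $C_{16}$), the $5$-dimensional representation splits into characters, so there is always an invariant plane. More generally, if the lift of $G$ to $\mathrm{GL}_5(\mathbb{C})$ (or to a central extension) has a $3$-dimensional subrepresentation, then its projectivisation is an invariant plane. The same holds when there is a $2$-dimensional subrepresentation, since its complement is $3$-dimensional. Hence the condition is that the $5$-dimensional representation contains no subrepresentation of dimension $2$ or $3$. The allowed decompositions are therefore $5$, $4+1$, $1+4$, or $1+1+\dots$ with no partial sum equal to $2$ or $3$. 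This already forces the irreducible components to be of dimension $4$ or $5$ up to at most one extra character, since two characters add up to $2$. So $G$ must have an irreducible representation of dimension at least $4$, which discards the cyclic cases and most small groups at once.

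Next I go through the remaining maximal groups.

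\textbf{Klein cubic.} The subgroups of $\mathrm{PSL}_2(\mathbf{F}_{11})$ are cyclic, dihedral, $\mathfrak{A}_4$, $\mathfrak{A}_5$, $C_{11}\rtimes C_5$, and the whole group. Restricting the $5$-dimensional representation shows the following:
\begin{itemize}
\item the dihedral groups and $\mathfrak{A}_4$ have irreducible representations of dimension at most $3$, so they fix a plane;
\item $C_{11}\rtimes C_5$ acts irreducibly but is excluded by Theorem~\ref{theorem:main};
\item $\mathfrak{A}_5$ and $\mathrm{PSL}_2(\mathbf{F}_{11})$ survive.
\end{itemize}
This gives (ii).

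\textbf{Cubic \eqref{equation:360-119}.} The representation is the standard one of $\mathfrak{S}_5$, twisted by $C_3$ acting on $x_5$, and splits as $4+1$. A subgroup survives exactly when it acts irreducibly on the $4$-dimensional part. Among subgroups of $\mathfrak{S}_5$, only $\mathfrak{A}_5$, $\mathfrak{S}_5$ and $C_5\rtimes C_4$ do, and the last is excluded. With the $C_3$ factor this gives (iii); $C_3\times(C_5\rtimes C_4)$ is again excluded.

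\textbf{GAP group $[648,541]$.} The family in (iv) is its invariant family. The surviving subgroups are found by computer, and this produces the three groups in (iv).

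\textbf{Fermat cubic.} For $C_4^3\rtimes\mathfrak{S}_5$, I would likewise enumerate conjugacy classes of subgroups by machine and test for invariant $2$- and $3$-dimensional subspaces, which yields Figure~\ref{figure:26}.

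The main obstacle is bookkeeping rather than ideas. Subgroups of the smaller groups may also act on a more special cubic: the $C_3\times\mathfrak{S}_5$ cubic and members of family (iv) may coincide with the Fermat cubic for special values. One must check that every plane-free pair is listed under the correct $X$ and that the list in (iv) is complete for every value of $a$. I would handle this by noting that any plane-free $G$ is contained in the full automorphism group of $X$, which is one of the six maximal groups, and then sorting the output of the Magma enumeration accordingly.
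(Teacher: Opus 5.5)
You follow the same route as the paper, and several parts are correct. The paper derives the corollary in one line from Theorem~\ref{theorem:main}, \cite{WeiYu2020} and the case analysis of Section~\ref{section:proof}. Your reduction to the group-theoretic condition (no $2$- or $3$-dimensional subrepresentation, and $G\not\simeq C_5\rtimes C_4, C_3\times(C_5\rtimes C_4), C_{11}\rtimes C_5$) is right, and so is your treatment of the subgroups of $\mathrm{PSL}_2(\mathbf{F}_{11})$ and $C_3\times\mathfrak{S}_5$.

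The genuine gap is the step that attaches $X$ to $G$. \cite{WeiYu2020} only says that $\mathrm{Aut}(X)$ is \emph{isomorphic to a subgroup} of one of the six groups. It is not true that $\mathrm{Aut}(X)$ ``is one of the six maximal groups'', and an abstract embedding $G\hookrightarrow C_3^4\rtimes\mathfrak{S}_5$ does not make $X$ the Fermat cubic. What you need, for each plane-free $G\subset\mathrm{PGL}_5(\mathbb{C})$ up to conjugacy, is the locus of smooth $G$-invariant cubics.
\begin{itemize}
\item For $C_3^4\rtimes C_5$ and $\mathrm{PSL}_2(\mathbf{F}_{11})$ this locus is a single cubic up to $G$-biregularity (Sections~\ref{section:405-15} and~\ref{section:55-1}).
\item For $C_3^2\rtimes C_4$, $\mathfrak{S}_3^2$ and $\mathfrak{S}_3\wr C_2$ it is the pencil $X_a$ of Lemma~\ref{lemma:36-9-equation}. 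This is a genuine one-parameter family, which no sorting by the six maximal automorphism groups can produce.
\end{itemize}
Your account of (iv) is also wrong. The generators of $[648,541]$ in Appendix~\ref{subsection:weird-threefold-qualifying-subgroups} are block diagonal with blocks of sizes $3$ and $2$. So every subgroup fixes the plane $\{x_3=x_4=0\}$, and that cubic contributes nothing. The groups in (iv) are instead subgroups of $C_3^4\rtimes\mathfrak{S}_5$: the matrices $M_1,\ldots,M_5$ of Section~\ref{section:36-9-36-10} are monomial with cube-root-of-unity entries. They appear in Figure~\ref{figure:26}, and the Fermat cubic is the member $X_0$.

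Once this step is done correctly, it shows that the ``only if'' direction cannot be proved as stated. Take $G\simeq\mathfrak{A}_5$ or $\mathfrak{S}_5$ acting via $\mathbb{I}\oplus\mathbb{V}_4$. The smooth $G$-invariant cubics form the pencil \eqref{equation:A5-standard}, and every smooth member is $G$-birationally superrigid. This follows from Proposition~\ref{proposition:A5-standard}, or directly from Theorem~\ref{theorem:main}, since $\mathbb{I}\oplus\mathbb{V}_4$ has no $2$- or $3$-dimensional subrepresentation. This pencil is not isotrivial, and for general $a$ one checks with \cite{WeiYu2020} that $\mathrm{Aut}(X)\simeq\mathfrak{S}_5$. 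So a general member is not the Fermat cubic, the Klein cubic, or the cubic \eqref{equation:360-119}. It is also not a member of (iv), since all of those admit $G_{72,40}$ and $72\nmid 120$. The same happens for the non-standard $\mathfrak{A}_5$-family of Section~\ref{section:A5}, of which the Klein cubic is only one member.

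So the list (i)--(iv) omits these families, and your final sorting step would hide the omission rather than detect it. The paper's one-line derivation also passes over this point.
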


\begin{remark}
\label{remark:Costya}    
If $X$ is the~Klein cubic threefold \eqref{equation:Klein} and $G\simeq\mathrm{PSL}_2(\mathbf{F}_{11})$, then $X$ is $G$-birationally superrigid by \cite[Theorem~A.5]{CheltsovShramov2014}. But we will not use this result in the proof of Theorem~\ref{theorem:main}.
\end{remark}

As a by-product of the proof of Theorem~\ref{theorem:main} (Main Theorem), we obtain the~following result, which completes the investigations started in \cite{CheltsovShramov2014,Avilov2016b,Avilov2018,CheltsovTschinkelZhang2025}. 

\begin{theorem}
\label{theorem:singular}
Let $X$ be a singular cubic threefold in $\mathbb{P}^4$ that has at most terminal singularities, and let $G$ be a finite subgroup in $\mathrm{Aut}(X)$ such that $\mathrm{rk}\,\mathrm{Cl}(X)^G=1$. Then $X$ is $G$-birationally rigid if and only if $X$ is $G$-birationally superrigid, which happens if and only if one of the~following holds:
\begin{itemize}
\item[$(\mathrm{v})$] $X$ is the~(ten nodal) Segre cubic threefold
\begin{equation}
\label{equation:cubic-Segre}\tag{$\diamondsuit$}
\big\{x_0+x_1+x_2+x_3+x_4+x_5=0,x_0^3+x_1^3+x_2^3+x_3^3+x_4^3+x_5^3=0\big\}\subset\mathbb{P}^5,
\end{equation}
and $G\simeq\mathfrak{S}_6,\mathfrak{A}_6$, or $G\simeq\mathfrak{S}_5,\mathfrak{A}_5$ and $G$ leaves invariant a hyperplane in $\mathbb{P}^4$;

\item[$(\mathrm{vi})$] $X$ is the~(nine-nodal) cubic threefold
\begin{equation}\tag{$\heartsuit$}
\label{equation:cubic-9-nodes}
\big\{x_0+x_1+x_2=x_3+x_4+x_5, x_0x_1x_2=x_3x_4x_5\big\}\subset\mathbb{P}^5,
\end{equation}
and $G\simeq\mathfrak{S}_3^2\rtimes C_3, C_3^2\rtimes C_4$, or $G\simeq\mathfrak{S}_3^2$ and $G$ acts transitively on $\mathrm{Sing}(X)$.
\end{itemize}
\end{theorem}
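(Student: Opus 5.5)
The plan is to split the statement into an \emph{obstruction} half and a \emph{rigidity} half. The obstruction half shows that whenever $(X,G)$ is not in (v) or (vi), there is an explicit $G$-birational map to a different $G$-Mori fibre space. The rigidity half shows, via the Noether--Fano inequality, that in (v) and (vi) there are no non-canonical mobile $G$-invariant linear systems. Superrigidity then follows directly, since every element of $\mathrm{Bir}^G(X)$ is forced to be biregular, and this gives the chain rigid $\Leftrightarrow$ superrigid $\Leftrightarrow$ (v) or (vi).

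\textbf{Step 1 (reduction to few singular cubics).} Terminal singularities of a cubic threefold are isolated. By the Segre-type bounds, a cubic with isolated singularities has at most $10$ of them, and they are of type $A_n$ with bounded $n$ (or worse only in nodal-free degenerate cases). The condition $\mathrm{rk}\,\mathrm{Cl}(X)^G=1$ forces $G$ to act transitively on $\mathrm{Sing}(X)$, because projecting from a $G$-fixed singular point is a $G$-birational map to a conic bundle or del Pezzo fibration. It also forces $X$ to contain no $G$-invariant plane or union of planes spanning a Weil divisor class not proportional to $-K_X$. Next, if $G$ fixes a point of $X$ or leaves a line or a pair of singular points invariant, I project from it to obtain a $G$-Mori fibre space structure (a conic bundle over $\mathbb{P}^2$ or a $G$-del Pezzo fibration), so $X$ is not rigid. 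Using the classification of nodal cubic threefolds by number of nodes and defect (the works cited, in particular Avilov and Cheltsov--Shramov), only cubics with large transitive symmetric node configurations survive. Running through $1\le|\mathrm{Sing}(X)|\le 10$, I expect only the Segre cubic ($10$ nodes) and the $9$-nodal cubic to admit transitive groups with $\mathrm{rk}\,\mathrm{Cl}(X)^G=1$; the remaining cases are ruled out by invariant-divisor or fixed-orbit arguments as in \cite{Avilov2016b,Avilov2018}.

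\textbf{Step 2 (non-rigid subgroups of $\mathfrak{S}_6$ and $\mathrm{Aut}$ of the $9$-nodal cubic).} For the Segre cubic, I use that $\mathfrak{S}_5$ acting standardly, as opposed to via the outer automorphism, fixes no hyperplane. In that case it preserves the $G$-orbit structure that yields a $G$-equivariant link to $\mathbb{P}^3$ or to a quadric, through contraction of planes (the Segre cubic is birational to $\overline{M}_{0,6}$ and to $\mathbb{P}^3$ blown up in $5$ points). Smaller subgroups of $\mathfrak{S}_6$ not listed have orbits of length $\le 6$ on nodes or invariant planes giving links. For the $9$-nodal cubic, groups not transitive on the nodes, or with invariant pairs of the two triples of planes, give links to $\mathbb{P}^1\times\mathbb{P}^1\times\mathbb{P}^1$-type or $\mathbb{P}^2\times\mathbb{P}^2$-type models; I would verify the list by computer enumeration of subgroups with $\mathrm{rk}\,\mathrm{Cl}(X)^G=1$.

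\textbf{Step 3 (superrigidity in (v), (vi)) — main obstacle.} Suppose $\mathcal{M}$ is a mobile $G$-invariant system with $\mathcal{M}\sim -nK_X/2$ and $(X,\tfrac{2}{n}\mathcal{M})$ not canonical; let $Z$ be a centre. If $Z$ is a curve, then $\mathrm{mult}_Z\mathcal{M}>n/2$, so $\deg Z$ of the $G$-orbit is bounded by intersecting with a general member. This excludes curves unless the orbit consists of lines; those lines are handled by knowing the $G$-orbits of lines on $X$ and using a pencil of hyperplane sections. If $Z$ is a smooth point, $\mathrm{mult}_Z(M_1\cdot M_2)>n^2$, while $M_1\cdot M_2$ has degree $3n^2/4$; this forces orbits of length smaller than those admitted by $G$, which I check from the known orbit structure (minimal orbits of $\mathfrak{A}_6$, $\mathfrak{A}_5$, $C_3^2\rtimes C_4$, and so on). If $Z$ is a node, I use the Corti-type inequality $\mathrm{mult}_Z(M_1\cdot M_2)>n^2/2$ at an ordinary double point. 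Combining this with transitivity on all $10$ or $9$ nodes, I intersect with a $G$-invariant cubic or quadric section through the nodes to get a contradiction. The delicate part is precisely the nodal case and lines through nodes. There I would try first the method of \cite{CheltsovShramov2014}: pass to the small resolution and use the planes through nodes, whose known intersection numbers give the contradiction.
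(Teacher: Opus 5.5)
Your Step~3 is where the real proof has to happen, and as written it does not go through. In the paper, everything except one case is quoted from Cheltsov--Shramov, Avilov and Cheltsov--Tschinkel--Zhang. The new input is Theorem~\ref{theorem:36-9}: superrigidity of the nine-nodal cubic $X_a$ ($a^3=-\frac{27}{2}$) for $G\simeq C_3^2\rtimes C_4$, and for $G\simeq\mathfrak{S}_3^2$ acting transitively on the nodes.

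\textbf{Curve centres.} The inequality $\mathrm{mult}_Z(\mathcal{M})>\frac{n}{2}$ together with $H\cdot M_1\cdot M_2=3n^2$ only gives $\deg(Z)<12$, not ``only lines''. Excluding the $G$-irreducible curves of degree $6$ and $9$ is the bulk of the paper's argument:
\begin{itemize}
\item curves in the invariant section $S=\{x_3=0\}\cap X_a$ are excluded by $\alpha_G(S)=2$ and inversion of adjunction; for $\mathfrak{S}_3^2$, where $\mathrm{rk}\,\mathrm{Pic}(S)^G=2$, this needs a separate computation with the line triples $\mathcal{L}_3,\mathcal{L}_3^\prime$;
\item the unions of plane cubics $\mathcal{R}_6,\mathcal{R}_6^\prime,\mathcal{R}_6^{\prime\prime}$, and the nine-line curves $\mathcal{R}_9,\mathcal{R}_9^\prime$ through the nodes, are excluded by restricting to a hyperplane section that contains one component;
\item the curves $\mathcal{L}_t^{\pm}$ and the degree-$9$ curves of genus $1$ or $4$ lie on the nodal K3 surfaces $S_\lambda$ of the pencil $\{x_0x_2+x_1x_4+\lambda x_3^2=0\}$; they are excluded by nef classes such as $3\widetilde{H}-\widetilde{Z}-E$ and $5\widetilde{H}-2\widetilde{Z}-E$ on the minimal resolution of $S_\lambda$.
\end{itemize}

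\textbf{Smooth-point centres.} Your count fails here too. First, $H\cdot M_1\cdot M_2=3n^2$, not $\frac{3n^2}{4}$, and this already exceeds the local bound $\mathrm{mult}_P(M_1\cdot M_2)>n^2$. Second, the degree of a $1$-cycle does not bound the sum of its multiplicities along an orbit. Smooth points genuinely occur as centres on cubic threefolds (e.g.\ for the classical birational involutions attached to points), so the group has to enter essentially. The paper proceeds as follows:
\begin{itemize}
\item it shows that the non-canonical points avoid $S$, $\mathcal{R}_6$ (and $\mathcal{R}_6^\prime$, $\mathcal{R}_6^{\prime\prime}$, $\mathcal{K}_6$, $\mathcal{K}_6^\prime$) and the nodes;
\item it takes $\mu\leqslant\frac{3}{n}$ with $(X_a,\mu\mathcal{M})$ strictly log canonical at $P$;
\item Corti's Lemma~1.8 shows that Nklt curves have degree $\leqslant 6$, hence miss the orbit $\Sigma_P$;
\item Nadel vanishing then gives $|\Sigma_P|\leqslant h^0(\mathcal{O}_{X_a}(1))=5$, contradicting that every $G$-orbit has length $\geqslant 6$.
\end{itemize}

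\textbf{Nodes.} Your fallback here does match the paper. Blow up the four nodes lying in a plane $\Pi_{i,j}\subset X_a$ and intersect with a conic in $\Pi_{i,j}$ through them. This gives $2n-4\mu\geqslant 0$, against Corti's $\mu>\frac{n}{2}$.

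Steps~1--2 also contain errors. The condition $\mathrm{rk}\,\mathrm{Cl}(X)^G=1$ does not force $G$ to act transitively on $\mathrm{Sing}(X)$. By the defect formula, a nodal cubic whose nodes impose independent conditions on hyperplanes already has $\mathrm{rk}\,\mathrm{Cl}(X)=1$. For example, a two-nodal cubic with $G=\{1\}$ qualifies. Projection from a $G$-fixed singular point is a $G$-birational map onto $\mathbb{P}^3$; this proves non-rigidity, it does not contradict rank one. So the ``only if'' direction is not about showing that other cubics fail to have rank one. It requires producing, for every rank-one pair outside $(\mathrm{v})$ and $(\mathrm{vi})$, a $G$-birational map to a different $G$-Mori fibre space. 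That is the content of the cited works, and your expectation in Step~1 skips it.

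Finally, your two $\mathfrak{S}_5$'s are swapped. The point stabiliser $\mathfrak{S}_5\subset\mathfrak{S}_6$ acts on $\{\sum x_i=0\}$ via $\mathbb{I}\oplus\mathbb{V}_4$. So it leaves the hyperplane $\{x_5=0\}$ invariant and belongs to the superrigid case $(\mathrm{v})$. It is the transitive $\mathfrak{S}_5\simeq\mathrm{PGL}_2(\mathbf{F}_5)$ that acts irreducibly, and it is $G$-birational to $\mathbb{P}^3$ via quadrics through five points.
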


\begin{proof}
If $X$ is the~cubic \eqref{equation:cubic-9-nodes}, and $G\simeq C_3^2\rtimes C_4$, or $G\simeq\mathfrak{S}_3^2$ and $G$ acts transitively on $\mathrm{Sing}(X)$, then the cubic threefold $X$ is $G$-birationally superrigid by Theorem~\ref{theorem:36-9}, which is proved in Section~\ref{section:36-9-36-10}. The remaining assertions are proved in \cite{CheltsovShramov2014,Avilov2016b,Avilov2018,CheltsovTschinkelZhang2025}.
\end{proof}

It would be natural to generalize both Theorems~\ref{theorem:main} and \ref{theorem:singular} for del Pezzo threefolds. Let us recall their classification. Let $X$ a Fano threefold with terminal Gorenstein singularities such that 
$$
-K_X\sim 2H,
$$
where $H$ is a Cartier divisor on $X$. Suppose that $\mathrm{rk}\,\mathrm{Cl}(X)^G=1$ for a finite subgroup $G\subset\mathrm{Aut}(X)$. Set $d=H^3$. Then $d\in\{1,2,3,4,5,6,8\}$, the threefold $X$ is a del Pezzo threefold of degree $d$, and it follows from \cite{Prokhorov2013a,Prokhorov2013b,Prokhorov2015,KuznetsovProkhorov2023} that one of the following cases holds:
\begin{itemize}
\item[$(\mathrm{I})$] $d=1$ and $X$ is a sextic hypersurface in $\mathbb{P}(1,1,1,2,3)$;
\item[$(\mathrm{II})$] $d=2$ and $X$ is a quartic hypersurface in $\mathbb{P}(1,1,1,1,2)$; 
\item[$(\mathrm{III})$] $d=3$ and $X$ is a cubic hypersurface in $\mathbb{P}^4$;
\item[$(\mathrm{IV})$] $d=4$ and $X$ is a complete intersection of two quadrics in $\mathbb{P}^5$;
\item[$(\mathrm{V})$] $d=5$ and $X$ is a smooth intersection of the Grassmannian
$\mathrm{Gr}(2,5)\hookrightarrow\mathbb{P}^9$  with a linear subspace of dimension $6$ ($X$ is unique up to an isomorphism, and $\mathrm{Aut}(X)\simeq\mathrm{PGL}_2(\mathbb{C})$);
\item[$(\mathrm{VI})$] $d=6$ and $X=\mathbb{P}^1\times\mathbb{P}^1\times \mathbb{P}^1$;
\item[$(\mathrm{VI}^\prime)$] $d=6$ and $X=\{x_0y_0+x_1y_1+x_2y_2=0\}\subset\mathbb{P}^2_{x_0,x_1,x_2}\times\mathbb{P}^2_{y_0,y_1,y_2}$; 
\item[$(\mathrm{VIII})$] $d=8$ and $X=\mathbb{P}^3$ (this case is usually excluded).
\end{itemize}
It seems feasible to find all possibilities for $X$ and $G$ such that $X$ is $G$-birationally rigid for $d\ne 3$. For $X=\mathbb{P}^3$, this is done in \cite{CheltsovShramov2012,CheltsovShramov2014,CheltsovShramov2019}. For $X=\mathbb{P}^1\times\mathbb{P}^1\times \mathbb{P}^1$, this is done in \cite{CheltsovDuboulozKishimoto2023} under an assumption that $|G|\geqslant 32\times 24^4$. If $d=5$, $X$ is $G$-birationally rigid if and only if $G\simeq\mathfrak{A}_5$ \cite{CheltsovShramov2016}. For $d\in\{1,2,4\}$, partial results are obtained in \cite{CheltsovShramovV5,Avilov2016a,Avilov2019,Avilov2023,Cheltsov2023,AbbanCheltsovParkShramov2025}.

The structure of this paper is simple. In Section~\ref{section:proof}, we prove Theorem~\ref{theorem:main}  modulo $5$ results, which are proved in Sections~\ref{section:20-3}, \ref{section:55-1}, \ref{section:405-15}, \ref{section:A5},~\ref{section:36-9-36-10}. In~Appendix~\ref{section:Magma}, we present Magma codes used in the~proof.

\medskip
\noindent
\textbf{Notation.}
Throughout the~paper, we denote by $C_n$ the~cyclic group of order $n$, and by  $\mathfrak{D}_n$ the~dihedral group of order~$2n\geqslant 6$.
Similarly, we denote by $\mathfrak{S}_n$ the~symmetric group of degree $n$, and we denote by $\mathfrak{A}_n$ its alternating subgroup. We denote by $\zeta_n$ a primitive $n$-th root of unity.

\medskip
\noindent
\textbf{Acknowledgements.}
Most of the~work presented in this paper was carried out during a visit by the~first two authors to the~University of Auckland (Tamaki Makaurau, Aotearoa) in March~2026. We thank the~university for providing an excellent environment for conducting research. 

We are very grateful to Paolo Cascini and Zhijia Zhang for many useful discussions. 

The~first author has been supported by Simons Collaboration grant \emph{Moduli of varieties}.
The~second author was supported by \emph{IBS-R003-D1} grant.
\begin{center}
\begin{figure}
\caption{Plane free groups acting on Fermat cubic}
  \label{figure:26}
\includegraphics[height=\textheight]{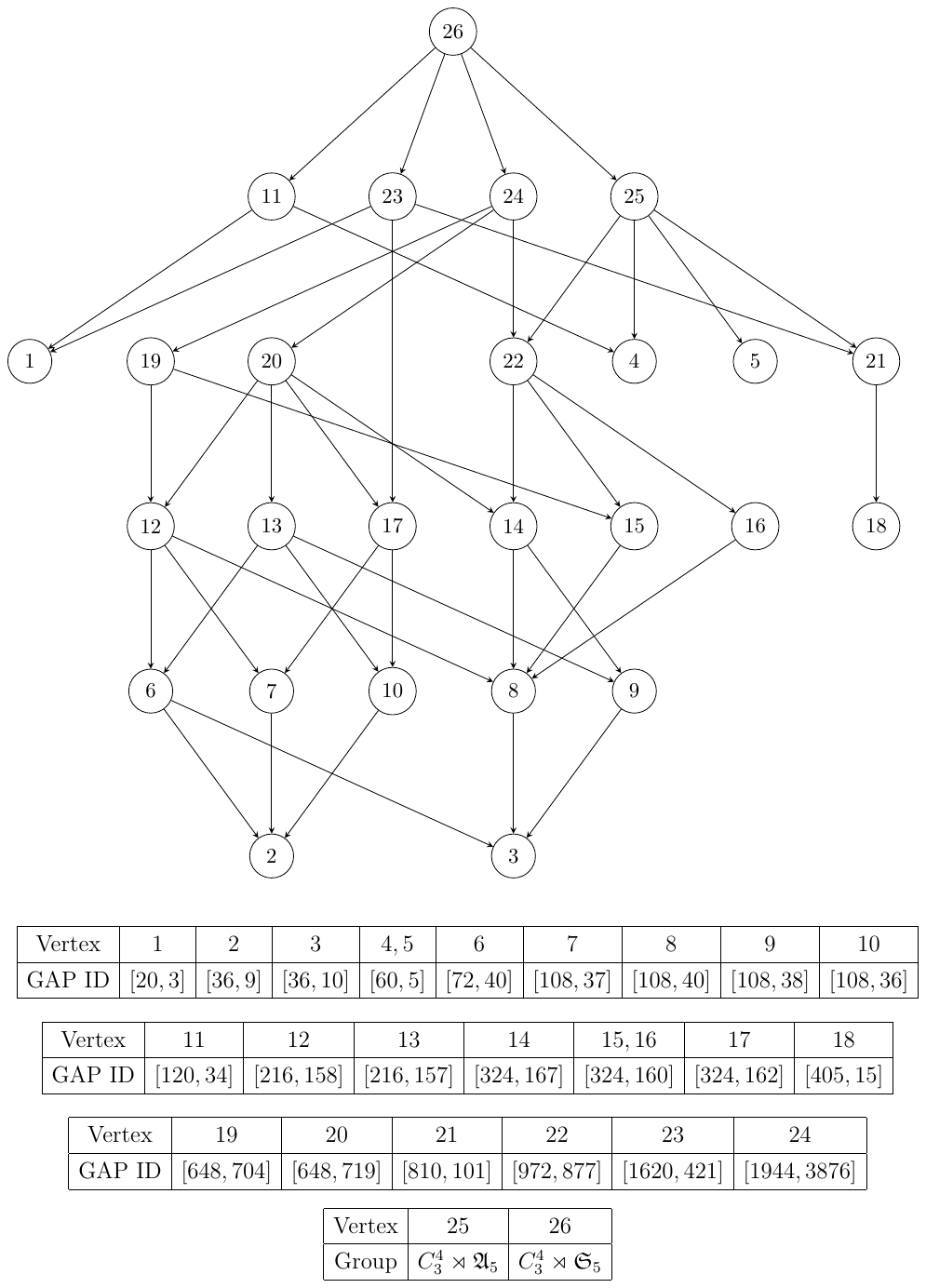}
\end{figure}
\end{center}

\section{The proof of Main Theorem}
\label{section:proof}

Let $X$ be a smooth cubic threefold in $\mathbb{P}^4$, and let $G$ be a subgroup in $\mathrm{Aut}(X)$. Then $G$ is finite, and its action on $X$ lifts to $\mathbb{P}^4$, so we consider $G$ as a subgroup in $\mathrm{PGL}_5(\mathbb{C})$. To prove Theorem~\ref{theorem:main}, we must show that the~following three conditions are equivalent:
\begin{itemize}
\item[$(\mathrm{a})$] $\mathbb{P}^4$ does not contain $G$-invariant planes, $G\not\simeq C_5\rtimes C_4, C_3\times(C_5\rtimes C_4), C_{11}\rtimes C_5$;
\item[$(\mathrm{b})$] $X$ is $G$-birationally superrigid;
\item[$(\mathrm{c})$] $X$ is $G$-birationally rigid.
\end{itemize}
The implication $(\mathrm{b})\Rightarrow (\mathrm{c})$ follows from the~definitions. If $G\simeq C_5\rtimes C_4$ or $G\simeq C_3\times(C_5\rtimes C_4)$, then the~cubic threefold $X$ is not $G$-birationally rigid by Theorem~\ref{theorem:20-3}, which will be proved in Section~\ref{section:20-3}. Similarly, if $G\simeq C_{11}\rtimes C_5$, then the~cubic threefold $X$ is not $G$-birationally rigid by Theorem~\ref{theorem:Klein-cubic}, which will be proved in Section~\ref{section:55-1}. Therefore, the~implication $(\mathrm{c})\Rightarrow (\mathrm{a})$ follows from

\begin{lemma}
\label{lemma:Pn-pencil}
Suppose that $\mathbb{P}^4$ contains a $G$-invariant plane. Then~$X$ is not $G$-birationally rigid.
\end{lemma}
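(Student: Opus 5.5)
Let $\Pi\subset\mathbb{P}^4$ be a $G$-invariant plane. Projection from $\Pi$ gives a $G$-equivariant rational map $X\dashrightarrow\mathbb{P}^1$, whose fibers are the residual surfaces cut out on $X$ by the hyperplanes containing $\Pi$. Since $X$ is a cubic, every hyperplane section $H\cap X$ with $H\supset\Pi$ is a cubic surface, and these surfaces pass through the $G$-invariant plane cubic curve $C=\Pi\cap X$. (The curve $C$ is a genuine curve: $X$ is smooth, so it contains no planes by Lefschetz.) My plan is to resolve this map $G$-equivariantly and produce a $G$-Mori fibre space over $\mathbb{P}^1$, or over a point but different from $X$, that is $G$-birational to $X$. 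That contradicts $G$-birational rigidity.

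Concretely, I would take the blow up $\pi\colon\widetilde{X}\to X$ along $C$. The pencil of hyperplanes through $\Pi$ is base point free on $\widetilde{X}$, giving a $G$-equivariant morphism $\phi\colon\widetilde{X}\to\mathbb{P}^1$ whose fibres are the cubic surfaces $H\cap X$. Some of these may be singular, and $C$ itself may be singular or reducible. In that case I would instead blow up the ideal of $\Pi\cap X$, or take a $G$-equivariant small resolution, to get a $G$-variety $\widetilde{X}$ with terminal singularities. I would then check the following:
\begin{itemize}
\item the $G$-invariant relative Picard rank of $\widetilde{X}$ over $\mathbb{P}^1$ is $1$;
\item $-K_{\widetilde{X}}$ is relatively ample, since $-K_{\widetilde{X}}=\pi^*(2H)-E$ restricts to $-K$ of a cubic surface on each fibre.
\end{itemize}
Since $\mathrm{rk}\,\mathrm{Pic}(\widetilde{X})^G=2$ and $\phi$ contracts one extremal ray, $\phi$ is a $G$-del Pezzo fibration of degree $3$. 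This is a $G$-Mori fibre space over $\mathbb{P}^1$, so it is not $G$-biregular to $X$, and $X$ is not $G$-birationally rigid.

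To avoid singularity issues, I would run the $G$-equivariant MMP over $\mathbb{P}^1$ on a $G$-equivariant resolution of the map $X\dashrightarrow\mathbb{P}^1$. The output is a $G$-Mori fibre space $Y\to Z$ with $\dim Z\geqslant 1$, since $Y$ maps onto $\mathbb{P}^1$. It therefore cannot be $G$-biregular to $X$, whose base is a point. This argument is cleaner: it requires no explicit resolution, only that the MMP applies, which holds because the general fibre, a cubic surface, is uniruled, so $K$ is not pseudo-effective over $\mathbb{P}^1$.

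The main obstacle is ensuring the output is not "over a point". The relative MMP over $\mathbb{P}^1$ guarantees this automatically, so I expect the real work is only in justifying existence of a $G$-equivariant resolution and the relative $G$-MMP, both of which are standard. I would therefore present the short MMP argument, remarking that generically it yields the cubic surface fibration obtained by blowing up $\Pi\cap X$.
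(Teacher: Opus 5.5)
Your final argument is the paper's proof: take the pencil of hyperplane sections through the $G$-invariant plane $\Pi$, resolve the $G$-equivariant map $X\dashrightarrow\mathbb{P}^1$, and run the relative $G$-MMP over $\mathbb{P}^1$ to reach a $G$-Mori fibre space with positive-dimensional base, which has $G$-invariant Picard rank at least $2$ and so is not $G$-biregular to $X$. The explicit blow-up of $\Pi\cap X$ is not needed, and you are right to drop it: when the plane cubic $\Pi\cap X$ is singular the blow-up acquires nodes, and the relative Picard rank claim would then need extra care.
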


\begin{proof}
Let $\mathcal{P}$ be the~pencil in $|\mathcal{O}_X(1)|$ consisting of all surfaces that are cut out by hyperplanes that pass through $\Pi$.
Then $\mathcal{P}$ is mobile, and all its members are irreducible cubic surfaces.
Let $\psi\colon X\dasharrow\mathbb{P}^1$ be the~map given by the pencil~$\mathcal{P}$. Then $\psi$ is $G$-equivariant and dominant.
Moreover, equivariantly resolving the~indeterminacy of this map, we get the~following $G$-equivariant commutative diagram:
$$
\xymatrix{
& Y\ar@{->}[ld]_\pi\ar@{->}[rd]^{\phi} & \\
X\ar@{-->}[rr]^\psi  && \mathbb{P}^1}
$$
where $Y$ is a smooth threefold, $\pi$ is a birational morphism, and $\phi$ is a surjective morphism, whose general fiber is birational to a cubic surface. Now, applying relative $G$-Minimal Model Program to~$Y$ over~$\mathbb{P}^1$, we obtain a $G$-birational map from $Y$ to a $G$-Mori fiber space with a positive-dimensional base, which implies that~$X$ is not $G$-birationally rigid.
\end{proof}

Thus, to complete the~proof of Theorem~\ref{theorem:main}, it is enough to prove the~implication $(\mathrm{a})\Rightarrow (\mathrm{b})$. Hence, we suppose that $\mathbb{P}^4$ does not contain $G$-invariant planes, $G\not\simeq C_5\rtimes C_4$, $G\not\simeq C_3\times(C_5\rtimes C_4)$, and $G\not\simeq C_{11}\rtimes C_5$. Let us show that $X$ is $G$-birationally superrigid.

By Duncan's lemma \cite[Lemma 2.4]{AbbanCheltsovKishimotoMangolte2025}, the~subgroup $G$ lifts isomorphically to $\mathrm{GL}_5(\mathbb{C})$, so the~action of the~group $G$ on $\mathbb{P}^4$ is given by its faithful $5$-dimensional representations $\mathbb{V}_5$. By assumption, $\mathbb{V}_5$ has no three-dimensional subrepresentations. Hence, twisting $\mathbb{V}_5$ by an appropriate $1$-dimensional representation of the~group $G$, we may assume that either $\mathbb{V}_5$ is irreducible, or
$$
\mathbb{V}_5=\mathbb{I}\oplus\mathbb{V}_4,
$$
where $\mathbb{V}_4$ is an irreducible faithful $4$-dimensional representation of the~group $G$, and $\mathbb{I}$ is the~trivial representation. In particular, we see that $G$ is neither cyclic nor dyhedral. Hence, it follows from the~classification \cite{WeiYu2020} that $G$ is isomorphic to a subgroup of one of the~following four groups:
\begin{enumerate}
\item $C_3^4\rtimes\mathfrak{S}_5$ --- the~automorphism group of the~Fermat cubic \eqref{equation:Fermat};
\item $\mathrm{PSL}_2(\mathbf{F}_{11})$ --- the~automorphism group of the~Klein cubic \eqref{equation:Klein};
\item $((C_3^2\rtimes C_3)\rtimes C_4)\times\mathfrak{S}_3$ --- the~group with GAP ID [648,541];
\item $C_3\times\mathfrak{S}_5$.
\end{enumerate}

\begin{lemma}
\label{lemma:Fermat}
Let $H$ be a subgroup of the~group $C_4^3\rtimes\mathfrak{S}_5$ such that $H$ has a faithful irreducible representation of dimension $4$ or $5$. Then $H$ is isomorphic to one of the~groups listed in Figure~\ref{figure:26}.
\end{lemma}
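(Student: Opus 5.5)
We treat the group $C_4^3\rtimes\mathfrak{S}_5$ exactly as it enters the proof: as the automorphism group of the Fermat cubic \eqref{equation:Fermat}, in the explicit realization from item (1) of the list above. It is generated by the permutations of $x_0,\ldots,x_4$ together with the diagonal transformations $x_i\mapsto\zeta_3^{a_i}x_i$, taken modulo scalars. So the ambient group is a concrete finite permutation/matrix group, and the lemma becomes a finite check. The plan is to prove it by a complete enumeration of subgroups up to conjugacy, done by computer (Magma), combined with character-theoretic tests. This is the same computational framework the paper uses in Appendix~\ref{section:Magma}.

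The steps are as follows.

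\begin{enumerate}
\item Construct the ambient group concretely, either as a subgroup of $\mathrm{GL}_5(\mathbb{Q}(\zeta_3))$ or as a permutation group via its action on a suitable finite set. We then compute all conjugacy classes of its subgroups; the group is small enough for Magma's \texttt{Subgroups} routine.
\item For each class representative $H$, compute the character table. Record whether $H$ has an irreducible character $\chi$ of degree $4$ or $5$ with $\ker\chi$ trivial. Faithfulness of a character is a condition on the class data: $\chi(g)\ne\chi(1)$ for every nontrivial class. So no explicit representation is needed.
\item Compare the surviving representatives, up to isomorphism (e.g.\ via \texttt{IdentifyGroup}), with the list in Figure~\ref{figure:26}. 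This shows that every survivor occurs there.
\end{enumerate}

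To keep the computation transparent and give a conceptual cross-check, we prune by hand before the enumeration. Let $A=H\cap D$, where $D$ is the abelian diagonal normal subgroup, so that $H/A$ embeds in $\mathfrak{S}_5$. Take a faithful irreducible $\mathbb{V}$ of $H$ of dimension $4$ or $5$.

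\begin{itemize}
\item By Clifford theory, $\mathbb{V}|_A$ is a sum of characters of $A$ permuted transitively by $H/A$, each with the same multiplicity.
\item Faithfulness forces these characters to separate $A$. The dimension $4$ or $5$ then forces the orbit of $H/A$ to have length dividing $4$ or $5$.
\item The primitive case $A$ trivial (or scalar) reduces to subgroups of $\mathfrak{S}_5$ with faithful irreducibles of degree $4$ or $5$, namely $C_5\rtimes C_4$, $\mathfrak{A}_5$ and $\mathfrak{S}_5$. Strictly, $C_5\rtimes C_4$ has a faithful irreducible of degree $4$ but none of degree $5$, while $\mathfrak{A}_5$ and $\mathfrak{S}_5$ have faithful irreducibles of both degrees.
\item Also, the center of $H$ must be cyclic.
\end{itemize}

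These constraints cut down the candidate list substantially and predict the shape of Figure~\ref{figure:26}: extensions of a diagonal subgroup by a transitive subgroup of $\mathfrak{S}_5$ or $\mathfrak{S}_4$.

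The main obstacle is bookkeeping rather than theory. One must make sure the enumeration is up to conjugacy in the full ambient group, not merely in $\mathrm{PGL}_5(\mathbb{C})$. One must also match abstract isomorphism types correctly, because non-conjugate but isomorphic subgroups with different $5$-dimensional actions can occur. I would resolve this by recording, for each surviving class, its GAP identifier together with generators. This matches how Figure~\ref{figure:26} and Appendix~\ref{subsection:plane-free-groups} present the data.
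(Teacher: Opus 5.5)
Your proposal is correct and is essentially the paper's argument: the paper also proves the lemma by a Magma enumeration of conjugacy classes of subgroups of $\mathrm{Aut}(X)$ (the group of order $9720$), filtering by character degrees and comparing the survivors with Figure~\ref{figure:26}. The only difference is that the paper filters by the weaker condition of having some irreducible character of degree $4$ or $5$, faithful or not, and observes that this list coincides with the plane-free list, whereas you test faithfulness directly via $\ker\chi=\{g:\chi(g)=\chi(1)\}$; this only shrinks the set to be matched, and your Clifford-theoretic pruning is an optional sanity check not used in the paper.
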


\begin{proof}
Recall that $C_4^3\rtimes\mathfrak{S}_5$ is the~automorphism group of the~Fermat cubic \eqref{equation:Fermat}.
Using the~Magma code presented in Appendix~\ref{subsection:plane-free-groups}, we find all subgroups of the~automorphism group of this cubic (up to conjugation) that does not leave invariant planes in $\mathbb{P}^4$. The~graph of these subgroups is presented in Figure~\ref{figure:26} (where edges corresponds to inclusions). Up to isomorphism, this graph contains $24$ groups, and all of them have a faithful irreducible representation of dimension $4$ or $5$.

Now, using the~Magma code presented in Appendix~\ref{subsection:qualifying-subgroups}, we obtain the~list of all subgroups of the~group $C_4^3\rtimes\mathfrak{S}_5$ that have an irreducible representation of dimension $4$ or~$5$, which contains the~same subgroups that are listed in Figure~\ref{figure:26}.
\end{proof}

\begin{lemma}
\label{lemma:648-541}
Let $H$ be a subgroup of the~group $((C_3^2\rtimes C_3)\rtimes C_4)\times\mathfrak{S}_3$ such that $H$ has a faithful irreducible representation of dimension $4$ or $5$. Then one of the~following cases holds:
\begin{itemize}
\item $H\simeq\mathfrak{S}_3^2$ and its GAP ID is [36,10];
\item $H\simeq C_3\times (C_3^2\rtimes C_4)$ and its GAP ID is [108,36];
\item $H\simeq C_3\times\mathfrak{S}_3^2$ and its GAP ID is [108,38].
\end{itemize}
\end{lemma}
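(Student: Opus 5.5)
The plan is to treat this as a finite group-theoretic computation, in the same spirit as the proof of Lemma~\ref{lemma:Fermat}. Set $\Gamma=((C_3^2\rtimes C_3)\rtimes C_4)\times\mathfrak{S}_3$, the group with GAP ID [648,541]. We enumerate the conjugacy classes of subgroups $H\subset\Gamma$, compute the character table of each, and keep those $H$ that have an irreducible character $\chi$ of degree $4$ or $5$ with $\ker\chi=1$.

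Before computing, we narrow the search by hand so that the answer can be checked.

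\begin{itemize}
\item A faithful irreducible representation of degree $d$ requires $d\mid |H|$ and $d^2<|H|$. Since $|H|$ divides $648=2^3\cdot 3^4$, it is not divisible by $5$, so only $d=4$ can occur. This forces $4\mid|H|$ and $|H|>16$.
\item The Sylow $2$-subgroup of $\Gamma$ is $C_4\times C_2$, which is abelian. Hence $H$ has a normal abelian subgroup $A$ of $3$-power order with $H/A$ a $2$-group (for instance $A=O_3(H)$, which is abelian here).
\item A faithful irreducible $4$-dimensional representation is then induced from a character of the inertia subgroup of $A$, of index $4$. Therefore $H$ must act on the linear characters of $A$ with an orbit of length $4$, and the Sylow $2$-subgroup of $H$ must have order $4$ or $8$, acting with a trivial kernel.
\end{itemize}

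Running through the candidates with $|H|\in\{36,72,108,216,324,648\}$ should then leave exactly the three groups in the statement:
\begin{itemize}
\item $\mathfrak{S}_3^2$, where $C_3^2$ is acted on by $C_2^2$ with a regular orbit of $4$ nontrivial characters;
\item $C_3\times(C_3^2\rtimes C_4)$;
\item $C_3\times\mathfrak{S}_3^2$, where the extra central $C_3$ is absorbed by a twist.
\end{itemize}
The larger groups are ruled out because they contain $C_3^2\rtimes C_3$, whose faithful characters have degree $3$, so their degrees come out as multiples of $3$ or too large.

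The practical step is a Magma or GAP loop, as in Appendix~\ref{subsection:qualifying-subgroups}: for each subgroup class, test for an irreducible character of degree $4$ or $5$ with trivial kernel, then print the IdentifyGroup of each survivor. We expect this to return [36,10], [108,36] and [108,38], possibly several times with non-conjugate embeddings. The subtle case is $C_3\times\mathfrak{S}_3^2$ versus other subgroups of order $108$ or $36$, such as [36,9] $=C_3^2\rtimes C_4$. Its faithful $4$-dimensional character might exist abstractly, yet the subgroup may not embed in $\Gamma$ compatibly with the $C_3$ factor. The computer check settles this.

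The main obstacle is making the hand-argument elimination of the larger subgroups rigorous, in particular whether a subgroup containing the Heisenberg group $C_3^2\rtimes C_3$ could still have a faithful degree-$4$ character. We would settle this by the observation that in any such $H$ the Heisenberg group is normal, so Clifford theory forces every faithful irreducible degree to be divisible by $3$, which excludes $4$. In the end we rely on the computer enumeration for completeness.
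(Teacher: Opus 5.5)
Your computational plan is essentially the paper's proof. The paper runs the Magma loop of Appendix~\ref{subsection:weird-threefold-qualifying-subgroups} to list subgroups having some irreducible representation of degree $4$ or $5$ (nine isomorphism types), and then checks faithfulness in GAP. You instead test directly for an irreducible character of degree $4$ or $5$ with trivial kernel. As a computer-assisted proof this is fine. Your reduction to degree $4$ is correct, and so is your Clifford argument excluding subgroups with nonabelian Sylow $3$-subgroup, applied to the normal subgroup $H\cap P$ with $P$ the Sylow $3$-subgroup of $\Gamma$.

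The hand analysis you use to predict the answer, however, has errors. $O_3(H)$ is not abelian in general: for $H=\Gamma$ it is $(C_3^2\rtimes C_3)\times C_3$. So the ``normal abelian $A$'' step only applies after the case split you make at the end. Also, $|H|=24$ is missing from your list of orders.

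More seriously, your expected output contradicts the statement you are proving. The group $C_3\times(C_3^2\rtimes C_4)$ contains $C_3^2\rtimes C_4$ (GAP ID [36,9]), which has faithful irreducible $4$-dimensional representations by Lemma~\ref{lemma:36-9-reps}. If the product were a subgroup of $\Gamma$, then [36,9] would be a counterexample to the lemma. Your remark about embedding ``compatibly with the $C_3$ factor'' does not resolve this, since any embedding of the product restricts to one of the factor.

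In fact, your Clifford-theoretic narrowing can be completed into a computer-free proof that rules this case out.
\begin{itemize}
\item Write $\Gamma=K\times\mathfrak{S}_3$ with $K=N\rtimes\langle A_3\rangle$, $N\simeq C_3^2\rtimes C_3$, $Z=Z(N)$ and $P=N\times\mathfrak{A}_3$. From the block-diagonal generators in the Appendix, $A_3$ centralizes $Z$ and $\mathfrak{S}_3$. It acts on $N/Z\simeq\mathbf{F}_3^2$ as an element of order $4$ of $\mathrm{SL}_2(\mathbf{F}_3)$, so it has no invariant line and $A_3^2$ acts as $-1$.
\item Suppose $H$ contains an element of order $4$. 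After conjugation its projection to $K$ is $A_3^{\pm1}$. The projection of the abelian group $Q=H\cap P$ to $N$ is then abelian and $A_3$-invariant, hence lies in $Z$; otherwise its product with $Z$ would be all of $N$, which is nonabelian.
\item So $Q\subseteq Z\times\mathfrak{A}_3$, on which $\Gamma$ acts through $\mathfrak{S}_3/\mathfrak{A}_3$. All orbits on $\mathrm{Irr}(Q)$ then have length at most $2$, which rules out degree $4$. Thus the Sylow $2$-subgroup of $H$ is, up to conjugacy, the Klein group $\langle A_3^2,A_4\rangle$.
\item Split the elementary abelian group $Q$ into eigenspaces, with signs recording the actions of $A_3^2$ and $A_4$. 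The $(-,-)$-part is trivial, the $(+,+)$-part lies in $Z$, the $(+,-)$-part lies in $\mathfrak{A}_3$, and the $(-,+)$-part lies in $N$.
\item A character of $Q$ whose orbit has length $4$ and spans $\mathrm{Irr}(Q)$ (equivalently, whose induction is faithful of degree $4$) exists only if the two mixed parts are one-dimensional and the $(+,+)$-part has dimension at most one.
\item Hence $Q=\langle h\rangle\times\mathfrak{A}_3$ or $Q=Z\times\langle h\rangle\times\mathfrak{A}_3$ with $h\in N$ inverted by $A_3^2$. That is, $H\simeq\mathfrak{S}_3^2$ or $H\simeq C_3\times\mathfrak{S}_3^2$.
\end{itemize}
So the middle alternative of the lemma never occurs. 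This is harmless, since the conclusion is a disjunction, but it means a correct run of your loop should return only these two groups, not the three you predict.
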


\begin{proof}
As in the~proof of Lemma~\ref{lemma:Fermat}, we use the~Magma code presented in Appendix~\ref{subsection:weird-threefold-qualifying-subgroups} to obtain the~list of all subgroups of the~group $((C_3^2\rtimes C_3)\rtimes C_4)\times\mathfrak{S}_3$ that have an irreducible representation of dimension $4$ or~$5$. This list consists of $9$ subgroups (up to isomorphism). Then, using GAP, we check that only three of them has a faithful irreducible representation of dimension $4$ or $5$. These are the~groups $\mathfrak{S}_3^2$, $C_3\times (C_3^2\rtimes C_4)$ and $C_3\times\mathfrak{S}_3^2$. For instance, the~following GAP code verifies that the~group $((C_3^2\rtimes C_3)\rtimes C_4)\times\mathfrak{S}_3$ does not have faithful $4$-dimensional irreducible representations.
\begin{verbatim}
    G := SmallGroup(648,541);
    tbl := CharacterTable(G);
    irr := Irr(tbl);
    deg4 := Filtered(irr, chi -> Degree(chi) = 4);
    Print("Number of irreducible representations: ", Length(deg4), "\n");
    faithful := [];
    for chi in deg4 do
        ker := KernelOfCharacter(chi);
        if Size(ker) = 1 then
            Add(faithful, chi);
        fi;
    od;
    if Length(faithful) > 0 then
        Print("Faithful representation exists.\n");
        Print("Number of representations: ", Length(faithful),"\n");
    else
        Print("No faithful representation found.\n");
    fi;
\end{verbatim}
Hence, we conclude that $H\simeq\mathfrak{S}_3^2$ or $H\simeq C_3\times (C_3^2\rtimes C_4)$ or $H\simeq C_3\times\mathfrak{S}_3^2$, as required.
\end{proof}

If $G$ is a isomorphic to a subgroups of the~group $\mathrm{PSL}_2(\mathbf{F}_{11})$, then  
$G\simeq\mathrm{PSL}_2(\mathbf{F}_{11})$ or $G\simeq\mathfrak{A}_5$, because $G\not\simeq C_{11}\rtimes C_5$ by assumption,
and all other subgroups of the~group $\mathrm{PSL}_2(\mathbf{F}_{11})$ do not have irreducible representations of dimension $4$ or $5$. Similarly, up to isomorphism,  $C_3\times\mathfrak{S}_5$ contains three subgroups that have a faithful irreducible representation of dimension $4$ or $5$ that are not listed in Figure~\ref{figure:26}. These are the~groups $C_3\times\mathfrak{S}_5$, $C_3\times\mathfrak{A}_5$, $C_3\rtimes(C_5\rtimes C_4)$.
Since $G\not\simeq C_3\rtimes(C_5\rtimes C_4)$, we see that one of the~following cases holds:
\begin{enumerate}
\item $G\simeq\mathrm{PSL}_2(\mathbf{F}_{11})$,
\item $G\simeq C_3\times\mathfrak{S}_5$,
\item $G\simeq C_3\times\mathfrak{A}_5$,
\item $G$ is isomorphic to one of the~groups listed in Figure~\ref{figure:26} except $C_5\rtimes C_4$.
\end{enumerate}
Now, using Lemmas~\ref{lemma:Fermat} and \ref{lemma:648-541} (and looking at the~graph presented in Figure~\ref{figure:26}), we conclude that the~group $G$ contains a subgroup $H$ such that the~subgroup $H$ isomorphic to one of the~following four \textit{minimal} groups:
\begin{center}
$C_3^4\rtimes C_5$ (GAP ID is [405,15]), $\mathfrak{A}_5$, $C_3^2\rtimes C_4$ (GAP ID is [36,9]), $\mathfrak{S}_3^2$ (GAP ID is [36,10]).
\end{center}
In Sections~\ref{section:405-15}, \ref{section:A5}, \ref{section:36-9-36-10} we will show that $X$ is $G$-birationally superrigid if $G$ is one of these four \textit{minimal} groups. This give the~following result.

\begin{theorem}[{cf. \cite{Kollar2009}}]
\label{theorem:minimal}
Suppose that the~restriction of the~$G$-representation $\mathbb{V}_5$ to $H$ is
\begin{itemize}
\item either an irreducible $5$-dimensional representation of the~group $H$,
\item or a sum of a $1$-dimensional representation and an irreducible $4$-dimensional representation.
\end{itemize}
Then $X$ is $H$-birationally superrigid, and $X$ is $G$-birationally superrigid.
\end{theorem}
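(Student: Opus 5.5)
The plan is to use the Noether--Fano method (Corti's form) and to reduce everything to the subgroup $H$. First note that $G$-birational superrigidity follows from $H$-birational superrigidity whenever $H\subseteq G$. Indeed, suppose $X$ is not $G$-birationally superrigid. Then by the equivariant Noether--Fano inequality there is a $G$-invariant mobile linear system $\mathcal{M}\subset|\mathcal{O}_X(n)|$ such that $(X,\frac{1}{n}\mathcal{M})$ is not canonical. This $\mathcal{M}$ is also $H$-invariant, and $\mathrm{rk}\,\mathrm{Cl}(X)^H=1$ because $X$ is a smooth cubic threefold with $\mathrm{Cl}(X)=\mathbb{Z}$. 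Hence $X$ would fail to be $H$-birationally superrigid. So it suffices to treat $G=H$, where $H$ is one of the four minimal groups $C_3^4\rtimes C_5$, $\mathfrak{A}_5$, $C_3^2\rtimes C_4$, $\mathfrak{S}_3^2$, acting on $\mathbb{P}^4$ through the restricted representation, which is either irreducible or of the form $\mathbb{I}\oplus\mathbb{V}_4$.

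For $H$ itself, I would argue by contradiction. Take an $H$-invariant mobile $\mathcal{M}\subset|\mathcal{O}_X(n)|$ with $(X,\frac{1}{n}\mathcal{M})$ not canonical, and let $Z$ be a minimal $H$-orbit of centers of non-canonical singularities. There are two cases.

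If $Z$ is a union of curves, each curve has $\mathrm{mult}_C\mathcal{M}>n$. Intersecting two general members $M_1\cdot M_2$ with a hyperplane gives $\deg Z<3$. Then $Z$ is a union of at most two lines or a conic, and its linear span is an $H$-invariant subspace of dimension at most $3$. This is impossible: the representation has no $H$-invariant planes (by assumption), and no invariant lines or points, as one sees by inspecting $\mathbb{V}_5$. The remaining subcase is an invariant $3$-space, which arises only when $\mathbb{V}_5=\mathbb{I}\oplus\mathbb{V}_4$. There the span is the hyperplane $\{x_0=0\}$, and a degree $\le 2$ curve spanning $\mathbb{P}^3$ cannot exist, so this subcase is ruled out as well.

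If $Z$ is a finite orbit of points, then by Corti's inequality $\mathrm{mult}_P(M_1\cdot M_2)>4n^2$ at each $P\in Z$. A general hyperplane section through $P$ then gives $|Z|\cdot 4n^2<3n^2$ in the simplest estimate, which is impossible. More carefully, I would take a divisor in $|\mathcal{O}_X(k)|$ through the orbit $Z$ but not containing components of $M_1\cdot M_2$, which gives $4n^2|Z|<3kn^2$. So it suffices to show that each $H$-orbit $Z$ of length $|Z|$ is cut out (or at least separated from curves of degree $\le 3n^2\cdots$) by forms of degree $k<\frac{4}{3}|Z|$.

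The main obstacle is this point case: one must classify the small $H$-orbits on $X$ for each of the four groups and check that they impose independent conditions in low degree. Concretely, for $\mathfrak{A}_5$ the short orbits have length $5,6,10,\ldots$, and I would verify that the orbits of length $5$ and $6$ either do not lie on $X$ or are cut out by quadrics. For $C_3^4\rtimes C_5$ and the groups of order $36$, I would compute the fixed loci of elements of each subgroup and use the explicit equations, together with the Magma code of the Appendix, to list the orbits of length $<$ roughly $20$ on $X$. For each such orbit, I would exhibit a low-degree linear system whose base locus is exactly $Z$ (in the spirit of \cite{Kollar2009}), yielding the contradiction. Where an orbit is too small for this bound, I would instead use the multiplicity bound $\mathrm{mult}_P\mathcal{M}>2n$ at a general member together with the line and conic geometry through $P$ on the cubic. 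Lines through $P$ give $\mathrm{mult}_P\mathcal{M}\le n$ if a line through $P$ is not contained in the base locus, and the base locus of $\mathcal{M}$ is $H$-invariant of degree at most $3n^2$, which yields the final contradiction.
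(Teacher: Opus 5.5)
Your reduction from $G$ to $H$ is the same as the paper's. The proof of Theorem~\ref{theorem:minimal} quotes Theorems~\ref{theorem:405-15}, \ref{theorem:A5} and \ref{theorem:36-9} for the four minimal groups, and then notes that a $G$-invariant mobile system violating the Noether--Fano inequality is also $H$-invariant.

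The gap is in your direct proof of $H$-birational superrigidity, which uses the wrong normalization. Since $-K_X\sim 2H$, a mobile $\mathcal{M}\subset|\mathcal{O}_X(n)|$ satisfies $\mathcal{M}\sim_{\mathbb{Q}}-\frac{n}{2}K_X$. So Noether--Fano only says that $(X,\frac{2}{n}\mathcal{M})$ is not canonical, not $(X,\frac{1}{n}\mathcal{M})$. Consequently:
\begin{itemize}
\item a curve center only has $\mathrm{mult}_Z(\mathcal{M})>\frac{n}{2}$, which gives $\deg Z<12$ rather than $\deg Z<3$;
\item a point center only gives $\mathrm{mult}_P(M_1\cdot M_2)>n^2$ rather than $>4n^2$.
\end{itemize}
Under your normalization a single point would already be contradictory, since $4n^2>3n^2=H\cdot M_1\cdot M_2$. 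The fact that your ``main obstacle'' evaporates should have been a warning sign.

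The argument also proves too much. Your curve and point steps use nothing about $H$ except the absence of small invariant linear subspaces. They therefore apply verbatim to $C_{11}\rtimes C_5$, which acts irreducibly on $\mathbb{C}^5$, and would make the Klein cubic $C_{11}\rtimes C_5$-birationally superrigid. This contradicts Theorem~\ref{theorem:Klein-cubic}. There the linear system defining $\chi$ lies in $|\mathcal{O}_X(7)|$ and has multiplicity $4$ along the degree-$5$ pentagon of lines $C$; since $\frac{7}{2}<4<7$, your criterion misses this maximal center.

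Separately, two skew lines have degree $2$ and span a $\mathbb{P}^3$; compare $L_1\cup L_2$ in Theorem~\ref{theorem:20-3}. So your dismissal of the invariant-hyperplane subcase is false as stated.

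With the correct bounds, the real content is what Sections~\ref{section:405-15}, \ref{section:A5} and \ref{section:36-9-36-10} do, and your plan contains none of it.
\begin{itemize}
\item \emph{Curve centers.} One must classify and exclude all $H$-irreducible curves of degree $<12$. Examples are $\mathcal{L}_6$, $\mathcal{L}_{10}^t$, rational octics and genus-$6$ decics for $\mathfrak{A}_5$, and $\mathcal{R}_6$, $\mathcal{L}_t^{\pm}$ and nonics of genus $1$ and $4$ for the groups of order $36$. The tools are nef test classes $k\widetilde{H}-F$ on blow-ups, the Hodge index theorem on invariant K3 surfaces, and $\alpha_H(S)=2$ for the invariant hyperplane section.
\item \emph{Isolated centers.} These are handled mainly by passing to the non-log-canonical pair $(X,\frac{3}{n}\mathcal{M})$ and applying Nadel vanishing, which bounds the orbit length by $h^0(\mathcal{O}_X(1))=5$.
\item \emph{The group $C_3^4\rtimes C_5$.} Here one uses tie-breaking plus Nadel vanishing with $h^0(\mathcal{O}_X(2))=15$, together with the genus bound for curves carrying a faithful action.
\end{itemize}
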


\begin{proof}
If $H\simeq C_3^4\rtimes C_5$, then $X$ is $H$-birationally superrigid by Theorem~\ref{theorem:405-15}. Similarly, if $H\simeq \mathfrak{A}_5$, then $X$ is $H$-birationally superrigid by Theorem~\ref{theorem:A5}. If $H\simeq C_3^2\rtimes C_4,\mathfrak{S}_3^2$, then $X$ is $H$-birationally superrigid by Theorem~\ref{theorem:36-9}. Using \cite[Corollary 3.3.3]{CheltsovShramov2016} and \cite[Corollary~1.4.3]{Cheltsov2005}, we see that the~following two conditions are equivalent:
\begin{enumerate}
\item $X$ is not $G$-birationally superrigid;
\item there is a non-empty $G$-invariant mobile linear subsystem $\mathcal{M}\subset |\mathcal{O}_X(n)|$, for some  $n\in\mathbb{Z}_{>0}$, such that the~singularities of the~log pair $(X,\frac{2}{n}\mathcal{M})$ are not canonical.
\end{enumerate}
Thus, if $X$ is not $G$-birationally superrigid, then $X$ is not $H$-birationally rigid. Hence, we conclude that $X$ is  $G$-birationally superrigid.
\end{proof}

In the~proof of Theorem~\ref{theorem:minimal}, we used Theorems~\ref{theorem:405-15}, \ref{theorem:A5}, \ref{theorem:36-9}. We will prove these theorems later in Section~\ref{section:405-15}, \ref{section:A5}, \ref{section:36-9-36-10}, respectively. Their proofs are independent of each other.

Now, we are ready to conclude the~proof of Theorem~\ref{theorem:main}. Suppose that $X$ is not $G$-birationally superrigid.  Then, by Theorem~\ref{theorem:minimal}, the~restriction of the~$G$-representation $\mathbb{V}_5$ to $H$ is a faithful $5$-dimensional representation of the~subgroup $H$ that  has a $3$-dimensional subrepresentation $\mathbb{V}_3$. This suprepresentation $\mathbb{V}_3$ corresponds to a $H$-invariant plane
$$
\Pi\subset\mathbb{P}^4.
$$
But the~groups $C_3^4\rtimes C_5$ and $C_3^2\rtimes C_4$ do not have a faithful $5$-dimensional representation that have a $3$-dimensional subrepresentation.
This shows that $H\not\simeq C_3^4\rtimes C_5$ and $H\not\simeq C_3^2\rtimes C_4$.

If $H\simeq \mathfrak{A}_5$, then $\mathbb{V}_3$ must be an irreducible representation of the~group $H$, which implies that the~plane $\Pi$ does not contain $H$-invarant cubic curves, which is a contradiction, because $\Pi\not\subset X$, so $X\vert_{\Pi}$ is an $H$-invariant cubic curve. This shows that $H\not\simeq \mathfrak{A}_5$.

Hence, we have $H\simeq \mathfrak{S}_3^2$. Applying previous arguments to other subgroups of $G$, we conclude that $G$ does not contain subgroups isomorphic to $C_3^4\rtimes C_5$, $\mathfrak{A}_5$, $C_3^2\rtimes C_4$. Now, looking at the~graph presented in Figure~\ref{figure:26}, we conclude that the~GAP ID of the~group $G$ is one of the~following:
\begin{center}
[108,38], [108,40], [324,160], [324,167], [972,877].
\end{center}
These groups do not have irreducible $5$-dimensional representations. Thus, we have
$$
\mathbb{V}_5=\mathbb{I}\oplus\mathbb{V}_4,
$$
where $\mathbb{V}_4$ is an irreducible faithful $4$-dimensional representation of the~group $G$ such that its restriction to the~subgroup $H$ is a faithful non-irreducible representation of the~group $H\simeq \mathfrak{S}_3^2$. This is purely a group-theoretic condition. Let us verify it using Magma.

If the~GAP ID of $G$ is [108,38], then $G$ has a unique subgroup isomorphic to $\mathfrak{S}_3^2$, which is normal, so $H$ must be this subgroup. This can be checked using the following Magma code:
\begin{verbatim}
    G:=SmallGroup(108,38);
    S:=Subgroups(G);
    for s in S do
        H:=s`subgroup;
        print(IdentifyGroup(H));
        if (IsNormal(G,H)) then 
        print ("Normal\n" );
        end if;
    end for;
\end{verbatim}
In this case, $G$ has two faithful $4$-dimensional representations. Moreover, restricting them to $H$, we obtain irreducible representations (this also follows from the~Clifford theory), which implies that the~GAP ID of $G$ is not [108,38].
Here, we used the following Magma code:
\begin{verbatim}
    G:=SmallGroup(108,38);
    K:=CyclotomicField(108);
    L:=IrreducibleModules(G,K);
    //print(L);
    R:=L[27];
    print(R);
    print(IdentifyGroup(Kernel(R)));
    S:=Subgroups(G);
    H:=S[48]`subgroup;
    print(IdentifyGroup(H));
    W:=Restriction(R, H);
    print(W);
    IsIrreducible(W);
\end{verbatim}

If  the~GAP ID of $G$ is [108,40], then $G$ has three subgroups isomorphic to $\mathfrak{S}_3^2$ (up to conjugation), and $G$ has two (non-isomorphic) faithful $4$-dimensional representations. Restricting each of these representations to each of these subgroups, we obtain irreducible representations, which shows that the~GAP ID of $G$ is not [108,40].

If the~GAP ID of $G$ is [324,160], it has a unique subgroup isomorphic to $\mathfrak{S}_3^2$ (up to conjugation), and $G$ has $6$ (non-isomorphic) $4$-dimensional representations, which are all faithful, and their restrictions to $H$ are irreducible. Hence, the~GAP ID of $G$ is not [324,160].

Similarly, if  the~GAP ID of $G$ is [324,167], then $G$ contains three subgroups isomorphic to $\mathfrak{S}_3^2$ (up to conjugation), and $G$ has a unique faithful $4$-dimensional representations. Restricting this representation to these three subgroups, we obtain irreducible representations, which shows that the~GAP ID of $G$ is not [324,167].

Finally, if the~GAP ID of the~group $G$ is [972,877], then $G$ contains a unique subgroup isomorphic to $\mathfrak{S}_3^2$ (up to conjugation), and $G$ has $12$ (non-isomorphic) faithful $4$-dimensional representations. Restricting each of these representation to this subgroup, we obtain an irreducible representation. Thus, the~GAP ID of $G$ is not [972,877].

The obtained contradiction completes the~proof of Theorem~\ref{theorem:main} (Main Theorem).

\section{Groups $C_5\rtimes C_4$ and $C_3\times(C_5\rtimes C_4)$}
\label{section:20-3}

Let $X$ be a smooth cubic threefold in $\mathbb{P}^4$. Suppose that $\mathrm{Aut}(X)$ contains a subgroup such that either $G\simeq C_5\rtimes C_4$ or $G\simeq C_3\times(C_5\rtimes C_4)$. The~goal of this section is to prove the~following result.

\begin{theorem}
\label{theorem:20-3}
The cubic threefold $X$ is not $G$-birationally rigid.
\end{theorem}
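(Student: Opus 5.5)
Since $G$ contains $C_5\rtimes C_4$ and acts on a smooth cubic threefold, I will first pin down the geometry. By the classification in \cite{WeiYu2020}, $X$ is the Fermat cubic \eqref{equation:Fermat} or the cubic \eqref{equation:360-119}. The representation $\mathbb{V}_5$ restricted to $C_5\rtimes C_4$ is either the permutation-type $5$-dimensional representation or $\mathbb{I}\oplus\mathbb{V}_4$ with $\mathbb{V}_4$ the unique faithful irreducible $4$-dimensional representation. The first step is to write $G$ explicitly in suitable coordinates. For instance, on \eqref{equation:360-119} with $x_0+\cdots+x_5=0$, take $C_5\rtimes C_4\subset\mathfrak{S}_5$ permuting $x_1,\dots,x_5$ via the affine group $\mathrm{AGL}_1(\mathbf{F}_5)$, and take $C_3$ multiplying $x_0$ by $\zeta_3$.

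The aim is then to produce an explicit $G$-equivariant birational map from $X$ to a $G$-Mori fiber space not $G$-biregular to $X$. The natural candidates are:
\begin{itemize}
\item a $G$-equivariant Sarkisov link starting from a $G$-orbit of small length, or a $G$-invariant curve;
\item a $G$-invariant pencil or net giving a conic bundle or del Pezzo fibration, as in Lemma~\ref{lemma:Pn-pencil}.
\end{itemize}
Since $\mathbb{P}^4$ has no $G$-invariant plane, the pencil trick is unavailable. I would therefore look for a $G$-invariant line or a $G$-orbit of lines on $X$.

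The group $C_5\rtimes C_4$ has small orbits: for example, the orbit of the point where $x_1=\cdots=x_5$ on \eqref{equation:360-119}, or five-point orbits $\{x_i=x_j,\ldots\}$. The plan is to find a $G$-orbit $\Sigma$ of $5$ points or a $G$-irreducible union of $5$ lines (for instance, lines permuted as the $5$ points of $\mathbf{F}_5$). I would then blow up and run the $G$-equivariant two-ray game.

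A concrete option is to use the fact that $\mathbb{V}_4$ restricted to $C_5\rtimes C_4$ is induced from a character of $C_4$. This gives a $G$-orbit of $4$ or $5$ points in general position in a hyperplane $\{x_0=0\}$. Projecting from the $G$-invariant point $[1:0:0:0:0]$, if that point lies off $X$, or linear systems of quadrics through the orbit, should yield a link to a del Pezzo fibration, a conic bundle, or another Fano threefold, e.g. a $(2,2)$ divisor.

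Alternatively, and probably more cleanly, $X$ being the Fermat cubic gives $C_5\rtimes C_4$ acting with an invariant point $P\notin X$. The linear projection from $P$ exhibits $X$ as a triple cover of $\mathbb{P}^3$. This is not birational, but the orbit of the $5$ Eckardt-type points $\{x_i=-x_j\}$ may yield links. I would check which construction gives a genuinely birational $G$-map.

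The main obstacle is verifying that the link exists. This means showing that on the blow-up the anticanonical divisor is nef and big, that the flops and contraction run as predicted, and that the resulting end is not $G$-biregular to $X$, for instance because it is a Mori fibre space over a positive-dimensional base.

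I expect the cleanest route is the following. Find a $G$-invariant twisted quartic curve, or a $G$-orbit of $5$ points spanning $\mathbb{P}^4$; take the linear system of quadrics through it; and show that the resulting map gives a $G$-birational map onto a del Pezzo fibration or a quadric threefold with $\mathrm{rk}\,\mathrm{Cl}^G=1$. The computations will be checked via intersection numbers on the blow-up, and I will first test the approach with $5$ general-position fixed-coordinate points.
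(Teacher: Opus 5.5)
Your proposal is a search plan rather than a proof. It never carries out a construction, and the ones you commit to would fail.

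\begin{enumerate}
\item There is no $G$-invariant rational normal quartic. Such a curve spans $\mathbb{P}^4$, so $C_5\rtimes C_4$ would act faithfully on $\mathbb{P}^1$. This is impossible, since $C_5\rtimes C_4$ is not cyclic, dihedral, $\mathfrak{A}_4$, $\mathfrak{S}_4$ or $\mathfrak{A}_5$.
\item A $G$-orbit of $5$ points cannot be the centre of a link. Blowing up $5$ points of $X$ gives $(-K)^3=24-5\cdot 8<0$. The quadrics through the orbit have multiplicity one at these points, so with $n=2$ the pair $(X,\frac{2}{n}\mathcal{M})$ is canonical there. By the Noether--Fano inequality, this system cannot define a $G$-birational map to a different $G$-Mori fibre space.
\item The projection from the fixed point is a triple cover, as you note yourself.
\item Your preliminary reduction is wrong. \cite{WeiYu2020} classifies maximal automorphism groups, but $C_5\rtimes C_4\subset\mathfrak{S}_5$ acts on whole families of smooth cubics, for instance every smooth member of \eqref{equation:A5-standard}. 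So $X$ need not be \eqref{equation:Fermat} or \eqref{equation:360-119}. Also, in \eqref{equation:360-119} the hyperplane is $x_0+\cdots+x_4=0$ and $C_3$ acts on $x_5$.
\end{enumerate}

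The missing idea is the $G$-invariant hyperplane section $S$ cut out by the hyperplane coming from the summand $\mathbb{I}$ of $\mathbb{I}\oplus\mathbb{V}_4$.
\begin{enumerate}
\item $S$ is smooth. A singular $S$ would have at least four, hence exactly four, singular points, and the four-nodal cubic surface admits no faithful $C_5\rtimes C_4$-action.
\item So $S$ is the Clebsch cubic surface, and there $C_5\rtimes C_4$ has invariant Picard rank $2$. It swaps two disjoint lines $L_1,L_2$ of the double-six, each with stabilizer $\mathfrak{D}_5$. This gives a two-dimensional $G$-Sarkisov link between a quintic del Pezzo surface and $\mathbb{P}^1\times\mathbb{P}^1$.
\item The paper extends this to a three-dimensional link. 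Blow up $L_1\cup L_2$ in $X$. The anticanonical model is a $(2,2)$-divisor in $\mathbb{P}^2\times\mathbb{P}^2$ with five nodes, coming from the five lines meeting both $L_1$ and $L_2$. Flop these five curves. Then contract the strict transform of $S$, which is now $\mathbb{P}^1\times\mathbb{P}^1$, to the node of a Fano threefold $Y$ with $\mathrm{Pic}(Y)=\mathbb{Z}$ and $(-K_Y)^3=14$.
\item $Y$ is a $G$-Mori fibre space not isomorphic to $X$, which proves the theorem. The extra $C_3$ factor acts trivially on $S$, so the same link works for $C_3\times(C_5\rtimes C_4)$.
\end{enumerate}
Your instinct to look for an orbit of lines was right, but the relevant orbit has length $2$, not $5$, and it lies in $S$.
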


\begin{proof}
The $G$-action on $X$ lifts to $\mathbb{P}^4$. Moreover, it follows from \cite[Lemma 2.4]{AbbanCheltsovKishimotoMangolte2025} that the~action of the~group $G$ on $\mathbb{P}^4$ is given by its faithful $5$-dimensional representation. Looking at representations of the~group $G$, we see that this representation splits as a sum of a $1$-dimensional representation and an irreducible $4$-dimensional representation.

Let $S$ be the $G$-invariant hyperplane section of $X$, and let $\overline{G}\subset\mathrm{Aut}(S)$ be the image of $G$ via the  natural homomorphism $G\to \mathrm{Aut}(S)$. Then $\overline{G}\simeq C_5\rtimes C_4$, and $S$ has isolated singularities, since $X$ is smooth. In fact, the~surface $S$ is smooth. Indeed, if $S$ is singular, then it has exactly four singular points, because $S$ does not have $G$-orbits of length less than $4$, and there are no cubic surfaces with isolated singularities that have more than four singular points. But cubic surface with four singular points is unique, and it does not admit a faithful $\overline{G}$-action. Hence, $S$ is smooth.

Since $S$ is smooth, it follows from \cite{DolgachevIskovskikh2009} that $S$ is isomorphic to the~Clebsch cubic surface, so we have $\mathrm{Aut}(S)\simeq\mathfrak{S}_5$.
Moreover, by \cite{Iskovskikh1996,Wolter},  we have the~following $G$-Sarkisov link:
$$
\xymatrix{
&S\ar[dl]_{\pi}\ar[dr]^{\phi}&\\
\overline{S}&&\mathbb{P}^1\times\mathbb{P}^1}
$$
where $\overline{S}$ is a smooth quintic del Pezzo surface, $\pi$ and $\phi$ are blowups of $2$ and $5$ points, respectively. Let $L_1$ and $L_2$ be the~$\pi$-exceptional curves, and let $\ell_1$, $\ell_2$, $\ell_3$, $\ell_4$, $\ell_5$ be the~$\phi$-exceptional curves. Then $L_1$, $L_2$, $\ell_1$, $\ell_2$, $\ell_3$, $\ell_4$, $\ell_5$ are lines, and it follows from \cite{CheltsovKrylovMartinezShinder2024,SashaYura2025} that this $G$-Sarkisov link can be extended to the~following three-dimensional $G$-Sarkisov link:
$$
\xymatrix{
&\widetilde{X}\ar[dl]_{\alpha}\ar[dr]^{\beta}\ar@{-->}[rr]^{\chi}&&\widehat{X}\ar[dl]_{\gamma}\ar[dr]^{\delta}&\\
X&&V&&Y}
$$
where $\alpha$ is the~blow up of the~lines $L_1$ and $L_2$, $V$ is a divisor of degree $(2,2)$ in $\mathbb{P}^2\times\mathbb{P}^2$ that has five isolated ordinary double points, $\beta$ is a small birational morphism that contracts strict transforms of the~lines $\ell_1$, $\ell_2$, $\ell_3$, $\ell_4$, $\ell_5$ to the~five singular points of $V$, $\chi$ is a composition of Atiyah flops of the~curves contracted by $\beta$, $Y$ is a Fano threefold with $\mathrm{Pic}(Y)=\mathbb{Z}$ and $(-K_Y)^3=14$ that have one isolated ordinary double point, $\gamma$ is a smooth birational morphism, and $\delta$ is a birational morphism that contracts the~strict transform of the~surface $S$ to the~singular point of the~threefold $Y$. 

By construction, $Y$ is a $G$-Mori fibre space (over a point) and $Y\not\simeq X$. Thus, we conclude that the cubic threefold $X$ is not $G$-birationally rigid.
\end{proof}

\section{Group $C_{11}\rtimes C_5$}
\label{section:55-1}

Let $X$ be a smooth cubic threefold in $\mathbb{P}^4$ such that $\mathrm{Aut}(X)$ contains a subgroup $G\simeq C_{11}\rtimes C_5$. Then the~$G$-action on $X$ lifts to $\mathbb{P}^4$, and it follows from \cite[Lemma 2.4]{AbbanCheltsovKishimotoMangolte2025} that the~action of the~group $G$ on the~projective space $\mathbb{P}^4$ is given by its faithful $5$-dimensional representation. This representation must be irreducible, and it is isomorphic to one of the~two distinct irreducible $5$-dimensional representations of the~group $G$. Since these $G$-representations differ by an outer automorphism of the~group $G$, we see that $G$ is uniquely determined as a subgroup of $\mathrm{PGL}_{5}(\mathbb{C})$ up to a conjugation. Hence, changing coordinates on $\mathbb{P}^4$ if neccesary, we may assume that $G$ is generated by the~following two projective transformations:
\begin{align*}
[x_0:x_1:x_2:x_3:x_4]&\mapsto [x_1:x_2:x_3:x_4:x_0],\\
[x_0:x_1:x_2:x_3:x_4]&\mapsto [\zeta_{11}x_0:\zeta_{11}^5x_1:\zeta_{11}^3x_2:\zeta_{11}^4x_3:\zeta_{11}^9x_4].
\end{align*}
Then explicit computations show that $G$ leaves invariant $5$ smooth cubic threefolds:
$$
X_a=\big\{x_0x_1^2+\zeta_5^ax_1x_2^2+\zeta_5^{2a}x_2x_3^2+\zeta_5^{3a}x_3x_4^2+\zeta_5^{4a}x_4x_0^2=0\},
$$
where $a\in\{0,1,2,3,4\}$. Since all these hypersurfaces are $G$-biregular, we may assume that $X=X_0$, which also follows from \cite{Roulleau}. Moreover, it follows from \cite{Adler} that $\mathrm{Aut}(X)\simeq\mathrm{PSL}_2(\mathbf{F}_{11})$, and the cubic $X$ is known as the~Klein cubic threefold.

\begin{theorem}
\label{theorem:Klein-cubic}
The cubic threefold $X$ is not $G$-birationally rigid.
\end{theorem}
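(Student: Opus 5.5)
The plan is to exhibit an explicit $G$-Sarkisov link starting from $X$ whose endpoint is a $G$-Mori fibre space not $G$-biregular to $X$, as in the proof of Theorem~\ref{theorem:20-3}. The natural center is a $G$-invariant curve built from the coordinate points. In the chosen coordinates, the generator of order $5$ permutes the coordinates cyclically and the generator of order $11$ is diagonal. So the coordinate points $P_0,\ldots,P_4$ form a $G$-orbit of length $5$. Each monomial of the equation of $X=X_0$ involves two distinct variables, hence all $P_i$ lie on $X$.

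Next I check which lines $P_iP_j$ lie in $X$. The line $P_0P_1$ is not contained in $X$, because of the monomial $x_0x_1^2$. On the line $P_0P_2$ every monomial $x_ix_{i+1}^2$ vanishes, so $P_iP_{i+2}\subset X$ for all $i$ (indices mod $5$). This gives a $G$-invariant cycle $C=\sum_i P_iP_{i+2}$ of five lines: it is a connected nodal curve of degree $5$ and arithmetic genus $1$ spanning $\mathbb{P}^4$, an elliptic normal quintic degenerated to a pentagon. I would then study the $G$-equivariant linear system $\mathcal{M}$ of quadric sections of $X$ containing $C$. Since $h^0(\mathcal{O}_{\mathbb{P}^4}(2))=15$ and $h^0(\mathcal{O}_C(2))=10$, and $C$ is projectively normal, $\mathcal{M}$ has dimension $4$ and is cut out by the $5$ quadrics $x_ix_{i+1}$. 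Thus $\mathcal{M}$ is visibly $G$-invariant and mobile.

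The steps, in order, are as follows.
\begin{enumerate}
\item Let $\alpha\colon\widetilde{X}\to X$ be the $G$-equivariant blow-up of $C$. Since $C$ is nodal, I would first blow up the orbit $P_0,\ldots,P_4$ and then the strict transforms of the five disjoint lines, or work with the blow-up of the ideal of $C$ and check that the result has terminal (at worst nodal) singularities.
\item Verify that $-K_{\widetilde{X}}=2H-E$ is nef and big, and that the anticanonical model is given by $\mathcal{M}$, which yields a map $X\dashrightarrow\mathbb{P}^4$. I expect the image to be a quadric or a cubic, and the map to be birational or of low degree.
\item Run the two-ray game: identify the curves with $-K_{\widetilde{X}}\cdot\Gamma=0$ (lines and conics meeting $C$ many times), flop them $G$-equivariantly, and determine the second extremal contraction $\delta\colon\widehat{X}\to Y$.
\item Conclude that $Y$ is a $G$-Mori fibre space not $G$-biregular to $X$ (different anticanonical degree, or a positive-dimensional base), exactly as at the end of the proof of Theorem~\ref{theorem:20-3}.
\end{enumerate}
The computation of $(-K_{\widetilde{X}})^3=24-4\cdot 5\cdot\ldots$ uses the standard formulas for blowing up a curve of degree $5$ and genus $1$. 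These numbers should match a known row of the tables of links from cubic threefolds (Cutrone--Marshburn / Blanc--Lamy type), and I would quote that row.

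The main obstacle is step~(3), together with the singularity issue in step~(1): $C$ is reducible and nodal, so the standard smooth-curve tables do not apply directly. I must check by hand that there are only finitely many $-K$-trivial curves and that the other contraction is $K$-negative. If this becomes unwieldy, the fallback is weaker but would still show that $X$ is not $G$-birationally superrigid. Using the criterion recalled in the proof of Theorem~\ref{theorem:minimal}, namely \cite[Corollary 3.3.3]{CheltsovShramov2016} and \cite[Corollary~1.4.3]{Cheltsov2005}, I would show that $(X,\frac{2}{n}\mathcal{M}')$ is not canonical along $C$ for a suitable mobile $G$-invariant $\mathcal{M}'$, for instance the sections of $|\mathcal{O}_X(2)|$ through $C$, whose multiplicity along $C$ needs to exceed $n/2=1$. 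I would then upgrade this to non-rigidity by producing the link through the Sarkisov program untwisting that maximal singularity.
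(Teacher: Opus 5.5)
Your center and the first half of your link match the paper's. The pentagon $\sum_i P_iP_{i+2}$ is exactly the curve $C=\ell_1+\cdots+\ell_5$, and the quadrics $x_ix_{i+1}$ define the map $\psi\colon X\dasharrow V_4$. Two corrections at that stage. First, the anticanonical model is a quartic, since $(2H-E)^3=24-2\cdot 10+(2p_a(C)-2)=4$. Second, blowing up the five points and then the five lines gives a threefold with $G$-invariant Picard rank $3$, so it is not the first step of a $G$-Sarkisov link; you need the blow-up of the ideal of $C$, which has five nodes.

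The genuine gap is that you never determine where the link ends, and for this statement that is the whole point. A $G$-Sarkisov link starting at $X$ only shows that $X$ is not $G$-birationally \emph{super}rigid. The link may end at $X$ again, giving a non-biregular element of $\mathrm{Bir}^G(X)$, which is compatible with rigidity. Your step (4) asserts without argument that the end is not $G$-biregular to $X$. Steps (2)--(3), where this would be decided, are left as expectations, and the tables you plan to quote concern smooth centers, not a nodal pentagon.

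The missing fact is that the far end is a smooth Fano threefold $V_{14}$. The paper obtains this without running the two-ray game. The $G$-invariant linear system spanned by the ten septics $f_0,\ldots,f_9$, which have multiplicity $4$ along $C$, gives a $G$-equivariant birational map from $X$ onto $Y=\mathrm{Gr}(2,6)\cap\mathbb{P}^9$. One checks that $Y$ is smooth, so $\mathrm{Pic}(Y)=\mathbb{Z}[-K_Y]$ and $(-K_Y)^3=14\neq 24=(-K_X)^3$. Hence $Y$ is a $G$-Mori fibre space not isomorphic to $X$.

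To complete your own route, you would have to show the following. After flopping the curves contracted by $\beta$, the second extremal ray contracts the strict transforms of the cubic surfaces $S_i=X\cap\{x_i=0\}$ onto a pentagon of lines in a smooth $V_{14}$. Each $S_i$ contains three lines of $C$, so their sum has class $5H-3E$. As a consistency check, $\delta^*(-K_Y)=-K_{\widehat{X}}+F$ corresponds to $(2H-E)+(5H-3E)=7H-4E$, which is exactly the system spanned by the $f_i$.

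Your fallback does not repair this. Non-superrigidity is strictly weaker than the claim. Untwisting a maximal singularity by the Sarkisov program again produces a link whose end you would still have to identify. Moreover, your suggested system of quadric sections through $C$ has multiplicity exactly $1=n/2$ along $C$. So $(X,\mathcal{M}')$ is canonical there (only non-terminal) and gives no maximal singularity. The non-canonical system is that of septics with multiplicity $4$ along $C$, since $4>7/2$; it is precisely the system defining the map to $V_{14}$.
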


To prove this theorem, let $\chi\colon X\dasharrow\mathbb{P}^9$ be the~rational map given by
$$
[x_0:x_1:x_2:x_3:x_4]\mapsto [f_0:f_1:f_2:f_3:f_4:f_5:f_6:f_7:f_8:f_9],
$$
where
\begin{multline*}
f_0=x_0x_1(x_0x_1x_2x_3x_4),
f_1=x_1x_2(x_0x_1x_2x_3x_4),
f_2=x_2x_3(x_0x_1x_2x_3x_4),
f_3=x_3x_4(x_0x_1x_2x_3x_4),\\
f_4=x_4x_0(x_0x_1x_2x_3x_4),
f_5=x_3(x_1x_2^2+x_3x_4^2)x_2x_3x_4,
f_6=x_4(x_0^2x_4+x_2x_3^2)x_3x_4x_0,\\
f_7=x_0(x_0x_1^2+x_3x_4^2)x_4x_0x_1,
f_8=x_1(x_0^2x_4+x_1x_2^2)x_0x_1x_2,
f_9=x_2(x_0x_1^2+x_2x_3^2)x_1x_2x_3,
\end{multline*}
and let $Y$ be the~subscheme in $\mathbb{P}^9$ that is given by the~following equations:
\begin{multline*}
\quad \quad \quad \quad \quad y_0y_9+y_1y_3+y_2y_8=0, y_2y_7-y_0^2-y_3y_4=0,y_0y_6+y_1y_4+y_3y_7=0,\\
y_0y_4+y_1^2-y_3y_8=0, y_1y_2-y_0y_5+y_3^2=0, y_1y_5+y_2y_4+y_3y_9=0,\\
y_0y_3+y_2y_6+y_4y_5=0, y_0y_2+y_1y_7+y_4y_8=0, y_0y_1+y_2^2-y_4y_9=0,\\
y_0y_1-y_3y_6-y_5y_7=0, y_1y_6-y_2y_3-y_4^2=0, y_4y_7-y_1y_2+y_6y_8=0,\\
y_0y_4-y_2y_5-y_6y_9=0, y_2y_3-y_0y_8-y_7y_9=0, y_3y_4-y_1y_9-y_5y_8=0.\quad \quad \quad \quad \quad
\end{multline*}
Then, using Magma, one can check that $Y$ is an irreducible smooth three-dimensional subvariety.
Moreover, $Y$ is $G$-invariant, $\chi$ is $G$-equivariant, and $\chi(X)\subset\mathrm{Supp}(Y)$. 

\begin{lemma}
\label{lemma:55-1-Gr-2-6}
The threefold $Y$ is an intersection of the~Grassmannian $\mathrm{Gr}(2,6)\subset \mathbb{P}^{14}$ with a linear subspace of dimension nine, where $\mathrm{Gr}(2,6)\hookrightarrow\mathbb{P}^{14}$ is the~Pl\"ucker embedding.
\end{lemma}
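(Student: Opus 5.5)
The plan is to realize the fifteen quadrics defining $Y$ as the restrictions of the fifteen Pl\"ucker quadrics of $\mathrm{Gr}(2,6)$ to a suitable linear subspace $\Lambda\simeq\mathbb{P}^9\subset\mathbb{P}^{14}$. Recall that $\mathrm{Gr}(2,6)\subset\mathbb{P}(\wedge^2\mathbb{C}^6)=\mathbb{P}^{14}$ is cut out by the $15$ Pfaffians of order four of a skew-symmetric $6\times 6$ matrix $(p_{ij})$, one Pfaffian for each four-element subset of $\{1,\dots,6\}$:
$$
p_{ij}p_{kl}-p_{ik}p_{jl}+p_{il}p_{jk}.
$$
Each equation of $Y$ is a sum of exactly three monomials, two with one sign and one with the other (up to rescaling), which is what a $4\times 4$ Pfaffian looks like. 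So I would look for a skew-symmetric $6\times 6$ matrix $M$ whose $15$ entries above the diagonal are linear forms in $y_0,\dots,y_9$. Since there are $15$ entries and only $10$ variables, five of them are linear combinations of the others; the simplest guess is that $10$ entries are $\pm y_i$ and $5$ entries are $0$, or that some variables repeat with signs. The aim is that the $15$ four-by-four Pfaffians of $M$ span the same linear system of quadrics as the $15$ given equations.

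To build $M$, I would use the $G$-symmetry. The group $G\simeq C_{11}\rtimes C_5$ acts on $\mathbb{P}^9$ through a $10$-dimensional representation, and the natural candidate is $\wedge^2 W$, where $W$ is a $5$-dimensional irreducible representation of $G$ (for the Klein cubic we have $\wedge^2$ of $5$-dimensional representations of $\mathrm{PSL}_2(\mathbf{F}_{11})$). However, $\wedge^2$ of a $5$-dimensional space gives $\mathrm{Gr}(2,5)$, which is the wrong Grassmannian. So it is better to use a $6$-dimensional representation $U=W\oplus\mathbb{C}$ of $G$, which has $\wedge^2U=\wedge^2W\oplus W$. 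The subspace $\Lambda=\mathbb{P}(\wedge^2 W)$ is then $G$-invariant, and $\mathrm{Gr}(2,6)\cap\Lambda$ is a $G$-invariant threefold: its expected dimension is $8-5=3$ and its expected degree is $14$.

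Concretely, I would read off the $G$-weights of the $y_i$ from the monomials of the $f_i$ under the diagonal generator, pair them as $e_a\wedge e_b$ for a weight basis of $U$, and place $\pm y_i$ in the corresponding entry of $M$, with zero in the entries that pair with the extra summand. The signs are then fixed by matching the given equations one by one.

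The verification step is then mechanical. I expand the $15$ Pfaffians of $M$ and check that they coincide, up to sign and permutation, with the listed equations, or at least span the same $15$-dimensional space of quadrics. Since both $Y$ and $\mathrm{Gr}(2,6)\cap\Lambda$ are then defined by the same ideal, they are equal as schemes. A dimension count confirms the result: $Y$ is an irreducible smooth threefold, which agrees with $\mathrm{Gr}(2,6)\cap\Lambda$ being a transversal linear section of codimension $5$.

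The main obstacle is finding the correct skew matrix, meaning the right assignment of variables and signs. I would first try the weight-matching guided by the $C_{11}$-action, and if that gets stuck, solve directly by computer (in Magma) for a linear map $\Lambda\to\wedge^2\mathbb{C}^6$ pulling the Pl\"ucker ideal back to the ideal of $Y$.
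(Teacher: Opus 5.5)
Your framework is the same as the paper's: exhibit a linear subspace $\Lambda\simeq\mathbb{P}^9\subset\mathbb{P}^{14}$ on which the fifteen Pl\"ucker quadrics restrict to the fifteen equations of $Y$, then check by substitution. But the one concrete construction of $\Lambda$ you propose fails. Write $U=W\oplus\mathbb{C}e_5$ and put zeros in the entries that pair with $e_5$. Then $\Lambda=\mathbb{P}(\wedge^2W)=\{p_{05}=p_{15}=p_{25}=p_{35}=p_{45}=0\}$, and $\mathrm{Gr}(2,6)\cap\Lambda=\mathrm{Gr}(2,W)\simeq\mathrm{Gr}(2,5)$. Indeed, for $u=w_1+ae_5$ and $v=w_2+be_5$ with $w_1,w_2\in W$, the $e_5$-part of $u\wedge v$ is $(bw_1-aw_2)\wedge e_5$. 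If it vanishes, then either $a=b=0$ or $bu-av=0$, i.e.\ $u\wedge v=0$. So you get a sixfold of degree $5$, not a threefold of degree $14$. Your ``expected dimension $8-5=3$'' does not apply, because this section is far from transversal. Equivalently, the ten Pfaffians $p_{ij}p_{k5}-p_{ik}p_{j5}+p_{i5}p_{jk}$ involving the index $5$ vanish identically on this $\Lambda$. The five quadrics that survive cut out $\mathrm{Gr}(2,5)$, not $Y$.

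The $C_{11}$-weights do not single out this $\Lambda$. The coordinates $y_i$ do carry all ten nonzero weights mod $11$, which is the character of $\wedge^2W\simeq W\oplus W'$, where $W,W'$ are the two $5$-dimensional irreducible representations of $C_{11}\rtimes C_5$. However, $W\wedge e_5\simeq W$ as well, so $W$ occurs twice in $\wedge^2U$. Hence the $G$-invariant $\mathbb{P}^9$'s with this character form a family parametrized by $\mathbb{P}^1$, and you picked a degenerate member.

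The missing idea is to take $W'$ together with the graph of an isomorphism from the copy of $W$ inside $\wedge^2W$ onto $W\wedge e_5$. Then the entries $p_{i5}$ repeat other variables rather than vanish. This is exactly the form of the paper's subspace $\Lambda=\{p_{45}=p_{13},\ p_{05}=p_{24},\ p_{15}=p_{03},\ p_{25}=p_{14},\ p_{35}=p_{02}\}$. There each $p_{i5}$ is set equal to the coordinate indexed by $\{i+2,i+4\}$ mod $5$, and these five coordinates have the same $C_{11}$-weights as the $p_{i5}$. After the explicit change of coordinates $y_0=-t_8$, $y_1=t_9$, \dots, $y_9=-t_3$, the restricted Pl\"ucker quadrics are the listed equations of $Y$.

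Without producing such a $\Lambda$, your fallback of a computer search leaves the lemma unproved. Its entire content is this explicit identification.
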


\begin{proof}
We may assume that $\mathrm{Gr}(2,6)$ is given in $\mathbb{P}^{14}$ by the~following system of equations:
\begin{multline*}
\quad \quad p_{01}p_{23}-p_{02}p_{13}+p_{03}p_{12}=0, p_{01}p_{24}-p_{02}p_{14}+p_{04}p_{12}=0, p_{01}p_{25}-p_{02}p_{15}+p_{05}p_{12}=0,\\
p_{01}p_{34}-p_{03}p_{14}+p_{04}p_{13}=0, p_{01}p_{35}-p_{03}p_{15}+p_{05}p_{13}=0, p_{01}p_{45}-p_{04}p_{15}+p_{05}p_{14}=0,\\
p_{02}p_{34}-p_{03}p_{24}+p_{04}p_{23}=0, p_{02}p_{35}-p_{03}p_{25}+p_{05}p_{23}=0, p_{02}p_{45}-p_{04}p_{25}+p_{05}p_{24}=0,\\
p_{03}p_{45}-p_{04}p_{35}+p_{05}p_{34}=0, p_{12}p_{34}-p_{13}p_{24}+p_{14}p_{23}=0, p_{12}p_{35}-p_{13}p_{25}+p_{15}p_{23}=0,\\
p_{12}p_{45}-p_{14}p_{25}+p_{15}p_{24}=0, p_{13}p_{45}-p_{14}p_{35}+p_{15}p_{34}=0, p_{23}p_{45}-p_{24}p_{35}+p_{25}p_{34}=0,
\quad \quad
\end{multline*}
where $p_{ij}$ are Pl\"ucker coordinates on $\mathbb{P}^{14}$. Let us rename these coordinates as follows: $t_0=p_{01}$, $t_1=p_{12}$, $t_2=p_{23}$, $t_3=p_{34}$, $t_4=p_{04}$, $t_5=p_{13}$, $t_6=p_{24}$, $t_7=p_{03}$, $t_8=p_{14}$, $t_9=p_{02}$,
$t_{10}=p_{45}$, $t_{11}=p_{05}$, $t_{12}=p_{15}$, $t_{13}=p_{25}$, $t_{14}=p_{35}$. Set
$$
\Lambda=\big\{t_{10}=t_5, t_{11}=t_6, t_{12}=t_7, t_{13}=t_8, t_{14}=t_9\big\}\subset\mathbb{P}^{14},
$$
and identify $\Lambda=\mathbb{P}^9$ with coordinates $t_{0}$, $t_1$, $t_2$, $t_3$, $t_4$, $t_5$, $t_6$, $t_7$, $t_8$, $t_9$. Finally, we change coordinates on $\Lambda$ as follows:  $y_0=-t_8$, $y_1=t_9$,  $y_2=t_5$,
$y_3=-t_6$, $y_4=t_7$, $y_5=-t_4$, $y_6=t_0$, $y_7=t_1$, $y_8=t_2$, $y_9=-t_3$. Then $\Lambda\cap \mathrm{Gr}(2,6)=Y$.
\end{proof}

Now, it follows from \cite{Gushel,IskovskikhProkhorov} that $Y$ is an anticanonically embedded smooth Fano threefold such that $\mathrm{Pic}(Y)=\mathbb{Z}[-K_Y]$ and $(-K_Y)^3=14$. Moreover, one can show that the~$G$-equivariant rational map $X\dasharrow Y$ induced by $\chi$ is birational. Hence, the cubic $X$ is not $G$-birationally rigid, which proves Theorem~\ref{theorem:Klein-cubic}. We~expect that $X$ is $G$-birationally solid \cite{CheltsovDuboulozKishimoto2023,CheltsovSarikyan2023,Pinardin2024,PinardinZhang2025}.

Let us explain how did we find $\chi$. Consider $G$-invariant singular quartic threefold:
$$
V_4=\{x_0^2x_1x_3+x_0x_2^2x_3+x_0x_2x_4^2+x_1^2x_2x_4+x_1x_3^2x_4=0\}\subset\mathbb{P}^4.
$$
Observe that the~singular locus of $V_4$ consists of $5$ terminal Gorenstein singular points, which form the~$G$-orbit of the~point $[1:0:0:0:0]$. Note also that the~threefolds $X$ and $V_4$ are $G$-birational. Indeed, let $\psi\colon X\dasharrow V_4$ be the~rational map given by
$$
[x_0:x_1:x_2:x_3:x_4]\mapsto [x_0x_1:x_1x_2:x_2x_3:x_3x_4:x_0x_4],
$$
and let $C$ be the~$G$-invariant curve in $X$ that consists of the~following $5$ lines:
\begin{align*}
\ell_1&=\{x_0=0, x_1=0, x_3=0\},\\
\ell_2&=\{x_1=0, x_2=0, x_4=0\},\\
\ell_3&=\{x_2=0, x_3=0, x_0=0\},\\
\ell_4&=\{x_3=0, x_4=0, x_1=0\},\\
\ell_5&=\{x_4=0, x_0=0, x_2=0\}.
\end{align*}
Then $\psi$ is birational, its indeterminacy locus consists of the~curve $C$, and we have the~following $G$-equivariant commutative diagram:
\begin{equation}
\label{equation:55-1}
\xymatrix{
&\widetilde{X}\ar[dl]_{\alpha}\ar[dr]^{\beta}&\\
X\ar@{-->}[rr]_{\psi}&&V_4}
\end{equation}
where $\alpha$ is the~blow up of the~curve $C$, the~divisor $-K_{\widetilde{X}}$ is nef and big, and $\beta$ is a small birational morphism, which is given by the~linear system $|-K_{\widetilde{X}}|$. It follows from \eqref{equation:55-1} that $\mathrm{rk}\,\mathrm{Cl}(V_4)^G=2$, which implies that $V_4$ is not a $G$-Mori fiber space.

\begin{remark}
\label{remark:55-1-lines}
The lines $\ell_1$, $\ell_2$, $\ell_3$, $\ell_4$, $\ell_5$ form a pentagon configuration, so $C$ has $5$ singular points, which form the~$G$-orbit of the~point $[1:0:0:0:0]$. Thus, the~threefold $\widetilde{X}$ has $5$ isolated ordinary double points. Hence, $C$ is a (reducible) curve of degree $5$ and arithmetic genus $1$.
\end{remark}

Arguing as in \cite{CutroneMarshburn2013,ArapCutroneMarshburn2017,CutroneLimarziMarshburn2019,CutroneMarshburn2025,Kaloghiros2012,BlancLamy}, we see that  \eqref{equation:55-1} can be uniquely extended to a $G$-Sarkisov link that starts at $X$. Let us describe this $G$-Sarkisov link.

\begin{remark}
\label{remark:55-1-link}
If $C$ were a smooth elliptic curve, the~$G$-Sarkisov link induced by \eqref{equation:55-1} would be a~usual Sarkisov link that starts at $X$ and ends up at a smooth Fano threefold of Picard rank $1$ and anticanonical degree $14$. This Sarkisov link is classically known \cite{Fano,Iskovskikh1980,Puts}, and it appeared in \cite{Beauville2002,Kuznetsov2004,SashaYuraCostya,AtanasDima,AtanasLaurent,IskovskikhProkhorov,YuraZhijia}. Another Sarkisov link that starts at a smooth cubic threefold and ends at a smooth Fano threefold of Picard rank $1$ and anticanonical degree $14$ is described in \cite{Takeuchi1989,Tregub}.
\end{remark}

Thus, to construct the~required $G$-Sarkisov link, we can follow the~proof of \cite[Theorem~1.4]{Iskovskikh1980}. Namely, let $\mathcal{M}$ be the~linear system on $X$ that consists of the~following surfaces:
$$
\big\{a_0f_0+a_1f_1+a_2f_2+a_3f_3+a_4f_4+a_5h_0+a_6h_1+a_7h_2+a_8h_3+a_9h_4=0\big\}\cap X,
$$
where $[a_0:a_1:a_2:a_3:a_4:a_5:a_6:a_7:a_8:a_9]\in\mathbb{P}^9$. Then $\mathcal{M}$ is $G$-invariant, and the~constructed rational map $\chi$ is given by the~linear system $\mathcal{M}$. Note that the~base locus of this linear system consists of the~curve $C$. Moreover, we have
$$
\mathrm{mult}_{C}\big(\mathcal{M}\big)=4.
$$
Now, we set $S_i=X\cap\{x_i=0\}$ for $i\in\{0,1,2,3,4\}$. Then each $S_i$ is a singular cubic surface that have a Du Val singular point of type $\mathbb{D}_4$, each $\psi(S_i)$ is a plane in $V_4$, and each $\chi(S_i)$ is a line in $Y$. Set $L_1=\chi(S_1)$, $L_2=\chi(S_2)$, $L_3=\chi(S_3)$, $L_4=\chi(S_4)$, $L_5=\chi(S_5)$, and set
$$
Z=L_1+L_2+L_3+L_4+L_5.
$$
Then $L_1$, $L_2$, $L_3$, $L_4$, $L_5$ span a $4$-dimensional linear subspace $\Pi\subset\mathbb{P}^9$ such that
$$
Y\big\vert_{\Pi}=Z,
$$
and  $L_1$, $L_2$, $L_3$, $L_4$, $L_5$ form a pentagon configuration. So, the~arithmetic genus of $Z$ is $1$.

Let $\delta\colon\widehat{X}\to Y$ be the~blow up of the~curve $Z$. Then $\widehat{X}$ has $5$ isolated ordinary double points, the~divisor $-K_{\widehat{X}}$ is nef and big, and we have the~following $G$-equivariant commutative diagram:
\begin{equation}
\label{equation:55-1-half-link}
\xymatrix{
&\widehat{X}\ar[dl]_{\gamma}\ar[dr]^{\delta}&\\
V_4&&Y\ar@{-->}[ll]}
\end{equation}
where $\gamma$ is a small birational morphism given by $|-K_{\widehat{X}}|$, and $Y\dasharrow V_4$ is the~rational map induced by the~linear projection from $\Pi$. Now, combining \eqref{equation:55-1} and \eqref{equation:55-1-half-link}, we obtain the~following $G$-equivariant commutative diagram:
\begin{equation}
\label{equation:55-1-link}
\xymatrix{
\widetilde{X}\ar[d]_{\alpha}\ar[rr]^{\beta}&& V_4&&\widehat{X}\ar[ll]_{\gamma}\ar[d]^{\delta}\\
X\ar@{-->}[rrrr]\ar@{-->}[rru]_{\psi}&&&&Y\ar@{-->}[ull]}
\end{equation}
where $X\dasharrow Y$ is the~birational map induced by $\chi$. Note that  \eqref{equation:55-1-link} is a $G$-Sarkisov link, and $\delta$-exceptional surfaces are the~strict transforms of the~cubic surfaces $S_1$, $S_2$, $S_3$, $S_4$, $S_5$.

\section{Group $C_3^4\rtimes C_5$}
\label{section:405-15}

Let $X$ be a smooth cubic threefold in $\mathbb{P}^4$. Suppose that $\mathrm{Aut}(X)$ has a subgroup $G\simeq C_3^4\rtimes C_5$. Since the~$G$-action lifts to $\mathbb{P}^4$, we can consider $G$ as a subgroup in $\mathrm{PGL}_5(\mathbb{C})$. Moreover, it follows from Duncan's lemma \cite[Lemma 2.4]{AbbanCheltsovKishimotoMangolte2025} that the group $G$ lifts isomorphically to $\mathrm{GL}_5(\mathbb{C})$. Looking at representations of the group $G$, we see that the~embedding $G\hookrightarrow\mathrm{GL}_5(\mathbb{C})$ is given~by an~irreducible representation. 

The~group $G$ has $16$ irreducible $5$-dimensional representations, but all of them define the same subgroup in $\mathrm{PGL}_5(\mathbb{C})$ up to conjugation. Thus, we may assume that $G$ is generated by 
\begin{align*}
[x_0:x_1:x_2:x_3:x_4]&\mapsto [x_0:\xi_3x_1:x_2:x_3:\xi_3^2x_4],\\
[x_0:x_1:x_2:x_3:x_4]&\mapsto [x_0:\xi_3x_1:x_2:\xi_3^2x_3:x_4],\\
[x_0:x_1:x_2:x_3:x_4]&\mapsto [x_0:\xi_3x_1:\xi_3^2x_2:x_3:x_4],\\
[x_0:x_1:x_2:x_3:x_4]&\mapsto [\xi_3x_0:\xi_3^2x_1:x_2:x_3:x_4],\\
[x_0:x_1:x_2:x_3:x_4]&\mapsto [x_4:x_3:x_1:x_0:x_2].
\end{align*}
Then $X=\{x_0^3+\zeta_5^ax_1^3+\zeta_5^{2a}x_2^3+\zeta_5^{3a}x_3^3+\zeta_5^{4a}x_4^3=0\big\}\subset\mathbb{P}^4$ for some $a\in\{0,1,2,3,4\}$. Since all these cubic threefolds are $G$-biregular, we may assume that $a=0$, so $X$ is the~Fermat cubic \eqref{equation:Fermat}. 

\begin{theorem}
\label{theorem:405-15}
The cubic theeefold $X$ is $G$-birationally superrigid.
\end{theorem}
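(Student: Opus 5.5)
We argue by contradiction, using the criterion already recalled in the proof of Theorem~\ref{theorem:minimal} (via \cite[Corollary 3.3.3]{CheltsovShramov2016} and \cite[Corollary~1.4.3]{Cheltsov2005}). If $X$ is not $G$-birationally superrigid, then there is a non-empty $G$-invariant mobile linear system $\mathcal{M}\subset|\mathcal{O}_X(n)|$ such that the log pair $(X,\frac{2}{n}\mathcal{M})$ is not canonical. Let $Z$ be a centre of non-canonical singularities of this pair. The plan is to show that $Z$ can be neither a curve nor a point, using only the very rigid structure of the $G$-orbits on the Fermat cubic $X$.

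\textbf{Curves.} Suppose that $Z$ is a curve. Then $\mathrm{mult}_Z(\mathcal{M})>n/2$. Take two general members $M_1,M_2\in\mathcal{M}$. Since $M_1\cdot M_2\cdot H=3n^2$, the $G$-orbit of $Z$ has total degree less than $12$. Hence every $G$-invariant curve of degree at most $11$ would have to be ruled out. Because $C_3^4$ acts diagonally, any irreducible curve with a stabiliser of small index must be very special.
\begin{itemize}
\item A curve not contained in a coordinate hyperplane has a $C_3^4$-orbit of length $81$ or $27$, unless it is fixed by a large diagonal subgroup. A curve fixed by a subgroup $C_3^k$ with $k\geqslant 2$ lies in a fixed linear subspace, so it lies in a union of coordinate subspaces.
\item This reduces the problem to the $135$ lines $\{x_i+\zeta x_j=0,\ x_k+\zeta' x_l=0\}$ on $X$, and to cubic curves cut out in coordinate planes.
\item Each of these families produces $G$-orbits of degree at least $15$ (for instance, the lines form orbits of length divisible by $5$ and $27$).
\end{itemize}
This excludes curves. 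I would check these orbit lengths directly, using the fact that $C_5$ cyclically permutes the coordinates.

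\textbf{Points.} Suppose that $Z$ is a point $P$. Then, by the standard Corti-type inequality for smooth points of threefolds, $\mathrm{mult}_P(M_1\cdot M_2)>n^2$ for general $M_1,M_2\in\mathcal{M}$. Taking a general hyperplane section through $P$, or better, using the stronger $4n^2$ version \cite{Cheltsov2005} for cycles, we obtain $|\mathrm{Orb}(P)|\cdot 4n^2<3n^2\cdot(\text{degree bound})$. Concretely, if the orbit $\Sigma$ of $P$ is not contained in a suitable $G$-invariant hypersurface cutting $M_1\cdot M_2$ properly, we get $|\Sigma|<3\cdot\deg$ bound. The correct tool is the following. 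Choose a $G$-invariant linear system $\mathcal{D}\subset|\mathcal{O}_X(k)|$ whose base locus is exactly $\Sigma$ (or is $0$-dimensional near $\Sigma$), and whose general member does not contain the components of $M_1\cdot M_2$. Then
$$3kn^2=D\cdot M_1\cdot M_2\geqslant |\Sigma|\cdot 4n^2\cdot \mathrm{mult}_P(D)^{\ast},$$
which forces $|\Sigma|<3k/4$. Here $\mathrm{mult}_P(D)^{\ast}$ denotes the multiplicity of $D$ at $P$, which is at least $1$ since $D$ passes through $P$, so $|\Sigma|<3k/4$ indeed follows. The orbits of $G$ on $X$ of small length are:
\begin{itemize}
\item the $45$ points with three zero coordinates, such as $[1:-\zeta:0:0:0]$;
\item points with two zero coordinates, which form orbits of length $\geqslant 81$;
\item general orbits, of length $405$.
\end{itemize}
For each orbit type I would exhibit a $G$-invariant (or $C_3^4$-semi-invariant, then symmetrised) system of low degree with base locus $\Sigma$. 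For the $45$ points, products and sums of the form $x_ix_jx_k$ already separate them, and the resulting $k$ is small enough. For the larger orbits, I would use quadrics and cubics in $x_i^3$, which are $C_3^4$-invariant: the map $X\to\mathbb{P}^3$, $x\mapsto(x_i^3)$, sends orbits to $C_5$-orbits on a hyperplane, which are easy to cut out.

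\textbf{Main obstacle.} The main obstacle is the point case for orbits of moderate length, say $81$ to $135$. There one needs $k$ with $3k/4$ below the orbit length, together with base locus exactly $\Sigma$. This will require some care and possibly a Magma check. If this fails, I would instead reduce to a surface: restrict to a $G$-invariant-like surface through $\Sigma$ and apply inversion of adjunction, obtaining a non-lc pair on a surface, where multiplicity estimates on curves are easier.
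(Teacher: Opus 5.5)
Your plan is a legitimate route and genuinely different from the paper's: exclude curve centres because $\deg(G\cdot Z)<12$, then exclude point centres using Corti's inequality and a test hypersurface through the orbit. As written, however, both halves contain concrete errors, and the point half is left unfinished.

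\emph{Curve case.} Your reduction to coordinate subspaces confuses setwise and pointwise stabilisers. A curve invariant under a large subgroup of $C_3^4$ need not be pointwise fixed by it, so it need not lie in a coordinate subspace. Also, ``a $C_3^4$-orbit of length $81$ or $27$'' is false as a count of components: a $G$-invariant irreducible curve has orbit length $1$. So your argument does not exclude irreducible $G$-invariant curves of degree $\leqslant 11$, nor $C_3^4$-invariant lines and conics.

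The conclusion is true, and the map you mention later repairs it. The map $\phi\colon x\mapsto(x_0^3:\cdots:x_4^3)$ restricts to the quotient $X\to X/C_3^4\cong\mathbb{P}^3$, which has degree $81$ and satisfies $\phi^*\mathcal{O}(1)=\mathcal{O}(3)$.
\begin{itemize}
\item A curve $C$ not in a coordinate hyperplane has $\deg(C_3^4\cdot C)=27\deg\phi(C)\geqslant 27$.
\item Inside a coordinate hyperplane the same count gives $9\deg\phi(C)$ in each of at least five hyperplanes.
\item In coordinate planes one only finds the Fermat cubic curves, whose orbit has degree $15$.
\end{itemize}
Alternatively: by fact (2), a stabiliser of index $\leqslant 11$ is $G$ or $C_3^4$. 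Then fact (4), the Castelnuovo bound (genus $\leqslant 12$) and fact (1) finish the argument.

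\emph{Point case.} Your pair is $(X,\frac{2}{n}\mathcal{M})$, so Corti's inequality only gives $\mathrm{mult}_P(M_1\cdot M_2)>4(n/2)^2=n^2$. There is no stronger $4n^2$ bound in this normalisation. Hence the test argument yields $|\Sigma|<3k$, not $|\Sigma|<3k/4$.

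Your orbit list is also wrong:
\begin{itemize}
\item the $30$ points with exactly two non-zero coordinates form the two orbits $\Sigma_{15},\Sigma_{15}^\prime$;
\item points with exactly three non-zero coordinates form a one-parameter family of orbits of length $45$;
\item points with exactly four non-zero coordinates give orbits of length $135$;
\item the remaining points give orbits of length $405$ or $81$.
\end{itemize}

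With the correct bound your tools still close the argument, but tightly. Let $\bar\Sigma=\phi(\Sigma)$, a $C_5$-orbit with $|\bar\Sigma|\leqslant 5$. Pull back via $\phi$ the products of $|\bar\Sigma|$ planes through $\bar\Sigma$. This gives a system with $k=3|\bar\Sigma|$ whose base locus is exactly $\Sigma$. It yields a contradiction if and only if the $C_3^4$-orbits in $\Sigma$ have length $\geqslant 9$; for the length-$45$ family this holds with equality, $|\Sigma|=3k=45$. So it works for every orbit except $\Sigma_{15},\Sigma_{15}^\prime$. Those two need the cubics $x_ix_jx_k$ you suggested ($k=3$, base locus on $X$ exactly these $30$ points) or the quadrics $\mathcal{H},\mathcal{H}^\prime$ ($k=2$). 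The delicate orbits are therefore those of length $15$ and $45$, not $81$--$135$.

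The paper avoids this whole stratification. Nadel vanishing for $\mathcal{O}_X(2)$ bounds any isolated centre orbit by $15$. For curves, subadjunction plus Riemann--Roch gives $15=r(2d-g+1)$, which reduces to a $C_3^4$-invariant line or a $G$-curve of genus $\leqslant 14$.
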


Before proving this result, let us state without proof $10$ facts.
\begin{enumerate}
\item The group $G$ cannot act faithfully on a smooth curve  of genus $\leqslant 15$ \cite{Breuer,Jen,lmfdb}.
\item Every proper subgroup of $G$ is isomorphic to $C_3$, $C_5$, $C_3^2$, $C_3^3$ or $C_3^4$.
\item The group $G$ contains a unique subgroup of index $5$, which is isomorphic to $C_3^4$.
\item If $H$ is the~subgroup of $G$ of index $5$, then $X$ does not contain $H$-invariant lines.
\item The cubic $X$ does not contain $G$-orbits of length $<15$;
\item The cubic $X$ contains two $G$-orbits of length $15$, which can be described as follows:
\begin{align*}
\Sigma_{15}&=\mathrm{Orb}_G\big([0:0:0:1:-1]\big),\\
\Sigma_{15}^\prime&=\mathrm{Orb}_G\big([0:0:1:0:-1]\big).
\end{align*}
\item As a $G$-representation, $H^0(X,\mathcal{O}_{X}(-K_X))$ splits as a sum of three pairwise non-isomorphic $5$-dimensional irreducible representations, which correspond to the~$4$-dimensional linear subsystems in $|\mathcal{O}_{X}(2)|$ that can be described as follows:
\begin{itemize}
\item $\mathcal{H}$ is the~linear system that is cut out by quadrics
$$
a_0x_0x_1+a_1x_1x_2+a_2x_2x_3+a_3x_3x_4+a_4x_0x_4=0,
$$
\item $\mathcal{H}^\prime$ is the~linear system that is cut out by quadrics
$$
a_0x_0x_2+a_1x_1x_3+a_2x_2x_4+a_3x_3x_5+a_4x_1x_4x=0,
$$
\item $\mathcal{H}^{\prime\prime}$ is the~linear system that is cut out by quadrics
$$
a_0x_0^2+a_1x_1^2+a_2x_2^2+a_3x_3^2+a_4x_4^2=0,
$$
\end{itemize}
where $[a_0:a_1:a_2:a_3:a_4]\in\mathbb{P}^4$.
\item The $G$-orbit $\Sigma_{15}$ is the~base locus of the linear system $\mathcal{H}$.
\item The $G$-orbit $\Sigma_{15}^\prime$ is the~base locus of the linear system $\mathcal{H}^\prime$.
\item The linear system $\mathcal{H}^{\prime\prime}$ is base point free.
\end{enumerate}

\begin{proof}[Proof of Theorem~\ref{theorem:405-15}]
Suppose $X$ is not $G$-birationally superrigid. Let us seek for a contradiction. 

It follows from \cite[Corollary 3.3.3]{CheltsovShramov2016} that there exists a non-empty $G$-invariant mobile linear subsystem $\mathcal{M}\subset |\mathcal{O}_X(n)|$, for some  $n\in\mathbb{Z}_{>0}$, such that the~log pair $(X,\frac{2}{n}\mathcal{M})$ is not canonical. Then, by \cite[Corollary~2.3]{CheltsovShramov2014}, the~log pair $(X,\frac{4}{n}\mathcal{M})$ is not log canonical. 

Choose a positive rational number $\mu\leqslant \frac{4}{n}$ such that the~singularities of  $(X,\mu\mathcal{M})$ are strictly log canonical. Let $C$ be an irreducible subvariety in $X$ that is a minimal center of log canonical singularities of the~log pair $(X,\mu\mathcal{M})$, and let $Z$ be the $G$-irreducible curve in $X$ whose irreducible component is the curve $C$. Then every irreducible component of the curve $Z$ is a minimal center of log canonical singularities of the log pair $(X,\mu\mathcal{M})$, which implies that these irreducible components are disjoint \cite[Proposition 1.5]{Kawamata-1}. Moreover, we know that $Z$ is not a surface, since $\mathcal{M}$ is mobile. Therefore, either $Z$ is a $G$-irreducible curve or $Z$ is the~$G$-orbit of a point. 

Now, we are going to use \cite[Lemma 2.8]{cheltsov2011exceptional}, \cite[Lemma 2.4.10]{CheltsovShramov2016} or \cite[Lemma A.28]{CalabiBook}, which are equivariant versions of \emph{Tie Breaking} trick used in the~proof of \cite[Theorem 1.10]{Kawamata-1}. Namely, using this \textit{equivariant trick} together with \cite[Theorem 4.8]{Kollar1997}, we can replace the~rational number $\mu$ and the~mobile linear system $\mathcal{M}$ by a positive rational number $\lambda<2$ and an effective $G$-invariant $\mathbb{Q}$-divisor $D\sim_{\mathbb{Q}}-K_{X}$ such that the log pair $(X,\lambda D)$ is log canonical, $C$ is a minimal center of the~non-Kawamata log terminal singularities of this pair, and $\mathrm{Nklt}(X,\lambda D)=Z$,
where 
$$
\mathrm{Nklt}\big(X,\lambda D\big)=\big\{P\in X\ \text{such that $(X,\lambda D)$ is not Kawamata log terminal at $P$}\big\}.
$$

Let $\mathcal{I}_Z$ be the~ideal sheaf of the~subvariety $Z\subset X$. Then it follows from the Nadel vanishing theorem \cite{Nadel} in the~form of \cite[Theorem 9.4.8]{Lazarsfeld2004} that
$$
h^1\big(X,\mathcal{O}_{X}(-K_X)\otimes\mathcal{I}_Z\big)=0.
$$
Hence, we have the~following exact sequence of $G$-representations:
$$
0\longrightarrow H^0\big(X,\mathcal{I}_Z\otimes\mathcal{O}_{X}(-K_X)\big)\longrightarrow H^0\big(X,\mathcal{O}_{X}(-K_X)\big)\longrightarrow H^0\big(Z,\mathcal{O}_{Z}(-K_X\vert_{Z})\big)\longrightarrow 0,
$$
which gives
\begin{equation}
\label{equation:405-15}
h^0\big(Z,\mathcal{O}_{Z}(-K_X\vert_{Z})\big)=15-h^0\big(X,\mathcal{I}_Z\otimes\mathcal{O}_{X}(-K_X)\big),
\end{equation}
because $h^0(X,\mathcal{O}_{X}(-K_X))=15$.  Thus, if $Z$ is a $G$-orbit of a point, \eqref{equation:405-15} gives
$$
|Z|=h^0(Z,\mathcal{O}_{Z})=15-h^0\big(X,\mathcal{I}_Z\otimes\mathcal{O}_{X}(-K_X)\big)\leqslant 15,
$$
so $Z$ is a $G$-orbit of length $15$ and $h^0(X,\mathcal{I}_Z\otimes\mathcal{O}_{X}(-K_X))=0$, which implies that $Z$ is not contained in any surface in $|-K_X|$.
On the~other hand, the~only $G$-orbits of length $15$ in $X$ are $\Sigma_{15}$ and~$\Sigma_{15}^\prime$, and each of them is contained in many surfaces in $|-K_X|$, since $\Sigma_{15}$ is the~base locus of the~linear system $\mathcal{H}$, and $\Sigma_{15}^\prime$ is the~base locus of the~linear system $\mathcal{H}^\prime$.
Thus, $Z$ is not a $G$-orbit of a point.

We see that $Z$ is a curve. Then it follows from \cite{Kawamata-2} that $C$ is smooth, so $Z$ is also smooth. Moreover, we have
$$
h^0\big(X,\mathcal{I}_Z\otimes\mathcal{O}_{X}(-K_X)\big)=0,
$$
because the~only $G$-invariant linear subsystems in $|-K_X|$ are $\mathcal{H}$, $\mathcal{H}^\prime$, $\mathcal{H}^{\prime\prime}$, and their base loci do not contain curves.
This and \eqref{equation:405-15} give:
\begin{equation}
\label{equation:405-15-easy}
h^0\big(Z,\mathcal{O}_{Z}(-K_X\vert_{Z})\big)=15.
\end{equation}

Let $r$ be the~number of irreducible components of the curve $Z$, let $d$ be the~degree of the~curve~$C$, let $g$ be the~genus of the curve $C$, and let $A=\epsilon (-K_X)$ for any  $\epsilon\in\mathbb{Q}_{>0}$. Then it follows from the~Kawamata subadjunction theorem \cite[Theorem 1]{Kawamata-2} that
$$
\big(K_{X}+\lambda D+A\big)\big\vert_{C}\sim_{\mathbb{Q}} K_C+\Delta_C
$$
for some effective $\mathbb{Q}$-divisor $\Delta_C$ on the~curve $C$. This gives
$$
(\lambda-1+\epsilon)2d\geqslant 2g-2.
$$
Since this inequality holds for any $\epsilon>0$, we get $2d>2g-2$, because $\lambda<2$. Then $g\leqslant d$, which implies that the~divisor $-K_X\vert_{C}$ is not special. Using \eqref{equation:405-15-easy} and the~Riemann--Roch formula, we get
$$
15=h^0\big(Z,\mathcal{O}_{Z}(-K_X\vert_{Z})\big)=r(2d-g+1).
$$
On the~other hand, going through the~subgroups of $G$, we see that $r\in\{1,5,15,45,81,135,405\}$.

If $g=0$, then $15=r(2d+1)$, which implies that $r=5$ and $d=1$, so $C$ is a line, and its stabilizer in $G$ is the~unique subgroup of index $5$. As we mentioned earlier, $X$ does not contain such lines. Hence,  $g\ne 0$. Then $d\geqslant 3$ and
$$
15=r(2d-g+1)\geqslant r(d+1)\geqslant 4r,
$$
which implies that $r=1$, so $C=Z$ and $G$ acts faithfully on $C$. Now, using $g\leqslant d$, we get
$$
15=2d-g+1\geqslant d+1,
$$
so $d\leqslant 14$. Then $g\leqslant d\leqslant 14$. But $G$ cannot act faithfully on a smooth curve  of genus $\leqslant 15$. 
\end{proof}

\section{Group $\mathfrak{A}_5$}
\label{section:A5}

Let $X$ be a smooth cubic hypersurface in $\mathbb{P}^4$ such that $\mathrm{Aut}(X)$ contains a subgroup $G\simeq\mathfrak{A}_5$. The goal of this section is to prove the following result.

\begin{theorem}
\label{theorem:A5}
The cubic $X$ is $G$-birationally superrigid.
\end{theorem}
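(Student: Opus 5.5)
We argue as in the proof of Theorem~\ref{theorem:405-15}, using the equivalence from \cite[Corollary 3.3.3]{CheltsovShramov2016}. Suppose that $X$ is not $G$-birationally superrigid. Then there is a non-empty $G$-invariant mobile linear system $\mathcal{M}\subset|\mathcal{O}_X(n)|$ such that the pair $(X,\frac{2}{n}\mathcal{M})$ is not canonical. Let $Z$ be the $G$-irreducible centre of non-canonical singularities; it is either a $G$-orbit of points or a $G$-irreducible curve. We want a contradiction in each case.

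\emph{Preliminary step: the representation $\mathbb{V}_5$.} Lifting $G$ to $\mathrm{GL}_5(\mathbb{C})$ by Duncan's lemma, we first show that $\mathbb{V}_5$ has no $3$-dimensional subrepresentation. Suppose it had one. Since $G\simeq\mathfrak{A}_5$ has no non-trivial characters, this subrepresentation would be one of the irreducible $3$-dimensional representations. So there would be a $G$-invariant plane $\Pi$ on which $G$ acts through $\mathfrak{A}_5\subset\mathrm{PGL}_3(\mathbb{C})$. This action has no invariant cubic curves: its invariants have degrees $2,6,10,15$. Hence $\Pi\subset X$, which is impossible for a smooth cubic threefold. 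Therefore either $\mathbb{V}_5$ is irreducible, or $\mathbb{V}_5=\mathbb{I}\oplus\mathbb{V}_4$ with $\mathbb{V}_4$ irreducible. In each case we list the $G$-orbits of small length in $\mathbb{P}^4$ (length $1,5,6,10,12,15$) and decide which of them lie on $X$. The only fixed point is $[1:0:0:0:0]$, which occurs in the second case. If it lay on $X$, the tangent hyperplane there would be $G$-invariant, so it would be $\{x_0=0\}$; then the cubic would be $x_0q(x_1,\ldots,x_4)+c(x_1,\ldots,x_4)$. Since $\mathbb{V}_4$ has no invariant cubic, $c\equiv 0$, so $X$ would be reducible, a contradiction. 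Hence $G$ has no fixed points on $X$. We also decompose $H^0(\mathcal{O}_X(2))=\mathrm{Sym}^2\mathbb{V}_5$, which is $\mathbb{I}\oplus\mathbb{V}_4\oplus\mathbb{V}_5\oplus\mathbb{V}_5'$ in the irreducible case and similar in the other, to control the invariant linear systems of quadrics.

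\emph{Points.} If $Z$ is a $G$-orbit, Corti's inequality gives $\mathrm{mult}_P(M_1\cdot M_2)>n^2$ at every point $P\in Z$, where $M_1,M_2\in\mathcal{M}$ are general. Intersecting with a general hyperplane section through one point of $Z$ would only give $|Z|\cdot n^2<3n^2$ if done carefully, so instead we use the global count: $M_1\cdot M_2$ is a curve of degree $3n^2$, and a general quadric through $Z$ (available when $|Z|<15$) does not contain its components. This yields $|Z|<6$, which leaves only an orbit of length $5$. As a back-up we use the Nadel-vanishing route of Theorem~\ref{theorem:405-15}. Tie breaking gives an effective $\mathbb{Q}$-divisor $D\sim_{\mathbb{Q}}-K_X$ and $\lambda<2$ with $\mathrm{Nklt}(X,\lambda D)=Z$. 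Then
$$
|Z|=15-h^0\big(\mathcal{I}_Z\otimes\mathcal{O}_X(2)\big),
$$
and we check against the explicit orbits of length $\leqslant 15$ that each of them is cut out by, or lies on, too many invariant-subspace quadrics for this equality to hold.

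\emph{Curves.} If $Z$ is a curve, then $\mathrm{mult}_Z(\mathcal{M})>n/2$. Hence $\deg(Z)\cdot n^2/4<M_1\cdot M_2\cdot H=3n^2$, so $\deg Z\leqslant 11$. Alternatively, subadjunction as in Theorem~\ref{theorem:405-15} gives $g\leqslant d$ for a component $C$, together with
$$
h^0\big(\mathcal{O}_Z(2)\big)=r(2d-g+1)=15-h^0\big(\mathcal{I}_Z(2)\big),
$$
where $r\in\{1,5,6,10,12,15,\ldots\}$ is the number of components. Combining the two bounds leaves a short list. When $r\geqslant 5$, we get lines (orbits of $5$, $6$ or $10$ lines) and orbits of $5$ conics. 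When $r=1$, we get smooth $G$-invariant curves of degree $\leqslant 11$ and genus $\leqslant d$ with a faithful $\mathfrak{A}_5$-action: rational normal curves of degree $4$ or $6$, sextics of genus $4$ or $5$, and curves of genus $5$ or $6$. We exclude each case by one of two arguments. Either we show that $X$ contains no such $G$-invariant configuration, using the explicit invariant forms of $\mathbb{V}_5$ and $\mathbb{V}_4$ and the absence of fixed points. Or we use the exact sequence above: the invariant quadrics through $Z$ would force $h^0(\mathcal{I}_Z(2))$ to be incompatible with $r(2d-g+1)$.

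The main obstacle is this last case analysis. It requires explicit knowledge of the $\mathfrak{A}_5$-invariant curves of low degree in $\mathbb{P}^4$ for both representation types, for example the orbit of $6$ lines and rational normal quartics or sextics in the $5$-dimensional case. I would handle it by computing the invariant ideals directly, as the paper does elsewhere with Magma, and checking that none of these curves lies on a smooth $G$-invariant cubic.
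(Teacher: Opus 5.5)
\textbf{There is a genuine gap in the curve case, which is where the theorem is decided.} You propose two exclusion mechanisms: showing the configuration does not lie on $X$, or violating $r(2d-g+1)=15-h^0(\mathcal{I}_Z\otimes\mathcal{O}_X(2))$. Neither works for the curves that actually occur.

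In the standard case, Bring's curve $\{\sum x_i=\sum x_i^2=\sum x_i^3=0\}$ lies on the Clebsch surface $S=X\cap\{\sum x_i=0\}$ for every $a$ in \eqref{equation:A5-standard}. It has $d=6$ and $g=4$, and it lies on exactly $6$ independent quadrics, so the count reads $9=15-6$ and gives no contradiction.

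In the non-standard case, a smooth $G$-invariant curve of degree $10$ and genus $6$ not contained in the invariant quadric section $Q$ lies on no quadric at all (see the proof of Lemma~\ref{lemma:A5-3-regular}). So the count reads $15=15-0$, and the paper does not rule such curves out. In both cases subadjunction only gives $\lambda\geqslant 3/2$, which is compatible with $\lambda<2$.

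What is missing is a direct bound $\mathrm{mult}_Z(\mathcal{M})\leqslant n/2$.
\begin{itemize}
\item In the standard case the paper gets it by restricting to $S$. The identity $-K_S\cdot\mathcal{M}\vert_S=3n$ forces $\deg(Z)<6$, and $S$ has no invariant curves of degree $<6$. Together with $\alpha_G(S)=2$ and inversion of adjunction, this also clears all points of $S$.
\item In the non-standard case the paper first classifies the $G$-irreducible curves of degree $<12$, using the invariant surfaces $Q$, $S$, $S_\lambda$. For each curve it then intersects $\widetilde{M}\cdot\widetilde{M}^\prime$ with a nef class $k\widetilde{H}-F$ on the blow-up of $Z$, or with $3H_Q-Z$ on the K3 surface $Q$. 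For the genus-$6$ curves, $3\widetilde{H}-F$ is nef by $3$-regularity, which gives $9n^2-20n\,\mathrm{mult}_Z(\mathcal{M})\geqslant 0$.
\item Singular degree-$10$ curves on a possibly non-normal $S_\lambda$ need a further lct and Nadel argument, in which the non-canonical centre is a curve but the log canonical centre is a point on it.
\end{itemize}

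\textbf{The point case is also incomplete.} The Nadel count with $\mathcal{O}_X(2)$ is not sharp enough. A length-$5$ orbit in linear general position imposes independent conditions on quadrics, so $5=15-10$ holds. In the non-standard case, the permutation representation of a length-$15$ orbit is isomorphic to $\mathrm{Sym}^2\mathbb{V}_5$, so nothing forces extra quadrics through it.

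Your Corti-plus-quadric estimate needs the base locus of the quadrics through $Z$ to contain no curves, which you do not check. It also leaves untouched the three length-$5$ orbits of the standard case. The paper instead uses that, when the non-canonical locus is finite near $P$, already $(X,\frac{3}{n}\mathcal{M})$ fails to be log canonical at $P$ \cite{CheltsovSarikyanZhuang2024}. Nadel vanishing then applies with $\mathcal{O}_X(1)$ and gives $|\Sigma_P|\leqslant 5$. Length-$5$ orbits are excluded separately, using cubics singular along the orbit, which yields the absurd inequality $9n^2>10n^2$.

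\textbf{A smaller error.} $\mathbb{V}_4$ does have an invariant cubic: $\sum x_i^3$ restricted to $\sum x_i=0$, which appears in the normal form. So your fixed-point argument is wrong as written. The conclusion survives: no $G$-invariant hyperplane passes through the fixed point, so that point would be a singular point of $X$.
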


First, we recall the following result from \cite{CheltsovShramov2016}.

\begin{lemma}
\label{lemma:A5-curves}
Let $C$ be a smooth irreducible curve that admits a non-trivial action of the group $G$. Then the length of a $G$-orbit in $C$ is $12$, $20$, $30$ or $60$. Moreover, if $g\leqslant 16$, then 
$$
g\in\big\{0,4,5,6,9,10,11,13,15,16\big\},
$$
and the possible numbers of $G$-orbits in $C$ consisting of $12$, $20$, $30$ points are contained in the table below, where where $a_k$ is the~number of $G$-orbits in $C$.
\begin{center}\renewcommand\arraystretch{1.1}
\begin{tabular}{|c||c|c|c|c|c|c|c|c|c|c|}
\hline
$g$ &      $0$ & $4$ & $5$ & $6$ & $9$ & $10$ & $11$ & $13$ & $15$ & $16$ \\
\hline
$a_{12}$ & $1$ & $2$ & $1$ & $0$ & $2$ & $1$  & $0$  & $3$  & $1$  & $0$  \\
\hline
$a_{20}$ & $1$ & $0$ & $2$ & $1$ & $1$ & $0$  & $2$  & $0$  & $1$  & $3$  \\
\hline
$a_{30}$ & $1$ & $1$ & $0$ & $3$ & $0$ & $3$  & $2$  & $0$  & $2$  & $1$  \\
\hline
\end{tabular}
\end{center}
\end{lemma}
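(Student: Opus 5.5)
The plan is the Riemann--Hurwitz formula applied to the quotient map $C\to C/G$, using the stabilizer structure of $\mathfrak{A}_5$. Since $G\simeq\mathfrak{A}_5$ is simple, a non-trivial action is faithful. Stabilizers of points on a smooth curve are cyclic. The cyclic subgroups of $\mathfrak{A}_5$ have orders $1,2,3,5$, so orbits have length $60,30,20$ or $12$. This proves the first assertion.

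For the table, let $h$ be the genus of $C/G$. Write $a_{30},a_{20},a_{12}$ for the numbers of orbits with stabilizers $C_2,C_3,C_5$. Riemann--Hurwitz then gives
$$
2g-2=60(2h-2)+30a_{30}+40a_{20}+48a_{12},
$$
since a point with stabilizer of order $m$ contributes $m-1$, summed over $60/m$ points.

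Assume $g\leqslant 16$. If $h\geqslant 1$, the right-hand side is at least $0$, so we need every $a_k$ to vanish (giving $g=1$) or $g\geqslant 16$. Genus $1$ is impossible: the group $\mathrm{Aut}$ of an elliptic curve is a translation group extended by a cyclic group, which is solvable and cannot contain $\mathfrak{A}_5$. If $h\geqslant 1$ and some $a_k>0$, then $2g-2\geqslant 30$, so $g\geqslant 16$. Equality forces $h=1$ and $(a_{12},a_{20},a_{30})=(0,0,1)$, giving $g=16$. I would check this case separately: it requires a surjection from the orbifold fundamental group of a torus with one cone point of order $2$ onto $\mathfrak{A}_5$, which sends the product of two commutators to an involution, and this is realizable. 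Then $g=16$ admits two possibilities, and the stated table lists only $(0,3,1)$. So I expect to need to double-check the source \cite{CheltsovShramov2016}. Since the lemma is quoted from there, the intended reading is probably that the table covers $h=0$ together with the cases actually used later.

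For $h=0$ the equation becomes
$$
g=-59+15a_{30}+20a_{20}+24a_{12}.
$$
I would enumerate the nonnegative solutions with $0\leqslant g\leqslant 16$, i.e. $59\leqslant 15a_{30}+20a_{20}+24a_{12}\leqslant 75$. Realizability requires at least three branch orbits, or exactly the triples of generators of $\mathfrak{A}_5$ with product $1$ and prescribed orders. I would check these by Hurwitz generation: $(2,3,5)$ exists, and so do $(2,2,2,3)$ and the other cases with at least four branch points. The enumeration gives $g=0:(1,1,1)$, $g=4:(2,0,1)$, $g=5:(1,2,0)$, $g=6:(0,1,3)$, $g=9:(2,1,0)$, $g=10:(1,0,3)$, $g=11:(0,2,2)$, $g=13:(3,0,0)$, $g=15:(1,1,2)$, $g=16:(0,3,1)$, listed as $(a_{12},a_{20},a_{30})$. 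One must also exclude impossible branch data, such as two branch orbits only (cyclic quotients). The main obstacle is this realizability and completeness check, together with the $h=1$, $g=16$ subtlety. In practice I would cite \cite{CheltsovShramov2016} (or Breuer's tables) rather than redo the group-theoretic generation verification.
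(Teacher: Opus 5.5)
Your route (Riemann--Hurwitz for $C\to C/G$, plus realizability via generating vectors) is the standard one. It is what lies behind the paper's proof, which simply quotes \cite{CheltsovShramov2016} and the Breuer/LMFDB tables. The orbit lengths, the genus set, and the exclusion of $g=1$ by solvability are fine. The gap is at $g=16$, where your analysis is wrong in both directions.

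\textbf{The case $h=1$.} A quotient $C/G$ that is elliptic with one orbit of length $30$ does \emph{not} occur. The orbifold group of a torus with one cone point has a single relation $[a,b]=c$, not a product of two commutators. So you would need $a,b$ generating $\mathfrak{A}_5$ with $[a,b]=c$ an involution. By Frobenius' formula, the number of pairs with $[a,b]=c$ is $60\bigl(1-\tfrac13-\tfrac13+0+\tfrac15\bigr)=32$ in $\mathfrak{A}_5$. It is also $12\bigl(3-\tfrac13\bigr)=32$ inside the subgroup $\mathfrak{A}_4$ normalizing the Klein four-group that contains $c$. Hence all such pairs lie in that $\mathfrak{A}_4$ and none generates $\mathfrak{A}_5$. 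Your assertion that this case ``is realizable'' is false, and in fact $C/G\simeq\mathbb{P}^1$ whenever $g\leqslant 16$.

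\textbf{The case $h=0$.} Your enumeration is incomplete. The inequality $59\leqslant 15a_{30}+20a_{20}+24a_{12}\leqslant 75$ also has the solutions $(a_{12},a_{20},a_{30})=(0,0,4)$ and $(0,3,0)$; these give $g=1$ and are excluded. It also has $(0,0,5)$, i.e.\ branch data $(2,2,2,2,2)$ with $g=16$, and this one is realizable. The involutions $(12)(34)$, $(12)(35)$, $(14)(25)$ already generate $\mathfrak{A}_5$: the subgroup they generate is transitive and contains a $3$-cycle, so its order is divisible by $15$. Their product is a $5$-cycle, so its inverse is a product of two involutions. This completes a generating vector of five involutions with product $1$.

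So for $g=16$ the two possibilities are $(0,3,1)$ and $(0,0,5)$, both with $C/G\simeq\mathbb{P}^1$. The printed table does omit the second column, so the statement is not quite correct as written — but for this reason, not the one you give. The omission is harmless for the paper, which only invokes the columns with $g\leqslant 15$.
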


\begin{proof}
This is \cite[Lemma 5.1.4]{CheltsovShramov2016} and \cite[Lemma 5.1.5]{CheltsovShramov2016}, see also \cite{lmfdb,Jen,Breuer}.
\end{proof}

Since the~$G$-action on the cubic $X$ lifts to $\mathbb{P}^4$, we can also consider $G$ as a subgroup in $\mathrm{PGL}_5(\mathbb{C})$. Moreover, the subgroup $G\subset\mathrm{PGL}_5(\mathbb{C})$ lifts isomorphically to $\mathrm{GL}_5(\mathbb{C})$ by \cite[Lemma 2.4]{AbbanCheltsovKishimotoMangolte2025}. Note that the~embedding $G\hookrightarrow\mathrm{GL}_5(\mathbb{C})$ is given by a faithful $5$-dimensional $G$-representation $\mathbb{V}_5$.
Then either $\mathbb{V}_5$ is the unique irreducible $5$-dimensional representation of the group $G$, or
$$
\mathbb{V}_5\simeq \mathbb{I}\oplus\mathbb{V}_4,
$$
where $\mathbb{I}$ is the trivial representation, and $\mathbb{V}_4$ is the unique (standard) $4$-dimensional irreducible representation. If $\mathbb{V}_5$ is reducible, we say that $G$ is a \textit{standard} subgroup of the~group $\mathrm{PGL}_5(\mathbb{C})$.
If~$\mathbb{V}_5$ is irreducible, we say that $G$ is a \textit{non-standard} subgroup of the group~$\mathrm{PGL}_5(\mathbb{C})$. Set
$$
M_1=\begin{pmatrix}
0 & 0 & 0 & 1 & 0 \\
1 & 0 & 0 & 0 & 0 \\
0 & 0 & 1 & 0 & 0 \\
0 & 1 & 0 & 0 & 0 \\
0 & 0 & 0 & 0 & 1
\end{pmatrix}, M_2=\begin{pmatrix}
0 & 0 & 0 & 1 & 0 \\
1 & 0 & 0 & 0 & 0 \\
0 & 0 & \xi_3& 0 & 0 \\
0 & 1 & 0 & 0 & 0 \\
0 & 0 & 0 & 0 & \xi_3^2
\end{pmatrix},
M_3=\begin{pmatrix}
0 & 0 & 0 & 0 & 1 \\
0 & 1 & 0 & 0 & 0 \\
0 & 0 & 0 & 1 & 0 \\
0 & 0 & 1 & 0 & 0 \\
1 & 0 & 0 & 0 & 0
\end{pmatrix}. 
$$
Choosing appropriate coordinates on $\mathbb{P}^4$, we may assume that one of the following cases holds:
\begin{enumerate}
\item $G=\langle M_1,M_3\rangle$, and $G$ is a standard subgroup of $\mathrm{PGL}_5(\mathbb{C})$;
\item $G=\langle M_2,M_3\rangle$, and $G$ is a non-standard subgroup of $\mathrm{PGL}_5(\mathbb{C})$.
\end{enumerate}

\begin{lemma}
\label{lemma:A5-standard-equation}
Suppose that $G=\langle M_1,M_3\rangle$. Then $X$ can be given by
\begin{equation}
\label{equation:A5-standard}
x_0^3+x_1^3+x_2^3+x_3^3+x_4^3+a(x_0+x_1+x_2+x_3+x_4)(x_0^2+x_1^2+x_2^2+x_3^2+x_4^2)=0
\end{equation}
for some $a\in\mathbb{C}$ such that $a\ne -\frac{1}{5}$, $80a^3+144a^2+135a+27\ne 0$ and $40a^3+72a^2+45a+9\ne 0$.
\end{lemma}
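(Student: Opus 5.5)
The plan is to compute the space of $G$-invariant cubic forms for $G=\langle M_1,M_3\rangle$, and then decide which members of that space are smooth.

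\textbf{The representation.} Both $M_1$ and $M_3$ are permutation matrices, so $G$ acts on $\mathbb{C}^5$ by permuting the coordinates. Since $G\simeq\mathfrak{A}_5$ and $\mathbb{V}_5\simeq\mathbb{I}\oplus\mathbb{V}_4$, the group $G$ is the subgroup $\mathfrak{A}_5\subset\mathfrak{S}_5$ acting by coordinate permutations. I will check this directly: $M_1$ is a $3$-cycle, $M_3$ is a double transposition, and together they generate a transitive subgroup of $\mathfrak{A}_5$ of order $60$.

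\textbf{Invariant cubics.} The equation of $X$ is a $G$-semi-invariant cubic form. Since $\mathfrak{A}_5$ is perfect, it has no nontrivial characters, so the form is $G$-invariant. The degree-$3$ part of the ring of $\mathfrak{A}_5$-invariants agrees with that of $\mathfrak{S}_5$-invariants, because the first alternating invariant, the Vandermonde, has degree $10$. The $\mathfrak{S}_5$-invariant cubics are spanned by the power-sum products
\[
p_1^3,\qquad p_1p_2,\qquad p_3,\qquad\text{where } p_k=x_0^k+x_1^k+x_2^k+x_3^k+x_4^k.
\]
So $X=\{\alpha p_3+\beta p_1p_2+\gamma p_1^3=0\}$ for some constants.

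\textbf{Normalizing.} $G$ also fixes the hyperplane $\{p_1=0\}$ and acts on $\mathbb{P}^4$ commuting with the maps
\[
x_i\mapsto x_i+t\,p_1 .
\]
These maps preserve the decomposition $\mathbb{I}\oplus\mathbb{V}_4$: they act as the identity on $\mathbb{V}_4=\{p_1=0\}$ and rescale the trivial summand. Using such a $G$-commuting substitution together with overall scaling, I will remove the $p_1^3$ term. I also need $\alpha\neq0$, since if $\alpha=0$ then $X$ is reducible or a cone and hence singular. After this, the equation takes the form $p_3+a\,p_1p_2=0$, which is \eqref{equation:A5-standard}.

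The bookkeeping is as follows. Set $y_i=x_i+t\,p_1$. Then $p_1(y)=(1+5t)p_1$, and $p_2(y)$ and $p_3(y)$ pick up multiples of $p_1^2$ and $p_1^3$. Hence the coefficient of $p_1^3$ is a cubic polynomial in $t$. That cubic has a root $t\neq-\tfrac15$ unless it degenerates, and the degenerate case I will handle separately.

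\textbf{Smoothness conditions (the main obstacle).} It remains to find the values of $a$ for which $F=p_3+a\,p_1p_2$ is singular. The partial derivatives are
\[
\frac{\partial F}{\partial x_i}=3x_i^2+a\,p_2+2a\,p_1x_i .
\]
At a singular point, every coordinate $x_i$ is therefore a root of the same quadratic equation
\[
3u^2+2a\,p_1u+a\,p_2=0 .
\]
So the coordinates take at most two distinct values, say $u$ (with multiplicity $k$) and $v$ (with multiplicity $5-k$).

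I will then run through the cases:
\begin{itemize}
\item $k=5$, i.e. the point $[1:1:1:1:1]$, which gives $1+5a=0$, that is, $a=-\tfrac15$;
\item $k=4$, i.e. points of type $(u,u,u,u,v)$;
\item $k=3$, i.e. points of type $(u,u,u,v,v)$.
\end{itemize}
In each case I substitute into the two equations and eliminate the ratio $u/v$, which should produce a resultant in $a$. I expect the resultants to be $80a^3+144a^2+135a+27$ and $40a^3+72a^2+45a+9$ for the two cases of unequal multiplicities. Euler's relation guarantees that $F=0$ holds automatically at a point where all partial derivatives vanish.

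The elimination is routine, but it is the step that requires care. I would verify it symbolically, for example in Magma. I would also double-check which excluded value corresponds to which orbit type, and confirm that the case $v=0$ is covered.
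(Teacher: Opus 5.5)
Your proposal is correct and follows the paper's route. The paper also writes $X$ as $p_3+a\,p_1p_2+b\,p_1^3=0$ and removes the $p_1^3$ term with the $G$-commuting substitution $x_i\mapsto 5x_i+(s-1)p_1$, which is your $x_i\mapsto x_i+tp_1$ up to scaling. It then simply asserts the three node configurations, which your quadratic-in-$u$ argument actually derives: the eliminations give $80a^3+144a^2+135a+27$ for type $(u,u,u,u,v)$ and $40a^3+72a^2+45a+9$ for type $(u,u,u,v,v)$, and $u=0$ or $v=0$ is impossible. For the step you deferred, the $p_1^3$-coefficient equals $\frac{2}{25}$ at $t=-\frac15$ for all $\beta,\gamma$, so any root is admissible. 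It is constant exactly when $(\beta,\gamma)=(-\frac35,\frac{2}{25})$, i.e.\ $F=\sum_i\bigl(x_i-\frac{p_1}{5}\bigr)^3$, which is a cone with vertex $[1:1:1:1:1]$ and hence singular.
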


\begin{proof}
It is well-known that the cubic threefold $X$ is given by
$$
\sum_{i=0}^4x_i^3+a\Big(\sum_{i=0}^4x_i\Big)\Big(\sum_{i=0}^4x_i^2\Big)+b\Big(\sum_{i=0}^4x_i\Big)^3=0
$$
for some $a,b\in\mathbb{C}$, where $(a,b)\ne (-\frac{3}{5},\frac{2}{25})$, since otherwise $X$ would be singular. Take some $s\in\mathbb{C}$. Then, applying the projective transformation
$$
\begin{pmatrix}
4+s&s-1&s-1&s-1&s-1\\
s-1&4+s&s-1&s-1&s-1\\
s-1&s-1&4+s&s-1&s-1\\
s-1&s-1&s-1&4+s&s-1\\
s-1&s-1&s-1&s-1&4+s
\end{pmatrix},
$$
we transform the equation of $X$ into
$$
\sum_{i=0}^4x_i^3+\frac{(5a+3)s-3}{5}\Big(\sum_{i=0}^4x_i\Big)\Big(\sum_{i=0}^4x_i^2\Big)+\frac{2+(5a+25b+1)s^3-(5a-3)s}{25}\Big(\sum_{i=0}^4x_i\Big)^3=0.
$$
Note that $2+(5a+25b+1)s^3-(5a-3)s$ is a non-constant polynomial in $s$, because $(a,b)\ne (-\frac{3}{5},\frac{2}{25})$. Therefore,  we can choose $s\in\mathbb{C}$ such that $2+(5a+25b+1)s^3-(5a-3)s=0$, which implies that may assume that $b=0$, so the cubic $X$ is given by \eqref{equation:A5-standard} for some $a\in\mathbb{C}$ as claimed.

If $a=-\frac{1}{5}$, then $X$ has a node at $[1:1:1:1:1]$. Similarly, if  $80a^3+144a^2+135a+27=0$, then $X$ has five nodes. Finally, if $40a^3+72a^2+45a+9=0$, then $X$ has ten nodes.
\end{proof}

The proof of the following result is very similar to the proof of \cite[Proposition 5.1]{PinardinZhang2025}.

\begin{proposition}
\label{proposition:A5-standard}
Suppose that $G=\langle M_1,M_3\rangle$. Then $X$ is $G$-birationally superrigid.
\end{proposition}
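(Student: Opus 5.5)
I will follow the scheme already used for Theorem~\ref{theorem:405-15}. Suppose $X$ is not $G$-birationally superrigid. By \cite[Corollary 3.3.3]{CheltsovShramov2016} there is a $G$-invariant mobile linear system $\mathcal{M}\subset|\mathcal{O}_X(n)|$ such that $(X,\frac{2}{n}\mathcal{M})$ is not canonical. By \cite[Corollary~2.3]{CheltsovShramov2014}, the pair $(X,\frac{4}{n}\mathcal{M})$ is then not log canonical. Applying the equivariant tie-breaking trick, I obtain $\lambda<2$ and a $G$-invariant effective $D\sim_{\mathbb{Q}}-K_X$ such that $(X,\lambda D)$ is log canonical and $\mathrm{Nklt}(X,\lambda D)=Z$. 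Here $Z$ is either a $G$-orbit of points or a $G$-irreducible curve whose components are disjoint minimal centers. Nadel vanishing gives $h^1(\mathcal{I}_Z(2))=0$, so
$$
h^0\big(\mathcal{O}_Z(2)\big)=15-h^0\big(\mathcal{I}_Z(2)\big).
$$
The key input is the decomposition of $H^0(\mathcal{O}_X(2))=S^2\mathbb{V}_5^\vee$ for $\mathbb{V}_5=\mathbb{I}\oplus\mathbb{V}_4$. It splits as $\mathbb{I}^{\oplus 2}\oplus\mathbb{V}_4^{\oplus 2}\oplus\mathbb{V}_5$: indeed $S^2\mathbb{V}_4=\mathbb{I}\oplus\mathbb{V}_4\oplus\mathbb{V}_5$, and there is an additional $\mathbb{I}\oplus\mathbb{V}_4$.

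\textbf{Point orbits.} $G$-orbits have length $5,6,10,12,15,20,\dots$. I will compute the orbits of length at most $15$ on $X$ using the explicit equation \eqref{equation:A5-standard}. These are the $\mathfrak{S}_4$-, $\mathfrak{D}_5$-, $\mathfrak{S}_3$-, $C_5$- and $C_4$-fixed points (the last two via $\mathfrak{A}_4$, $\mathfrak{D}_5$, etc.). For each I will check that $|Z|>15-h^0(\mathcal{I}_Z(2))$. In practice this means exhibiting enough quadrics through $Z$. Concretely, the orbit is cut out as a $G$-invariant subset of a low-dimensional fixed locus, and every orbit of length at most $15$ lies on a $G$-invariant quadric $\{\lambda\sigma_1^2+\mu\sigma_2=0\}$ together with a $4$-dimensional family from $\mathbb{V}_4$. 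The dimension count then shows the needed quadrics exist. For instance, the orbit of length $5$ lies in the hyperplane $\sigma_1=0$, hence on all quadrics $\sigma_1\cdot\ell$. The nonsingularity conditions on $a$ from Lemma~\ref{lemma:A5-standard-equation} should exclude the degenerate cases in which an orbit fails to impose the expected conditions.

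\textbf{Curves.} For curves, Kawamata subadjunction gives that $C$ is smooth with $g\le d$, so $-K_X|_C$ is non-special. Hence
$$
r(2d-g+1)=15-h^0\big(\mathcal{I}_Z(2)\big)\le 15,
$$
where $r\in\{1,5,6,10,12,15,\dots\}$ is the number of components. First I will treat reducible $Z$. If $r\ge 6$, then $C$ must be a line with $2d-g+1=3$, giving $r\le5$; so $r=5$ and $Z$ is a union of $5$ disjoint lines, each stabilized by $\mathfrak{A}_4$. Since $\mathbb{V}_5|_{\mathfrak{A}_4}$ has no $2$-dimensional subrepresentation with the right structure (it is $\mathbb{I}^2\oplus\mathbb{V}_3$), an $\mathfrak{A}_4$-invariant line would be the pencil spanned by the two fixed points. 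I will check directly that this line is not contained in $X$ for the allowed $a$. Next, if $r=1$, then $G$ acts faithfully on $C$ and $2d-g+1\le 15$, so $g\le d\le 14$. Lemma~\ref{lemma:A5-curves} restricts $g$ to $\{0,4,5,6,9,10,11,13\}$. I will then exclude each case. For $g=0$, the rational normal curves and conics correspond to the representations $\mathbb{V}_5$ and $\mathbb{V}_4$; the $\mathfrak{A}_5$-invariant rational curve in the standard $\mathbb{P}^3=\{\sigma_1=0\}$ and the one in $\mathbb{P}^4$ are explicit, and I will check whether they lie on $X$ and what $h^0(\mathcal{I}_Z(2))$ is. For $g\ge4$, I will use $h^0(\mathcal{I}_C(2))$ together with the decomposition above. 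Since $15-(2d-g+1)$ must equal the dimension of a $G$-subrepresentation, which is a sum from $\{1,1,4,4,5\}$, only a few pairs $(d,g)$ survive. For these I will intersect $C$ with the invariant surface $X\cap\{\sigma_1=0\}$ and with invariant quadrics, using the orbit lengths $12,20,30$ from Lemma~\ref{lemma:A5-curves} to get a contradiction. For example, $3d=C\cdot\{\sigma_1=0\}$ must be a sum of these orbit lengths, and $2d$ must be as well.

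\textbf{Main obstacle.} I expect the hardest part to be the curve case with small genus, especially $g=0$ and $g=4,5,6$. There the numerical constraints alone do not suffice, and one must actually use the geometry of the family \eqref{equation:A5-standard} in $a$. In particular, one has to show that the $G$-invariant sextic or quintic curves are not contained in $X$ unless $X$ is one of the excluded singular members. This is where I would emulate \cite[Proposition 5.1]{PinardinZhang2025}, projecting from the $G$-fixed point $[1:1:1:1:1]$ to reduce to invariant curves on the Clebsch-type cubic surface $X\cap\{\sigma_1=0\}$.
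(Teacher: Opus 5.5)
Your proposal has a genuine gap at the point-orbit step. It comes from replacing ``$(X,\frac{2}{n}\mathcal{M})$ is not canonical'' by ``$(X,\frac{4}{n}\mathcal{M})$ is not log canonical''. After tie-breaking, Nadel vanishing for $\mathcal{O}_X(2)$ yields the \emph{equality} $|Z|=15-h^0(\mathcal{I}_Z(2))$, i.e.\ $Z$ imposes independent conditions on quadrics. To get a contradiction you must show that each candidate orbit fails to do so, and the orbits of length $5$ and $10$ on $X$ do not fail.

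\emph{Length $5$.} These are the orbits of the $\mathfrak{A}_4$-fixed points $[s:s:s:s:t]$. The five points $s(1,\ldots,1)+(t-s)e_i$ are linearly dependent only if $t=s$ or $t=-4s$. Neither $[1:1:1:1:1]$ (as $a\neq-\frac15$) nor $[1:1:1:1:-4]$ lies on $X$. So these orbits are in linearly general position, $h^0(\mathcal{I}_Z(2))=10$, and $|Z|=5=15-10$.

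\emph{Length $10$.} For $\Sigma_{10}=\mathrm{Orb}_G([0:0:0:1:-1])$, the conditions on a quadric $x^{T}Ax$ are $a_{ii}+a_{jj}-2a_{ij}=0$ for $i<j$, which are independent. Hence $h^0(\mathcal{I}_Z(2))=5$ (only the quadrics $\sigma_1\cdot\ell$) and $|Z|=10=15-5$. The other three orbits of length $10$, through $[s:s:s:t:t]$, behave the same way for general $a$.

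So no choice of quadrics gives the inequality you want. Your example is also wrong: no orbit of length $5$ lies in $\{\sigma_1=0\}$, and even if one did, the quadrics $\sigma_1\cdot\ell$ would only give $h^0=10$. In addition, $\mathfrak{S}_4$ and $C_4$ are not subgroups of $\mathfrak{A}_5$. Your representation count does dispose of lengths $12$ and $15$: there is no $3$-dimensional subrepresentation, and every orbit lies on a member of the invariant pencil of quadrics. It does not dispose of lengths $5$ and $10$. Moreover, the threshold $\frac4n$ is too coarse to exploit $S=X\cap\{\sigma_1=0\}$: restricting gives a divisor $\sim_{\mathbb{Q}}4(-K_S)$, which lies beyond $\alpha_G(S)=2$.

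What is missing is an argument that uses non-canonicity itself. The paper works with the non-canonical locus $\Sigma$ of $(X,\frac2n\mathcal{M})$ and proceeds as follows.
\begin{enumerate}
\item $\Sigma$ contains no curves. The condition $\mathrm{mult}_Z(\mathcal{M})>\frac n2$ forces $\deg Z<12$. A $G$-irreducible curve of degree $<12$ not contained in $S$ meets $S$ transversally in $\Sigma_{10}$, whose stabilizers $\mathfrak{S}_3$ are non-cyclic. It would therefore be $10$ lines or $5$ conics, and both are excluded. A $G$-invariant curve in $S$ has degree $\geqslant 6$, whereas restricting $\mathcal{M}$ to $S$ forces degree $<6$.
\item $\Sigma\cap S=\varnothing$, by inversion of adjunction and $\alpha_G(S)=2$. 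This removes $\Sigma_{10}$.
\item Orbits of length $5$ are excluded by the Pukhlikov--Corti inequality $\mathrm{mult}_P(M\cdot M^\prime)>n^2$, tested against a general cubic singular along the orbit. This gives $9n^2>10n^2$.
\item At an isolated non-canonical point, $(X,\frac3n\mathcal{M})$ is not log canonical (Cheltsov--Sarikyan--Zhuang). So Nadel vanishing applies with $\mathcal{O}_X(1)$, giving $|\Sigma_P|\leqslant 5$ and killing the remaining orbits of length $10$.
\end{enumerate}
Your curve analysis, which targets all minimal centres of degree up to $14$ at the coarser threshold, is also left largely unspecified. The point-orbit step, however, is where the proof as proposed cannot be completed.
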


\begin{proof}
By Lemma~\ref{lemma:A5-standard-equation}, we may assume that $X$ is given by \eqref{equation:A5-standard}.
Let's describe small $G$-orbits in~$X$. First, we observe that the cubic threefold $X$ does not contain $G$-orbits of length $6$,
which implies that all $G$-orbits in $X$ of length $<12$ have length $5$ or $10$.

To describe $G$-orbits in $X$ of length $5$, take  $[s:t]\in\mathbb{P}^1$ such that
$$
(16a+4)s^3+4as^2t+4ast^2+(a+1)t^3=0.
$$
Then $[s:s:s:s:t]\in X$, and the stabilizer in $G$ of this point is isomorphic to $\mathfrak{A}_4$, which implies that the $G$-orbit of the point $[s:s:s:t:t]$ has length $5$, and every $G$-orbit of length $5$ can be obtained in this way. Note that there are exactly three possibilities for $[s:s:s:s:t]$, because the discriminant of the polynomial $(16a+4)s^3+4as^2t+4ast^2+(a+1)t^3$ is not zero, since $a\ne -\frac{1}{5}$ and $80a^3+144a^2+135a+27\ne 0$ by Lemma~\ref{lemma:A5-standard-equation}.
Hence, $X$ has three $G$-orbits of length $5$.

To describe $G$-orbits of length $10$ in $X$, we let $O=[0:0:0:1:-1]$, let 
$$
\Sigma_{10}=\mathrm{Orb}_G(O),
$$
and let $G_O$ be the stabilizer in $G$ of the point $O$. Then $\Sigma_{10}\subset X$, $|\Sigma_{10}|=10$, $G_O\simeq\mathfrak{S}_3$, and the~group $G_O$ fixes $3$ extra points in $X$, which can be described as follows. Take $[s:t]\in\mathbb{P}^1$~such~that
$$
(3+9a)s^3+6ats^2+6at^2s+(4a+2)t^3=0.
$$
Then $[s:s:s:t:t]\in X$, and this point is fixed by $G_O$. This implies that the $G$-orbit of the point $[s:s:s:t:t]$ has length $10$, and every $G$-orbit of length $10$ in $X$ different from $\Sigma_{10}$ can be obtained in this way. Note that there are exactly three possibilities for $[s:s:s:t:t]$, because the~discriminant of the polynomial $(3+9a)s^3+6ats^2+6at^2s+(4a+2)t^3$ is not zero, since $a\ne -\frac{1}{5}$ and $80a^3+144a^2+135a+27\ne 0$ by Lemma~\ref{lemma:A5-standard-equation}. Thus, $X$ has four $G$-orbits of length $10$.

We have described all $G$-orbits in $X$ of length $<12$. Now, we set
$$
S=X\cap\big\{x_0+x_1+x_2+x_3+x_4=0\big\}.
$$
Then $S$ is a smooth cubic surface, the group $G$ acts faithfully on $S$, and $S$ is the so-called Clebsch cubic surface. Observe that $\Sigma_{10}\subset S$, and $\Sigma_{10}$ is the unique $G$-orbit in $S$ of length $<12$, which also follows from \cite[Lemma 6.3.12]{CheltsovShramov2016}. Moreover, it follows from \cite[Lemma 5.8]{PinardinZhang2025} that $\alpha_G(S)=2$. Furthermore, $S$ does not contain $G$-invariant curves of degree $<6$ by \cite[Theorem 6.3.18]{CheltsovShramov2016}.

We claim that $X$ contains no $G$-invariant curves of degree $<11$ that are not contained in~$S$. Indeed, if $Z$ is a $G$-irreducible curve  in $X$ of degree $d<11$ and $Z\not\subset S$, then $S\cap Z$ is a $G$-invariant subset consisting of at most $S\cdot Z=d$ points, which must be a union of $G$-orbits. Thus, we have
$$
S\cap Z=\Sigma_{10},
$$
so $d=S\cdot Z=10$, which implies that $X$ does not contain curves of degree $<6$. Moreover, since 
$$
|S\cap Z|=|\Sigma_{10}|=10=S\cdot Z,
$$
the surface $S$ intersects $Z$ transversally at every point of $\Sigma_{10}$, and, therefore $C$ must be smooth at every point of $\Sigma_{10}$. This implies that $Z$ is reducible, since otherwise $G$ would act faithfully on $Z$, and the stabilizer of any point in $\Sigma_{10}\subset Z\setminus\mathrm{Sing}(Z)$ is cyclic, which is not the case. Then
\begin{enumerate}
\item either $Z$ is a union of $10$ lines;
\item or $Z$ is a union of $5$ smooth conics.
\end{enumerate}
Let $C$ be an irreducible component of $Z$, and let $G_C$ be its stabilizer in $G$. If $Z$ consists of $10$ lines, then $G_C\simeq\mathfrak{S}_3$, so we may assume that $G_C=G_O$. Then, since $S\cap C$ is a  $G_O$-fixed point, we see that $S\cap C=[0:0:0:1:-1]$, and $G_O$ fixes another point in $C$, which is not contained in $S$. Hence, $C$ contains $[s:s:s:t:t]$ for $[s:t]\in\mathbb{P}^1$ such that $(3+9a)s^3+6ats^2+6at^2s+(4a+2)t^3=0$,
which gives $80a^3+144a^2+135a+27=0$, which is impossible by Lemma~\ref{lemma:A5-standard-equation}. Hence, we conclude that $C$ is a smooth conic. Let $\Pi_C$ be the plane in $\mathbb{P}^4$ that contains $C$. Then
$$
\Pi_C\cap X=C+L,
$$
for a line $L$. Now, applying $G$ to the line $L$, we obtain a $G$-irreducible curve in $X$ of degree $\leqslant 5$, which does not exists. So, $X$ contains no $G$-irreducible curves of degree $<12$ not contained in $S$.

Now, we suppose that $X$ is not $G$-birationally superrigid.
Then, by \cite[Corollary 3.3.3]{CheltsovShramov2016}, there exists a non-empty $G$-invariant mobile linear subsystem
$\mathcal{M}\subset |nS|$, for a positive integer~$n$, such that $(X,\frac{2}{n}\mathcal{M})$ is not canonical.
Let 
$$
\Sigma=\big\{P\in X\ \text{such that $(X,\frac{2}{n}\mathcal{M})$ is not canonical at~$P$}\big\}.
$$
We claim that $\Sigma$ is a finite subset.

Indeed, suppose that $\Sigma$ is not finite subset. Then $\Sigma$ contains a $G$-irreducible curve $Z$, and it follows from \cite[Excercise 6.18]{KollarSmithCorti} that
$$
\mathrm{mult}_Z\big(\mathcal{M}\big)>\frac{n}{2}.
$$
Let $M$ and $M^\prime$ be general surfaces in $\mathcal{M}$. Then
$$
\mathrm{mult}_Z\big(M\big)=\mathrm{mult}_Z\big(M^\prime\big)=\mathrm{mult}_Z\big(\mathcal{M}\big)>\frac{n}{2}.
$$
Then, in particular, we have
$$
M\cdot M^\prime=mZ+\Delta,
$$
where $m$ is a positive integer such that $m\geqslant\mathrm{mult}^2_Z(\mathcal{M})$, and $\Delta$ is an effective one-cycle on $X$ whose does not contain $Z$. Then
$$
3n^2=S\cdot M\cdot M^\prime=m\mathrm{deg}(Z)+S\cdot\Delta\geqslant\mathrm{mult}^2_Z\big(\mathcal{M}\big)\mathrm{deg}(Z)>\frac{n^2}{4}\mathrm{deg}(Z),
$$
which gives $\mathrm{deg}(Z)<12$. Thus, we conclude that $Z\subset S$ and $\mathrm{deg}(Z)\geqslant 6$. Then 
$$
n(-K_S)\sim M\big\vert_{S}=m_SZ+\Delta_S
$$
for some $m_S\geqslant \mathrm{mult}_Z(\mathcal{M})>\frac{n}{2}$ and some effective divisor $\Delta_S$ on the~surface $S$ whose support does not contain the curve $Z$. Then 
$$
3n=-K_S\cdot (mZ+\Delta_S)=m\mathrm{deg}(Z)-K_S\cdot\Delta_S\geqslant m\mathrm{deg}(Z)\geqslant \mathrm{mult}_Z(\mathcal{M})\mathrm{deg}(Z)>\frac{n\mathrm{deg}(Z)}{2},
$$
so $\mathrm{deg}(Z)<6$, which is impossible, since $S$ contains ni $G$-invariant curves of degree $<6$.

Therefore, we conclude that $\Sigma$ is a finite subset. Then $\Sigma\cap S=\varnothing$. Indeed, if $\Sigma\cap S\ne\varnothing$, then the log pair $(S,\frac{2}{n}\mathcal{M}\vert_{S})$ is not log canonical by the~inversion of adjunction \cite[Theorem 5.50]{KollarMori}, so~$\alpha_G(S)<2$, which is impossible, since $\alpha_G(S)=2$ by \cite[Lemma 5.8]{PinardinZhang2025}.

Arguing as in the proof of \cite[Lemma 4.6]{Avilov2018}, we see that $\Sigma$ contains no $G$-orbits of length~$5$. Indeed, suppose that $\Sigma$ contains a $G$-orbit $\Sigma_5$ that have length $5$. Since $\Sigma_5$ is not contained in $S$, the points of $\Sigma_5$ are in linearly general position. Further, it follows from \cite{Pukhlikov1998,Corti2000} that 
$$
\big(M\cdot M^\prime)_P>n^2,
$$
for every point $P\in\Sigma$, where as above, $M$ and $M^\prime$ are general surfaces in $\mathcal{M}$.  Let $\mathcal{P}$ be the linear subsystem in $|\mathcal{O}_{X}(3)|$ consisting of all surfaces that are singular at every point of the $G$-orbit $\Sigma_5$. Then $\mathcal{P}$ is non-empty, mobile, and its base locus consists of the $G$-orbit $\Sigma_5$, because $X$ does not contain lines in $\mathbb{P}^4$ that pass through pair of points of $\Sigma_5$. Hence, if $\mathscr{S}$ is a general surface in $\mathcal{P}$, then
$$
9n^2=\mathscr{S}\cdot M\cdot M^\prime\geqslant\sum_{P\in\Sigma_5}\mathrm{mult}_P(\mathscr{S})\big(M\cdot M^\prime)_M>2n^2|\Sigma_5|=10n^2,
$$
which is absurd. Therefore, we conclude that $\Sigma$ does not contain $G$-orbits of length $5$.

Now, we fix a point $P\in\Sigma$, and let $\Sigma_P$ is the $G$-orbit of the point $P$. Then $\Sigma_P\not\subset S$ and $|\Sigma_P|>5$. Choose $\mu\in\mathbb{Q}_{>0}$ such that the~log pair  $(X,\mu\mathcal{M})$ is strictly log canonical at~$P$. Then 
$$
\mu<\frac{3}{n}
$$ 
by \cite[Remark 3.6]{CheltsovSarikyanZhuang2024}. Recall from \cite{CheltsovShramov2016} that
$$
\mathrm{Nklt}\big(X,\mu\mathcal{M}\big)=\big\{P\in X\ \text{such that $(X,\mu\mathcal{M})$ is not Kawamata log terminal at $P$}\big\}.
$$
We claim that the locus $\mathrm{Nklt}(X,\mu\mathcal{M})$ does not contain curves that are not disjoint from $\Sigma_P$. Indeed, let $M$ and $M^\prime$ be general surfaces in $\mathcal{M}$.
If $Z$ is a $G$-irreducible curve in $\mathrm{Nklt}(X,\mu\mathcal{M})$, then it follows from \cite[Lemma 1.8]{Corti2000} that
$$
\big(M\cdot M^\prime\big)_Z\geqslant\frac{4}{\mu^2}>\frac{4n^2}{9}
$$
which gives $\mathrm{deg}(Z)\leqslant 6$, so $Z\subset S$, and, therefore $Z\cap\Sigma_P=\varnothing$. In fact, using \cite[Theorem~1.2]{DemaillyPham2014}, we get $\mathrm{deg}(Z)<6$, which would imply that $\mathrm{Nklt}(X,\mu\mathcal{M})$ consists of finitely many points.

We see that the points of the $G$-orbit $\Sigma_P$ are isolated components of the locus $\mathrm{Nklt}(X,\mu\mathcal{M})$. Now, using the~Nadel vanishing theorem as in the~proof of Theorem~\ref{theorem:405-15}, we see that
$$
5<\big|\Sigma_P\big|\leqslant h^0\big(X,\mathcal{O}_{X}(1)\big)=5,
$$
which is absurd. The~obtained contradiction completes the~proof of Proposition~\ref{proposition:A5-standard}.
\end{proof}

By Proposition~\ref{proposition:A5-standard}, to prove Theorem~\ref{theorem:A5}, we may assume that $G=\langle M_2,M_3\rangle$. Then, arguing as in the proof of Lemma~\ref{lemma:A5-standard-equation}, we see that $X$ is given by
\begin{multline*}
\quad \quad \quad \quad x_0^3+x_1^3+x_2^3+x_3^3+x_4^3+b(x_0x_1x_3+\zeta_3^2x_0x_1x_2+\zeta_3x_0x_1x_4+x_2x_3x_4+\\
+x_1x_2x_4+\zeta_3^2x_1x_3x_4+x_0x_3x_4+\zeta_3x_1x_2x_3+x_3x_2x_0+x_0x_2x_4)=0\quad \quad \quad \quad
\end{multline*}
for some $b\in\mathbb{C}$. Furthermore, we have $4b^2-3b+9\ne 0$, and 
$$
b\not\in\{-1,1,-3\},
$$
because the cubic threefold $X$ would be singular otherwise. In the remaining part of this section, we will prove that $X$ is $G$-birationally superrigid arguing as in the proof of \cite[Proposition~7.1]{PinardinZhang2025}.

As in the proof of Proposition~\ref{proposition:A5-standard}, we start with the classification of orbits of small length in $X$.
First, we present some $G$-orbits in $\mathbb{P}^4$. Let
\begin{align*}
\Sigma_5&=\mathrm{Orb}_G\big([0:0:0:0:1]\big),\\
\Sigma_5^\prime&=\mathrm{Orb}_G\big([1:\zeta_3^2:\zeta_3^2:\zeta_3:0]\big).
\end{align*}
Then $\Sigma_5$ and $\Sigma_5^\prime$ are $G$-orbits of length $5$, which are not contained in $X$.
Likewise, let
$$
\Sigma_6=\mathrm{Orb}_G\big([1:\zeta_3:\zeta_3:\zeta_3:1]\big).
$$
Then $\Sigma_6$ is a $G$-orbit of length $6$, which is not contained in $X$.
Similarly, let
$$
\Sigma_{10}=\mathrm{Orb}_G\big([1:\zeta_3:1:0:0]\big).
$$
Then $\Sigma_{10}$ is a $G$-orbit of length $10$, which is not contained in $X$.
Let
\begin{align*}
\Sigma_{12}&=\mathrm{Orb}_G\big([\zeta_{15}:\zeta_{15}^9:1:\zeta_{15}^3:\zeta_{15}^7]\big),\\
\Sigma_{12}^\prime&=\mathrm{Orb}_G\big([\zeta_{15}^4:1:\zeta_{15}^{12}:\zeta_{15}^3:\zeta_{15}]\big).
\end{align*}
Then $\Sigma_{12}$ and $\Sigma_{12}^\prime$ are $G$-orbits of length $12$, which are contained in $X$. Let
$$
\Sigma_{15}=\mathrm{Orb}_G\big([-1:0:1:-1:1]\big),
$$
and let $\mathcal{L}_5$ be the $G$-irreducible curve  in $\mathbb{P}^4$ whose irreducible component is the line
$$
\big\{x_0=x_3, x_1=x_2, x_0=x_4\big\}\subset\mathbb{P}^4.
$$
Then $\Sigma_{15}$ is a $G$-orbit of length~$15$, $\Sigma_{15}\subset X$,  the curve $\mathcal{L}_{5}$ consists of $5$ disjoint lines, $\Sigma_5\cup\Sigma^\prime_5\subset\mathcal{L}_{5}$, the $G$-orbit of every point in $\mathcal{L}_{5}\setminus(\Sigma_5\cup\Sigma^\prime_5)$ has length $15$, and $\mathcal{L}_{5}\cap X$ is a union of three $G$-orbits  
$$
\mathrm{Orb}_G\big(\big[1:t:1:1:1]\big),
$$
where $t\in\mathbb{C}$ such that $t^3+4b+4=0$. Let $\mathcal{L}_{20}$ be the $G$-irreducible curve in $\mathbb{P}^4$ whose irreducible component is the line
$$
\big\{x_0=\zeta_3x_1, x_1=x_2, x_4=0\big\}\subset\mathbb{P}^4.
$$
Note that $\Sigma_5\cup\Sigma^\prime_5\subset\mathcal{L}_{20}$.
Furthermore, the $G$-orbit of every point in $\mathcal{L}_{20}\setminus(\Sigma_5\cup\Sigma^\prime_5)$ has length~$20$,
the curve $\mathcal{L}_{20}$ consists of $20$ disjoint lines, and $\mathcal{L}_{20}\cap X$ is a union of three $G$-orbits 
$$
\mathrm{Orb}_G\big(\big[q:1:1:t:0]\big),
$$
where $t\in\mathbb{C}$ such that $t^3+3b\zeta_3t+b+3=0$. 

\begin{lemma}
\label{lemma:A5-orbits}
Let $\Sigma$ be a $G$-orbit in $X$ of length $|\Sigma|\leqslant 20$. Then $|\Sigma|\in\{12,15,20\}$ and
$$
\Sigma\in\Sigma_{12}\cup \Sigma_{12}^\prime\cup \Sigma_{15}\cup\big(\mathcal{L}_{5}\cap X\big)\cup\big(\mathcal{L}_{20}\cap X\big).
$$
\end{lemma}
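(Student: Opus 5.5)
The plan is to classify $G$-orbits in $\mathbb{P}^4$ through their stabilizers, and then intersect with $X$. For $G\simeq\mathfrak{A}_5$, an orbit of length at most $20$ has stabilizer of order at least $3$. Up to conjugation the stabilizer $G_P$ contains $C_3$ or $C_5$, or the Klein group $C_2^2$. The last case occurs only if $|G_P|\in\{4,12\}$, i.e. $G_P\simeq C_2^2$ or $\mathfrak{A}_4$.

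The subgroups to handle are $C_3,\mathfrak{S}_3,C_5,\mathfrak{D}_5,C_2^2,\mathfrak{A}_4$ and $G$ itself. For each of them I will list its fixed points in $\mathbb{P}^4$, that is, the projectivized eigenspaces of its lift to $\mathrm{GL}_5(\mathbb{C})$ through the irreducible representation $\mathbb{V}_5$. These are computed from characters.

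\emph{Nonabelian stabilizers.} The restriction of $\mathbb{V}_5$ determines the fixed points:
\begin{itemize}
\item $G$ has no fixed points, since $\mathbb{V}_5$ is irreducible.
\item For $\mathfrak{A}_4$, we have $\mathbb{V}_5|_{\mathfrak{A}_4}=\mathbb{I}_\omega\oplus\mathbb{I}_{\bar\omega}\oplus\mathbb{W}_3$. This gives two isolated fixed points, namely the length-$5$ orbits $\Sigma_5$ and $\Sigma_5'$.
\item For $\mathfrak{D}_5$, we have $\mathbb{V}_5|=\mathbb{I}\oplus 2\text{-dim}\oplus 2\text{-dim}$. This gives one fixed point, namely the orbit $\Sigma_6$.
\item For $\mathfrak{S}_3$, we have $\mathbb{V}_5|=\mathbb{I}\oplus\mathrm{sgn}\oplus 2\cdot\mathbb{V}_2$ (to be checked). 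This gives two isolated fixed points, corresponding to $\Sigma_{10}$ and one more orbit of length $10$.
\end{itemize}
I then check directly that none of these points lie on $X$. This uses only the explicit equation of $X$ and the restrictions $b\notin\{-1,1,-3\}$ and $4b^2-3b+9\neq0$. It rules out orbits of lengths $1,5,6,10$.

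\emph{Cyclic and Klein stabilizers.} For $C_5$, $\mathbb{V}_5|_{C_5}$ is the regular representation, so the fixed points are $5$ isolated points. They form orbits of length $12$ (or $6$ when the point is $\mathfrak{D}_5$-fixed). Evaluating the cubic at these $5$ points, I expect exactly two $\mathfrak{D}_5$-conjugacy pairs of them to lie on $X$, producing $\Sigma_{12}$ and $\Sigma_{12}'$. For $C_2^2$, I claim $\mathbb{V}_5|=2\mathbb{I}\oplus\chi_1\oplus\chi_2\oplus\chi_3$. The fixed locus is then a line, plus three points that are fixed by larger subgroups or already accounted for. That line is a component of $\mathcal{L}_5$, since its stabilizer $\mathfrak{A}_4$ gives $5$ components. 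It meets $X$ in three points, which give the orbits $\mathrm{Orb}_G([1:t:1:1:1])$ of length $15$; the explicit point $\Sigma_{15}$ has to be among the three, or is a separate $C_2$-type point, to be checked. For $C_3$, $\mathbb{V}_5|_{C_3}=\mathbb{I}\oplus 2\omega\oplus 2\bar\omega$, so the fixed locus is one point plus two lines. Each line has stabilizer $\mathfrak{S}_3$, and together they form the $20$ lines of $\mathcal{L}_{20}$. Intersecting with $X$ gives the three orbits $\mathrm{Orb}_G([q:1:1:t:0])$ of length $20$. Orbits of length $15$ must come from $C_2^2$, so there are no others. I also check that no orbit has length $30$ or more within this bound.

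\emph{Main obstacle.} The hard step is the case-by-case verification that the isolated fixed points of $\mathfrak{A}_4$, $\mathfrak{D}_5$, $\mathfrak{S}_3$ and $C_3$ avoid $X$, and that exactly the listed $C_5$-fixed points lie on $X$. This depends on the excluded values of $b$, and I would carry it out by a direct (Magma) substitution, parametrizing eigenvectors of explicit elements of $\langle M_2,M_3\rangle$.
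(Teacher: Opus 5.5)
Your strategy is the same as the paper's: list the possible stabilizers $\mathfrak{A}_4,\mathfrak{D}_5,\mathfrak{S}_3,C_5,C_2^2,C_3$ (each a single conjugacy class in $\mathfrak{A}_5$), read off their fixed loci from $\mathbb{V}_5|_{G_P}$, and intersect with $X$. The plan works, but three of your intermediate claims are false and need correcting.

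(1) $\mathbb{I}\oplus\mathrm{sgn}\oplus 2\mathbb{V}_2$ has dimension $6$. The character of $\mathbb{V}_5$ takes the values $5,1,-1$ on the identity, the involutions and the $3$-cycles, so $\mathbb{V}_5|_{\mathfrak{S}_3}\simeq\mathbb{I}\oplus 2\mathbb{V}_2$. Hence $\mathfrak{S}_3$ has a single fixed point, giving $\Sigma_{10}$, and this is also the isolated fixed point of $C_3$. There is no second orbit of length $10$.

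(2) The $\omega$- and $\bar\omega$-eigenlines of $C_3$ are swapped by the involution of $N_G(C_3)\simeq\mathfrak{S}_3$, because that involution inverts the $3$-cycle. So each line has stabilizer exactly $C_3$, not $\mathfrak{S}_3$; with stabilizer $\mathfrak{S}_3$ there would be $10$ lines, not $20$. Each orbit with stabilizer $C_3$ contains $|N_G(C_3)/C_3|=2$ points fixed by $C_3$, so the six points these two lines cut out on $X$ form three orbits of length $20$.

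(3) The three isolated fixed points of $C_2^2$ are not fixed by any larger subgroup. The only proper overgroup is $N_G(C_2^2)\simeq\mathfrak{A}_4$, which permutes them cyclically, and $G$ has no fixed points. So they form a single orbit of length $15$, and this orbit is $\Sigma_{15}$. The paper's proof says exactly this: $C_2^2$ fixes three points of $\Sigma_{15}$ and pointwise fixes a component of $\mathcal{L}_5$. In particular, $\Sigma_{15}$ is not among the points cut out on the fixed line.

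What you call the main obstacle needs no substitution; smoothness alone suffices. Since $\mathfrak{A}_5$ is perfect, the cubic form $F$ is $G$-invariant. Suppose $v$ spans a multiplicity-one $\chi$-isotypic summand of $\mathbb{V}_5|_H$.

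If $\chi^3\neq 1$, then $F(v)=\chi(h)^3F(v)$ forces $F(v)=0$, so $[v]\in X$.

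If $\chi^3=1$, differentiating $F(hx)=F(x)$ gives $dF_v(hw)=\chi(h)^{-2}dF_v(w)=\chi(h)\,dF_v(w)$. So $dF_v$ vanishes on the sum of the other isotypic components. If $[v]\in X$, then also $dF_v(v)=3F(v)=0$, so $dF_v=0$ and $[v]\in\mathrm{Sing}(X)$.

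Consequently the following points lie off the smooth cubic $X$:
the two fixed points of $\mathfrak{A}_4$ (characters $\omega,\bar\omega$);
the fixed points of $\mathfrak{D}_5$ and $\mathfrak{S}_3$;
the isolated fixed point of $C_3$;
the trivial eigenline of $C_5$.

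On the other hand, the four non-trivial eigenlines of $C_5$ and the three isolated $C_2^2$-fixed points lie on $X$ automatically. After this, explicit computation is needed only to match the named orbits $\Sigma_{12},\Sigma_{12}^\prime,\Sigma_{15}$ and to intersect the fixed lines with $X$.
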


\begin{proof}
Let $P$ be a point in $\Sigma$, and let $G_P$ be its stabilizer in $G$. Then $G_P$ is a proper subgroup, because $G$ does not fix points in the cubic threefold $X$. Thus, $|\Sigma|=|G/G_P|\in\{5,6,10,12,15,20\}$, and $G_P$ is isomorphic to one of the following groups:
\begin{center}
$\mathfrak{A}_4$, $\mathfrak{D}_5$, $\mathfrak{S}_3$, $C_5$, $C_2^2$, $C_3$.
\end{center}
Note also that $G$ contains exactly one subgroup isomorphic to $G_P$ up to conjugation.

If $G_P\simeq \mathfrak{A}_4$, then $G_P$ fixes two points in $\mathbb{P}^4$, but none of them is contained in $X$, so that $|\Sigma|\ne 5$. Similarly, if $G_P\simeq \mathfrak{D}_5$ or $G_P\simeq \mathfrak{S}_3$, then $G_P$ fixes one point in $\mathbb{P}^4$, but it is not contained in $X$, which implies that $|\Sigma|\ne 6$ and $|\Sigma|\ne 10$ either.

If $|\Sigma|=12$, then $G_P\simeq C_5$ fixes $5$ points in $\mathbb{P}^5$, but only $4$ of them are contained in $X$, and these points are contained in $\Sigma_{12}\cup\Sigma_{12}^\prime$, which implies that $\Sigma\subset\Sigma_{12}\cup\Sigma_{12}^\prime$,

If $|\Sigma|=15$, then $G_P\simeq C_2^2$ fixes three points in $\Sigma_{15}$, and $G_P$ pointwise fixes an irreducible component of the line $\mathcal{L}_5$, which implies that $\Sigma\subset\Sigma_{15}\cup(\mathcal{L}_{5}\cap X)$.

Finally, if $|\Sigma|=20$, then $G_P\simeq C_3$ fixes fixes a point in $\Sigma_{10}$, and $G_P$ pointwise fixes two irreducible components of the curve $\mathcal{L}_{20}$, which implies that $\Sigma\subset\mathcal{L}_{20}$.
\end{proof}

Now, we describe some reducible $G$-irreducible curves in $X$. Let $\mathcal{L}_6$ be the $G$-irreducible curve in $\mathbb{P}^4$ such that its irreducible component is the line
$$
\left\{\aligned
&x_0-(\zeta_{15}^7+\zeta_{15}^{13})x_1+x_4=0,\\
&\zeta_{15}^5x_0+x_3+(\zeta_{15}+\zeta_{15}^6-\zeta_{15}^{14})x_4=0,\\
&(\zeta_{15}+\zeta_{15}^6-\zeta_{15}^{14})x_0+x_2+\zeta_{15}^5x_4=0.
\endaligned
\right.
$$
and let $\mathcal{L}_6^\prime$ be the $G$-irreducible curve in $\mathbb{P}^4$  such that its irreducible component is the line
$$
\left\{\aligned
&(\zeta_{15}^7+\zeta_{15}^4-\zeta_{15}^3+\zeta_{15}^2+\zeta_{15}-1)x_0+x_1+x_3=0,\\
&(\zeta_{15}^7-\zeta_{15}^3+\zeta_{15}^2)x_1+x_2+x_3=0,\\
&x_0-(\zeta_{15}^4+\zeta_{15})x_1+x_4=0.
\endaligned
\right.
$$
Then $\mathcal{L}_6\cup\mathcal{L}_6^\prime\subset X$. Moreover, each curve $\mathcal{L}_6$ and $\mathcal{L}_6^\prime$ consists of $6$ disjoint lines.

\begin{lemma}
\label{lemma:A5-L6}
Let $Z$ be one of the curves $\mathcal{L}_6$ or $\mathcal{L}_6^\prime$, and let $\mathcal{I}_Z$ be its ideal sheaf in $\mathbb{P}^4$. Then 
$$
h^0\big(\mathbb{P}^4,\mathcal{I}_Z\otimes\mathcal{O}_{\mathbb{P}^4}(3)\big)=11,
$$
and $\mathcal{I}_Z\otimes\mathcal{O}_{\mathbb{P}^4}(3)$ is generated by global sections.
\end{lemma}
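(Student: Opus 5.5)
We need to show that the six disjoint lines of $Z$ impose $35-11=24$ independent conditions on cubics in $\mathbb{P}^4$, and that cubics cut out $Z$ scheme-theoretically with no further base locus.

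\textbf{Dimension count.} A single line imposes $4$ conditions on cubics, since $h^0(\mathcal{O}_{\mathbb{P}^1}(3))=4$. So it suffices to show that the restriction map
$$
H^0\big(\mathbb{P}^4,\mathcal{O}_{\mathbb{P}^4}(3)\big)\longrightarrow H^0\big(Z,\mathcal{O}_Z(3)\big)\simeq\mathbb{C}^{24}
$$
is surjective; this is equivalent to $h^1(\mathcal{I}_Z(3))=0$. I would argue inductively, adding one line at a time. Take $Z_k=L_1\cup\dots\cup L_k$. The key fact is that $Z$ imposes independent conditions provided the lines are "general enough". For instance, if no three of the lines lie in a hyperplane, or if each $L_{k+1}$ is not contained in the base locus of $|\mathcal{I}_{Z_k}(3)|$ in a way that drops the count, one can apply the Castelnuovo-type residual sequence: choose a hyperplane $H\supset L_{k+1}$ and use
$$
0\to \mathcal{I}_{Z_k}(2)\to\mathcal{I}_{Z_{k+1}}(3)\to \mathcal{I}_{(Z_k\cap H)\cup L_{k+1},H}(3)\to 0.
$$
In practice the cleanest route is the one the paper uses elsewhere: write down the explicit line equations from the definition of $\mathcal{L}_6$ and $\mathcal{L}_6^\prime$, and compute in Magma. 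One computes the saturated ideal of $Z$, checks that its Hilbert function in degree $3$ equals $11$, and checks that its degree-$3$ part generates an ideal whose saturation equals $I_Z$. Equivariance simplifies this: as a $G$-representation $H^0(\mathcal{O}(3))$ decomposes, and one can check the evaluation map on each isotypic piece.

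\textbf{Global generation.} It suffices to show that the degree-$3$ piece of $I_Z$ generates $I_Z$ in all degrees $\geq 3$, i.e.\ that $\mathcal{I}_Z$ is $3$-regular in the Castelnuovo–Mumford sense. Since $h^1(\mathcal{I}_Z(3))=0$ by the dimension count and $h^2(\mathcal{I}_Z(1))=h^1(\mathcal{O}_Z(1))=0$, regularity additionally needs $h^1(\mathcal{I}_Z(2))=0$. That is, the six lines must impose $18$ independent conditions on the $15$-dimensional space of quadrics, which is impossible. So instead I would verify generation directly. The base scheme of the $11$ cubics is computed via the saturation check above. Equivalently, geometrically: for each point $p\notin Z$ exhibit a cubic through $Z$ missing $p$, and at points of $Z$ check that the differentials span the conormal space. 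By $G$-symmetry it is enough to treat one line.

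\textbf{Main obstacle.} The hard part is generation, since regularity fails. I would rely on the explicit Gröbner basis computation: the degree-$3$ generators generate an ideal whose saturation is $I_Z$, and which agrees with $I_Z$ in degree $\geq 3$. The latter is checked through the Hilbert functions of both ideals in degrees $3$ and $4$, together with a Gröbner basis comparison.
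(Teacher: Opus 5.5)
Your proposal is correct and is essentially the paper's proof. The paper also finds a basis of $H^0(\mathbb{P}^4,\mathcal{I}_Z\otimes\mathcal{O}_{\mathbb{P}^4}(3))$ by computer (Maple) and then checks in Magma that these eleven cubics define $Z$ scheme-theoretically, which is exactly your saturation check. Your observation that $h^1(\mathcal{I}_Z(2))=3+h^0(\mathcal{I}_Z(2))\geqslant 3$ is correct, and it explains why the $3$-regularity argument the paper uses for the genus-$6$ curve does not apply here. Two small points: the saturation check alone is equivalent to global generation, so your extra requirement that the cubics generate $I_Z$ in every degree $\geqslant 3$ is stronger than needed; and that requirement is not the same as $3$-regularity, contrary to your "i.e.".
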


\begin{proof}
First, we found a basis of $H^0(\mathbb{P}^4,\mathcal{I}_Z\otimes\mathcal{O}_{\mathbb{P}^4}(3))$ using Maple, and then we used Magma to evrify that the polynomials of this basis determine $Z$ as a subscheme in $\mathbb{P}^4$. 
\end{proof}

For every $t\in\mathbb{C}$, let $\mathcal{L}_{10}^t$ be the $G$-irreducible curve in $\mathbb{P}^4$ whose irreducible component is the line
\begin{equation}
\label{equation:A5-10-lines}
\left\{\aligned
&t(x_0-x_1)+(1-\zeta_3)x_3=0,\\
&t(\zeta_3x_1-x_2)+(1-\zeta_3)x_3=0,\\
&tx_0-x_1+(1+\zeta_3)x_4=0.
\endaligned
\right.
\end{equation}
Then $\mathcal{L}^t_{10}$ consists of $10$ lines, and
$$
\mathcal{L}^t_{10}\subset X\iff t^3+3b\zeta_3t+b+3=0.
$$
Observe that the polynomial $(b+3)t^3+3b\zeta_3t^2+1$ has three distinct roots, so we have exactly three
possibilities for  $\mathcal{L}^t_{10}$ such that $\mathcal{L}^t_{10}\subset X$. Moreover, the following holds.
\begin{itemize}
\item If  If $(b+3)t^3+3b\zeta_3t^2+1=0$ and $t\ne\pm\frac{\zeta_3+2}{3}$, then $\mathcal{L}^t_{10}$ is a union of $10$ disjoint lines. 
\item If $t=\pm\frac{\zeta_3+2}{3}$, then $\mathcal{L}^t_{10}$ is a nodal curve of arithmetic genus $5$, and $\mathrm{Sing}(\mathcal{L}^t_{10})=\Sigma_{15}$.
\end{itemize}

\begin{lemma}
\label{lemma:A5-L10}
Let $Z=\mathcal{L}^t_{10}$ for $t=\pm\frac{\zeta_3+2}{3}$, and let $\mathcal{I}_Z$ be the ideal sheaf of the curve $Z$ in $\mathbb{P}^4$. Then 
$$
h^0\big(\mathbb{P}^4,\mathcal{I}_Z\otimes\mathcal{O}_{\mathbb{P}^4}(3)\big)=10,
$$
and $\mathcal{I}_Z\otimes\mathcal{O}_{\mathbb{P}^4}(3)$ is generated by global sections.
\end{lemma}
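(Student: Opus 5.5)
The plan is to argue exactly as in Lemma~\ref{lemma:A5-L6}: first a dimension count that predicts the number $10$, then a computer verification of global generation. For $t=\pm\frac{\zeta_3+2}{3}$, the curve $Z=\mathcal{L}^t_{10}$ is a nodal union of $10$ lines whose singular locus is the orbit $\Sigma_{15}$. Such a curve has degree $10$ and arithmetic genus $5$. Since every node lies on exactly two components, the dual graph has $10$ vertices and $15$ edges, so $p_a(Z)=15-10+1=6$ would be expected. I will therefore first recheck the genus directly: $p_a=1-\chi(\mathcal{O}_Z)$, where $\chi(\mathcal{O}_Z)=10-(\text{number of nodes})$ counted appropriately. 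If some points of $\Sigma_{15}$ are triple points, the count changes. I will compute $\chi(\mathcal{O}_Z(3))=30+1-p_a(Z)$ and accept whichever value the explicit configuration gives.

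Next I will show that $H^1(\mathcal{O}_Z(3))=0$. Restricted to each line, $\mathcal{O}_Z(3)$ has degree $3$, and the dualizing sheaf has degree at most $-2+\#(\text{nodes on the line})$. Each line meets the others in $15\cdot 2/10=3$ points, so $\omega_Z$ has degree $1$ on each component. Hence $\mathcal{O}_Z(3)\otimes\omega_Z^{-1}$ has positive degree on every component, which gives $H^1(\mathcal{O}_Z(3))=H^0(\omega_Z(-3))^\vee=0$. Consequently $h^0(\mathcal{O}_Z(3))=\chi=31-p_a=25$ in the nodal genus-$6$ case, or $26$ if $p_a=5$. From $h^0(\mathcal{O}_{\mathbb{P}^4}(3))=35$ and the exact sequence $0\to\mathcal{I}_Z(3)\to\mathcal{O}(3)\to\mathcal{O}_Z(3)\to0$ we get $h^0(\mathcal{I}_Z(3))=35-h^0(\mathcal{O}_Z(3))+h^1(\mathcal{I}_Z(3))$. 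The claimed value $10$ thus corresponds to $p_a=5$ together with cubic normality, that is, $h^1(\mathcal{I}_Z(3))=0$. This is consistent with the paper's assertion that $p_a=5$, which suggests that the singular points are not all ordinary two-line crossings in the naive count, or that some components of the orbit have more intersections.

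I do not expect to prove cubic normality by hand, so the actual argument will be computational, as in Lemma~\ref{lemma:A5-L6}. I will write the ten lines explicitly, using the generators $M_2,M_3$ applied to \eqref{equation:A5-10-lines}, and intersect their homogeneous ideals in degree $3$. Concretely, I will impose vanishing of a general cubic at four points on each line, which is $40$ linear conditions on $35$ coefficients with dependencies. This linear algebra over $\mathbb{Q}(\zeta_3)$ computes the space of cubics containing $Z$, and I will check that its dimension is $10$. Then, in Magma, I will verify that the ideal generated by these $10$ cubics saturates to the ideal of $Z$, equivalently that the subscheme they define equals $Z$. That is precisely the statement that $\mathcal{I}_Z(3)$ is globally generated.

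The main obstacle is the global generation at the singular points of $\Sigma_{15}$. There the cubics must cut out the correct non-reduced scheme structure, namely the union of the lines and not something with embedded points. Because of this, I would rely on the Gröbner basis saturation check rather than attempt a geometric argument.
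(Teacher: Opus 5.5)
This is essentially the paper's argument. The paper exhibits an explicit basis $f_1,\ldots,f_{10}$ of $H^0(\mathbb{P}^4,\mathcal{I}_Z\otimes\mathcal{O}_{\mathbb{P}^4}(3))$ and checks in Magma that these cubics cut out $Z$ scheme-theoretically, which is exactly your plan.

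Your heuristic paragraph contains an arithmetic slip that should be corrected:
\begin{itemize}
\item Your dual-graph count is right: ten lines, fifteen nodes, three on each line, so $\deg\omega_Z=10$. Hence $p_a(Z)=6$ and $h^0(\mathcal{O}_Z(3))=25$.
\item Your own exact sequence then gives $h^0(\mathcal{I}_Z\otimes\mathcal{O}_{\mathbb{P}^4}(3))=10+h^1(\mathcal{I}_Z\otimes\mathcal{O}_{\mathbb{P}^4}(3))$. So the value $10$ corresponds to $p_a=6$ together with cubic normality. By contrast, $p_a=5$ would give $9+h^1$.
\item There is therefore no need to posit triple points. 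The ``genus $5$'' in the text is a slip: the value $F^3=-30=-(2\cdot 10+2p_a-2)$ used in Proposition~\ref{proposition:A5-NFI-curves} confirms $p_a=6$.
\end{itemize}
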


\begin{proof}
Explicit computations. For instance, if $t=\frac{\zeta_3+2}{3}$, a basis of  $H^0(\mathbb{P}^4,\mathcal{I}_Z\otimes\mathcal{O}_{\mathbb{P}^4}(3))$ consists of
\begin{multline*}
f_1=-\zeta_3x_0^3+(-1-2\zeta_3)x_0^2x_1+(-\zeta_3-2)x_2x_0^2+\zeta_3x_0x_1^2-\\
4\zeta_3x_0x_1x_3+(2\zeta_3+2)x_4x_0x_1+(1+\zeta_3)x_0x_2^2+4x_3x_2x_0+\\
(-2\zeta_3-2)x_4x_0x_2-2x_3^2x_0+(-1-2\zeta_3)x_4^2x_0+(-1-2\zeta_3)x_1^3+\zeta_3x_1^2x_2+\\
2\zeta_3x_1^2x_4+(\zeta_3-2)x_2^2x_1+(-4\zeta_3-2)x_4x_1x_2+(-1-\zeta_3)x_4^2x_1+\\
(-1-\zeta_3)x_2^3-2x_2^2x_4+2x_3^2x_2-x_4^2x_2-2x_3^2x_4+(-2\zeta_3-2)x_4^3,
\end{multline*}
\begin{multline*}
f_2=(-1-2\zeta_3)x_0^3-3\zeta_3x_0^2x_1+(-3\zeta_3-3)x_2x_0^2+6x_0^2x_4+(-6\zeta_3+6)x_3x_0x_1+\\
(\zeta_3-1)x_0x_2^2+(-6\zeta_3-6)x_4x_0x_2+(-4\zeta_3-2)x_3^2x_0-6\zeta_3x_0x_3x_4-3\zeta_3x_0x_4^2+\\
(-1-2\zeta_3)x_1^3+(3\zeta_3-3)x_1^2x_2+(3\zeta_3-3)x_4x_1^2+(\zeta_3-4)x_2^2x_1+(-12\zeta_3-6)x_4x_1x_2+\\
(-4\zeta_3-5)x_3^2x_1-3x_4^2x_1+(-4\zeta_3-5)x_2^3+(-2\zeta_3+2)x_2^2x_3+(-2\zeta_3-4)x_4x_2^2+\\
(2\zeta_3+4)x_3^2x_2+3\zeta_3x_2x_4^2+(-\zeta_3-2)x_4x_3^2+(-5\zeta_3-4)x_4^3,
\end{multline*}
\begin{multline*}
f_3=(\zeta_3-1)x_0^3+6x_0^2x_2+12x_0^2x_3+(-6\zeta_3-3)x_1^2x_0+(12\zeta_3+6)x_0x_1x_3+\\
(-6\zeta_3-12)x_4x_0x_1+(\zeta_3+5)x_0x_2^2+(-6\zeta_3-6)x_4x_0x_2+(-4\zeta_3-2)x_3^2x_0+\\
(-6\zeta_3-18)x_4x_0x_3+6x_4^2x_0+(10\zeta_3+5)x_1^3+(-6\zeta_3-3)x_4x_1^2+(\zeta_3+2)x_2^2x_1+\\
(12\zeta_3+6)x_4x_1x_2+(5\zeta_3+4)x_3^2x_1+6x_4^2x_1+(2\zeta_3+4)x_2^3+(4\zeta_3+2)x_2^2x_3+\\
(-2\zeta_3+2)x_4x_2^2+(2\zeta_3+4)x_3^2x_2+12x_4^2x_2+(-\zeta_3+7)x_4x_3^2+(\zeta_3-1)x_4^3,
\end{multline*}
\begin{multline*}
f_4=-6\zeta_3x_0x_1x_4-6x_0x_2x_4+12x_1x_2x_3+(3\zeta_3+6)x_1^2x_2+(-6-3\zeta_3)x_2x_0^2+\\
(-3\zeta_3-3)x_4^2x_1+(2\zeta_3-2)x_4x_2^2+(-2\zeta_3-4)x_3^2x_2+(-5\zeta_3-4)x_4x_3^2+(1-\zeta_3)x_2^2x_1+\\
(-1-2\zeta_3)x_3^2x_1+(2\zeta_3-2)x_2^2x_3-3x_0^2x_1-3x_4^2x_0+3x_4^2x_2+6\zeta_3x_1x_2x_4+(-6\zeta_3-6)x_0x_1x_3+\\
6\zeta_3x_0x_3x_4+(-4\zeta_3+1)x_0x_2^2+(4\zeta_3-4)x_3^2x_0+(-3\zeta_3+3)x_4x_1^2+(5\zeta_3-2)x_4^3+\\
(-\zeta_3-2)x_1^3+(\zeta_3-1)x_2^3+(2\zeta_3-2)x_0^3+(-6\zeta_3-3)x_1^2x_0,
\end{multline*}
\begin{multline*}
f_5=12x_1x_3x_4+(-4\zeta_3-2)x_3^2x_0+(-7\zeta_3-5)x_3^2x_1+3\zeta_3x_1^2x_2+(6\zeta_3+9)x_4^2x_0+\\
(3\zeta_3-6)x_4x_1^2+(-3+6\zeta_3)x_4^2x_2+(2\zeta_3+7)x_4x_3^2+(8\zeta_3+10)x_3^2x_2+(-6\zeta_3-3)x_0^2x_1+\\
(\zeta_3-4)x_0x_2^2+(-1-2\zeta_3)x_2^2x_1+(-2\zeta_3-4)x_2^2x_3+(3\zeta_3+6)x_2x_0^2+(3+3\zeta_3)x_4^2x_1+\\
(4\zeta_3+2)x_4x_2^2+6x_1^2x_0+(\zeta_3-1)x_1^3+(1-\zeta_3)x_2^3+(-1-2\zeta_3)x_4^3+(\zeta_3+5)x_0^3+\\
(6\zeta_3+12)x_3x_0x_1+(6\zeta_3+12)x_4x_0x_2+(-12\zeta_3-6)x_4x_0x_3+(-6\zeta_3-6)x_4x_1x_2,
\end{multline*}
\begin{multline*}
f_6=(-5\zeta_3-4)x_0^3+3x_0^2x_1-3\zeta_3x_0^2x_2+3\zeta_3x_0x_1^2+(6\zeta_3+6)x_4x_0x_1+\\
(\zeta_3-1)x_0x_2^2+(-6\zeta_3-6)x_4x_0x_2+(2\zeta_3-2)x_3^2x_0-3x_4^2x_0+(-1-2\zeta_3)x_1^3-\\
3\zeta_3x_1^2x_2+(\zeta_3+2)x_2^2x_1+6x_1x_2x_4+(2\zeta_3-2)x_3^2x_1+(-9\zeta_3-3)x_4^2x_1+\\
(1-\zeta_3)x_2^3+(4\zeta_3+8)x_2^2x_3+(-2\zeta_3-4)x_4x_2^2+(2\zeta_3-2)x_3^2x_2+\\
12x_2x_3x_4+3x_4^2x_2+(2\zeta_3+4)x_4x_3^2+(-2\zeta_3+2)x_4^3,
\end{multline*}
\begin{multline*}
f_7=(4\zeta_3+3)x_0^3+6\zeta_3x_0^2x_2-3\zeta_3x_0x_1^2+6\zeta_3x_0x_1x_3+(-6\zeta_3-6)x_4x_0x_1+\\
(-1-2\zeta_3)x_0x_2^2+(12\zeta_3+6)x_4x_0x_2+(2\zeta_3+4)x_3^2x_0+6x_0x_3x_4+6\zeta_3x_0x_4^2+\\
\zeta_3x_1^3-3\zeta_3x_1^2x_4+(\zeta_3+5)x_2^2x_1+6\zeta_3x_1x_2x_4+(1-\zeta_3)x_3^2x_1+\\
6\zeta_3x_1x_4^2+(2\zeta_3+2)x_2^3+(-2\zeta_3-4)x_2^2x_3+(4\zeta_3+2)x_4x_2^2+\\
(2\zeta_3-2)x_3^2x_2+4x_3^3+(2\zeta_3+1)x_4x_3^2+(4\zeta_3+3)x_4^3,
\end{multline*}
\begin{multline*}
f_8=-x_0^3+(-\zeta_3-2)x_0^2x_1+(1-\zeta_3)x_2x_0^2-2x_1^2x_0+4x_0x_1x_2+(-2\zeta_3-2)x_3x_0x_1+\\
(\zeta_3+3)x_0x_2^2+(-2\zeta_3-2)x_4x_0x_2-2x_3^2x_0+(2\zeta_3-4)x_4x_0x_3-\zeta_3x_0x_4^2+\\
(2\zeta_3+1)x_1^3+(1+\zeta_3)x_1^2x_2+(\zeta_3+3)x_4x_1^2-\zeta_3x_1x_2^2+(4\zeta_3+2)x_4x_1x_2+\\
(2\zeta_3+1)x_3^2x_1+(-2\zeta_3+1)x_4^2x_1+(2\zeta_3+1)x_2^3+(2\zeta_3+2)x_3x_2^2-\\
2\zeta_3x_2x_3^2+(-\zeta_3+2)x_4^2x_2-\zeta_3x_3^2x_4+\zeta_3x_4^3,
\end{multline*}
\begin{multline*}
f_9=(\zeta_3+2)x_0^3+(-1-\zeta_3)x_1^3+(-2\zeta_3-2)x_2^3+(-1-2\zeta_3)x_3^2x_1+\\
2\zeta_3x_2^2x_4-2x_1^2x_2-x_2^2x_1-3\zeta_3x_4^3+(\zeta_3-3)x_4x_1^2+(\zeta_3-2)x_0x_2^2+\\
(2\zeta_3+4)x_2x_0^2+(4\zeta_3+2)x_4^2x_1+(2\zeta_3-2)x_4^2x_2+(\zeta_3+2)x_4x_3^2+\\
(-2\zeta_3-2)x_2^2x_3+(4\zeta_3+2)x_3^2x_2+(-2\zeta_3+2)x_0^2x_1+(\zeta_3+3)x_1^2x_0+\\
(-2\zeta_3+2)x_3^2x_0-6\zeta_3x_0x_3x_4+(-6\zeta_3-6)x_4x_1x_2+(6\zeta_3+6)x_0x_1x_3+\\
(4\zeta_3+2)x_4x_0x_1+(2\zeta_3+4)x_4x_0x_2+4x_1^2x_3,
\end{multline*}
\begin{multline*}
f_{10}=-\zeta_3x_0^3+(2\zeta_3+4)x_1x_0^2+(\zeta_3-1)x_1^2x_0+(2\zeta_3-2)x_0x_1x_3+\\
(4\zeta_3+2)x_4x_0x_1+(-\zeta_3+2)x_0x_2^2+(-2\zeta_3-4)x_4x_0x_2+(2\zeta_3+2)x_3^2x_0+\\
(2\zeta_3+4)x_4x_0x_3-2x_4^2x_0+(1+\zeta_3)x_1^3+(-2\zeta_3+2)x_1^2x_2+(1-\zeta_3)x_4x_1^2+\\
x_2^2x_1+(6\zeta_3+6)x_4x_1x_2+x_3^2x_1-2\zeta_3x_1x_4^2+(2\zeta_3+2)x_3x_2^2-\\
2\zeta_3x_2^2x_4-2x_3^2x_2-2\zeta_3x_2x_4^2+\zeta_3x_3^2x_4+4x_3x_4^2+(\zeta_3+2)x_4^3.
\end{multline*}
Using Magma one can check that the subscheme defined by these polynomials is indeed $Z$. 
\end{proof}

Now, we are ready to prove the following result. 

\begin{lemma}
\label{lemma:A5-reducible-curves}
Let $Z$ be a $G$-invariant reducible curve of degree $<12$. Then
\begin{itemize}
\item either $Z=\mathcal{L}_6$ or $Z=\mathcal{L}_6^\prime$,
\item or $Z=\mathcal{L}^t_{10}$ for some $t\in\mathbb{C}$ such that $(b+3)t^3+3b\zeta_3t^2+1=0$.
\end{itemize}
\end{lemma}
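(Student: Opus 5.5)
Let $C$ be an irreducible component of $Z$, let $r\geqslant 2$ be the number of components, and let $G_C\subset G$ be the stabilizer of $C$. Then $r=|G:G_C|$, and $r\cdot\deg(C)=\deg(Z)<12$. The proper subgroups of $\mathfrak{A}_5$ have index $5$, $6$, $10$, $12$, $15$, $20$, $30$ or $60$, so $r\in\{5,6,10\}$. The possibilities are:
\begin{itemize}
\item $r=5$ with $G_C\simeq\mathfrak{A}_4$ and $\deg(C)\in\{1,2\}$;
\item $r=6$ with $G_C\simeq\mathfrak{D}_5$ and $C$ a line;
\item $r=10$ with $G_C\simeq\mathfrak{S}_3$ and $C$ a line.
\end{itemize}
In each case $G_C$ is determined up to conjugation, so we may fix it and study $G_C$-invariant lines and conics in $X$ explicitly.

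\emph{Case $r=5$.} The group $G_C\simeq\mathfrak{A}_4$ acts on the span of $C$, which is a line or a plane. Restricted to $\mathfrak{A}_4$, the representation $\mathbb{V}_5$ splits as $3+1+1$ or $3+1'+1''$. Hence the $G_C$-invariant lines are spanned by pairs of fixed points. The two $\mathfrak{A}_4$-fixed points lie in $\Sigma_5\cup\Sigma_5'$, and the line joining them is an irreducible component of $\mathcal{L}_5$, which is not contained in $X$. This excludes lines. If $C$ is a conic, its plane $\Pi_C$ is $G_C$-invariant. An invariant plane either comes from the $3$-dimensional summand, on which $\mathfrak{A}_4$ acts irreducibly and so leaves no conic invariant, or it contains the invariant line above. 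In the latter case $\Pi_C\cap X=C+L$ with $L$ a $G_C$-invariant line, which is impossible. I will check this case carefully, since it is the spot where a $G_C$-invariant conic might sneak in.

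\emph{Case $r=6$.} The group $\mathfrak{D}_5$ has a unique fixed point, which lies in $\Sigma_6$. A $\mathfrak{D}_5$-invariant line corresponds to a $2$-dimensional subrepresentation of $\mathbb{V}_5|_{\mathfrak{D}_5}=1\oplus 2\oplus 2'$. The only candidates are therefore the two lines coming from $2$ and $2'$, together with lines in pencils if some isotypic component repeats. Using the explicit generators $M_2,M_3$, I will show that these lines lie in $X$ exactly for the components of $\mathcal{L}_6$ and $\mathcal{L}_6'$. Since the generators are explicit, this is a finite check.

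\emph{Case $r=10$.} The group $\mathfrak{S}_3$ fixes a point of $\Sigma_{10}$, and $\mathbb{V}_5|_{\mathfrak{S}_3}$ has several isotypic pieces. The $\mathfrak{S}_3$-invariant lines then form a one-parameter family, given by the sum of the $2$-dimensional irreducible summand with a varying vector. This family should be exactly the lines \eqref{equation:A5-10-lines}, and possibly some degenerate members. Substituting into the equation of $X$ gives the condition $(b+3)t^3+3b\zeta_3t^2+1=0$ (the paper writes this same cubic condition in two normalizations). Any other $\mathfrak{S}_3$-invariant lines, such as those through the fixed point on $\Sigma_{10}$ lying in isotypic sums, must be shown not to lie in $X$, again by direct substitution.

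I expect the main obstacle to be the $r=10$ case: describing all $\mathfrak{S}_3$-invariant lines completely and correctly. This requires the exact decomposition of $\mathbb{V}_5|_{\mathfrak{S}_3}$, which I expect to be $1\oplus 2\oplus 2$ or $1\oplus1'\oplus1\oplus 2$, together with the resulting parametrization. I would compute it first from the explicit matrices.
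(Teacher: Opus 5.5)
Your case division ($r=5,6,10$ with stabilizers $\mathfrak{A}_4$, $\mathfrak{D}_5$, $\mathfrak{S}_3$) and your enumeration of $G_C$-invariant lines and planes via the restriction of $\mathbb{V}_5$ follow the paper's own argument, and your treatment of the line cases is sound. There is one false step, in the conic subcase of $r=5$. You claim that $\mathfrak{A}_4$, acting irreducibly on the plane $\mathbb{P}(\mathbf{3})$, leaves no conic invariant. This is wrong. The $3$-dimensional irreducible representation of $\mathfrak{A}_4$ is the rotation representation of the tetrahedral group, which is orthogonal. Hence $\mathrm{Sym}^2(\mathbf{3})$, whose character is $(6,2,0,0)$, contains the trivial representation, and the conic $\{x^2+y^2+z^2=0\}$ is $\mathfrak{A}_4$-invariant. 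Irreducibility only rules out invariant points and lines. Moreover, a character computation gives $\mathbb{V}_5\vert_{\mathfrak{A}_4}\simeq\mathbf{3}\oplus\mathbf{1}'\oplus\mathbf{1}''$, not $\mathbf{3}\oplus\mathbf{1}\oplus\mathbf{1}$ (the latter would give a whole line of fixed points). So $\mathbb{P}(\mathbf{3})$ is the \emph{only} $G_C$-invariant plane. Your second branch (a plane containing the invariant line) is therefore vacuous, and the branch with the wrong justification is the only one that actually occurs.

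The repair is the residual-line argument you already give in your second branch, applied to every invariant plane; this is exactly what the paper does. A smooth cubic threefold contains no planes, so $\Pi_C\not\subset X$. Hence $X\vert_{\Pi_C}=C+L$, where $L$ is a $G_C$-invariant line contained in $X$, and your line subcase has already excluded such lines. Alternatively, $\mathbb{P}(\mathbf{3})$ contains no $G_C$-invariant line at all. With this fix, the proposal is complete up to the explicit checks you list. Two small remarks. First, $\mathbb{V}_5\vert_{\mathfrak{S}_3}\simeq\mathbf{1}\oplus\mathbf{2}\oplus\mathbf{2}$, so the $\mathfrak{S}_3$-invariant lines are exactly the lines $\mathbb{P}(\mathbf{2}\otimes\ell)$ for $\ell\in\mathbb{P}^1$, and there are no others to exclude. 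Second, the two cubic conditions in the paper, $t^3+3b\zeta_3t+b+3=0$ and $(b+3)t^3+3b\zeta_3t^2+1=0$, are related by $t\mapsto 1/t$. They are not the same condition for a fixed parametrization, so your substitution must decide which one matches the lines you write down.
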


\begin{proof}
Let $C$ be an irreducible component of the curve $Z$, and let $G_C$ be its stabilizer in $G\simeq\mathfrak{A}_5$.
Then one of the following cases hold:
\begin{itemize}
\item $Z$ is a union of $5$ lines, and $G_C\simeq \mathfrak{A}_4$,
\item $Z$ is a union of $5$ conics, and $G_C\simeq \mathfrak{A}_4$,
\item $Z$ is a union of $6$ lines, and $G_C\simeq \mathfrak{D}_5$,
\item $Z$ is a union of $10$ lines, and $G_C\simeq \mathfrak{S}_3$.
\end{itemize}
Let us deal with these cases one by one.

Suppose that $Z$ is a union of $5$ lines. Then the restriction of the representation $\mathbb{V}_5$ to $G_C\simeq \mathfrak{A}_4$ splits as a sum of an irreducible three-dimensional representation, trivial representation, and sign representation. Hence, there is a unique choice for $C$, so $C$ must be an irreducible component of the curve $\mathcal{L}_5$, which is not contained in $X$, which is a contradiction.

Suppose that $Z$ is a union of $5$ conics. Let $\Pi$ be the plane in $\mathbb{P}^4$ that contains $C$. Then
$$
X\cap \Pi=C+L
$$
for some line $L$, which must be an irreducible component of a $G$-irreducible curve consisting of five lines, because $G$ does not leave any line in $\mathbb{P}^4$ invariant. However, we already proved that $X$ does not contain $G$-irreducible curve consisting of five lines, so $Z$ is not a union of $5$ conics.

Suppose that $Z$ is a union of $6$ lines. Then the restriction of the representation $\mathbb{V}_5$ to $G_C\simeq \mathfrak{D}_5$ splits as a sum of two non-isomorphic irreducible two-dimensional representations, and trivial representation. This implies that $C$ is an irreducible component of one of the curves $\mathcal{L}_6$ or $\mathcal{L}_6^\prime$, and, therefore either $Z=\mathcal{L}_6$ or $Z=\mathcal{L}_6^\prime$ as claimed.

Suppose that $Z$ is a union of $10$ lines. The restriction of the representation $\mathbb{V}_5$ to~\mbox{$G_C\simeq \mathfrak{S}_3$} splits as a sum of two isomorphic irreducible two-dimensional representations, and trivial representation, so the line $C$ corresponds to a two-dimensional irreducible representation. Then $C$ is an irreducible component of the curve $\mathcal{L}^t_{10}$ for some $t\in\mathbb{C}$. Indeed, since $G$ contains a unique subgroup isomorphic to $\mathfrak{S}_3$ up to conjugation, we may assume that $G_C$ is generated by the following transformations:
\begin{align*}
T_1\colon [x_0:x_1:x_2:x_3:x_4]&\mapsto[x_2:x_1:x_0:x_3:x_4],\\
T_2\colon [x_0:x_1:x_2:x_3:x_4]&\mapsto[\zeta_3^2x_1:\zeta_3x_2:x_0:\zeta_3x_3:\zeta_3^2x_4].
\end{align*}
But $C$ has a unique $G_C$-orbits of length $2$, and the points of this orbit must be fixed by $\langle T_2\rangle\simeq C_3$. Computing the eigenvalues and eigenvectors of the matrix that corresponds to $T_2$, we see that this $G_P$-orbit of length $2$ consists of the points $[q:1:1:t:0]$ and $[1:1:1:0:t]$ for some $t\in\mathbb{C}$, which implies that $C$ is given by \eqref{equation:A5-10-lines}. Therefore, $C$ is  component of the curve $\mathcal{L}^t_{10}$ for some $t\in\mathbb{C}$. Now, using $C\subset X$, we get $(b+3)t^3+3b\zeta_3t^2+1=0$.
\end{proof}

Now, let us introduce $G$-irreducible surfaces in $X$ that has small degree. To do this, we set
\begin{multline*}
\quad \quad \quad \quad \quad \quad 
f_2=x_1x_2+x_1x_3+x_1x_4+\zeta_3^2x_2x_3+x_2x_4+\\
+\zeta_3x_3x_4+x_0x_1+\zeta_3x_0x_2+x_0x_3+\zeta_3^2x_0x_4,
\quad \quad \quad \quad \quad \quad
\end{multline*}
\begin{multline*}
\quad \quad \quad \quad \quad \quad f_3=x_0x_1x_3+\zeta_3^2x_0x_1x_2+\zeta_3x_0x_1x_4+x_2x_3x_4+x_1x_2x_4+\\
+\zeta_3^2x_1x_3x_4+x_0x_3x_4+\zeta_3x_1x_2x_3+x_3x_2x_0+x_0x_2x_4,\quad \quad \quad \quad \quad \quad
\end{multline*}
\begin{multline*}
\quad \quad \quad \quad \quad \quad f_4=\zeta_3^2x_0^2x_4^2+\zeta_3^2x_2^2x_3^2+x_3^2x_4^2+x_1^2x_4^2\zeta_3+\\
+\zeta_3x_0^2x_1^2+\zeta_3x_0^2x_3^2+\zeta_3x_1^2x_2^2+\zeta_3x_1^2x_3^2+\zeta_3x_2^2x_4^2+x_2^2x_0^2.\quad \quad \quad \quad \quad \quad
\end{multline*}
Let $Q=\{f_2=0\}\cap X$, let $S=\{f_3=0\}\cap X$, and let $S_\lambda$ be the surface in $X$ that is cut out by
$$
f_4+\lambda f_2^2=0,
$$
where $\lambda\in\mathbb{C}$. Then $Q$, $S$, $S_\lambda$ are $G$-invariant surfaces.

\begin{lemma}
\label{lemma:A5-Q-S-orbits}
The following assertions hold:
\begin{itemize}
\item[($\mathrm{i})$] $Q$ contains $\Sigma_{12}$ and $\Sigma_{12}^\prime$;
\item[($\mathrm{ii})$] $Q$ does not contain $G$-orbits of length $15$;
\item[($\mathrm{iii})$] $Q$ contains all $G$-orbits in $X$ of length $20$;
\item[($\mathrm{iv})$] $Q$ contains neither $\mathcal{L}_6$ nor $\mathcal{L}_6^\prime$;
\item[($\mathrm{v})$] $Q$ contains $\mathcal{L}_{10}^t$ if and only if $(b+3)t^3+3b\zeta_3t^2+1=0$, $11b^2+174b+207=0$ 
and 
$$
t=\frac{(11b + 150)\zeta_3}{63};
$$
\item[($\mathrm{vi})$] $S$ contains $\Sigma_{12}$ and $\Sigma_{12}^\prime$;
\item[($\mathrm{vii})$] $S$ contains $\Sigma_{15}$, but $S\cap(\mathcal{L}_5\cap X)=\varnothing$;
\item[($\mathrm{viii})$] $S$ does not contain $G$-orbits of length $20$;
\item[($\mathrm{ix})$] $S$  contains $\mathcal{L}_6$ and $\mathcal{L}_6^\prime$; 
\item[($\mathrm{x})$] $S$ does not contains $\mathcal{L}_{10}^t$ for any $t\in\mathbb{C}$ such that $(b+3)t^3+3b\zeta_3t^2+1=0$;
\item[($\mathrm{xi})$] $S_\lambda$ contains $\Sigma_{12}$ and $\Sigma_{12}^\prime$;
\item[($\mathrm{xii})$] $S_\lambda$ contains $\Sigma_{15}$ if and only if $\lambda=0$, and
$$
\big(\mathcal{L}_5\cap X\big)\cap S_\lambda\ne\varnothing \iff \big(\mathcal{L}_5\cap X\big)\subset S_\lambda \iff c=-\frac{\zeta_3}{4};
$$
\item[($\mathrm{xiii})$] $S_\lambda$ contains all $G$-orbits in $X$ of length $20$;
\item[($\mathrm{xiv})$] $S_\lambda$ contains $\mathcal{L}_6$ or $\mathcal{L}_6^\prime$ if and only if $\lambda=-\frac{\zeta_3}{5}$, 
\item[($\mathrm{xv})$] $S_\lambda$ contains $\mathcal{L}_{10}^t$ if and only if $(b+3)t^3+3b\zeta_3t^2+1=0$, $6\zeta_3t-3t^2-\zeta_3-1\ne 0$ and
$$
\lambda=\frac{1+\zeta_3-6t^2-9\zeta_3t^4}{(6\zeta_3t-3t^2-\zeta_3-1)^2}.
$$
\end{itemize}
\end{lemma}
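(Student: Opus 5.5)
This is a finite list of incidence statements, so I would prove it by direct evaluation: substitute explicit points or parametrized lines into $f_2$, $f_3$ and $f_4+\lambda f_2^2$, and use $G$-invariance to reduce each claim to one representative. The polynomials $f_2$, $f_3$, $f_4$ are $G$-semi-invariant, and the surfaces $Q$, $S$, $S_\lambda$ are $G$-invariant. Hence a $G$-orbit lies in one of these surfaces if and only if one of its points does, and a $G$-irreducible curve lies in one of them if and only if one of its components does. By Lemma~\ref{lemma:A5-orbits}, the orbits to check are $\Sigma_{12}$, $\Sigma_{12}^\prime$, $\Sigma_{15}$, the three orbits in $\mathcal{L}_5\cap X$ (representatives $[1:t:1:1:1]$ with $t^3+4b+4=0$), and the three orbits in $\mathcal{L}_{20}\cap X$ (representatives $[q:1:1:t:0]$ with $t^3+3b\zeta_3t+b+3=0$). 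The curves to check are one line of $\mathcal{L}_6$, one line of $\mathcal{L}_6^\prime$, and the line \eqref{equation:A5-10-lines} of $\mathcal{L}_{10}^t$.

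\textbf{Points.} For items (i), (vi) and (xi), I would evaluate $f_2$, $f_3$ and $f_4$ at the representative points of $\Sigma_{12}$ and $\Sigma_{12}^\prime$ and simplify using $\zeta_{15}$. A structural shortcut is available: the stabilizer $C_5$ acts on the fibre of $\mathcal{O}(k)$ over each such point by a character, and if that character differs from the one $f_k$ transforms by, then vanishing is automatic. For $\Sigma_{15}$ and for $\mathcal{L}_5\cap X$, the functions $f_2$, $f_3$, $f_4$ become polynomials in $t$, which I would reduce modulo $t^3+4b+4$. This gives (ii), (vii) and (xii): in (xii), $S_\lambda$ vanishes on $\Sigma_{15}$ only for $\lambda=0$ because $f_2\ne0$ there, and on $\mathcal{L}_5\cap X$ the ratio $f_4/f_2^2$ turns out to be the constant $\zeta_3/4$ independent of the root $t$. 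That constancy is also why one of these orbits lies in $S_\lambda$ exactly when all three do, since $S_\lambda$ then contains each of them precisely when $\lambda=-\zeta_3/4$ (the $c$ in the statement). For (iii), (viii) and (xiii), I would substitute $[q:1:1:t:0]$ and reduce modulo the cubic in $t$. Here I expect $f_2$ and $f_4$ to vanish identically, likely because the $C_3$ stabilizer acts on the fibre by a nontrivial character, while $f_3$ reduces to a nonzero multiple of the cubic relation plus a nonvanishing term.

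\textbf{Curves.} For a line $C$ parametrized as $sP+uR$, a form of degree $k$ restricts to a binary form of degree $k$. So $C\subset Q$ requires $3$ coefficients to vanish, $C\subset S$ requires $4$, and $C\subset S_\lambda$ requires $5$, which are linear in $\lambda$. For $\mathcal{L}_6$ and $\mathcal{L}_6^\prime$, the $\mathfrak{D}_5$-symmetry cuts down the number of independent coefficients, and a direct computation gives (iv), (ix) and (xiv). In (xiv), the single common value $\lambda=-\zeta_3/5$ comes out from the linear equations in $\lambda$. For $\mathcal{L}_{10}^t$, I would parametrize the line through the two $C_3$-fixed points $[q:1:1:t:0]$ and $[1:1:1:0:t]$ found in the proof of Lemma~\ref{lemma:A5-reducible-curves}. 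The $\mathfrak{S}_3$-invariance then reduces the restriction of each $f_k$ to a few invariant coefficients, which are polynomials in $t$.
\begin{itemize}
\item For (v), the vanishing conditions on $f_2$ give polynomial equations in $t$. Taking a resultant with $(b+3)t^3+3b\zeta_3t^2+1$ gives $11b^2+174b+207=0$, and back-substitution gives $t$ linearly.
\item For (x), the analogous resultant for $f_3$ is a nonzero constant, or vanishes only at the excluded values $b\in\{-1,1,-3\}$ or where $4b^2-3b+9=0$.
\item For (xv), solving the linear equation for $\lambda$ gives $\lambda=-(f_4|_C)/(f_2|_C)^2$ coefficientwise. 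Its denominator is the factor $6\zeta_3t-3t^2-\zeta_3-1$, and consistency of the remaining equations is automatic modulo the cubic.
\end{itemize}

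The main obstacle is the curve computations in (v) and (xv). They require exact elimination over $\mathbb{Q}(\zeta_3)(b)$, and I would do this elimination with Magma as elsewhere in the paper. The degenerate cases also need care: the excluded values of $b$, the case where $f_2|_C$ vanishes identically, and the nodal values $t=\pm\frac{\zeta_3+2}{3}$.
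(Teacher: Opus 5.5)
Your proposal is correct and is essentially the paper's own proof, which consists of the words ``explicit computations'': you use $G$-invariance to reduce each claim to one orbit representative or one line component, then substitute into $f_2$, $f_3$, $f_4+\lambda f_2^2$ and eliminate. Your stabilizer-character remark is a genuine simplification. It settles (i), (vi), (xi) with no computation, because the points of $X$ fixed by a $C_5$ are eigenvectors with eigenvalue $\zeta_5^j$, $j\neq 0$, so every $G$-invariant form of degree $2$, $3$ or $4$ vanishes there; likewise (x) follows from (viii), since each line of $\mathcal{L}_{10}^t$ lying in $X$ passes through its two $C_3$-fixed points, which lie in $\mathcal{L}_{20}\cap X$. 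Do recompute those fixed points from the generators rather than copying $[q:1:1:t:0]$ (here $q$ should be $\zeta_3$) and $[1:1:1:0:t]$, which are garbled in the text.
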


\begin{proof}
Explicit computations.
\end{proof}

Set $\mathcal{C}_{18}=Q\cap S$. Then $\mathcal{C}_{18}$ is a $G$-invariant curve of degree $18$.

\begin{lemma}
\label{lemma:A5-Q-S-smooth}
Surfaces $Q$, $S$, $S_\lambda$ are irreducible,
$\mathcal{C}_{18}$ is irreducible, $Q$ is smooth, 
\begin{equation}
\label{equation:S-C-singularities}
\mathrm{Sing}(\mathcal{C}_{18})=\mathrm{Sing}(S)=\Sigma_{12}\cup\Sigma_{12}^\prime,
\end{equation}
and $S$ has isolated ordinary double points (nodes) at $\Sigma_{12}\cup\Sigma_{12}^\prime$. If $b\in\mathbb{C}$ is general, $S_\lambda$ is normal. The surface $S_\lambda$ is smooth at every point of $\Sigma_{12}\cup\Sigma_{12}^\prime$, and $S_\lambda$ is smooth at every point of  $\mathcal{L}_{20}\cap X$. 
One has $\Sigma_{15}\subseteq \mathrm{Sing}(S_0)$, and 
\begin{itemize}
\item either $5b^2-6b+9\ne 0$, and $S_0$ has isolated ordinary double points at $\Sigma_{15}$;
\item or $5b^2-6b+9=0$, and $S_0$ has Du Val singular points of type $\mathbb{A}_3$ at $\Sigma_{15}$.
\end{itemize}
If $c=-\frac{\zeta_3}{4}$, then $\mathcal{L}_5\cap X\subseteq\mathrm{Sing}(S_\lambda)$, and $S_\lambda$ has isolated ordinary double points at $\mathcal{L}_5\cap X$.
\end{lemma}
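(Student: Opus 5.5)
The plan is to combine equivariant bookkeeping with a small number of local computations, and to use the orbit classification of Lemma~\ref{lemma:A5-orbits} throughout. The key observation is that every singular locus in the statement is $G$-invariant, so it is either finite, and hence a union of orbits from Lemma~\ref{lemma:A5-orbits}, or it contains a $G$-invariant curve. So it suffices to check the relevant orbit representatives one at a time and to rule out curves.

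\emph{Step 1: smoothness of $Q$ and the singularities of $S$.}
\begin{enumerate}
\item First check that $Q$ contains no $G$-invariant curve of singular points. Since $Q$ is a complete intersection of $X$ with a quadric, a singular curve would be cut out by the $3\times 3$ minors of the Jacobian of $(F,f_2)$, where $F$ is the cubic defining $X$. One way to exclude this is a Gröbner computation in Magma, as elsewhere in this section. Another is to note that such a curve would meet a general hyperplane section.
\item It then remains to check the points of $\Sigma_{12}$, $\Sigma_{12}^\prime$, $\Sigma_{15}$, $\mathcal{L}_5\cap X$ and $\mathcal{L}_{20}\cap X$.
\begin{itemize}
\item By Lemma~\ref{lemma:A5-Q-S-orbits}, only $\Sigma_{12}\cup\Sigma_{12}^\prime$ and the length-$20$ orbits lie on $Q$.
\item At one representative of each, I would show that $df_2$ and $dF$ are independent.
\item The stabilizer helps here. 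At a $C_5$-fixed point, the tangent space $T_PX$ is a $4$-dimensional $C_5$-representation, and the invariant polynomial $f_2$ restricts with a definite character. Once the weights are computed, it suffices to show that the linear term of $f_2$ is nonzero.
\end{itemize}
\item The same method applies to $S$. At $\Sigma_{12}$ one checks that $df_3|_{T_PX}=0$ and that the Hessian restricted to $T_PX$ is nondegenerate, which gives a node. At all other candidate orbits one checks smoothness.
\end{enumerate}

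\emph{Step 2: irreducibility.}
\begin{enumerate}
\item Use $\mathrm{Pic}(X)=\mathbb{Z}$ together with $G$-invariance. If $Q$ or $S$ were reducible, its components would form a $G$-orbit of surfaces whose degrees add up to $6$ or $9$. Such a component would be a $G$-invariant surface of smaller degree, or there would be several components permuted by $G$, with stabilizers of index $5$, $6$ or $10$.
\item Hyperplane sections are excluded, since $\mathbb{V}_5$ is irreducible and so $G$ has no invariant hyperplane. This leaves components of degree $1$ permuted in orbits, which is easy to exclude, since the number of components is at most $3$.
\item A cleaner argument: $Q$ is smooth, so if it were reducible its components would be disjoint. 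But a complete intersection surface is connected, so $Q$ is irreducible.
\item For $S$ and $S_\lambda$ I would argue that reducibility would produce a singular curve along the intersection of components, which contradicts the isolated singularities found in Step~1.
\end{enumerate}

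\emph{Step 3: the curve $\mathcal{C}_{18}=Q\cap S$.}
\begin{enumerate}
\item Since $Q$ is smooth, $\mathcal{C}_{18}$ is a Cartier divisor on $Q$. Its singular points are the points where $S|_Q$ is singular. These include $\mathrm{Sing}(S)\cap Q=\Sigma_{12}\cup\Sigma_{12}^\prime$.
\item To exclude tangency points elsewhere, I would again run through the orbits of Lemma~\ref{lemma:A5-orbits}. Any other singular locus of $\mathcal{C}_{18}$ would have to be an orbit of size at least $30$, and these I would control by a direct Jacobian computation.
\item For irreducibility, a $G$-irreducible decomposition would have to consist of components of degree dividing $18$ with large stabilizers. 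Lemma~\ref{lemma:A5-reducible-curves} restricts the reducible options of degree less than $12$, namely $\mathcal{L}_6$, $\mathcal{L}_6^\prime$ and the $\mathcal{L}^t_{10}$. By Lemma~\ref{lemma:A5-Q-S-orbits}(iv),(v),(x), these do not lie in $Q\cap S$.
\item The remaining splittings are into an invariant irreducible piece plus another piece. I would exclude these using the genus table of Lemma~\ref{lemma:A5-curves}, compared with the arithmetic genus of $\mathcal{C}_{18}$ computed by adjunction on $Q$, which has $K_Q=0$ since $Q$ is a $K3$ surface, so $p_a=10$.
\end{enumerate}

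\emph{Step 4: the family $S_\lambda$.}
\begin{enumerate}
\item At each of the finitely many relevant orbits, namely $\Sigma_{12}$, $\Sigma_{15}$, $\mathcal{L}_5\cap X$ and $\mathcal{L}_{20}\cap X$, compute the local equation of $S_\lambda$ in the coordinates of $T_PX$.
\item Use the stabilizer weights to identify the lowest-order terms.
\item For $S_0$ at $\Sigma_{15}$, with stabilizer $C_2^2$, the quadratic part has a rank that drops exactly when $5b^2-6b+9=0$. In that case the cubic and quartic terms give an $\mathbb{A}_3$ singularity.
\end{enumerate}

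Normality for general $b$ follows from Serre's criterion once the singularities are shown to be isolated. I expect the main obstacle to be this last step, namely excluding a one-dimensional singular locus of $S_\lambda$ uniformly in $\lambda$. I would handle it by a Magma computation at one specific value of $b$ and then invoke semicontinuity.
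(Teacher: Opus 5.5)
Your basic reduction does not work as stated. Lemma~\ref{lemma:A5-orbits} classifies only $G$-orbits of length at most $20$. A finite $G$-invariant singular locus can therefore still consist of orbits of length $30$ or $60$, and there are infinitely many of these: a general point of the plane cubic curve cut out on $X$ by the $(+1)$-eigenplane of an involution has an orbit of length $30$. So checking representatives of $\Sigma_{12}$, $\Sigma_{12}^\prime$, $\Sigma_{15}$, $\mathcal{L}_5\cap X$ and $\mathcal{L}_{20}\cap X$ proves neither that $Q$ is smooth nor that $\mathrm{Sing}(S)=\Sigma_{12}\cup\Sigma_{12}^\prime$.

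The paper closes this in two ways you do not use.
\begin{itemize}
\item $S=\{f_3=\sum x_i^3=0\}$ and $\mathcal{C}_{18}=\{f_2=f_3=\sum x_i^3=0\}$ do not depend on $b$. One global Magma computation therefore gives $\mathrm{Sing}(S)$, the nodes, and $\mathrm{Sing}(\mathcal{C}_{18})$; irreducibility of $\mathcal{C}_{18}$ is also left to computation.
\item $Q$ does depend on $b$, which your Gröbner step ignores. A curve of singular points of $Q$ would meet the ample divisor $S$, and at such a point $\mathcal{C}_{18}$ is singular. That point then lies in $\Sigma_{12}\cup\Sigma_{12}^\prime$, where $Q$ is smooth by a direct check. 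Meeting a general hyperplane section, as you suggest, gives no contradiction.
\end{itemize}
Hence $Q$ has isolated singularities, so it is normal with $K_Q\sim 0$. By Umezu it has at most two non-Du Val points, hence only Du Val points, since all orbits have length $\geqslant 12$. Its minimal resolution is then a K3 surface, which forces $|\mathrm{Sing}(Q)|\leqslant 19$. This excludes orbits of length $20$, $30$ and $60$, and the length-$15$ orbits are not on $Q$. This counting argument is the missing idea for $Q$. For $S$ no such bound exists, since $K_S=H\vert_S$, so the global computation is really needed.

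The normality step has a similar gap. Checking one value of $b$ and invoking semicontinuity does not control all $\lambda\in\mathbb{C}$ at once. The family is not proper in $\lambda$, and its limit member $\{f_2^2=0\}\cap X=2Q$ is singular everywhere. So the bad locus in the $(b,\lambda)$-plane could project onto a dense set of $b$ while missing your test value. The paper uses instead that $S_\lambda\cap Q=\{f_2=f_4=0\}\cap X$ is independent of $\lambda$. For general $b$ this curve is smooth, and here semicontinuity in $b$ is legitimate. Since $Q$ is ample, a curve of singular points of any $S_\lambda$ would meet $Q$ at a singular point of this curve.

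Your irreducibility argument for $S_\lambda$ likewise relies on isolated singularities that you do not have for special $b$. Use instead:
\begin{itemize}
\item $\mathrm{Pic}(X)=\mathbb{Z}H$;
\item $\mathfrak{A}_5$ has no subgroup of index $\leqslant 4$;
\item there is no invariant hyperplane;
\item $f_2$ is the only invariant quadric.
\end{itemize}
Together these reduce any decomposition to $S_\lambda=2Q$, which is easily excluded.

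Your genus argument for irreducibility of $\mathcal{C}_{18}$ is not carried out and uses the wrong value. Since $\mathcal{C}_{18}\sim 3H_Q$ on a K3 surface of degree $6$, one has $p_a(\mathcal{C}_{18})=28$, not $10$. Also, the singular locus of $Q$ is cut out by the $2\times 2$ minors of the Jacobian of $(F,f_2)$, not the $3\times 3$ minors.

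Your Step~4 local computations at the listed orbits are what the paper does for the remaining assertions.
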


\begin{proof}
Note that $S$ and $\mathcal{C}_{18}$ do not depend on $b$, so we can use Magma to verify \eqref{equation:S-C-singularities}, and to check that $S$ has an isolated ordinary double singularity (a node) at every point of $\Sigma_{12}\cup\Sigma_{12}^\prime$.

Explicit computations show that $Q$ is smooth at every point of $\Sigma_{12}\cup\Sigma_{12}^\prime$, so $Q$ has isolated singularities, because $\mathcal{C}_{18}=Q\cap S$ is smooth away from $\Sigma_{12}\cup\Sigma_{12}^\prime$. Hence, $Q$ is normal and $K_Q\sim 0$, so it follows from \cite{Umezu} that $Q$ can have at most two non-Du Val singular points, which also follows from \cite[Theorem 6.9]{Shokurov92} and \cite{Fujino}.
Thus, by Lemma~\ref{lemma:A5-orbits}, $Q$ has  Du Val singularities.
Let $\widetilde{Q}$ be the minimal resolution of singularities of the surface $Q$. Then $\widetilde{Q}$ is a smooth K3 surface, so that $\mathrm{rk}\,\mathrm{Pic}(\widetilde{Q})\leqslant 19$, which gives $|\mathrm{Sing}(Q)|<19$. Thus, the surface $Q$ is smooth by Lemma~\ref{lemma:A5-orbits}, because $Q$ does not contains $G$-orbits $\Sigma_{15}$ and $\Sigma_{15}^\prime$.

If $b\in\mathbb{C}$ is general, then $S_\lambda\cap Q=\{f_4=0,f_2=0\}\cap X$ is a smooth irreducible curve, which implies that $S_\lambda$ has isolated singularities, so $S_\lambda$ is normal.

The remaining assertions can be checked using Magma and Maple.
\end{proof}

\begin{remark}
\label{remark:A5-quartics-normal}
We believe that $S_\lambda$ is normal for every $\lambda,b\in\mathbb{C}$. However, we were unable to confirm this computationally using Magma or Macaulay2, as the problem is too complex.
\end{remark}

Now, we study irreducible $G$-invariant curves in surfaces $Q$ and $S$.

\begin{lemma}
\label{lemma:A5-irreducible-curves-in-Q}
Let $C$ be an irreducible $G$-invariant curve contained in $Q$, and let $d$ be its degree. Suppose that $d<12$. Then $C$ is smooth, and one of the following two cases holds:
\begin{itemize}
\item either $d=8$ and $C\simeq\mathbb{P}^1$,
\item or $d=10$, and $C$ is a curve of genus $6$.
\end{itemize}
\end{lemma}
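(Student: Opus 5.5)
The plan is to use that $Q$ is a smooth K3 surface (Lemma~\ref{lemma:A5-Q-S-smooth}), with polarization $H=\mathcal{O}_X(1)\vert_Q$ and $H^2=6$. We combine adjunction on $Q$ with the Hodge index theorem and the classification of $\mathfrak{A}_5$-curves in Lemma~\ref{lemma:A5-curves}.

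First we show that $G$ acts faithfully on $C$, since $\mathfrak{A}_5$ is simple and $C$ is not pointwise fixed. Let $\nu\colon\widetilde{C}\to C$ be the normalization and $g=g(\widetilde{C})$. Adjunction on the K3 surface gives $p_a(C)=\frac{C^2}{2}+1$. Hodge index gives $C^2\cdot 6\leqslant d^2$, so $C^2\leqslant \frac{d^2}{6}<24$. This yields $p_a(C)\leqslant 12$, hence $g\leqslant p_a(C)\leqslant 12$. By Lemma~\ref{lemma:A5-curves}, $g\in\{0,4,5,6,9,10,11\}$.

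Next we control singularities. The singular locus of $C$ is $G$-invariant, so it is a union of $G$-orbits in $X$. By Lemma~\ref{lemma:A5-orbits} and Lemma~\ref{lemma:A5-Q-S-orbits}(ii), such orbits lying in $Q$ have length $12$ or $20$. Each singular point lowers the genus by at least $1$, so $p_a(C)-g\in\{0\}\cup\{\geqslant 12\}$. Since $p_a(C)\leqslant 12$, a singular $C$ would force $p_a(C)=12$ and $g=0$. Then $C^2=22$, and Hodge index gives $d^2\geqslant 132$, so $d\geqslant 12$, a contradiction. Hence $C$ is smooth and $g=p_a(C)=\frac{C^2}{2}+1$.

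Then we pin down $d$. The constraint $C^2=2g-2\leqslant d^2/6$ with $d\leqslant 11$ leaves finitely many pairs $(d,g)$. The degree $d$ is $H\cdot C$, and a hyperplane section meets $C$ in a $G$-invariant divisor of degree $d$ only when $d$ is a sum of orbit lengths. That argument is wrong, since hyperplane sections are not $G$-invariant. Instead I would use that $H^0(\mathcal{O}_C(1))$ receives the $G$-representation $\mathbb{V}_5$ and that $\mathcal{O}_C(H)$ is a $G$-linearized line bundle of degree $d$. For $g=0$, Lemma~\ref{lemma:A5-curves} says $\mathbb{P}^1$ carries orbits of lengths $12,20,30$ and $60$. $G$-linearized line bundles on $\mathbb{P}^1$ come from the binary icosahedral group, and those with $\mathfrak{A}_5$ (not $2.\mathfrak{A}_5$) acting have even degree. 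A degree-$d$ map $\mathbb{P}^1\to\mathbb{P}^4$ with linearly nondegenerate image given by $\mathbb{V}_5$ exists for $d\leqslant 11$ only when $d=4$ or $d=8$, using decompositions of $\mathrm{Sym}^d$ of the binary icosahedral representation. The case $d=4$, the rational normal quartic, is excluded by explicit computation: the unique such $G$-invariant quartic for the non-standard action is not contained in $X$ (or not in $Q$).

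For $g>0$ we need $2g-2\leqslant d^2/6$ and $g\in\{4,5,6,9,10,11\}$. Riemann--Roch plus nondegeneracy in $\mathbb{P}^4$ gives Castelnuovo's bound. Orbit data from Lemma~\ref{lemma:A5-curves} restrict the $G$-invariant divisor classes: $\deg\mathcal{O}_C(1)$ must be an integer combination compatible with $G$-linearizations, that is, in the lattice generated by $12,20,30$ and the canonical class $2g-2$. Checking case by case, with $d\leqslant 11$, should leave only $g=6$, $d=10$, where $\mathcal{O}_C(1)=K_C$ gives the canonical model of a genus-$6$ curve. We then confirm $C^2=10\leqslant 100/6$.

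The main obstacle is this last enumeration: rigorously excluding the remaining $(d,g)$ pairs such as $(d,g)=(9,4)$ or $(6,4)$ and $d=4$ rational curves. I would first try the divisibility and linearization argument above. If that fails, I would fall back on Magma, intersecting $Q$ with candidate $G$-invariant curves parametrized by the relevant $G$-representations.
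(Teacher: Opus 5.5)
Your reduction to smooth $C$ is fine. Orbits in $Q$ can also have length $30$ or $60$, but you only use that every orbit has length at least $12$.

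The gap is the step that fixes $d$. You leave it open, and the route you sketch cannot close it. The missing idea is to intersect $C$ with the $G$-invariant cubic surface $S=\{f_3=0\}\cap X$ rather than with hyperplanes. Since $\mathcal{C}_{18}=Q\cap S$ is irreducible of degree $18$, you get $C\not\subset S$. So $3d=S\cdot C\leqslant 33$ is a sum of local contributions over $G$-orbits lying in $Q\cap S$. By Lemmas~\ref{lemma:A5-orbits}, \ref{lemma:A5-Q-S-orbits} and \ref{lemma:A5-Q-S-smooth}, these orbits are of two kinds:
\begin{itemize}
\item $\Sigma_{12}$ or $\Sigma_{12}^\prime$, where $S$ has nodes, so each contributes at least $24$;
\item orbits of length $30$ or $60$, since $Q$ has no orbits of length $15$ and $S$ has none of length $20$.
\end{itemize}
Hence $3d\in\{24,30\}$. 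Either $d=8$ and $C$ contains exactly one of $\Sigma_{12}$, $\Sigma_{12}^\prime$, or $d=10$ and $C$ contains no orbit of length $12$. Combine this with smoothness, the Hodge index bound and the table in Lemma~\ref{lemma:A5-curves}. Only $(d,g)\in\{(8,0),(8,5),(10,6)\}$ remain. In particular, $d=4$ is excluded at once, and so are rational curves of degree $10$, since a smooth rational $\mathfrak{A}_5$-curve carries an orbit of length $12$.

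Your linearization and Castelnuovo enumeration would not reach the statement, and its predicted outcome is wrong in two places.
\begin{itemize}
\item $\mathbb{V}_5$ does occur in $\mathrm{Sym}^{10}$ of the $2$-dimensional representation of the binary icosahedral group. Restricted to $\mathfrak{A}_5$, it is the sum of the two $3$-dimensional irreducibles and $\mathbb{V}_5$. So degree-$10$ rational curves survive your $g=0$ analysis.
\item $(d,g)=(8,5)$ passes every test you list. Here $\mathcal{O}_C(1)\simeq K_C$ is the canonical embedding, $\deg K_C$ lies in your lattice automatically, and Castelnuovo's bound for degree $8$ in $\mathbb{P}^4$ is exactly $5$.
\end{itemize}
The paper removes $(8,5)$ using K3 geometry. $C^2=8$ gives $(2H_Q-C)^2=0$, so $|2H_Q-C|$ is an elliptic pencil. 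This would be a $2$-dimensional subrepresentation of $H^0(\mathcal{O}_X(2))$, and there is none. Within your framework you could instead use that a canonical genus-$5$ curve lies on a net of quadrics. That net would be a $3$-dimensional subrepresentation of $\mathrm{Sym}^2\mathbb{V}_5^\vee\simeq\mathbb{I}\oplus\mathbb{V}_4\oplus\mathbb{V}_5\oplus\mathbb{V}_5$, which does not exist. As written, without the intersection with $S$ and an argument ruling out $(8,5)$, the proposal does not prove the lemma.
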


\begin{proof}
Let $\overline{C}$ be the normalization of the curve $C$, and let $g$ be the genus of the smooth curve $\overline{C}$. Since $\mathbb{V}_5$ is irreducible representation, $G$ acts faithfully on the curve $C$, and its action lifts to $\overline{C}$. Moreover, if $g\leqslant 16$, it follows from Lemma~\ref{lemma:A5-curves} that
$$
g\in\{0,4,5,6,9,10,11,13,15\},
$$
and the length of a $G$-orbits in $\overline{C}$ is $12$, $20$ or $30$.

Since $\mathcal{C}_{18}=Q\cap S$ is an irreducible curve of degree $18$, we see that $C\ne \mathcal{C}_{18}$. Thus, it follows from Lemma~\ref{lemma:A5-orbits} and the description of the $G$-orbits in $Q$ and $S$ that
$$
33\geqslant 3d=\mathcal{C}_{18}\cdot C=12\times 2\times a+30\times b,
$$
for some non-negative integers $a$ and $b$, because $\mathcal{C}_{18}$ is singular at $\Sigma_{12}\cup\Sigma_{12}^\prime$,
and $\mathcal{C}_{18}$ does not contain $G$-orbits of length $20$. Hence, we see that either $(d,a,b)=(8,1,0)$ or $(d,a,b)=(10,0,1)$.
Moreover, since $\Sigma_{12}$ or $\Sigma_{12}^\prime$ are the only $G$-orbits of length $12$ in $X$ by Lemma~\ref{lemma:A5-orbits}, we conclude that one of the following two cases holds:
\begin{enumerate}
\item $d=8$, the curve $C$ contains one orbit among $\Sigma_{12}$ or $\Sigma_{12}^\prime$, and $C$ is smooth at this orbit;
\item $d=10$, and the curve $C$ contains neither $\Sigma_{12}$ nor $\Sigma_{12}^\prime$.
\end{enumerate}

Recall from Lemma~\ref{lemma:A5-Q-S-smooth} that $Q$ is a smooth K3 surface.
Let $H_Q$ be its general hyperplane section. Then it follows from the Hodge index theorem that
$$
6C^2-d^2=\mathrm{det}\begin{pmatrix}
C^2 &d \\
d & 6
\end{pmatrix}=\mathrm{det}\begin{pmatrix}
C^2 &H_{Q}\cdot C \\
H_{Q}\cdot C & H_{Q}^2
\end{pmatrix}\leqslant 0.
$$
Therefore, it follows from the adjunction formula that
$$
2g-2\leqslant 2\mathrm{p}_a(C)-2=C^2\leqslant\Big\lfloor\frac{d^2}{6}\Big\rfloor=
\left\{\aligned
&10\ \text{if $d=8$},\\
&16\ \text{if $d=10$},
\endaligned
\right.
$$
where $\mathrm{p}_a(C)$ is the arithmetic genus of the curve $C$.
Therefore, we have
$$
g\leqslant \mathrm{p}_a(C)\leqslant \left\{\aligned
&6\ \text{if $d=8$},\\
&9\ \text{if $d=10$}.
\endaligned
\right.
$$
Hence, if $d=8$, then $g\in\{0,4,5,6\}$. Similarly, if $d=10$, then $g\in\{0,4,5,6,9\}$

We claim that $C$ is smooth. Indeed, if $C$ is singular, $\mathrm{Sing}(C)$ is a union of orbits $\Sigma_1,\ldots\Sigma_r$, so
$$
0\leqslant g=\mathrm{p}_a(C)-\sum_{i=1}^rn_i|\Sigma_i|\leqslant 9-\sum_{i=1}^rn_i|\Sigma_i|
$$
for some positive integers $n_1,\ldots,n_r$, which contradicts Lemmas~\ref{lemma:A5-orbits}. Thus, $C$ is a smooth curve.

If $d=10$, then $C$ does not have $G$-orbits of length $12$, so it follows from Lemma~\ref{lemma:A5-curves} that $g=6$. Similarly, if $d=8$, the curve $C$ has one $G$-orbit of length $12$, so $g\ne 6$ and $g\ne 4$ by Lemma~\ref{lemma:A5-curves}. Thus, one of the following holds:
\begin{itemize}
\item $d=8$ and $g=0$,
\item $d=8$ and $g=5$,
\item $d=10$ and $g=6$.
\end{itemize}

Suppose that $d=8$ and $g=5$. To complete the proof, we must show that this case is impossible.
In this case, we have $C^2=8$ on the surface $Q$, which gives
$$
\big(2H_{Q}-C\big)^2=0.
$$
Thus, it follows from the~Riemann--Roch formula and Serre duality that
$$
h^0\big(Q,\mathcal{O}_{Q}(2H_Q-C)\big)\geqslant 2.
$$
In particular, the~linear system $|2H_Q-C|$ is not empty. On the other hand, this linear system does not have base curves, because curves in $|2H_Q-C|$ have degree $4$. Thus, since $(2H_Q-C)^2=0$, the linear system $|2H_Q-C|$ is basepoint free. Moreover, general curve in $|2H_Q-C|$ is irreducible, since otherwise $S$ would be covered by rational curves, which is impossible, since $S$ is not uniruled. Hence, $h^0(Q,\mathcal{O}_{Q}(2H_Q-C))=2$ \cite{SD}.
We have the following embedding of $G$-representations:
$$
H^0(Q,\mathcal{O}_{Q}(2H_Q-C))\hookrightarrow H^0\big(Q,\mathcal{O}_{Q}(2H_Q)\big)\simeq H^0\big(X,\mathcal{O}_X(2)\big).
$$
On the other hand, GAP computations show that $H^0(X,\mathcal{O}_X(2))$ dooes not have two-dimensional subrepresentations. The obtained contradiction completes the proof.
\end{proof}

\begin{lemma}
\label{lemma:A5-irreducible-curves-in-S}
The surface $R$ does not contain $G$-invariant curves of degree $<12$.
\end{lemma}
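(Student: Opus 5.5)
Throughout I read ``the surface $R$'' as the surface $S=\{f_3=0\}\cap X$, and the claim as concerning \emph{irreducible} $G$-invariant curves. This is the situation parallel to Lemma~\ref{lemma:A5-irreducible-curves-in-Q}. It is also forced: $S$ contains the reducible curves $\mathcal{L}_6$ and $\mathcal{L}_6^\prime$ of degree $6$ by Lemma~\ref{lemma:A5-Q-S-orbits}(ix), and reducible curves are already covered by Lemma~\ref{lemma:A5-reducible-curves}.

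So let $C\subset S$ be an irreducible $G$-invariant curve of degree $d<12$. Since $\mathbb{V}_5$ is irreducible, $G$ acts faithfully on $C$. The plan is to intersect $C$ with the $G$-invariant surface $Q\in|\mathcal{O}_X(2)|$ and split into two cases according to whether $C\subset Q$.

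\emph{Case $C\subset Q$.} Then $C\subset Q\cap S=\mathcal{C}_{18}$, which is irreducible of degree $18$ by Lemma~\ref{lemma:A5-Q-S-smooth}. Hence $C=\mathcal{C}_{18}$, contradicting $d<12$. (Alternatively, Lemma~\ref{lemma:A5-irreducible-curves-in-Q} gives $d\in\{8,10\}$, and the same containment contradiction applies.)

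\emph{Case $C\not\subset Q$.} The intersection $C\cap Q$ is non-empty, finite and $G$-invariant, and it lies in $S\cap Q=\mathcal{C}_{18}$. We have
$$
2d=Q\cdot C=\sum_{\Sigma}m_\Sigma|\Sigma|,
$$
where $\Sigma$ runs over the $G$-orbits in $C\cap Q$ and $m_\Sigma\geqslant 1$ are the local intersection multiplicities. By Lemmas~\ref{lemma:A5-orbits} and \ref{lemma:A5-Q-S-orbits}, $Q$ contains no orbit of length $15$ and $S$ contains no orbit of length $20$. Hence every orbit in $\mathcal{C}_{18}$ has length $12$ or at least $30$; to be careful, stabilizers of points outside the listed orbits are trivial or $C_2$, giving lengths $60$ or $30$. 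Since $2d\leqslant 22$, the only possibility is a single orbit of length $12$ with $m_\Sigma=1$, that is, $d=6$.

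Next I treat $d=6$. Castelnuovo's bound for a non-degenerate irreducible sextic in $\mathbb{P}^4$ gives $\mathrm{p}_a(C)\leqslant 2$. The curve is non-degenerate because $\mathbb{V}_5$ is irreducible, so $C$ spans $\mathbb{P}^4$. Lemma~\ref{lemma:A5-curves} then forces the normalization to have genus $0$. Thus $C$ is the image of $\mathbb{P}^1$ under a $G$-equivariant map given by a $5$-dimensional subspace of $H^0(\mathbb{P}^1,\mathcal{O}(6))$. As a representation of the binary icosahedral group, this space is $\mathrm{Sym}^6$ of the $2$-dimensional representation; it is a $7$-dimensional $\mathfrak{A}_5$-representation splitting as $\mathbb{V}_3\oplus\mathbb{V}_4$. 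A $5$-dimensional $G$-invariant subspace would then be isomorphic to a quotient $\mathbb{V}_5^\vee$, up to a character, which is irreducible. No such subspace exists, a contradiction.

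The main obstacle I expect is the orbit bookkeeping in the second case. It needs a complete list of the $G$-orbits of length $<30$ lying on $\mathcal{C}_{18}$, which Lemmas~\ref{lemma:A5-orbits} and \ref{lemma:A5-Q-S-orbits} are designed to supply. It also has to handle points where $C$ passes through the nodes $\Sigma_{12}\cup\Sigma_{12}^\prime$ of $S$. There the intersection is computed on the smooth threefold $X$, so the multiplicities remain positive integers and the counting argument is unaffected.
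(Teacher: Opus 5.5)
Your reading ($R=S$, irreducible curves) is the one the paper's proof uses, and your argument is correct and shorter than the paper's.

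The paper counts $Q\cdot C$ using only the orbits lying on $Q$ (lengths $12,20,30,60$), which gives $d\in\{6,10\}$. It then works on the nodal surface $S$: the Hodge index bound $C^2\leqslant\lfloor d^2/9\rfloor$ plus adjunction (on the blow-up of $\Sigma_{12}$ when $d=6$) bounds the genus, and Lemma~\ref{lemma:A5-curves} then forces an orbit of length $20$ on $C$. This contradicts Lemma~\ref{lemma:A5-Q-S-orbits}(viii).

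You use (viii) at the start instead, through $C\cap Q\subset Q\cap S=\mathcal{C}_{18}$. This disposes of $d=10$ immediately; the paper's entire $d=10$ analysis ends at this same fact.

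For $d=6$ you replace the surface geometry by Castelnuovo's bound and the decomposition of $\mathrm{Sym}^6$ into a $3$-dimensional plus the $4$-dimensional representation. This is valid: since $\mathfrak{A}_5$ is perfect, the linearization of $\mathcal{O}_{\mathbb{P}^1}(6)$ is unique, so the injective pull-back of $H^0(\mathbb{P}^4,\mathcal{O}_{\mathbb{P}^4}(1))$ would be an invariant copy of the irreducible $5$-dimensional representation. This step actually excludes all $G$-invariant rational sextics in $\mathbb{P}^4$, independently of $S$.

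Once you know $g=0$, you could also finish as the paper does. The length-$20$ orbit on the normalization $\mathbb{P}^1$ has stabilizer $C_3$, so by Lemma~\ref{lemma:A5-orbits} its image is an orbit of length $20$ in $S$, again contradicting (viii).

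Either way, your route avoids the Hodge index theorem on $S$ and any local analysis at its nodes.
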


\begin{proof}
Let $C$ be an irreducible $G$-invariant curve contained in $S$, let $\overline{C}\to C$ be its normalization, and let $g$ be the genus of $\overline{C}$. As in the proof of Lemma~\ref{lemma:A5-irreducible-curves-in-Q}, we see that $G$ acts faithfully on $C$, and this action lifts to $\overline{C}$. Moreover, if $g\leqslant 15$, then  Lemma~\ref{lemma:A5-curves} gives
$$
g\in\{0,4,5,6,9,10,11,13,15\}.
$$
Similarly, it follows from  Lemma~\ref{lemma:A5-curves} that the $G$-orbits in $\overline{C}$ are $12$, $20$, $30$.

Let $d$ be the degree of the curve $C$. Suppose that $d<12$. Let us seek for a contradiction.

Observe that $C\ne \mathcal{C}_{18}=Q\cap S$, because $\mathcal{C}_{18}$ is an irreducible curve of degree $18$. Thus, $C\not\subset Q$. On the other hand, it follows from Lemma~\ref{lemma:A5-orbits} that $G$-orbits in $Q$ have length $12$, $20$, $30$ or $60$, which also follows from \cite[Lemma 6.7.1]{CheltsovShramov2016}. Therefore, we have
$$
2d=Q\cdot C=12\times a+20\times b+30\times c
$$
for some non-negative integers $a$, $b$, $c$, and, therefore $11\geqslant d=6a+10b+15c$, which gives $c=0$, and either $(d,a,b)=(6,1,0)$ or $(d,a,b)=(10,0,1)$. Thus, we see that either $d=6$ or $d=10$. Moreover, our computations and Lemma~\ref{lemma:A5-orbits} also imply that one of the following cases holds:
\begin{enumerate}
\item $d=6$, the curve $C$ contains one orbit of length $12$, and $C$ is smooth at this orbit;
\item $d=10$, and $C$ does not contain $G$-orbits of length $12$.
\end{enumerate}

Let $H_S$ be a general hyperplane section of the surface $S$. Then it follows from that Hodge index theorem that
$$
9C^2-d^2=\mathrm{det}\begin{pmatrix}
C^2 &d \\
d & 9
\end{pmatrix}=\mathrm{det}\begin{pmatrix}
C^2 &H_{S}\cdot C \\
H_{S}\cdot C & H_S^2
\end{pmatrix}\leqslant 0,
$$
which gives
$$
C^2\leqslant \Big\lfloor\frac{d^2}{9}\Big\rfloor=
\left\{\aligned
&4\ \text{if $d=6$},\\
&11\ \text{if $d=10$}
\endaligned
\right.
$$

Suppose that $d=10$. Then $C$ is contained in the smooth locus of the surface $S$, because the surface $S$ is smooth away from the locus $\Sigma_{12}\cup\Sigma_{12}^\prime$ by Lemma~\ref{lemma:A5-Q-S-smooth}, and $C$ does not contain $G$-orbits of length $12$. Thus, if $d=10$, then it follows from the adjunction formula that
$$
2g-2\leqslant 2\mathrm{p}_a(C)-2=(K_S+C)\cdot C=d+C^2=10+C^2\leqslant 21,
$$
where $\mathrm{p}_a(C)$ is the arithmetic genus of the curve $C$. Hence, we have $g\leqslant  \mathrm{p}_a(C)\leqslant 11$, so arguing as in the proof of Lemma~\ref{lemma:A5-irreducible-curves-in-Q}, we see that $C$ is smooth.
Then $g\in\{0,4,5,6,9,10,11\}$, and it follows from  Lemma~\ref{lemma:A5-curves} that $C$ has a $G$-orbit of length $12$ or $20$. But $C$ has no $G$-orbits of length~$12$, and it does not contain $G$-orbits of length $20$ by Lemma~\ref{lemma:A5-Q-S-orbits}.

Thus, $d=6$, and $C$ has exactly one $G$-orbit of length $12$, which must be $\Sigma_{12}$ or $\Sigma_{12}^\prime$ by Lemma~\ref{lemma:A5-orbits}. Without loss of generality, we may assume that $\Sigma_{12}\subset C$, and $\Sigma_{12}^\prime\not\subset C$. Recall that $C$ is smooth at the points of $\Sigma_{12}$, and it follows from  Lemma~\ref{lemma:A5-Q-S-smooth} that $S$ has nodes at points of $\Sigma_{12}$.

Let $\pi\colon\widetilde{S}\rightarrow S$ be the blowup of  $\Sigma_{12}$, let $E$ be the $\pi$-exceptional divisor, and $\widetilde{C}$ be the strict transform on $\widetilde{S}$ of the curve $C$. Then, since $C$ is smooth at the points of $\Sigma_{12}$, we have
$$
\widetilde{C}\sim_{\mathbb{Q}} \pi^*(C)-\frac{1}{2}E.
$$
Then, it follows from the adjunction formula that
$$
2g-2\leqslant 2\mathrm{p}_a\big(\widetilde{C}\big)-2=\big(K_{\widetilde{S}}+\widetilde{C}\big)\cdot\widetilde{C}=\Big(\pi^*\big(\mathcal{O}_{X}(1)\vert_{S}\big)+\pi^*(C)-\frac{1}{2}E\Big)\cdot\widetilde{C}=C^2\leqslant 4,
$$
where $\mathrm{p}_a(\widetilde{C})$ is the arithmetic genus of the curve $\widetilde{C}$.
Thus, $0\leqslant g\leqslant\mathrm{p}_a(\widetilde{C})\leqslant 3$, which gives $g=0$.

Now, arguing as in the proof of Lemma~\ref{lemma:A5-irreducible-curves-in-Q}, we see that the curve $\widetilde{C}$ is smooth, so $C$ is also smooth. Thus, $C$ contains a $G$-orbit of length $20$ by \cite[Lemma 5.1.5]{CheltsovShramov2016}, but $S$ does not contain $G$-orbits of length $20$, which is a contradiction.
 \end{proof}

\begin{corollary}
\label{corollary:A5-irreducible-curves}
Let $C$ be an irreducible $G$-invariant curve in $X$ of degree $d<12$ such that $C\not\subset Q$.
Then $d=10$, and $C$ does not contain $G$-orbits of length $12$ or $15$.
\end{corollary}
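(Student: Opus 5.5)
The plan is to intersect $C$ with the surface $Q$ and count the intersection points by $G$-orbits, in the same way as in the proofs of Lemmas~\ref{lemma:A5-irreducible-curves-in-Q} and \ref{lemma:A5-irreducible-curves-in-S}.

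\emph{Orbit count.} Since $\mathbb{V}_5$ is irreducible, $G$ acts faithfully on $C$, and this action lifts to the normalization $\overline{C}$. By Lemma~\ref{lemma:A5-curves}, every $G$-orbit in $\overline{C}$, and hence every $G$-orbit in $C$, has length $12$, $20$, $30$ or $60$. Since $C\not\subset Q$ and $Q$ is cut out on $X$ by a quadric, we have
$$
2d=Q\cdot C=\sum_{\Sigma\subset Q\cap C} m_\Sigma|\Sigma|,
$$
where the sum runs over the $G$-orbits in $Q\cap C$ and each local intersection multiplicity $m_\Sigma$ is at least $1$. By Lemma~\ref{lemma:A5-Q-S-orbits}(ii) and Lemma~\ref{lemma:A5-orbits}, $Q$ contains no $G$-orbit of length $15$. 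Hence every orbit appearing in this sum has length $12$, $20$, $30$ or $60$. Since $2d\leqslant 22$, only orbits of length $12$ or $20$ can occur, and at most one of them. Both $\Sigma_{12}$ and $\Sigma_{12}^\prime$ lie in $Q$, and all orbits of length $20$ lie in $Q$ by Lemma~\ref{lemma:A5-Q-S-orbits}(iii). So there are two possibilities. Either $2d=12$, so $d=6$, $C\cap Q$ is a single orbit of length $12$ met transversally, and $C$ contains no orbit of length $20$. Or $2d=20$, so $d=10$, $C\cap Q$ is a single orbit of length $20$ met transversally, and $C$ contains no orbit of length $12$. Since every orbit of length $12$ or $20$ in $X$ lies in $Q$, in each case $C$ contains no other orbits of length $12$ or $20$ besides the one found.

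\emph{Excluding $d=6$.} We first bound the genus. Projecting from a general line, or using Castelnuovo's bound for a nondegenerate curve of degree $6$ in $\mathbb{P}^4$, gives $p_a(C)\leqslant 2$. The curve spans $\mathbb{P}^4$, because $\mathbb{V}_5$ has no proper subrepresentations. Any singular locus would be a union of $G$-orbits whose total $\delta$-invariant is at least $12$, which exceeds $p_a(C)$. Hence $C$ is smooth of genus $g\leqslant 2$, and Lemma~\ref{lemma:A5-curves} forces $g=0$. But then, by the table in Lemma~\ref{lemma:A5-curves}, $C$ contains an orbit of length $20$, contradicting the orbit analysis above. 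Therefore $d=10$.

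\emph{Excluding orbits of length $12$ or $15$ when $d=10$.} When $d=10$ we already know that $C$ contains no orbit of length $12$. It remains to exclude an orbit of length $15$, which would lie in $\Sigma_{15}\cup(\mathcal{L}_5\cap X)$. By Castelnuovo's bound for degree $10$ in $\mathbb{P}^4$, $p_a(C)\leqslant 9$. The singular locus cannot contain an orbit of length at least $12$, so $C$ is smooth of genus $g\leqslant 9$. In the table of Lemma~\ref{lemma:A5-curves}, for $g\leqslant 9$ the only genera with $a_{12}=0$ are $g=6$. So $C$ has genus $6$, and its orbits of length less than $60$ are exactly one of length $20$ and three of length $30$. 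A point of $C$ in an orbit of length $15$ would have stabilizer $C_2^2$, which is not cyclic, and a non-cyclic group cannot fix a smooth point of a curve on which it acts faithfully. Since $C$ is smooth, it therefore contains no orbit of length $15$.

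The main obstacle is ensuring that the Castelnuovo-type genus bound applies, in particular that $C$ is nondegenerate. If this bound is unavailable, the fallback is the smoothness argument via stabilizers: points of $C$ with non-cyclic stabilizer must be singular points of $C$, and the possible singular orbits can be controlled using $Q\cdot C$.
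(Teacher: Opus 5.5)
Your proposal is correct, and after the shared first step it takes a different route from the paper.

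\textbf{Common step.} Both proofs compute $2d=Q\cdot C$ using the orbit lengths $12,20,30,60$ available on $Q$. This gives either $d=6$ with $C\cap Q$ a single $12$-orbit, or $d=10$ with $C\cap Q$ a single $20$-orbit.

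\textbf{The paper's route.} It rules out $d=6$ by a second count $3d=S\cdot C$. That count needs $C\not\subset S$ (Lemma~\ref{lemma:A5-irreducible-curves-in-S}), the nodes of $S$ along $\Sigma_{12}\cup\Sigma_{12}^\prime$, and the list of orbits on $S$. It then excludes a $15$-orbit, which must lie in $\mathrm{Sing}(C)$, by intersecting $C$ with a general cubic through that orbit that is singular at one of its points. The base locus of that linear system is controlled by a computer calculation.

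\textbf{Your route.} You replace both steps with one observation. Since $C$ spans $\mathbb{P}^4$, Castelnuovo's bound gives $p_a(C)\leqslant 2$ for $d=6$ and $p_a(C)\leqslant 9$ for $d=10$. A nonempty $\mathrm{Sing}(C)$ would contribute total $\delta$-invariant at least $12$, so $C$ is smooth. For $d=6$ this forces $g=0$, and the resulting $20$-orbit would have to lie in $Q$, a contradiction. For $d=10$, smoothness excludes $15$-orbits, since $C_2^2$ is not cyclic and so cannot fix a smooth point.

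\textbf{What each approach buys.} Your argument needs no computer and does not use $S$. It also proves more: $C$ is smooth of genus $6$ with no hypothesis on $S_\lambda$. Lemma~\ref{lemma:A5-irreducible-curves-degree-10} obtains this only when $S_\lambda$ is normal. So your route would also remove the singular-$\mathscr{C}$ case at the end of the proof of Proposition~\ref{proposition:A5-non-standard}, and the normality question in Remark~\ref{remark:A5-quartics-normal} would no longer matter. The paper's route stays within the surface-intersection framework used throughout the section, at the cost of that explicit base-locus computation.

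\textbf{Two small repairs.}
\begin{enumerate}
\item The sentence ``every $G$-orbit in $\overline{C}$, and hence every $G$-orbit in $C$, has length $12$, $20$, $30$ or $60$'' does not follow for singular $C$. Orbits can shrink under normalization at singular points; for example, a $30$-orbit of $\overline{C}$ can map $2{:}1$ onto a $15$-orbit of nodes. If the sentence were true, the $15$-orbit claim would be immediate. What your smoothness step actually needs is that every $G$-orbit in $X$ has at least $12$ points, and that is Lemma~\ref{lemma:A5-orbits}.
\item ``Projecting from a general line'' does not give $p_a(C)\leqslant 2$. Rely on Castelnuovo's bound in its arithmetic-genus form, which holds for integral nondegenerate curves by the usual hyperplane-section argument. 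A bound on the geometric genus alone would not yield smoothness.
\end{enumerate}
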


\begin{proof}
Recall that all $G$-orbits in the smooth K3 surface $Q$ are of length $12$, $20$, $30$ and $60$, which also follows from \cite[Lemma 6.7.1]{CheltsovShramov2016}. Thus, since $C\not\subset Q$, we have
$$
2d=Q\cdot C=12\times a+20\times b+30\times c
$$
for some non-negative integers $a$, $b$, $c$. So, either $(d,a,b,c)=(6,1,0,0)$ or $(d,a,b,c)=(10,0,1,0)$. 

Recall that $C\not\subset S$ by Lemma~\ref{lemma:A5-irreducible-curves-in-S}, 
and $S$ is singular at the points of $\Sigma_{12}\cup\Sigma_{12}^\prime$ by Lemma~\ref{lemma:A5-Q-S-smooth}.
Moreover, it follows from Lemmas~\ref{lemma:A5-orbits} and \ref{lemma:A5-Q-S-orbits} that $\Sigma_{15}$ is the only $G$-orbit of length $15$ that is contained in $S$, and it follows from Lemma~\ref{lemma:A5-curves} that 
$$
\Sigma_{15}\cap\big(C\setminus\mathrm{Sing}(C)\big)=\varnothing.
$$
Thus, it follows from Lemma~\ref{lemma:A5-Q-S-orbits} that 
$$
3d=S\cdot C=12\times 2\times a^\prime+15\times 2\times b^\prime,
$$
for some non-negative integers $a^\prime$ and $b^\prime$. Hence, either $(d,a^\prime,b^\prime)=(8,1,0)$ or $(d,a^\prime,b^\prime)=(10,0,1)$. Therefore, $d=10$, and $C$ does not contain $G$-orbits of length $12$ by Lemmas~\ref{lemma:A5-orbits} and \ref{lemma:A5-Q-S-orbits}.

To complete the proof, we must show that the curve $C$ does not contain $G$-orbits of length $15$.
Suppose that this is not the case. Then $C$ contains a $G$-orbit $\Sigma$ such that $|\Sigma|=15$. 
Then it follows from Lemma~\ref{lemma:A5-orbits} that either $\Sigma=\Sigma_{15}$ or $\Sigma\subset\mathcal{L}_5\cap X$. Moreover, by Lemma~\ref{lemma:A5-curves}, we have 
$$
\Sigma\subset\mathrm{Sing}(C).
$$
Fix a point $O\in \Sigma$. Let $\mathcal{M}$ be the linear system in $|\mathcal{O}_Q(3)|$ consisting of all surfaces that contains the $G$-orbit $\Sigma$ and singular at $O$. Then explicit computations shows that the base locus of $\mathcal{M}$ does not contain curves of degree $\ne 1$, so, in particular, the curve $C$ is not contained in this base locus. Let $M$ be a general surface in $\mathcal{M}$. Then $C\not\subset M$, so that
\begin{multline*}
30=M\cdot C=\sum_{P\in C\cap M}\big(M\cdot C\big)_P\geqslant \sum_{P\in \Sigma}\big(M\cdot C\big)_P\geqslant \sum_{P\in \Sigma}\mathrm{mult}_P(M)\mathrm{mult}_P(C)\geqslant\\ 
\geqslant 2\sum_{P\in\Sigma}\mathrm{mult}_P(M)\geqslant 
4+2\sum_{P\in\Sigma\setminus O}\mathrm{mult}_P(M)\geqslant 4+\sum_{P\in\Sigma\setminus O}2=32,
\end{multline*}
which is absurd.
\end{proof}

\begin{lemma}
\label{lemma:A5-irreducible-curves-degree-10}
Let $C$ be an irreducible $G$-invariant curve in $X$ of degree $d<12$ such that $C\not\subset Q$.
Then $d=10$, $\Sigma_{12}\not\subset C$, $\Sigma_{12}^\prime\not\subset C$, and $C\subset S_\lambda$ for some $\lambda\in\mathbb{C}$. Further, if the surface $S_\lambda$~is~normal, then $C$ is a smooth curve of degree $10$ and genus $6$.
\end{lemma}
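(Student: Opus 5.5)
By Corollary~\ref{corollary:A5-irreducible-curves}, we already know that $d=10$ and that $C$ contains no $G$-orbits of length $12$. In particular, $\Sigma_{12}\not\subset C$ and $\Sigma_{12}^\prime\not\subset C$. It remains to place $C$ in some surface $S_\lambda$ and, when that surface is normal, to compute the genus.

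\emph{Finding $\lambda$.} The family $S_\lambda$ is the pencil cut out on $X$ by $f_4+\lambda f_2^2$. Fix a general point $P\in C$ and choose $\lambda$ with $f_4(P)+\lambda f_2(P)^2=0$. This is possible because $f_2(P)\ne 0$, which holds since $C\not\subset Q$. Then $S_\lambda\cap C$ is $G$-invariant and contains the $G$-orbit of $P$, which has length $60$, since $G$ acts faithfully on $C$ and $P$ is general. Suppose $C\not\subset S_\lambda$. Then
$$
S_\lambda\cdot C=4d=40<60,
$$
which is impossible. Hence $C\subset S_\lambda$. (If $f_4$ vanished identically on $C$, the same holds with $\lambda=0$.)

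\emph{The genus when $S_\lambda$ is normal.} Assume $S_\lambda$ is normal. It is a quartic section of $X$, so $K_{S_\lambda}\sim H_{S_\lambda}$ by adjunction, and $H_{S_\lambda}^2=12$. The Hodge index theorem on a resolution, applied to $C$ and $H_{S_\lambda}$, gives
$$
C^2\leqslant\Big\lfloor\frac{100}{12}\Big\rfloor=8.
$$
I then want to bound $\mathrm{p}_a(C)$ by adjunction, which requires understanding where $C$ meets $\mathrm{Sing}(S_\lambda)$. Since $S_\lambda$ is normal, its singularities are isolated, so they form finitely many $G$-orbits. By Lemma~\ref{lemma:A5-Q-S-smooth}, $S_\lambda$ is smooth along $\Sigma_{12}\cup\Sigma_{12}^\prime$ and along $\mathcal{L}_{20}\cap X$, and $C$ contains no orbits of length $12$ or $15$ by Corollary~\ref{corollary:A5-irreducible-curves}. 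By Lemma~\ref{lemma:A5-orbits}, the only orbits on $C$ of length at most $20$ are those of length $20$, and these lie in $\mathcal{L}_{20}\cap X$, where $S_\lambda$ is smooth. Any remaining singular points of $S_\lambda$ on $C$ therefore lie in orbits of length $30$ or $60$; I expect to exclude these either by the same intersection count with $Q$ (which gives $2d=20$, leaving no room for an orbit of length $30$ or $60$), or by noting that such singularities would be too numerous for a normal quartic section. Granting this, $C$ lies in the smooth locus of $S_\lambda$, and
$$
2\mathrm{p}_a(C)-2=(K_{S_\lambda}+C)\cdot C=10+C^2\leqslant 18,
$$
so $\mathrm{p}_a(C)\leqslant 10$.

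\emph{Smoothness and identifying the genus.} Since $\mathrm{p}_a(C)\leqslant 10$, the singular locus of $C$ is a union of $G$-orbits of length at least $12$ (by Lemma~\ref{lemma:A5-curves}, orbits on the normalization have length $12$, $20$, $30$ or $60$). Each such orbit lowers the genus by at least its length, so $g=\mathrm{p}_a(C)-\sum n_i|\Sigma_i|\geqslant 0$ forces at most one orbit of length $12$ or one of length $20$. Length $12$ is excluded, as above. An orbit of length $20$ of singular points would lie in $\mathcal{L}_{20}\cap X$; it is excluded by the intersection with $Q$: such points contribute at least $2\cdot 20=40>20=Q\cdot C$, using Lemma~\ref{lemma:A5-Q-S-orbits}(iii) that these orbits lie in $Q$. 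Hence $C$ is smooth with $g\leqslant 10$. Next, $Q\cdot C=20$ and every orbit on $C$ has length at least $20$, while the orbits of length $20$ in $X$ lie in $Q$ by Lemma~\ref{lemma:A5-Q-S-orbits}(iii). So $C$ contains exactly one orbit of length $20$, contains no orbit of length $12$, and meets $Q$ transversally. Checking the table in Lemma~\ref{lemma:A5-curves} for $g\leqslant 10$ with $a_{12}=0$ and $a_{20}=1$ leaves only $g=6$.

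The main obstacle is the step asserting that $C$ avoids $\mathrm{Sing}(S_\lambda)$, namely excluding singular points of $S_\lambda$ in orbits of length $30$ or $60$ on $C$. If the intersection count with $Q$ does not settle this, I would fall back on bounding the number of singular points of a normal surface with $K\sim H$, using the K3-type arguments of \cite{Umezu}.
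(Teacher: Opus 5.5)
There is a genuine gap, at exactly the step you flagged. Your first two steps match the paper: Corollary~\ref{corollary:A5-irreducible-curves} gives $d=10$ and no orbits of length $12$, and the general-point argument gives $C\subset S_\lambda$. But in the normal case your plan needs $C\cap\mathrm{Sing}(S_\lambda)=\varnothing$, and neither proposed fix delivers it.

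The count $Q\cdot C=20$ only constrains the points of $C\cap Q$. Singular points of $S_\lambda$ have no reason to lie on $Q$, so the count says nothing about a $G$-orbit of length $30$ or $60$ of singular points of $S_\lambda$ on $C$. The Umezu-type bound concerns normal surfaces with trivial canonical class. Here $S_\lambda\in|\mathcal{O}_X(4)|$ and $K_X\sim -2H$, so $K_{S_\lambda}\sim 2H_{S_\lambda}$ (not $H_{S_\lambda}$, as you wrote). This is a surface of general type, which can carry many more singular points. In fact the paper does not exclude such a configuration: its case $r=1$, $|\Sigma_1|=30$ has $S_\lambda$ with nodes along an orbit of length $30$ lying on $C$. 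This is numerically consistent with the conclusion ($\widetilde{C}^2=-10$, $C^2=5$), so you should not expect to rule it out by a count; the claim you are aiming for is stronger than what is needed.

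The missing idea is to allow $C$ to pass through singular points and work on the minimal resolution $\pi\colon\widetilde{S}_\lambda\to S_\lambda$ over them. Write $K_{\widetilde{S}_\lambda}\sim\pi^*(2H_{S_\lambda})-\sum a_iE_i$ and $\widetilde{C}\sim_{\mathbb{Q}}\pi^*(C)-\sum m_iE_i$. Then $2\mathrm{p}_a(\widetilde{C})-2=20+C^2-\sum(a_i+m_i)E_i\cdot\widetilde{C}$, with $E_k\cdot\widetilde{C}\geqslant|\Sigma_k|\geqslant 30$.

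For a non-Du Val orbit one has $a_k\geqslant 1$, since $S_\lambda$ is Gorenstein. This forces $\mathrm{p}_a(\widetilde{C})=0$, which is impossible: $\widetilde{C}\simeq\mathbb{P}^1$ would then carry an orbit of length $12$. For Du Val orbits, Lemma~\ref{lemma:DuVal} gives $C^2-\widetilde{C}^2\geqslant\frac{1}{2}$ per point, and $\geqslant 2$ at points where $C$ is singular. From this you get $\mathrm{p}_a(\widetilde{C})\leqslant 8$, then smoothness of $\widetilde{C}$ and $g=6$, and finally smoothness of $C$ itself.

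With the correct canonical class, your smooth-locus adjunction gives $\mathrm{p}_a(C)\leqslant 15$ rather than $10$. The rest of your argument survives this. Singular orbits on $C$ have length at least $20>15$, so $C$ is smooth. Your observation that $C\cap Q$ is a single orbit of length $20$ gives $a_{12}=0$ and $a_{20}=1$, hence $g=6$ by Lemma~\ref{lemma:A5-curves}. This is a clean substitute for the paper's Castelnuovo argument excluding $g=11$.
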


\begin{proof}
Let $\overline{C}\to C$ be be the normalization of the curve $C$, and let $g$ be the genus of the curve~$\overline{C}$. Arguing, as in the proof of Lemma~\ref{lemma:A5-irreducible-curves-in-Q}, we see that $G$ acts faithfully on $C$, and the action lifts to~$\overline{C}$. If $g\leqslant 15$, then it follows from Lemma~\ref{lemma:A5-curves} that
$$
g\in\{0,4,5,6,9,10,11,13,15\}.
$$
By  Lemma~\ref{lemma:A5-curves}, the length of the $G$-orbits in $\overline{C}$ are $12$, $20$, $30$. So, if $C$ has a $G$-orbit of length $15$, then $C$ is singular at every point of this orbit. By Corollary~\ref{corollary:A5-irreducible-curves}, $d=10$, $\Sigma_{12}\not\subset C$ and $\Sigma_{12}^\prime\not\subset C$.

Let $P$ be a general point in $C$, and let $\Sigma_P$ be its $G$-orbit. Then $|\Sigma|=60$, and there is $\lambda\in\mathbb{C}$ such that $P\in S_\lambda$. Moreover, if $C\not\subset S_\lambda$, then 
$$
40\geqslant 4d=S_\lambda\cdot C\geqslant |S_\lambda\cap C|\geqslant |\Sigma_P|=60.
$$
Hence, we conclude that $C\subset S_\lambda$. 

Now, we suppose that $S_\lambda$ is normal, which is always the case if $b\in\mathbb{C}$ is general, cf. Remark~\ref{remark:A5-quartics-normal}.

Set $H_{S_{\lambda}}=H\vert_{S_{\lambda}}$. Then $K_{S_{\lambda}}\sim 2H_{S_{\lambda}}$, and it follows from the Hodge index theorem that
$$
12C^2-100=12C^2-d^2=\mathrm{det}\begin{pmatrix}
C^2 &d \\
d & 12
\end{pmatrix}=\mathrm{det}\begin{pmatrix}
C^2 &H_{S_{\lambda}}\cdot C \\
H_{S_{\lambda}}\cdot C & H_{S_{\lambda}}^2
\end{pmatrix}\leqslant 0.
$$
This gives $C^2\leqslant 8$. Here, we consider intersection of divisors on $S_\lambda$ as in \cite{Sakai1984}.

Suppose that $C$ is contained in the~smooth locus of the surface $S_\lambda$.
Then, by adjunction formula, we have $C^2=2\mathrm{p}_a(C)-2$, where $\mathrm{p}_a(C)$ is the~arithmetic genus of the~curve $C$.
Thus, we have
$$
2g-2\leqslant 2\mathrm{p}_a(C)-2=\big(K_{S_\lambda}+C\big)\cdot C^2=\big(2H_{S_\lambda}+C\big)\cdot C^2=2d+C^2=20+C^2\leqslant 28,
$$
so $g\leqslant 15$. Thus, $g\in\{0,4,5,6,9,10,11,13,15\}$. Since $C$ does not contain $G$-orbits of length $12$, it follows from Lemmas~\ref{lemma:A5-orbits} and \ref{lemma:A5-Q-S-orbits} that the $G$-orbits in $C$ have length $20$, $30$,~$60$. Hence, arguing as in the proof of Lemma~\ref{lemma:A5-irreducible-curves-in-Q}, we see that $C$ is smooth. Then, by Lemma~\ref{lemma:A5-curves}, either $g=6$ or $g=11$. But $g=11$ is impossible by the Castelnuovo bound \cite{ACGH}. Hence, $g=6$ as claimed.

Thus, we may assume that $S_\lambda$ is singular, and $C$ contains some of its singular points. Then 
$$
\mathrm{Sing}(\Sigma_\lambda)\cap C=\Sigma_1\cup\cdots\cup\Sigma_r,
$$ 
where $\Sigma_1,\ldots,\Sigma_r$ are $G$-orbits.  Then it follows from Lemma~\ref{lemma:A5-Q-S-orbits} that
$$
|\Sigma_i|\in\{15,30,60\big\}
$$
for every $i\in\{1,\ldots,r\}$. Without loss of generality, we may assume that $|\Sigma_i|\leqslant |\Sigma_j|$ if $i<j$.

Let $\pi\colon\widetilde{S}_\lambda\to S_\lambda$ to be the minimal resolution of singularities of the points in $\Sigma_1\cup\cdots\cup\Sigma_r$, let $\widetilde{C}$ be the strict transform on $\widetilde{S}_\lambda$ of the curve $C$, and let $E_1,\ldots,E_r$ be the exceptional $G$-irreducible curves that are mapped to $\Sigma_1,\ldots,\Sigma_r$, respectively. Then $$
K_{\widetilde{S}_\lambda}\sim \pi^*\big(K_{S_\lambda}\big)-\sum_{i=1}^{r}a_iE_i\sim \pi^*\big(2H_{S_\lambda}\big)-\sum_{i=1}^{r}a_iE_i
$$
for some non-negative integers $a_1,\ldots,a_r$, and
$$
\widetilde{C}\sim_{\mathbb{Q}} \pi^*(C)-\sum_{i=1}^{r}m_iE_i
$$
for some positive rational numbers $m_1,\ldots,m_r$. Then 
\begin{multline*}
-2\leqslant 2g-2\leqslant 2\mathrm{p}_a\big(\widetilde{C}\big)-2=\big(K_{\widetilde{S}_\lambda}+\widetilde{C}\big)\cdot\widetilde{C}=\\
=\Big(\pi^*\big(2H_{S_\lambda}+C\big)-\sum_{i=1}^{r}(a_i+m_i)E_i\Big)\cdot\widetilde{C}=20+C^2-\sum_{i=1}^{r}(a_i+m_i)E_i\cdot\widetilde{C}\leqslant \\
\leqslant 20+C^2-(a_k+m_k)E_k\cdot\widetilde{C}\leqslant 20+C^2-(a_k+m_k)|\Sigma_k|\leqslant 28-(a_k+m_k)|\Sigma_1|< 28-a_k|\Sigma_k|
\end{multline*}
for every $k$, where $\mathrm{p}_a(\widetilde{C})$ is the arithmetic genus of the curve $\widetilde{C}$. If $|\Sigma_k|\ne 15$ and $a_k>0$, then
$$
2g-2<28-a_k|\Sigma_k|\leqslant 28-|\Sigma_k|\leqslant -2,
$$
which is absurd. If $|\Sigma_k|=15$, then $S_\lambda$ has Du Val singularities at the points of $\Sigma_k$ by Lemma~\ref{lemma:A5-Q-S-smooth}, so in this case we have $a_k=0$. Thus, we conclude that $a_k=0$ for every possible $k$.
This means that $S_\lambda$ has Du Val singularities at every point in $\Sigma_1\cup\cdots\cup\Sigma_r$. Then  Lemma~\ref{lemma:DuVal} gives
\begin{multline*}
\quad \quad -2\leqslant 2g-2\leqslant 2\mathrm{p}_a\big(\widetilde{C}\big)-2=\big(K_{\widetilde{S}_\lambda}+\widetilde{C}\big)\cdot\widetilde{C}=\\
=\big(\pi^*\big(2H_{S_\lambda}\big)+\widetilde{C}\big)\cdot\widetilde{C}=20+\widetilde{C}^2\leqslant 20+C^2-\sum_{i=1}^{r}\frac{1}{2}|\Sigma_i|\leqslant 28-\sum_{i=1}^{r}\frac{1}{2}|\Sigma_i|.
\end{multline*}
Thus, $g\leqslant \mathrm{p}_a\leqslant 8$, because $|\Sigma_i|\in\{15,30,60\}$ for each $i$. So, arguing as in the proof of Lemma~\ref{lemma:A5-irreducible-curves-in-Q}, we see that $\widetilde{C}$ is smooth. 
Then it follows from \cite{lmfdb} that $g=\mathrm{p}_a=6$, because $\widetilde{C}$ does not have $G$-orbits of length $12$, since $C$ does not contain $G$-orbits of length $12$. Then
$$
10=2g-2=20+\widetilde{C}^2\leqslant 20+C^2-\sum_{i=1}^{r}\frac{1}{2}|\Sigma_i|\leqslant 28-\sum_{i=1}^{r}\frac{1}{2}|\Sigma_i|
$$
Thus, it follows from Lemma~\ref{lemma:A5-Q-S-smooth}, that one of the following cases cases holds:
\begin{enumerate}
\item $r=1$ and $|\Sigma_1|=30$;
\item $r=1$, $\lambda=0$, $\Sigma_1=\Sigma_{15}$;
\item $r=2$, $\lambda=-\frac{\zeta_3}{4}$, $|\Sigma_1|=|\Sigma_2|=15$, $\Sigma_1\cup\Sigma_2\subset\mathcal{L}_5\cap X$.
\end{enumerate}

Suppose that $r=1$ and $|\Sigma_1|=30$. If the singularities of $S_\lambda$ at the points of $\Sigma_1$ are not isolated ordinary double points, then it follows from Lemma~\ref{lemma:DuVal} that
$$
10=2g-2=20+\widetilde{C}^2\leqslant 20+C^2-\frac{2}{3}|\Sigma_1|\leqslant 8,
$$
which is absurd. So, we conclude that $S_\lambda$ has isolated ordinary double points at the points of $\Sigma_1$. If~the curve $C$ is singular at these points, then Lemma~\ref{lemma:DuVal} gives
$$
10=2g-2=20+\widetilde{C}^2\leqslant 20+C^2-2|\Sigma_1|=C^2-40<-32,
$$
$$
10=2g-2=20+C^2-m_1E_1\cdot\widetilde{C}=20+C^2-2m_1^2|\Sigma_1|=20+C^2-60m_1^2\leqslant C^2-40<-32.
$$
Thus, $C$ is smooth, and, therefore, $C\simeq\widetilde{C}$ is a curve of degree $10$ and genus $6$ as claimed.

Suppose that $r=1$, $\lambda=0$, $\Sigma_1=\Sigma_{15}$. Recall from Lemma~\ref{lemma:A5-Q-S-smooth} that either $S_\lambda$ has isolated ordinary double points at the points of $\Sigma_{15}$, or $S_\lambda$ has Du Val singularities of type $\mathbb{A}_3$ at the points of $\Sigma_{15}$. In both cases, if $C$ is singular at $\Sigma_1$, then it follows from Lemma~\ref{lemma:DuVal} that
$$
10=2g-2=20+\widetilde{C}^2\leqslant 20+C^2-2|\Sigma_1|=C^2-10\leqslant -2,
$$
which is absurd. Thus, $C\simeq\widetilde{C}$ is a curve of degree $10$ and genus $6$ as claimed.

Finally, we suppose that $r=2$, $\lambda=-\frac{\zeta_3}{4}$, $|\Sigma_1|=|\Sigma_2|=15$, $\Sigma_1\cup\Sigma_2\subset\mathcal{L}_5\cap X$. If $C$ is singular at the points of the $G$-orbit $\Sigma_1$, then, as above, we have 
$$
10=2g-2=20+\widetilde{C}^2\leqslant 20+C^2-2|\Sigma_1|-\frac{1}{2}|\Sigma_1|\leqslant -2,
$$
which is a contradiction. Thus, we see that $C$ is smooth at $\Sigma_1$.
Similarly, we see that $C$ is smooth at the points of the $G$-orbit $\Sigma_2$.
Hence, $C$ is smooth, and $C\simeq\widetilde{C}$ is a curve of genus $6$.
\end{proof}

In the proof of Corollary~\ref{corollary:A5-irreducible-curves}, we used the following result.

\begin{lemma}
\label{lemma:DuVal} Let $Y$ be a normal singular surface, let $P$ be its singular point, let $C\subset Y$ be a curve such that $P\in C$, let $f\colon\widetilde{Y}\to Y$ be the~minimal resolution of the~point~$P$, let $\widetilde{C}$ be the~strict transform on $\widetilde{Y}$ of the~curve $C$. Suppose that $(P\in Y)$ is a Du Val singularity. Then 
\begin{equation}
\label{equation:DuVal}
C^2-\widetilde{C}^2\geqslant \left\{\aligned
&\frac{n}{n+1}\ \text{if $(P\in Y)$ is a singular point of type $\mathbb{A}_n$},\\
&1\ \text{if $(P\in Y)$ is a singular point of type $\mathbb{D}_n$},\\
&\frac{4}{3}\ \text{if $(P\in Y)$ is a singular point of type $\mathbb{E}_6$},\\
&\frac{3}{2}\ \text{if $(P\in Y)$ is a singular point of type $\mathbb{E}_7$},\\
&2\ \text{if $(P\in Y)$ is a singular point of type $\mathbb{E}_8$}.
\endaligned
\right.
\end{equation}
Moreover, if the curve $C$ is singular at $P$, then $C^2-\widetilde{C}^2\geqslant 2$.
\end{lemma}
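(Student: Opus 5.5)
We work with the pullback formula on the minimal resolution. Write $E_1,\ldots,E_n$ for the $f$-exceptional $(-2)$-curves and $\Gamma$ for their intersection matrix, which is negative definite. Then
$$
\widetilde{C}\sim_{\mathbb{Q}} f^*(C)-\sum_{i=1}^n m_iE_i,
$$
where the vector $m=(m_i)$ is uniquely determined by $\widetilde{C}\cdot E_j=\sum_i m_i(-E_i\cdot E_j)$ for every $j$. Put $b_j=\widetilde{C}\cdot E_j\geqslant 0$. Then
$$
C^2-\widetilde{C}^2=\sum_i m_i\,\widetilde{C}\cdot E_i=b^{T}(-\Gamma)^{-1}b.
$$
So the problem reduces to a quadratic form estimate. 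The form $(-\Gamma)^{-1}$ is the inverse Cartan matrix of the corresponding ADE root system. All its entries are positive, and it is the matrix of fundamental coweights.

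Since $P\in C$, the vector $b$ is a nonzero nonnegative integer vector. Because $(-\Gamma)^{-1}$ has positive entries, the form $b\mapsto b^T(-\Gamma)^{-1}b$ is monotone on nonnegative vectors. Its minimum over nonzero $b$ is therefore attained at a standard basis vector $e_j$. The minimum equals $\min_j ((-\Gamma)^{-1})_{jj}$, the smallest diagonal entry of the inverse Cartan matrix.

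We then read these diagonal entries off the standard tables:
\begin{itemize}
\item For $\mathbb{A}_n$ they are $j(n+1-j)/(n+1)$, with minimum $n/(n+1)$ at an end vertex.
\item For $\mathbb{D}_n$ the minimum is $1$, attained at the end of the long arm.
\item For $\mathbb{E}_6,\mathbb{E}_7,\mathbb{E}_8$ the minima are $4/3$, $3/2$ and $2$ respectively.
\end{itemize}
This gives \eqref{equation:DuVal}. Alternatively, one can avoid the tables by noting that $-(\Gamma^{-1})_{jj}$ equals the coefficient of $E_j$ in the $\mathbb{Q}$-divisor $D_j$ with $D_j\cdot E_i=-\delta_{ij}$, which is computed directly.

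For the final assertion, suppose $C$ is singular at $P$. The claim is that then $\sum_j b_j\,\mathrm{mult}$-type data is large enough. The cleanest route uses the fundamental cycle $Z_f=\sum z_iE_i$ of the rational double point. The maximal ideal satisfies $f^*\mathfrak{m}_P\cdot\mathcal{O}_{\widetilde{Y}}=\mathcal{O}(-Z_f)$, and the multiplicity of $C$ at $P$ as a subscheme of $Y$ is at least $\widetilde{C}\cdot Z_f$ in the appropriate sense. Singularity of $C$ at $P$ forces $\widetilde{C}\cdot Z_f\geqslant 2$, or forces $\widetilde{C}$ to meet $E$ in a way excluded for smooth germs.

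The cases are then as follows:
\begin{itemize}
\item If $\sum_j z_jb_j\geqslant 2$, then either $b$ has total weight at least $2$, or $b=e_j$ with $z_j\geqslant 2$.
\item In the first case, monotonicity gives $b^T(-\Gamma)^{-1}b\geqslant e_i^T(-\Gamma)^{-1}e_i+e_k^T(-\Gamma)^{-1}e_k+2((-\Gamma)^{-1})_{ik}$, which we bound by $2$ using the tables.
\item In the second case, vertices with $z_j\geqslant 2$ are interior vertices of $\mathbb{D}$ and $\mathbb{E}$ graphs, and we check that their diagonal entry is at least $2$.
\end{itemize}
For $\mathbb{A}_n$ all $z_j=1$, so we need $b$ of weight at least $2$. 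Here the entries give, for example, $b=2e_1$ yielding $4n/(n+1)\geqslant 2$, and $e_1+e_n$ yielding $(n+1)/(n+1)\cdot 2=2$. All cases follow by computing with $((-\Gamma)^{-1})_{ik}=i(n+1-k)/(n+1)$ for $i\leqslant k$.

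The main obstacle is justifying rigorously that a singular germ satisfies $\widetilde{C}\cdot Z_f\geqslant 2$. The smooth-germ case requires care in $\mathbb{A}_n$, where a smooth curve through $P$ may meet only one $E_j$ transversally. We would argue via $\mathrm{mult}_P C\geqslant 2\Rightarrow f^*C-\widetilde{C}\geqslant 2Z_f$ fails in general, and instead use $\mathrm{mult}_P(C)=$ length of $\mathcal{O}_C/\mathfrak{m}\mathcal{O}_C$ in the embedded $\mathbb{C}^3$. This gives $\widetilde{C}\cdot Z_f\geqslant\mathrm{mult}_P(C)$ when computed on the resolution, because $f^*\mathfrak{m}=\mathcal{O}(-Z_f)$ for rational double points (Artin).
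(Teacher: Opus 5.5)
Your argument is correct, but both halves are organized differently from the paper's proof.

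\textbf{The bound \eqref{equation:DuVal}.} The paper does not compute this directly. It quotes \cite[Proposition~A.1.3]{CheltsovPrzyjalkowski2025} for curves that are smooth at $P$. It uses \cite[Lemma~A.4.3]{CheltsovPrzyjalkowski2025} to see that every curve through an $\mathbb{E}_8$ point is singular there. Otherwise it falls back on the singular case. Your approach is different: the identity $C^2-\widetilde{C}^2=b^{T}(-\Gamma)^{-1}b$, together with monotonicity of a form with positive coefficients on nonnegative integer vectors, treats all curves at once. It is self-contained modulo the diagonal of the inverse Cartan matrix, it shows where the constants come from, and it makes the $\mathbb{E}_8$ case automatic.

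\textbf{The last assertion.} Both arguments rest on the same fact from Artin, $\widetilde{C}\cdot Z_f=\mathrm{mult}_P(C)$. The paper then avoids tables by completing the square. In your notation, $(-\Gamma)m=b$ and $z^{T}(-\Gamma)z=-Z_f^2=2$, so
\[
0\leqslant (m-z)^{T}(-\Gamma)(m-z)=b^{T}(-\Gamma)^{-1}b-2z^{T}b+2 .
\]
That is, $C^2-\widetilde{C}^2\geqslant 2\,\mathrm{mult}_P(C)-2$, uniformly in the type. This one line replaces your two cases and the entry-by-entry checks. Those checks need closed formulas for all entries of $(-\Gamma)^{-1}$ for $\mathbb{A}_n$ and $\mathbb{D}_n$, not only the diagonal ones.

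\textbf{Two small corrections.}
\begin{itemize}
\item The length of $\mathcal{O}_C/\mathfrak{m}\mathcal{O}_C$ is $1$, so that formula is wrong as written. The correct statement is $\mathrm{mult}_P(C)=\ell(\mathcal{O}_{C,P}/(h))$ for a general $h\in\mathfrak{m}_P$. Since $\mathfrak{m}_P\mathcal{O}_{\widetilde{Y}}=\mathcal{O}_{\widetilde{Y}}(-Z_f)$, one has $\mathrm{div}(h\circ f)=Z_f+H^\prime$ with $H^\prime$ missing $\widetilde{C}\cap f^{-1}(P)$. This gives the equality $\mathrm{mult}_P(C)=\widetilde{C}\cdot Z_f$. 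Your sentence asserting $\mathrm{mult}_P(C)\geqslant\widetilde{C}\cdot Z_f$ is the direction you cannot use, though it is harmless once equality is known.
\item In your second case, the vertices with $z_j\geqslant 2$ are not only interior ones. The end of the short arm of $\mathbb{E}_6$, two of the ends of $\mathbb{E}_7$, and all three ends of $\mathbb{E}_8$ also have $z_j\geqslant 2$. Their diagonal entries are $2$; $2$ and $\frac{7}{2}$; $2$, $4$ and $8$, all still at least $2$, so your conclusion stands. But the check must run over $\{j : z_j\geqslant 2\}$, not over interior vertices.
\end{itemize}
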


\begin{proof}
If $(P\in Y)$ is a singular point of type $\mathbb{E}_8$, then $C$ is singular at $P$ by \cite[Lemma~A.4.3]{CheltsovPrzyjalkowski2025}. Moreover, if $C$ is smooth at $P$, then the required assertion follows from \cite[Proposition A.1.3]{CheltsovPrzyjalkowski2025}. Thus, we may assume that the curve $C$ is singular at the point $P$. Hence,  $\mathrm{mult}_P(C)\geqslant 2$. 

Let $E_1,\ldots,E_n$ be $f$-exceptional curves in $\widetilde{Y}$. Then
$$
\widetilde{C}\sim_{\mathbb{Q}}f^*(C)-\sum_{i=1}^{n}a_iE_i,
$$
where each $a_i$ is a positive rational number. Define the function $\Delta\colon\mathbb{R}^n\to\mathbb{R}$ as follows:
$$
\Delta(x_1,\ldots,x_n)=-\Big(\sum_{i=1}^{n}x_iE_i\Big)^2.
$$
Recall from \cite{Artin1962} that $\Delta(x_1,\ldots,x_n)\geqslant 0$ for every $(x_1,\ldots,x_n)\in\mathbb{R}^n$, because the intersection form of the curves $E_1,\ldots,E_n$ is negative definite. On the other hand, we have 
$$
C^2-\widetilde{C}^2=\Delta(a_1,\ldots,a_n).
$$
Thus, we have to show that $\Delta(a_1,\ldots,a_n)\geqslant 2$. 

Suppose that $(P\in Y)$ is a Du Val singular point of type $\mathbb{A}_n$. We may assume that 
$$
E_i\cdot E_j=\left\{\aligned
&0\ \text{if $|i-j|\geqslant 2$},\\
&1\ \text{if $|i-j|=1$},\\
&-2\ \text{if $i=j$}.
\endaligned
\right.
$$
If $n=1$, then $\Delta(a_1)=2a_1^2$ and $2a_1=E_1\cdot \widetilde{C}=\mathrm{mult}_P(C)$, so
$$
\Delta(a_1)=\frac{\mathrm{mult}^2_P(C)}{2}\geqslant 2.
$$
Hence, we may further assume $n\geqslant 2$. Then 
$$
\Delta(a_1,\ldots,a_n)=2\sum_{i=1}^na_i^2-2\sum_{i=1}^{n-1}a_ia_{i+1}=a_1^2+a_n^2+\sum_{i=1}^{n-1}(a_i-a_{i+1})^2,
$$
and it follows from \cite{Artin1962,Artin1966} that 
$$
a_1+a_n=\widetilde{C}\cdot\Big(\sum_{i=1}^{n}E_i\Big)=\mathrm{mult}_P(C)\geqslant 2.
$$
On the other hand, we have 
$$
\Delta(a_1,\ldots,a_n)=\Delta(a_1-1,a_2-1,\ldots,a_n-1)+2(a_1+a_{n}-2)+2\geqslant 2(a_1+a_{n}-2)+2\geqslant 2
$$
as claimed. 

Suppose that $(P\in Y)$ is a Du Val singular point of type $\mathbb{D}_4$. Then $n=4$, and we may assume that the intersection form of the curves $E_1$, $E_2$, $E_3$, $E_4$ is given~in the following table:
\begin{center}\renewcommand\arraystretch{1.42}
\begin{tabular}{|c||c|c|c|c|}
\hline
 $\bullet$  & $E_1$ & $E_2$& $E_3$ & $E_4$\\
\hline\hline
 $E_1$ &  $-2$ & $1$ & $1$ & $1$ \\
\hline
 $E_2$ &  $1$ & $-2$ & $0$ & $0$\\
\hline
 $E_3$ &  $1$ & $0$ & $-2$ & $0$\\
\hline
 $E_4$ &  $1$ & $0$ & $0$ & $-2$\\
\hline
\end{tabular}
\end{center}
Then 
$$
\Delta(a_1,a_2,a_3,a_4)=2a_1^2+2a_2^2+2a_3^2+2a_4^2-2a_1a_2-2a_1a_3-2a_1a_4
$$
Moreover, since $2E_1+E_2+E_3+E_4$ is the fundamental cycle of the singular point $O$, it follows from \cite{Artin1962,Artin1966} that 
$$
a_1=\widetilde{C}\cdot\big(2E_1+E_2+E_3+E_4\big)=\mathrm{mult}_{O}\big(C\big)\geqslant 2.
$$
As above, we see that 
$$
\Delta(a_1,a_2,a_3,a_4)=\Delta(a_1-2,a_2-1,a_3-1,a_4-1)+2(a_{1}-2)+2\geqslant 2(a_{1}-2)+2\geqslant 2.
$$

Suppose that $(P\in Y)$ is a Du Val singular point of type $\mathbb{D}_n$ with $n\geqslant 5$. In this case, we may assume that the intersection form of the curves $E_1,\ldots,E_n$ is given~in the following table:
\begin{center}\renewcommand\arraystretch{1.42}
\begin{tabular}{|c||c|c|c|c|c|c|c|c|}
\hline
 $\bullet$  & $E_1$ & $E_2$& $E_3$ & $E_4$ & $E_5$ & $\ldots$ & $E_{n-1}$ & $E_n$\\
\hline\hline
 $E_1$ &  $-2$ & $1$ & $1$ & $1$ & $0$ & $\ldots$ & $0$ & $0$\\
\hline
 $E_2$ &  $1$ & $-2$ & $0$ & $0$ & $0$ & $\ldots$ & $0$ & $0$\\
\hline
 $E_3$ &  $1$ & $0$ & $-2$ & $0$ & $0$ & $\ldots$ & $0$ & $0$\\
\hline
 $E_4$ &  $1$ & $0$ & $0$ & $-2$ & $1$ & $\ldots$ & $0$ & $0$\\
\hline
 $E_5$ &  $0$ & $0$ & $0$ & $1$ & $-2$ & $\ldots$ & $0$ & $0$ \\
\hline
 $\ldots$ & $\ldots$ & $\ldots$ & $\ldots$ & $\ldots$ & $\ldots$ & $\ddots$ & $\ldots$ & $\ldots$\\
\hline
 $E_{n-1}$ &  $0$ & $0$ & $0$ & $0$ & $0$ & $\ldots$ & $-2$ & $1$\\
\hline
 $E_n$ &  $0$ & $0$ & $0$ & $0$ & $0$ & $\ldots$ & $1$ & $-2$\\
\hline
\end{tabular}
\end{center}
Then 
$$
\Delta(a_1,\ldots,a_n)=2\sum_{i=1}^na_i^2-2a_1(a_2+a_3+a_4)-\sum_{i=4}^{n-1}a_ia_{i+1}.
$$
Since  $2E_1+E_2+E_3+2E_4+\cdots+2E_{n-1}+E_n$ is the fundamental cycle of the singular point $O$,  it follows from \cite{Artin1962,Artin1966} that 
$$
a_{n-1}=\widetilde{C}\cdot\big(2E_1+E_2+E_3+2E_4+\cdots+2E_{n-1}+E_n\big)=\mathrm{mult}_{O}\big(C\big)\geqslant 2.
$$
Then 
$$
\Delta(a_1,\ldots,a_n)=\Delta(a_1-2,a_2-1,a_3-1,a_4-2,\ldots,a_{n-1}-2,a_n-1)+2(a_{n-1}-2)+2\geqslant 2(a_{n-1}-2)+2\geqslant 2.
$$

Suppose that $(P\in Y)$ is a Du Val singular point of type $\mathbb{E}_6$. Then $n=6$, and we may assume that the intersection form of the curves  $E_1$, $E_2$, $E_3$, $E_4$, $E_5$, $E_6$  is given~in the following table:
\begin{center}\renewcommand\arraystretch{1.42}
\begin{tabular}{|c||c|c|c|c|c|c|}
\hline
 $\bullet$  & $E_1$ & $E_2$& $E_3$ & $E_4$ & $E_5$ & $E_6$\\
\hline\hline
 $E_1$ &  $-2$ & $1$ & $0$ & $0$ & $0$ & $0$\\
\hline
 $E_2$ &  $1$ & $-2$ & $1$ & $0$ & $0$ & $0$\\
\hline
 $E_3$ &  $0$ & $1$ & $-2$ & $1$ & $1$ & $0$\\
\hline
 $E_4$ &  $0$ & $0$ & $1$ & $-2$ & $0$ & $0$\\
\hline
 $E_5$ &  $0$ & $0$ & $1$ & $0$ & $-2$ & $1$\\
\hline
 $E_6$ &  $0$ & $0$ & $0$ & $0$ & $1$ & $-2$\\
\hline
\end{tabular}
\end{center}
Then 
$$
\Delta(a_1,a_2,a_3,a_4,a_5,a_6)=2a_1^2+2a_2^2+2a_3^2+2a_4^2+2a_5^2+2a_6^2-2a_1a_2-2a_2a_3-2a_3a_4-2a_3a_5-2a_5a_6.
$$
Since $E_1+2E_2+3E_3+2E_4+2E_5+E_6$ is the fundamental cycle of the singular point $O$, it follows from \cite{Artin1962,Artin1966} that 
$$
a_{4}=\widetilde{C}\cdot\big(E_1+2E_2+3E_3+2E_4+2E_5+E_6\big)=\mathrm{mult}_{O}\big(C\big)\geqslant 2.
$$
Then 
$$
\Delta(a_1,a_2,a_3,a_4,a_5,a_6)=\Delta(a_1-1,a_2-2,a_3-3,a_4-2,a_5-2,a_6-1)+2(a_{4}-2)+2\geqslant 2(a_{4}-2)+2\geqslant 2.
$$

Suppose that $(P\in Y)$ is a Du Val singular point of type $\mathbb{E}_7$. Then $n=7$, and we may assume that the intersection form of the curves $E_1$, $E_2$, $E_3$, $E_4$, $E_5$, $E_6$, $E_7$ is given~in the following table:
\begin{center}\renewcommand\arraystretch{1.42}
\begin{tabular}{|c||c|c|c|c|c|c|c|}
\hline
 $\bullet$  & $E_1$ & $E_2$& $E_3$ & $E_4$ & $E_5$ & $E_6$ & $E_7$\\
\hline\hline
 $E_1$ &  $-2$ & $1$ & $0$ & $0$ & $0$ & $0$ & $0$ \\
\hline
 $E_2$ &  $1$ & $-2$ & $1$ & $0$ & $0$ & $0$& $0$ \\
\hline
 $E_3$ &  $0$ & $1$ & $-2$ & $1$ & $1$ & $0$ & $0$ \\
\hline
 $E_4$ &  $0$ & $0$ & $1$ & $-2$ & $0$ & $0$& $0$ \\
\hline
 $E_5$ &  $0$ & $0$ & $1$ & $0$ & $-2$ & $1$& $0$ \\
\hline
 $E_6$ &  $0$ & $0$ & $0$ & $0$ & $1$ & $-2$ & $1$ \\
\hline
 $E_7$ &  $0$ & $0$ & $0$ & $0$ & $0$ & $1$ & $-2$ \\
\hline
\end{tabular}
\end{center}
Then 
$$
\Delta(a_1,a_2,a_3,a_4,a_5,a_6,a_7)=2(a_1^2+a_2^2+a_3^2+a_4^2+a_5^2+a_6^2+a_7^2-a_1a_2-a_2a_3-a_3a_4-a_3a_5-a_5a_6-a_6a_7).
$$
Since $2E_1+3E_2+4E_3+2E_4+3E_5+2E_6+E_7$ is the fundamental cycle of  $(Y\ni O)$, we get 
$$
a_{1}=\widetilde{C}\cdot\big(2E_1+3E_2+4E_3+2E_4+3E_5+2E_6+E_7\big)=\mathrm{mult}_{O}\big(C\big)\geqslant 2.
$$
As above, we have 
$$
\Delta(a_1,a_2,a_3,a_4,a_5,a_6,a_7)=\Delta(a_1-2,a_2-3,a_3-4,a_4-2,a_5-3,a_6-2,a_7-1)+2(a_{1}-2)+2,
$$
which gives $\Delta(a_1,a_2,a_3,a_4,a_5,a_6,a_7)\geqslant 2(a_{1}-2)+2\geqslant 2$.

Suppose that $(P\in Y)$ is a singular point of type $\mathbb{E}_8$. Then $n=8$, and we may assume that the~intersection form of the curves $E_1$, $E_2$, $E_3$, $E_4$, $E_5$, $E_6$, $E_7$, $E_8$ is given~in the following table:
\begin{center}\renewcommand\arraystretch{1.42}
\begin{tabular}{|c||c|c|c|c|c|c|c|c|}
\hline
 $\bullet$  & $E_1$ & $E_2$& $E_3$ & $E_4$ & $E_5$ & $E_6$ & $E_7$ & $E_8$\\
\hline\hline
 $E_1$ &  $-2$ & $1$ & $0$ & $0$ & $0$ & $0$ & $0$ & $0$\\
\hline
 $E_2$ &  $1$ & $-2$ & $1$ & $0$ & $0$ & $0$& $0$ & $0$\\
\hline
 $E_3$ &  $0$ & $1$ & $-2$ & $1$ & $1$ & $0$ & $0$ & $0$\\
\hline
 $E_4$ &  $0$ & $0$ & $1$ & $-2$ & $0$ & $0$& $0$ & $0$\\
\hline
 $E_5$ &  $0$ & $0$ & $1$ & $0$ & $-2$ & $1$& $0$ & $0$\\
\hline
 $E_6$ &  $0$ & $0$ & $0$ & $0$ & $1$ & $-2$ & $1$ & $0$\\
\hline
 $E_7$ &  $0$ & $0$ & $0$ & $0$ & $0$ & $1$ & $-2$ & $1$\\
\hline
 $E_8$ &  $0$ & $0$ & $0$ & $0$ & $0$ & $0$ & $1$ & $-2$\\
\hline
\end{tabular}
\end{center}
Then 
\begin{multline*}
\Delta(a_1,a_2,a_3,a_4,a_5,a_6,a_7,a_8)=2a_1^2+2a_2^2+2a_3^2+2a_4^2+2a_5^2+2a_6^2+2a_7^2+2a_8^2-\\
-2a_1a_2-2a_2a_3-2a_3a_4-2a_3a_5-2a_5a_6-2a_6a_7-2a_7a_8.\quad \quad 
\end{multline*}
Since $2E_1+4E_2+6E_3+3E_4+5E_5+4E_6+3E_7+2E_8$ is the fundamental cycle of the singular point $O$, it follows from \cite{Artin1962,Artin1966} that 
$$
a_{8}=\widetilde{C}\cdot\big(2E_1+4E_2+6E_3+3E_4+5E_5+4E_6+3E_7+2E_8\big)=\mathrm{mult}_{O}\big(C\big)\geqslant 2.
$$
Since $\Delta$ is a non-negative function, we have 
$$
\Delta(a_1,a_2,a_3,a_4,a_5,a_6,a_7,a_8)=\Delta(a_1-2,a_2-4,a_3-6,a_4-3,a_5-5,a_6-4,a_7-3,a_8-2)+2(a_{8}-2)+2,
$$
which implies that $\Delta(a_1,a_2,a_3,a_4,a_5,a_6,a_7,a_8)\geqslant 2(a_{8}-2)+2\geqslant 2$ as claimed.
\end{proof}

To proceed, we need the following lemma.

\begin{lemma}
\label{lemma:A5-3-regular}
Let $C$ be a smooth irreducible $G$-invariant curve of degree $10$ and genus $6$ in $\mathbb{P}^4$, and let  $\mathcal{I}_C$ be the ideal sheaf of the curve $C$. Suppose that $C\subset X$ and $C\not\subset Q$. Then 
\begin{equation}
\label{equation:3-regular}
H^i\big(\mathcal{I}_C\otimes\mathcal{O}_{\mathbb{P}^4}(3-i)\big)=0
\end{equation}
for every $i>0$, i.e., the curve $C$ is $3$-regular \cite{Mumford}.
\end{lemma}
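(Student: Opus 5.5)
The plan is to check the four vanishings one at a time. Only the case $i=1$ is substantial; the others reduce to facts about line bundles on $C$. For each $i$ I would use the exact sequence
$$
0\longrightarrow\mathcal{I}_C\longrightarrow\mathcal{O}_{\mathbb{P}^4}\longrightarrow\mathcal{O}_C\longrightarrow 0
$$
twisted by $\mathcal{O}_{\mathbb{P}^4}(3-i)$, together with $H^j(\mathbb{P}^4,\mathcal{O}_{\mathbb{P}^4}(k))=0$ for $0<j<4$ and all $k$. For $i=4$ we get $H^4(\mathcal{I}_C(-1))=H^4(\mathcal{O}_{\mathbb{P}^4}(-1))=0$. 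For $i=3$ we get $H^3(\mathcal{I}_C)\simeq H^2(\mathcal{O}_C)=0$, since $C$ is a curve.

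For $i=2$ we get $H^2(\mathcal{I}_C(1))\simeq H^1(\mathcal{O}_C(1))$, and $\deg\mathcal{O}_C(1)=10=2g-2$. This group is nonzero only if $\mathcal{O}_C(1)\simeq K_C$, so I would rule that out equivariantly. Suppose $\mathcal{O}_C(1)\simeq K_C$. The restriction map $H^0(\mathcal{O}_{\mathbb{P}^4}(1))\to H^0(K_C)$ is injective, because $C$ is not contained in a hyperplane: $\mathbb{V}_5$ is irreducible, so there are no $G$-invariant hyperplanes. Its image is therefore a $5$-dimensional $G$-subrepresentation of the $6$-dimensional representation $H^0(K_C)$. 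The isomorphism $\mathcal{O}_C(1)\simeq K_C$ is $G$-equivariant up to a character of $G\simeq\mathfrak{A}_5$, and every such character is trivial. Hence the quotient $H^0(K_C)/\mathbb{V}_5$ is a $1$-dimensional representation of $\mathfrak{A}_5$, which must be trivial. By complete reducibility, $H^0(K_C)$ would then contain a nonzero $G$-invariant section. Its zero divisor would be a $G$-invariant effective divisor of degree $10$ on $C$. This is impossible, because by Lemma~\ref{lemma:A5-curves} every $G$-orbit on $C$ has length $12$, $20$, $30$ or $60$. So $H^1(\mathcal{O}_C(1))=0$.

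The main step is $i=1$, that is, $H^1(\mathcal{I}_C(2))=0$. This vanishing is equivalent to surjectivity of
$$
r\colon H^0(\mathcal{O}_{\mathbb{P}^4}(2))\to H^0(\mathcal{O}_C(2)).
$$
Since $\deg\mathcal{O}_C(2)=20>2g-2$, Riemann--Roch gives $h^0(\mathcal{O}_C(2))=20-6+1=15=h^0(\mathcal{O}_{\mathbb{P}^4}(2))$. So it suffices to show that $r$ is injective, i.e. that $C$ lies on no quadric in $\mathbb{P}^4$. The kernel $W=H^0(\mathcal{I}_C(2))$ is a $G$-subrepresentation of $\mathrm{Sym}^2\mathbb{V}_5$. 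I would first decompose $\mathrm{Sym}^2\mathbb{V}_5$, which I expect to be of the form $\mathbb{I}\oplus\mathbb{V}_4\oplus\mathbb{V}_5\oplus\mathbb{V}_5^\prime$ or similar, with the trivial summand spanned by $f_2$.

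If $W$ contained the trivial summand, then $C\subset\{f_2=0\}\cap X=Q$, contradicting the hypothesis. So $W$ is a sum of nontrivial isotypic pieces. For each nontrivial irreducible subrepresentation $U\subset\mathrm{Sym}^2\mathbb{V}_5$ (including the pencil of copies of $\mathbb{V}_5$ when an isotypic component has multiplicity two), I would compute explicitly, using Magma as in Lemmas~\ref{lemma:A5-L6} and~\ref{lemma:A5-L10}, the base locus of the quadrics in $U$ intersected with $X$. The goal is to show that this base locus contains no curve of degree $10$: it should consist only of finitely many points, or of the curves $\mathcal{L}_6$, $\mathcal{L}_6^\prime$, $\mathcal{L}^t_{10}$, which are reducible and hence different from $C$. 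The same conclusion could alternatively be reached by showing that $|U|$ restricted to $X$ has a zero-dimensional base locus. This contradicts $C\subset\mathrm{Bs}(U)$, so $W=0$ and $r$ is an isomorphism.

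I expect this case-by-case base-locus computation to be the main obstacle. The difficulty is that it has to be carried out uniformly in the parameter $b$, and for a multiplicity-two isotypic component it has to be done over a whole $\mathbb{P}^1$-family of subrepresentations. If that family is too expensive to handle by computer, I would try a fallback. A $4$- or $5$-dimensional space of quadrics through $C$ would force $C$ into the intersection of at least two quadrics, which is a surface of degree at most $4$, or else would force $\mathrm{Bs}$ to be a curve of degree at most $8$. I would then use Castelnuovo-type genus bounds for curves of degree $10$ on such a surface to contradict $g=6$ together with the orbit restrictions of Lemma~\ref{lemma:A5-curves}.
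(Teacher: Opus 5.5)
Your treatment of $i=3,4$ is fine, and your $i=2$ argument is correct and is essentially the paper's. The paper phrases it through the $G$-equivariant canonical embedding $C\hookrightarrow\mathbb{P}^5$ and a $G$-fixed point, that is, a $G$-invariant canonical divisor of degree $10$. Your invariant section of $K_C$ is the same object, and using that $\mathfrak{A}_5$ is perfect correctly handles the linearization.

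For $i=1$, two steps are also correct: the reduction to ``$C$ lies on no quadric'' via $h^0(\mathcal{O}_C(2))=15$, and the use of $C\not\subset Q$ to exclude the trivial summand of $\mathrm{Sym}^2\mathbb{V}_5\simeq\mathbb{I}\oplus\mathbb{V}_4\oplus\mathbb{V}_5^{\oplus 2}$.

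The gap is everything after that. For $W\subset\mathbb{V}_4\oplus\mathbb{V}_5^{\oplus 2}$ you only propose an unperformed computer check over a two-parameter family (the parameter $b$ and the $\mathbb{P}^1$ of copies of $\mathbb{V}_5$), resting on an unverified guess about the base loci. So this step is not proved.

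The fallback does not close the gap either. Degree $10$ and genus $6$ lies below Castelnuovo's bound ($9$) in $\mathbb{P}^4$. Such curves genuinely lie on quartic del Pezzo surfaces, for example a general member of $|6L-2E_1-2E_2-2E_3-2E_4|$. Any contradiction would therefore have to come from the group action, and you do not say how.

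The missing idea is a short intersection-theoretic argument that needs no decomposition of $\mathrm{Sym}^2\mathbb{V}_5$.

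Let $\mathcal{M}\subset|-K_X|=|\mathcal{O}_X(2)|$ be the $G$-invariant system of surfaces containing $C$, and suppose it is non-empty. A fixed part would be a $G$-invariant member of $|\mathcal{O}_X(1)|$, of which there are none since $\mathbb{V}_5$ is irreducible. Otherwise it would be a $G$-invariant member of $|\mathcal{O}_X(2)|$ containing $C$, and the only one is $Q$, which is excluded. So $\mathcal{M}$ is mobile.

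For general $M,M'\in\mathcal{M}$, the cycle $M\cdot M'$ has degree $12$. Hence it contains $C$ with multiplicity one, so general members are transversal at a general point of $C$. Any further base curve would give a $G$-invariant curve of degree at most $2$, which Lemmas~\ref{lemma:A5-reducible-curves}, \ref{lemma:A5-irreducible-curves-in-Q}, \ref{lemma:A5-irreducible-curves-in-S} and Corollary~\ref{corollary:A5-irreducible-curves} rule out.

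Now let $\pi\colon\widetilde{X}\to X$ be the blow up of $C$, with exceptional divisor $E$. The strict transform of $\mathcal{M}$ lies in $|\pi^*(-K_X)-E|$, and its only possible base curves are fibres of $E\to C$. On these fibres $\pi^*(-K_X)-E$ has degree $1$, so $\pi^*(-K_X)-E$ is nef.

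But $\pi^*(-K_X)\cdot E^2=-20$ and $E^3=-\deg N_{C/X}=-(20+10)=-30$. These give $(\pi^*(-K_X)-E)^3=24-60+30=-6<0$, a contradiction. Hence $C$ lies on no quadric, and your dimension count then yields $H^1(\mathcal{I}_C\otimes\mathcal{O}_{\mathbb{P}^4}(2))=0$.
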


\begin{proof}
We claim that $C$ is not contained in any surface in $|-K_X|$. Indeed, let $\mathcal{M}$ be the linear subsystem in $|-K_X|$ consisting of surfaces that pass through $C$. Suppose that $\mathcal{M}$ is not empty. Then $\mathcal{M}$ does not have fixed components, since $C\not\subset Q$. Moreover, the base locus of the linear system $\mathcal{M}$ does not contain curves except $C$. Indeed, if $C^\prime$ is another $G$-irreducible curve contained in the base locus of the linear system $\mathcal{M}$, then 
$$
M\cdot M^\prime=mC+m^\prime C^\prime+\Delta,
$$
for general surfaces $M$ and $M^\prime$ in $\mathcal{M}$, an effective one-cycle $\Delta$, and $m,m^\prime\in\mathbb{Z}_{>0}$, which gives 
$$
\mathrm{deg}(C^\prime)\leqslant\mathrm{deg}\big(M\cdot M^\prime\big)-\mathrm{deg}(C)=12-\mathrm{deg}(C)=2,
$$
which is impossible by Lemmas~\ref{lemma:A5-reducible-curves}, \ref{lemma:A5-irreducible-curves-in-Q}, \ref{lemma:A5-irreducible-curves-in-S} and Corollary~\ref{corollary:A5-irreducible-curves}. Thus, $C$ is the only curve in the base locus of the linear system $\mathcal{M}$. Furthermore, the same arguments imply that two general surfaces in the linear system $\mathcal{M}$ are smooth at general point of the curve $C$, and they are not tangent to each other along $C$. Now, we let $\pi\colon\widetilde{X}\to X$ be the blow up of the curve $C$, let $E$ be the exceptional surface, and let $\widetilde{\mathcal{M}}$ be the strict transform of the linear system $\mathcal{M}$ on the threefold $\widetilde{X}$. Then
$$
\widetilde{\mathcal{M}}\sim\pi^*(-K_X)-E,
$$ 
and $\widetilde{\mathcal{M}}$ does not have base curves except possibly fibers of the~projection $E\to C$, which implies that the divisor $\pi^*(-K_X)-E$ is nef. But $\pi^*(-K_X)-E$  cannot be nef, since 
$$
\big(\pi^*(-K_X)-E\big)^3=-6.
$$
Hence, $C$ is not contained in any surface in $|-K_X|$, which implies that $C$ is not contained in any quadric hypersurface in $\mathbb{P}^4$. Let us use this to prove \eqref{equation:3-regular} for $i=1$.

Recall that $C$ is not contain in any hyperplane in $\mathbb{P}^4$, because the representation $\mathbb{V}_5$ is irreducible.
Let $H$ be a hyperplane section of the curve $C$. Consider the following exact sequence: 
\begin{multline*}
\quad \quad \quad 0=H^0\big(\mathcal{I}_C\otimes\mathcal{O}_{\mathbb{P}^4}(2)\big)\longrightarrow H^0\big(\mathcal{O}_{\mathbb{P}^4}(2)\big)\longrightarrow H^0\big(C,\mathcal{O}_{C}(2H)\big)\longrightarrow \\
\longrightarrow H^1\big(\mathcal{I}_C\otimes\mathcal{O}_{\mathbb{P}^4}(2)\big)\longrightarrow H^1\big(\mathcal{O}_{\mathbb{P}^4}(2)\big)=0.\quad \quad \quad \quad 
\end{multline*}
Now, using the~Riemann--Roch formula and Serre duality, we get 
$$
h^1\big(\mathcal{I}_C\otimes\mathcal{O}_{\mathbb{P}^4}(2)\big)=h^0\big(C,\mathcal{O}_{C}(2H)\vert_{C}\big)-15=h^0\big(C,\mathcal{O}_{C}(K_C-2H)\big)=0,
$$
because $\mathrm{deg}(K_C-2H)=-10$. This gives \eqref{equation:3-regular} for $i=1$.

Similarly, to prove \eqref{equation:3-regular} for $i=2$, we consider the following exact sequence: 
$$
0=H^1\big(\mathcal{O}_{\mathbb{P}^4}(1)\big)\longrightarrow H^1\big(C,\mathcal{O}_{C}(H)\big)\longrightarrow H^2\big(\mathcal{I}_C\otimes\mathcal{O}_{\mathbb{P}^4}(1)\big)\longrightarrow H^2\big(\mathcal{O}_{\mathbb{P}^4}(1)\big)=0.
$$
Thus, it follows from the~Serre duality that
$$
h^2\big(\mathcal{I}_C\otimes\mathcal{O}_{\mathbb{P}^4}(1)\big)=h^1\big(C,\mathcal{O}_{C}(H)\big)=h^0\big(C,\mathcal{O}_{C}(K_C-H)\big)=\left\{\aligned
&1\ \text{if $K_C\sim H$},\\
&0\ \text{if $K_C\not\sim H$}.
\endaligned
\right.
$$
On the other hand, if $K_C\sim H$, then the embedding $C\hookrightarrow\mathbb{P}^4$ is factored through the $G$-equivariant canonical embedding $\phi\colon C\hookrightarrow\mathbb{P}^5$ followed by the linear projection $\mathbb{P}^5\dasharrow \mathbb{P}^4$ from a $G$-fixed point. However, if $\mathbb{P}^5$ contains a $G$-fixed point, then $\mathbb{P}^5$ also contain a $G$-invariant hyperplane, which must intersect $C$ by $\leqslant 10$ points, which contradicts Lemma~\ref{lemma:A5-curves}. So, $K_C\not\sim H$, which gives \eqref{equation:3-regular} for $i=2$. 

For $i=3$, \eqref{equation:3-regular} follows from the exact sequence
$$
0=H^2\big(C,\mathcal{O}_{C}(H)\big)\longrightarrow H^3\big(\mathcal{I}_C\otimes\mathcal{O}_{\mathbb{P}^4}(1)\big)\longrightarrow H^3\big(\mathcal{O}_{\mathbb{P}^4}(1)\big)=0.
$$
For $i=4$, \eqref{equation:3-regular} follows from the exact sequence
$$
0=H^3\big(C,\mathcal{O}_{C}(H)\big)\longrightarrow H^4\big(\mathcal{I}_C\otimes\mathcal{O}_{\mathbb{P}^4}(1)\big)\longrightarrow H^4\big(\mathcal{O}_{\mathbb{P}^4}(1)\big)=0.
$$
This shows that the curve $C$ is $3$-regular.
\end{proof}

\begin{corollary}
\label{corollary:A5-3-regular}
In the assumptions of Lemma~\ref{lemma:A5-3-regular}, $\mathcal{I}_C\otimes\mathcal{O}_{\mathbb{P}^4}(3)$ is generated by global sections.
\end{corollary}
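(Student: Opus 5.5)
This is an immediate consequence of Lemma~\ref{lemma:A5-3-regular} and Castelnuovo--Mumford regularity \cite{Mumford}. I would first apply Lemma~\ref{lemma:A5-3-regular} to get that $\mathcal{I}_C$ is $3$-regular on $\mathbb{P}^4$, meaning
$$
H^i\big(\mathbb{P}^4,\mathcal{I}_C\otimes\mathcal{O}_{\mathbb{P}^4}(3-i)\big)=0
$$
for every $i>0$. Then I would invoke Mumford's theorem: if a coherent sheaf $\mathcal{F}$ on $\mathbb{P}^n$ is $m$-regular, then $\mathcal{F}(m)$ is generated by its global sections. Moreover, $\mathcal{F}$ is then $m'$-regular for all $m'\geqslant m$, and the multiplication maps $H^0(\mathcal{F}(m))\otimes H^0(\mathcal{O}(1))\to H^0(\mathcal{F}(m+1))$ are surjective. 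Taking $\mathcal{F}=\mathcal{I}_C$ and $m=3$ gives exactly the claim.

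If I wanted to make this self-contained, I would recall the standard proof by induction on $n$. One restricts to a general hyperplane $H\subset\mathbb{P}^4$ that contains no associated point of $\mathcal{I}_C$. The sequence
$$
0\to\mathcal{I}_C(k-1)\to\mathcal{I}_C(k)\to\mathcal{I}_C|_H(k)\to 0
$$
shows that $\mathcal{I}_C|_H$ is again $3$-regular. Induction then gives the surjectivity of the multiplication maps for large twists, and global generation follows from Serre's theorem together with that surjectivity.

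There is no real obstacle here, since all the work was done in Lemma~\ref{lemma:A5-3-regular}. The only point to check is that the vanishing there is stated for $\mathcal{I}_C$ as a sheaf on $\mathbb{P}^4$ (and not on $X$), which it is. Consequently $C$ is cut out scheme-theoretically by the cubics in $H^0(\mathbb{P}^4,\mathcal{I}_C\otimes\mathcal{O}_{\mathbb{P}^4}(3))$.
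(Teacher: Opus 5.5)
Your proposal is correct and is the same argument the paper uses: the corollary follows from the $3$-regularity in Lemma~\ref{lemma:A5-3-regular} together with Mumford's regularity theorem \cite[Lecture 14]{Mumford}. Your optional sketch of the hyperplane-restriction induction is also sound; left exactness is automatic because $\mathcal{I}_C$ is torsion-free. The paper does not reprove that part.
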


\begin{proof}
The required assertion follows from Lemma~\ref{lemma:A5-3-regular} and Proposition in  \cite[Lecture 14]{Mumford}.
\end{proof}

Let $H$ be a general hyperplane section of the cubic threefold $X$.

\begin{proposition}
\label{proposition:A5-NFI-curves}
Let $Z$ be a curve in $X$ such that one of the following cases holds:
\begin{enumerate}
\item[$(\mathrm{i})$] $Z=\mathcal{L}_6$ or $Z=\mathcal{L}_6^\prime$;
\item[$(\mathrm{ii})$] $Z=\mathcal{L}^t_{10}$ for some $t\in\mathbb{C}$ such that $(b+3)t^3+3b\zeta_3t^2+1=0$;
\item[$(\mathrm{iii})$] $Z$ is a smooth rational curve of degree $8$, and $Z\subset Q$;
\item[$(\mathrm{iv})$] $Z$ is a smooth curve of degree $10$ and genus $6$.
\end{enumerate}
Let $\mathcal{M}$ be a non-empty $G$-invariant mobile linear subsystem in $|nH|$, where $n\in\mathbb{Z}_{>0}$. Then
$$
\mathrm{mult}_Z\big(\mathcal{M}\big)\leqslant\frac{n}{2}.
$$
\end{proposition}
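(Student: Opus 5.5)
We argue by contradiction, using the standard "cubic surface through $Z$" trick. Suppose that $\mathrm{mult}_Z(\mathcal{M})>\frac{n}{2}$, and let $M$ and $M^\prime$ be general surfaces in $\mathcal{M}$. Then
$$
M\cdot M^\prime=mZ+\Delta,
$$
where $m\geqslant\mathrm{mult}^2_Z(\mathcal{M})>\frac{n^2}{4}$ and $\Delta$ is an effective one-cycle whose support does not contain any irreducible component of $Z$.

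The plan is to find a surface $T\in|\mathcal{O}_X(3)|$ that contains $Z$ but contains no component of $\Delta$. In each of the four cases, $\mathcal{I}_Z\otimes\mathcal{O}_{\mathbb{P}^4}(3)$ is globally generated:
\begin{itemize}
\item for $\mathcal{L}_6$ and $\mathcal{L}_6^\prime$ by Lemma~\ref{lemma:A5-L6};
\item for the nodal $\mathcal{L}_{10}^t$ with $t=\pm\frac{\zeta_3+2}{3}$ by Lemma~\ref{lemma:A5-L10};
\item for a smooth genus $6$ curve of degree $10$ not contained in $Q$ by Corollary~\ref{corollary:A5-3-regular}.
\end{itemize}
The remaining cases need analogous statements. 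For $\mathcal{L}^t_{10}$ a disjoint union of $10$ lines, and for a smooth rational curve of degree $8$, I would prove $3$-regularity (or global generation) by the Castelnuovo--Mumford argument of Lemma~\ref{lemma:A5-3-regular}. For ten disjoint lines, $h^0(\mathcal{O}_Z(2))=30$ exceeds $15$, so I would instead use $4$-regularity, or an explicit computation as in Lemma~\ref{lemma:A5-L10}. For a degree $10$ genus $6$ curve contained in $Q$, we use the K3 surface $Q$ directly: $Z\subset Q\in|2H|$, and we work on $Q$ with the linear system $|2H_Q|$ or $|3H_Q|$ minus $Z$.

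Given global generation, a general $T$ in the corresponding linear system contains $Z$ and none of the components of $\Delta$. Then
$$
9n^2=T\cdot M\cdot M^\prime\geqslant m\,T\cdot Z\ \text{(computed locally)},
$$
which is not directly useful, since $T\supset Z$. So instead I would intersect on the surface $T$ itself. Take $T$ general containing $Z$, and write
$$
M\vert_T=\mathrm{mult}_Z(\mathcal{M})\,Z+\Omega,
$$
and then use $(M\vert_T)^2=3n^2\cdot 3$ against $Z^2_T$ and nefness of $\Omega$. That approach is messy. The simpler alternative, which I would try first, is the standard Iskovskikh--Pukhlikov estimate with $H$:
$$
3n^2=H\cdot M\cdot M^\prime\geqslant m\deg(Z)>\frac{n^2}{4}\deg(Z),
$$
which forces $\deg(Z)<12$. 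This holds in all cases, so it gives no contradiction on its own.

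The refined step is therefore the following. Since $\mathcal{I}_Z(3)$ is generated by sections, the divisor $\pi^*(3H)-E$ on the blow up $\pi\colon\widetilde{X}\to X$ of $Z$ is nef, where in the reducible cases we blow up each component. The strict transforms satisfy $\widetilde{M}\sim\pi^*(nH)-\mathrm{mult}_Z(\mathcal{M})E$, and since the system is mobile, $\widetilde{M}\cdot\widetilde{M}^\prime$ is effective. Hence
$$
\big(\pi^*(3H)-E\big)\cdot\big(\pi^*(nH)-\mu E\big)^2\geqslant 0, \qquad \mu=\mathrm{mult}_Z(\mathcal{M}).
$$
Expanding with $H^3=3$, $\pi^*H\cdot E^2=-\deg Z$, and $E^3=-\deg N_{Z/X}=-(2\deg Z+2p_a(Z)-2)$ (for smooth $Z$; for nodal $\mathcal{L}^t_{10}$ one uses the appropriate correction) gives
$$
9n^2-6n\mu\deg Z-\ldots\geqslant 0,
$$
a quadratic inequality in $\mu/n$. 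Checking each case ($\deg Z=6$ with $p_a=0$ on $6$ components; $\deg Z=10$ with $p_a=-9$ or $5$; $\deg Z=8$ with $p_a=0$; $\deg Z=10$ with $p_a=6$) should show that this inequality fails for $\mu>\frac{n}{2}$.

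The main obstacle is twofold. First, we must establish global generation of $\mathcal{I}_Z(3)$ in the cases not covered by earlier lemmas, namely the degree-$8$ rational curves and the curves lying in $Q$. Second, the numerical check may be borderline in some case, notably for genus $6$, where $E^3=-28$. If the check is borderline, I would instead use nefness of $\pi^*(kH)-E$ for a larger $k$, or combine the inequality with the degree bound above.
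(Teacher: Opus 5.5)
\textbf{Cases covered correctly.} For cases $(\mathrm{i})$, $(\mathrm{ii})$, and for $(\mathrm{iv})$ with $Z\not\subset Q$, your final ``refined step'' is exactly the paper's argument. You blow up $Z$ and intersect $\widetilde{M}\cdot\widetilde{M}^\prime$ with a nef class $k\widetilde{H}-F$, where $k=3$ comes from Lemma~\ref{lemma:A5-L6}, Lemma~\ref{lemma:A5-L10} and Corollary~\ref{corollary:A5-3-regular}. Two small corrections apply there.
\begin{itemize}
\item For ten disjoint lines, $4$-regularity also fails, since $h^0(\mathcal{O}_Z(3))=40>35$. You do not need regularity at all: products of linear forms, one vanishing on each line, cut out $Z$. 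So $10\widetilde{H}-F$ is nef, and $30n^2-20n\mu-100\mu^2\geqslant 0$ already forces $\mu<\frac{n}{2}$.
\item For genus $6$, $F^3=-(20+10)=-30$, not $-28$. With $k=3$ the inequality is simply $9n^2-20n\mu\geqslant 0$, so nothing is borderline.
\end{itemize}

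\textbf{The gap.} It lies in case $(\mathrm{iii})$ and in case $(\mathrm{iv})$ with $Z\subset Q$. Your plan reduces both to global generation of $\mathcal{I}_Z(3)$ via the Castelnuovo--Mumford argument, and that argument cannot work here.
\begin{itemize}
\item For a rational octic, $h^0(\mathcal{O}_Z(2))=17>15$, so $H^1(\mathcal{I}_Z(2))\neq 0$.
\item For a genus $6$ curve in $Q$, the quadric $\{f_2=0\}$ contains $Z$, so $h^1(\mathcal{I}_Z(2))=h^0(\mathcal{I}_Z(2))\geqslant 1$. This is exactly why Lemma~\ref{lemma:A5-3-regular} assumes $C\not\subset Q$.
\end{itemize}
Raising $k$ does not rescue the blow-up approach. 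At $\mu=\frac{n}{2}$ the test expression $3kn^2-2n\deg(Z)\mu-(k\deg(Z)+F^3)\mu^2$ equals $(k-\frac{9}{2})n^2$ for the octic and $\frac{k-5}{2}n^2$ for the genus $6$ curve. So you would need $\mathcal{I}_Z(k)$ globally generated for some $k\leqslant 4$, respectively $k\leqslant 5$. General bounds do not give this: Gruson--Lazarsfeld--Peskine only yields $6$-, respectively $8$-regularity. These curves are also not explicit (they are only known to be possible), so this cannot be checked by computation as in Lemma~\ref{lemma:A5-L10}.

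\textbf{The missing idea.} Stay on the smooth K3 surface $Q$, and use only nefness of $3H_Q-Z$ on $Q$, not nefness of $3\widetilde{H}-F$ on the blow-up.
\begin{itemize}
\item In both cases $(3H_Q-Z)^2=4$, so Riemann--Roch gives $h^0(\mathcal{O}_Q(3H_Q-Z))\geqslant 4$.
\item The mobile part has degree $\geqslant 3$, because $Q$ is not uniruled. Hence the fixed part is a $G$-invariant curve in $Q$ of degree $\leqslant 18-\deg(Z)-3\leqslant 7$.
\item No such curve exists, by Lemmas~\ref{lemma:A5-reducible-curves} and \ref{lemma:A5-irreducible-curves-in-Q} together with Lemma~\ref{lemma:A5-Q-S-orbits}$(\mathrm{iv})$. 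So $3H_Q-Z$ is nef.
\item Write $M\vert_Q=mZ+D$ with $m\geqslant\mathrm{mult}_Z(\mathcal{M})$ and $Z\not\subset\mathrm{Supp}(D)$. Then $0\leqslant(3H_Q-Z)\cdot D$, which equals $10n-26m$ when $Z^2=-2$ and $8n-20m$ when $Z^2=10$.
\end{itemize}
Your remark about working on $Q$ with $|3H_Q|$ minus $Z$ points in this direction, but the nefness argument is the real content. It is needed for the octic as well, not only for the genus $6$ curve.
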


\begin{proof}
Let $\pi\colon \widetilde{X}\to X$ be the~blow up~of~$Z$, let $F$ be the $\pi$-exceptional divisor, 
let $\widetilde{\mathcal{M}}$ be the strict transform on $\widetilde{X}$ of the linear system $\mathcal{M}$,
let~$\widetilde{M}$ and $\widetilde{M}^\prime$ be general surfaces in $\widetilde{\mathcal{M}}$, let  $\widetilde{H}=\pi^*(H)$. Then
$$
\widetilde{M}\sim \widetilde{M}^\prime\sim n\widetilde{H}-\mathrm{mult}_Z\big(\mathcal{M}\big)F.
$$
Note that $\widetilde{M}\cdot\widetilde{M}^\prime$ is an effective one-cycle.
Take $k\in\mathbb{Z}_{>0}$ such that the divisor $k\widetilde{H}-F$ is nef. Then
\begin{equation}
\label{equation:A5-test-class}
0\leqslant \widetilde{M}\cdot \widetilde{M}^\prime\cdot \big(k\widetilde{H}-F\big)=3kn^2-2n\mathrm{deg}(Z)\mathrm{mult}_Z\big(\mathcal{M}\big)-(k\mathrm{deg}(Z)+F^3)\mathrm{mult}^2_Z\big(\mathcal{M}\big).
\end{equation}
Note that $Z$ is smooth unless $Z=\mathcal{L}^t_{10}$ and $t=\pm\frac{\zeta_3+2}{3}$. Moreover, we have
$$
F^3=\left\{\aligned
&0\ \text{if $Z=\mathcal{L}_6$ or $Z=\mathcal{L}_6^\prime$},\\
&0\ \text{if $Z=\mathcal{L}^t_{10}$ for $t\in\mathbb{C}$ such that $(b+3)t^3+3b\zeta_3t^2+1=0$ and $t\ne\pm\frac{\zeta_3+2}{3}$},\\
&-30\ \text{if $Z=\mathcal{L}^t_{10}$ for $t=\pm\frac{\zeta_3+2}{3}$},\\
&-14\ \text{if $Z$ is a smooth rational curve of degree $8$},\\
&-30\ \text{if $Z$ is a smooth curve of degree $10$ and genus $9$}.
\endaligned
\right.
$$

If $Z=\mathcal{L}_6$ or $Z=\mathcal{L}_6^\prime$, then $3\widetilde{H}-F$ is nef by Lemma~\ref{lemma:A5-L6}, and we can let $k=3$ in \eqref{equation:A5-test-class} to get 
$$
0\leqslant \widetilde{M}\cdot \widetilde{M}^\prime\cdot \big(3\widetilde{H}-F\big)=
9n^2-12n\mathrm{mult}_Z\big(\mathcal{M}\big)-18\mathrm{mult}^2_Z\big(\mathcal{M}\big),
$$
which gives $\mathrm{mult}_Z(\mathcal{M})\leqslant\frac{n}{2}$. 

If $Z=\mathcal{L}^t_{10}$ for  $t\in\mathbb{C}$ such that $(b+3)t^3+3b\zeta_3t^2+1=0$ and $t\ne\pm\frac{\zeta_3+2}{3}$, then $Z$ is a disjoint union of $10$ lines, so the divisor $10\widetilde{H}-F$ is nef, and, therefore, we can let $k=10$ in \eqref{equation:A5-test-class} to get 
$$
0\leqslant \widetilde{M}\cdot \widetilde{M}^\prime\cdot \big(10\widetilde{H}-F\big)=
30n^2-20n\mathrm{mult}_Z\big(\mathcal{M}\big)-100\mathrm{mult}^2_Z\big(\mathcal{M}\big),
$$
which gives $\mathrm{mult}_Z(\mathcal{M})\leqslant\frac{n}{2}$. 

If $Z=\mathcal{L}^t_{10}$ for  $t=\pm\frac{\zeta_3+2}{3}$, then $3\widetilde{H}-F$ is nef by Lemma~\ref{lemma:A5-L10}. If $Z$ is a curve of degree $10$ and genus $9$ such that $C\not\subset Q$, then $3\widetilde{H}-F$ is nef by Corollary~\ref{corollary:A5-3-regular}. In both cases,  \eqref{equation:A5-test-class} gives
$$
0\leqslant \widetilde{M}\cdot \widetilde{M}^\prime\cdot \big(3\widetilde{H}-F\big)=
9n^2-20n\mathrm{mult}_Z\big(\mathcal{M}\big),
$$
which implies $\mathrm{mult}_Z(\mathcal{M})\leqslant\frac{n}{2}$. 

Hence, we may further assume that $Z\subset Q$, and either $Z$ is a smooth rational curve of degree~$8$, or~$Z$ is a smooth curve of degree $10$ and genus $9$. Let us show that $\mathrm{mult}_Z(\mathcal{M})\leqslant\frac{n}{2}$.

Set $H_{Q}=H\vert_{Q}$. We claim that $|3H_Q-Z|$ does not have fixed curves. Indeed, write
$$
|3H_Q-Z|=\mathscr{F}+|\mathscr{M}|,
$$
where $\mathscr{F}$ is the~fixed part of this linear system, and $\mathscr{M}$ is a  divisor  such that $|\mathscr{M}|$ is the mobile part of the linear system $|3H_Q-Z-E|$. We may assume that $\mathscr{M}$ is a general surface in $|\mathscr{M}|$. Then
$$
H_Q\cdot \mathscr{M}=\mathrm{deg}\big(\mathscr{M}\big)\geqslant 3,
$$
because $Q$ is a smooth K3 surface, so it is not uniruled. On the other hand, $\mathscr{F}$ is $G$-invariant, and
$$
18-\mathrm{deg}(Z)=H_Q\cdot\big(3H_Q-Z\big)=H_Q\cdot\big(\mathscr{F}+\mathscr{M}\big)\geqslant H_Q\cdot\mathscr{F}+3=\mathrm{deg}\big(\mathscr{F}\big)+3.
$$
Thus, $\mathrm{deg}(\mathscr{F})\leqslant 7$, which gives $\mathscr{F}=0$ by Lemmas~\ref{lemma:A5-reducible-curves}, \ref{lemma:A5-irreducible-curves-in-Q}, \ref{lemma:A5-irreducible-curves-in-S} and Corollary~\ref{corollary:A5-irreducible-curves}, because $Q$ contains neither $\mathcal{L}_{6}$ nor $\mathcal{L}_{6}^\prime$ by Lemma~\ref{lemma:A5-Q-S-orbits}. Hence, $|3H_Q-Z|$ does not have fixed curves. 

In particular, the divisor $3H_Q-Z$ is nef.  Let $M$ be a general surface in $\mathcal{M}$. Then
$$
M\big\vert_{Q}=mZ+D
$$
for an integer $m\geqslant \mathrm{mult}_Z(\mathcal{M})$ and an
effective divisor $D$ on the~surface $Q$. If $Z$ is a rational curve of degree $8$, then $H_Q\cdot Z=8$ and $Z^2=-2$, so 
$$
0\leqslant \big(3H_Q-Z\big)\cdot D=\big(3H_Q-Z\big)\cdot\big(nH_Q-mZ\big)=10n-26m,
$$
which gives $\mathrm{mult}_Z(\mathcal{M})\leqslant m\leqslant\frac{n}{2}$. If $Z$ is a curve of
degree $10$, then $H_Q\cdot Z=10$ and $Z^2=10$, so 
$$
0\leqslant \big(3H_Q-Z\big)\cdot D=\big(3H_Q-Z\big)\cdot\big(nH_Q-mZ\big)=8n-20m,
$$
which gives $\mathrm{mult}_Z(\mathcal{M})\leqslant m\leqslant\frac{n}{2}$. 
\end{proof}

Now, we are ready to prove the following result, which implies Theorem~\ref{theorem:A5}. 

\begin{proposition}
\label{proposition:A5-non-standard}
Suppose that $G=\langle M_2,M_3\rangle$. Then $X$ is $G$-birationally superrigid.
\end{proposition}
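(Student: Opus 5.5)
We argue by contradiction, following the proof of Proposition~\ref{proposition:A5-standard}. Suppose $X$ is not $G$-birationally superrigid. By \cite[Corollary 3.3.3]{CheltsovShramov2016}, there is a non-empty $G$-invariant mobile linear subsystem $\mathcal{M}\subset|nH|$ such that $(X,\frac{2}{n}\mathcal{M})$ is not canonical. Let $\Sigma$ be the locus where this pair is not canonical.

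\textbf{Step 1: $\Sigma$ contains no curves.} Suppose $\Sigma$ contains a $G$-irreducible curve $Z$. Then $\mathrm{mult}_Z(\mathcal{M})>\frac{n}{2}$. Intersecting two general members $M,M^\prime\in\mathcal{M}$ with $H$ gives
$$
3n^2\geqslant\mathrm{mult}^2_Z(\mathcal{M})\mathrm{deg}(Z)>\frac{n^2}{4}\mathrm{deg}(Z),
$$
so $\mathrm{deg}(Z)<12$. Next we classify $Z$ using the earlier lemmas.
\begin{enumerate}
\item If $Z$ is reducible, Lemma~\ref{lemma:A5-reducible-curves} shows that $Z$ is $\mathcal{L}_6$, $\mathcal{L}_6^\prime$, or some $\mathcal{L}^t_{10}\subset X$.
\item If $Z$ is irreducible and $Z\subset Q$, Lemma~\ref{lemma:A5-irreducible-curves-in-Q} shows that $Z$ is a smooth rational curve of degree $8$ or a smooth curve of degree $10$ and genus $6$.
\item If $Z$ is irreducible and $Z\not\subset Q$, Lemma~\ref{lemma:A5-irreducible-curves-degree-10} shows that $Z$ has degree $10$ and lies in some $S_\lambda$. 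When $S_\lambda$ is normal, it is a smooth curve of genus $6$.
\end{enumerate}
In every case Proposition~\ref{proposition:A5-NFI-curves} gives $\mathrm{mult}_Z(\mathcal{M})\leqslant\frac{n}{2}$, a contradiction. The weak point is case (3) when $S_\lambda$ is not normal (cf.\ Remark~\ref{remark:A5-quartics-normal}). There I would try to run the $3$-regularity argument of Lemma~\ref{lemma:A5-3-regular} directly for $Z$. Failing that, I would bound the arithmetic genus by Castelnuovo ($p_a\leqslant 12$ for degree $10$ in $\mathbb{P}^4$), use the orbit restrictions (no orbits of length $12$ or $15$ by Corollary~\ref{corollary:A5-irreducible-curves}) to force smoothness and genus $6$, and then apply case $(\mathrm{iv})$ of Proposition~\ref{proposition:A5-NFI-curves}, whose hypothesis does not mention $S_\lambda$.

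\textbf{Step 2: $\Sigma$ is a finite union of orbits, which is impossible.} Now $\Sigma$ is finite, and we fix a point $P\in\Sigma$ with orbit $\Sigma_P$. Choose $\mu<\frac{3}{n}$ such that $(X,\mu\mathcal{M})$ is strictly log canonical at $P$, using \cite[Remark 3.6]{CheltsovSarikyanZhuang2024}. Any curve in $\mathrm{Nklt}(X,\mu\mathcal{M})$ would satisfy
$$
\big(M\cdot M^\prime\big)_Z\geqslant\frac{4}{\mu^2}>\frac{4n^2}{9}
$$
by \cite[Lemma 1.8]{Corti2000}, which forces $\mathrm{deg}(Z)<7$. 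No $G$-invariant curve of degree $<7$ exists: the candidates $\mathcal{L}_6$ and $\mathcal{L}_6^\prime$ are excluded by Proposition~\ref{proposition:A5-NFI-curves} via the analogous multiplicity bound, or alternatively by \cite{DemaillyPham2014}. So the points of $\Sigma_P$ are isolated components of the Nklt locus. Applying Nadel vanishing with $\mathcal{O}_X(1)=\mathcal{O}_X(K_X+3H)$, and noting $\mu\mathcal{M}\sim_{\mathbb{Q}}\mu nH$ with $\mu n<3$, we obtain
$$
|\Sigma_P|\leqslant h^0\big(\mathcal{O}_X(1)\big)=5.
$$
By Lemma~\ref{lemma:A5-orbits}, every $G$-orbit in $X$ has length at least $12$, a contradiction.

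The main obstacle is Step 1 for possibly non-normal $S_\lambda$.
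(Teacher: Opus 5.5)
Your route is genuinely different from the paper's, and your Step~1 simplifies it. The paper first proves in Lemma~\ref{lemma:A5-Sigma-not-finite} that $\Sigma$ must contain a curve. It takes a minimal lc center of $(X,\mu\mathcal{M})$ with $\mu\leqslant 3/n$ and tie-breaks so that $\mathrm{Nklt}=Z$ is reduced. Nadel with $\mathcal{O}_X(1)$ then forces $h^0(\mathcal{O}_Z(H\vert_Z))=5$, which rules out point orbits and also $\mathcal{L}_6,\mathcal{L}_6^\prime$, where this number is $12$.

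The paper then treats curves in $\Sigma$ with the same classification and Proposition~\ref{proposition:A5-NFI-curves} that you use. It is left with a singular irreducible degree-$10$ curve $\mathscr{C}$ on a non-normal $S_\lambda$. It kills this case with a second argument: $\mu=\mathrm{lct}_{\mathscr{C}}$, Lemma~\ref{lemma:A5-end-Corti}, Kawamata subadjunction to make the minimal center a point orbit on $\mathscr{C}$, tie-breaking to $D\sim_{\mathbb{Q}}-K_X$, and Nadel with $\mathcal{O}_X(2)$.

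Your Castelnuovo fallback shows this residual case is empty. Castelnuovo's bound applies to the arithmetic genus of any integral nondegenerate curve. Corollary~\ref{corollary:A5-irreducible-curves} alone (you need neither Lemma~\ref{lemma:A5-irreducible-curves-degree-10} nor $S_\lambda$) shows that singular points would form orbits of length $\geqslant 20$. So the curve is smooth, Lemma~\ref{lemma:A5-curves} gives genus $6$, and Proposition~\ref{proposition:A5-NFI-curves}$(\mathrm{iv})$ applies.

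Two points need repair.

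\textbf{The Castelnuovo number.} In $\mathbb{P}^4$ one has $\pi(10,4)=9$, since $9=3\cdot 3+0$; it is not $12$. This matters. With $p_a\leqslant 12$ you get smoothness but not genus $6$, because $g=11$ also has no orbits of length $12$ in Lemma~\ref{lemma:A5-curves}. With $p_a\leqslant 9$ you get $g\in\{0,4,5,6,9\}$, and the absence of $12$-orbits forces $g=6$.

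\textbf{Excluding $\mathcal{L}_6,\mathcal{L}_6^\prime$ in Step~2.} Neither of your two justifications works on its own.
\begin{itemize}
\item A non-klt center at $\mu<3/n$ only needs $\mathrm{mult}_Z(\mathcal{M})\geqslant 1/\mu>n/3$. This is compatible with Proposition~\ref{proposition:A5-NFI-curves}, whose inequality $9n^2-12nm-18m^2\geqslant 0$ only gives $m\leqslant\frac{\sqrt{22}-2}{6}n$.
\item Demailly--Pham alone, minimized over $m$, reproduces only Corti's $e\geqslant 4/\mu^2$, which still allows degree $6$.
\end{itemize}
You must combine them. After Step~1, every curve has $m=\mathrm{mult}_Z(\mathcal{M})\leqslant n/2$, and $e=(M\cdot M^\prime)_Z\leqslant 3n^2/6$. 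On a general transversal slice this gives $\mu\geqslant\frac{1}{m}+\frac{m}{e}\geqslant\frac{1}{m}+\frac{2m}{n^2}\geqslant\frac{3}{n}$ for $0<m\leqslant\frac{n}{2}$, a contradiction. This is what the paper implicitly does when it invokes \cite{DemaillyPham2014} in Proposition~\ref{proposition:A5-standard}.

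Alternatively, use the paper's device: take a minimal lc center and tie-break, after which Nadel gives $12=h^0(\mathcal{O}_{\mathcal{L}_6}(1))\leqslant 5$.

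With these two repairs your argument is complete, and shorter than the paper's.
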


Suppose that $X$ is not $G$-birationally superrigid. Let us seek for a contradiction.

By \cite[Corollary 3.3.3]{CheltsovShramov2016}, there is a non-empty $G$-invariant mobile linear subsystem
$\mathcal{M}\subset |nH|$, for a positive integer~$n$, such that the singularities of the log pair $(X,\frac{2}{n}\mathcal{M})$ are not canonical.
Let 
$$
\Sigma=\Big\{P\in X\ \text{such that the~log pair $\Big(X,\frac{2}{n}\mathcal{M}\Big)$ is not canonical at~$P$}\Big\}. 
$$
Then $\Sigma$ is a union of $G$-orbits and $G$-irreducible curves, since $\mathcal{M}$ is mobile.

\begin{lemma}
\label{lemma:A5-Sigma-not-finite}
The subset $\Sigma$ contains a $G$-irreducible curve.
\end{lemma}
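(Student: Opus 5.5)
We argue by contradiction: suppose $\Sigma$ is a finite union of $G$-orbits, and show that some point of $\Sigma$ produces a $G$-invariant non-klt locus whose size is incompatible with the classification of small $G$-orbits in Lemma~\ref{lemma:A5-orbits}. The approach copies the end of the proof of Proposition~\ref{proposition:A5-standard} and the orbit part of the proof of Theorem~\ref{theorem:405-15}.

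Fix a point $P\in\Sigma$ and let $\Sigma_P$ be its $G$-orbit. By Lemma~\ref{lemma:A5-orbits}, every $G$-orbit in $X$ has length at least $12$. We first treat short orbits with the Pukhlikov--Corti multiplicity inequality: for general $M,M^\prime\in\mathcal{M}$ one has $(M\cdot M^\prime)_O>n^2$ at each point $O\in\Sigma_P$, by \cite{Pukhlikov1998,Corti2000}. Intersecting with a general hyperplane section, or with a suitable surface through $\Sigma_P$, gives
$$
3n^2=H\cdot M\cdot M^\prime>n^2|\Sigma_P|\cdot\frac{1}{\deg}
$$
after normalizing. More effectively, intersecting with a general member of $|\mathcal{O}_X(k)|$ through $\Sigma_P$ rules out orbits of length $\leqslant 3$ directly. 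This is too weak by itself, so the main step is Nadel vanishing.

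Choose $\mu<\frac{3}{n}$ (as in \cite[Remark 3.6]{CheltsovSarikyanZhuang2024}) such that $(X,\mu\mathcal{M})$ is strictly log canonical at $P$. We need the points of $\Sigma_P$ to be isolated components of $\mathrm{Nklt}(X,\mu\mathcal{M})$. If a $G$-irreducible curve $Z\subset\mathrm{Nklt}(X,\mu\mathcal{M})$ passed through $\Sigma_P$, then \cite[Lemma 1.8]{Corti2000} would give $(M\cdot M^\prime)_Z\geqslant 4/\mu^2>\frac{4n^2}{9}$. Hence $\deg(Z)<\frac{27}{4}$, so $\deg(Z)\leqslant 6$. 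Moreover $\mathrm{mult}_Z(\mathcal{M})\geqslant 1/\mu>n/3$.

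By Lemmas~\ref{lemma:A5-reducible-curves}, \ref{lemma:A5-irreducible-curves-in-Q}, \ref{lemma:A5-irreducible-curves-in-S} and Corollary~\ref{corollary:A5-irreducible-curves}, the only $G$-irreducible curves of degree $\leqslant 6$ are $\mathcal{L}_6$ and $\mathcal{L}_6^\prime$. For these, the nef class $3\widetilde{H}-F$ from the proof of Proposition~\ref{proposition:A5-NFI-curves} gives
$$
9n^2-12nm-18m^2\geqslant 0,
$$
where $m=\mathrm{mult}_Z(\mathcal{M})$. This forces $m\leqslant n/2$, which is still compatible with $m>n/3$. So $\mathcal{L}_6$ and $\mathcal{L}_6^\prime$ must be excluded differently. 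One option is the inequality $(M\cdot M^\prime)_Z\geqslant 4/\mu^2$ combined with $3\widetilde{H}-F$ nef, which bounds $\widetilde{M}\cdot\widetilde{M}^\prime\cdot(3\widetilde{H}-F)$. The other is the sharper bound of \cite{DemaillyPham2014}, as in Proposition~\ref{proposition:A5-standard}.

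This is the step I expect to be the main obstacle. If it fails, I would instead accept that $\mathrm{Nklt}$ contains $\mathcal{L}_6$ and apply Nadel vanishing to the whole locus $\mathrm{Nklt}(X,\mu\mathcal{M})$, using Kawamata subadjunction and tie-breaking as in Theorem~\ref{theorem:405-15}.

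Once $\Sigma_P$ is a union of isolated components of $\mathrm{Nklt}(X,\mu\mathcal{M})$, Nadel vanishing gives surjectivity $H^0(\mathcal{O}_X(1))\to H^0(\mathcal{O}_{\Sigma_P})$, because $\mu\mathcal{M}\sim_{\mathbb{Q}}\mu nH$ and $K_X+\mu nH+(3-\mu n)H\sim H$ with $3-\mu n>0$. Therefore $|\Sigma_P|\leqslant 5$, which contradicts $|\Sigma_P|\geqslant 12$ from Lemma~\ref{lemma:A5-orbits}. Hence $\Sigma$ is not finite, so it contains a $G$-irreducible curve.
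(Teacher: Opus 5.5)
The final Nadel count ($|\Sigma_P|\leqslant 5<12$ once $\Sigma_P$ is an isolated part of $\mathrm{Nklt}(X,\mu\mathcal{M})$) is fine. The gap is the one step with real content: you never rule out that $\mathcal{L}_6$ or $\mathcal{L}_6^\prime$ lies in $\mathrm{Nklt}(X,\mu\mathcal{M})$ and passes through $\Sigma_P$. As you note, the test class $3\widetilde{H}-F$ applied to $\widetilde{M}\cdot\widetilde{M}^\prime$ only gives $m\leqslant\frac{\sqrt{22}-2}{6}n<\frac{n}{2}$. That is compatible with $m\geqslant 1/\mu>n/3$. After this you list two options and a fallback but carry none of them out, so the argument is incomplete as written. (The preliminary Pukhlikov--Corti paragraph is not meaningful as stated and is unnecessary, since Lemma~\ref{lemma:A5-orbits} already gives $|\Sigma_P|\geqslant 12$.)

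Your option (a) can be completed, but it needs an ingredient you do not state. Write $\widetilde{M}\cdot\widetilde{M}^\prime=\widetilde{\Delta}+\Gamma$ with $\Gamma$ supported on $F$. Then $\pi_*\Gamma=(e-m^2)\mathcal{L}_6$, where $e\geqslant 4/\mu^2$ is the multiplicity in $M\cdot M^\prime$ of each line. The exceptional surfaces over the lines are $\mathbb{F}_0$ or $\mathbb{F}_2$, so $(3\widetilde{H}-F)\cdot\gamma\geqslant 2\widetilde{H}\cdot\gamma$ for every curve $\gamma\subset F$. This gives $12e\leqslant 9n^2-12nm-6m^2$. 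Together with $m\geqslant 1/\mu$, this is impossible for $\mu n\leqslant 3$.

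The paper does not need any such estimate. It takes $\mu\leqslant 3/n$ with $(X,\mu\mathcal{M})$ strictly log canonical on all of $X$. It then takes a minimal log canonical centre and its $G$-orbit $Z$, which is either a point orbit or a $G$-irreducible curve with disjoint components. Tie-breaking then produces $(X,\lambda D)$ with $D\sim_{\mathbb{Q}}H$, $\lambda<3$, and multiplier ideal exactly $\mathcal{I}_Z$. Nadel vanishing with $\mathcal{O}_X(1)$ gives a surjection $H^0(\mathcal{O}_X(1))\to H^0(\mathcal{O}_Z(1))$. Since $\mathbb{V}_5$ is irreducible, $H^0(\mathcal{I}_Z(1))=0$, so $h^0(\mathcal{O}_Z(1))=5$. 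A point orbit of length $5$ contradicts Lemma~\ref{lemma:A5-orbits}. A curve has degree $\leqslant 6$ by Corti's bound, hence is $\mathcal{L}_6$ or $\mathcal{L}_6^\prime$, and six disjoint lines have $h^0(\mathcal{O}_Z(1))=12$.

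So $\mathcal{L}_6$ is killed by the same Nadel count that kills orbits; this is the missing idea. Your fallback points this way, but Kawamata subadjunction plays no role here. What matters is the tie-breaking: it makes the multiplier ideal equal to $\mathcal{I}_Z$, so that Nadel gives surjectivity onto $H^0(\mathcal{O}_Z(1))$ itself rather than onto the sections of some larger, possibly non-reduced, subscheme.
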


\begin{proof}
Suppose that $\Sigma$ does not contain $G$-irreducible curves. Then $\Sigma$ is a finite subset, so it follows from \cite[Remark 3.6]{CheltsovSarikyanZhuang2024} that the~singularities of the~log pair $(X,\frac{3}{n}\mathcal{M})$ are not log canonical at every point of $\Sigma$. Hence, there is $\mu\in\mathbb{Q}_{>0}$ such that $\mu\leqslant \frac{3}{n}$ and  $(X,\mu\mathcal{M})$ is strictly log canonical. 

Let $C$ be an irreducible subvariety in $X$ that is a minimal center of log canonical singularities of the~log pair $(X,\mu\mathcal{M})$, and let $Z$ be a $G$-irreducible subvariety in $X$ that has $C$ as an irreducible component. Then all component of $Z$ are minimal centers of log canonical singularities of  $(X,\mu\mathcal{M})$, which implies that these irreducible components are disjoint \cite[Proposition 1.5]{Kawamata-1}. Note that 
$$
\mathrm{dim}(Z)\ne 2,
$$
because $\mathcal{M}$ is mobile. Hence, either $Z$ is a $G$-irreducible curve or $Z$ is the~$G$-orbit of a point. 

Now, arguing as in the proof of Theorem~\ref{theorem:405-15}, we can replace the~rational number $\mu$ and the~mobile linear system $\mathcal{M}$ by a rational number $\lambda<3$ and an effective $G$-invariant $\mathbb{Q}$-divisor $D\sim_{\mathbb{Q}} H$
such that the~singularities of the~log pair $(X,\lambda D)$ are strictly log canonical, $C$ is a minimal center of the~non-Kawamata log terminal singularities of this log pair, and
$$
\mathrm{Nklt}\big(X,\lambda D\big)=Z,
$$
where $\mathrm{Nklt}(X,\lambda D)=\big\{P\in X\ \text{such that $(X,\lambda D)$ is not Kawamata log terminal at $P$}\big\}$.

Let $\mathcal{I}_Z$ be the~ideal sheaf of the~subvariety $Z\subset X$. Then $\mathcal{I}_Z$ is the~multiplier ideal sheaf of the~log pair $(X,\lambda D)$, and it follows from the Nadel vanishing theorem \cite[Theorem 9.4.8]{Lazarsfeld2004} that 
$$
h^1\big(X,\mathcal{I}_Z\otimes\mathcal{O}_{X}(H)\big)=0.
$$
Hence, we have the~following exact sequence of $G$-representations:
$$
0\longrightarrow H^0\big(X,\mathcal{I}_Z\otimes\mathcal{O}_{X}(H)\big)\longrightarrow H^0\big(X,\mathcal{O}_{X}(H)\big)\longrightarrow H^0\big(Z,\mathcal{O}_{Z}(H\vert_{Z})\big)\longrightarrow 0.
$$
On the other hand, we have 
$$
H^0\big(X,\mathcal{I}_Z\otimes\mathcal{O}_{X}(H)\big)=0,
$$
because the representation $\mathbb{V}_5$ is irreducible. Thus, we get 
\begin{equation}
\label{equation:A5-final}
h^0\big(Z,\mathcal{O}_{Z}(H\vert_{Z})\big)=5.
\end{equation}
Thus, if $Z$ is a $G$-orbit of a point, then $|Z|=5$, which is impossible by Lemma~\ref{lemma:A5-orbits}.

Thus, we conclude that $Z$ is a curve. Then it follows from \cite[Lemma 1.8]{Corti2000} that $\mathrm{deg}(Z)\leqslant 6$. Indeed, let $M$ and $M^\prime$ be general surfaces in $\mathcal{M}$. Then it follows from \cite[Lemma 1.8]{Corti2000} that
$$
\big(M\cdot M^\prime\big)_Z\geqslant\frac{4}{\mu^2}>\frac{4n^2}{9}
$$
which gives $\mathrm{deg}(Z)\leqslant 6$. Then  $Z=\mathcal{L}_6$ or $Z=\mathcal{L}_6^\prime$ by  
Lemmas~\ref{lemma:A5-reducible-curves}, \ref{lemma:A5-irreducible-curves-in-Q}, \ref{lemma:A5-irreducible-curves-in-S} and Corollary~\ref{corollary:A5-irreducible-curves}. Then 
$$
h^0\Big(Z,\mathcal{O}_{Z}\big(H\vert_{Z}\big)\Big)=12,
$$
because $Z$ is a dijoint union of $6$ lines. This contradicts to \eqref{equation:A5-final}. 
\end{proof}

Thus, $\Sigma$ contains a $G$-irreducible curve $\mathscr{C}$. Then, see \cite[Excercise 6.18]{KollarSmithCorti}, we have 
\begin{equation}
\label{equation:A5-mult}
\mathrm{mult}_{\mathscr{C}}\big(\mathcal{M}\big)>\frac{n}{2}.
\end{equation}
This gives $\mathrm{deg}(\mathscr{C})<12$. Indeed, let $M$ and $M^\prime$ be general surfaces in $\mathcal{M}$. Then
$$
M\cdot M^\prime=m\mathscr{C}+\Delta,
$$
where $\Delta$ is an effective one-cycle whose support does not contain $\mathscr{C}$, and $m$ is a positive integer such that
$m\geqslant \mathrm{mult}_\mathscr{C}^2(\mathcal{M})>\frac{n^2}{4}$. Thus, we have
$$
3n^2=H\cdot M\cdot M^\prime=m\mathrm{deg}(Z)+H\cdot\Delta\geqslant\mathrm{mult}^2_\mathscr{C}\big(\mathcal{M}\big)\mathrm{deg}\big(\mathscr{C}\big)>\frac{n^2}{4}\mathrm{deg}(\mathscr{C}),
$$
so $\mathrm{deg}(Z)<12$. Then, by Lemmas~\ref{lemma:A5-reducible-curves}, \ref{lemma:A5-irreducible-curves-in-Q}, \ref{lemma:A5-irreducible-curves-in-S}, \ref{lemma:A5-irreducible-curves-degree-10}, Corollary~\ref{corollary:A5-irreducible-curves} and Proposition~\ref{proposition:A5-NFI-curves}, we see that 
\begin{itemize}
\item $\mathscr{C}$ is an irreducible singular curve of degree $10$, 
\item $\mathscr{C}$ does not contain $G$-orbits of length $12$ or $15$,
\item $\mathscr{C}$ is contained in $S_\lambda$ for some $\lambda\in\mathbb{C}$, and $S_\lambda$ is not normal.
\end{itemize}

\begin{remark}
\label{remark:A5-simplified-proof}
By Lemma~\ref{lemma:A5-Q-S-smooth}, $S_\lambda$ is normal if $b\in\mathbb{C}$ is general. Hence, Proposition~\ref{proposition:A5-non-standard} is proved for general $b\in\mathbb{C}$. As we mentioned in Remark~\ref{remark:A5-quartics-normal}, we think that $S_\lambda$ is normal for every $b\in\mathbb{C}$. Unfortunately, we were unable to show this using the tools we have.
\end{remark}

It follows from \eqref{equation:A5-mult} that the~singularities of the mobile log pair $(X,\frac{4}{n}\mathcal{M})$ are not log canonical along the curve $\mathscr{C}$. Let 
$$
\mu=\mathrm{lct}_{\mathscr{C}}\big(X,\mathcal{M}\big)=\mathrm{sup}\big\{\lambda\in\mathbb{Q}_{>0}\ \big\vert\ \text{$(X,\lambda\mathcal{M})$ is log canonical at every point of the curve $\mathscr{C}$}\big\}.
$$
Then $\mu\leqslant \frac{4}{n}$, and $\mathrm{Nklt}(X,\mu\mathcal{M})$ contains an irreducible subvariety $C$ such that $C\cap\mathscr{C}\ne \varnothing$, where 
$$
\mathrm{Nklt}\big(X,\mu\mathcal{M}\big)=\big\{P\in X\ \text{such that $(X,\mu\mathcal{M})$ is not Kawamata log terminat at $P$}\big\}.
$$
Moreover, replacing $C$ if necessarily, may further assume that $C$ has minimal dimension among all subvarieties in $\mathrm{Nklt}(X,\mu\mathcal{M})$ with this property.

Let $Z$ be the $G$-irreducible subvariety in the hypersurface $X$ whose irreducible component is~$C$. Then we have the following three possibilities:
\begin{enumerate}
\item $Z$ is a $G$-irreducible curve such that $C\ne \mathscr{C}$ and $Z\cap\mathscr{C}\ne\varnothing$; 
\item $Z$ is a $G$-orbit of a point in $\mathscr{C}$;
\item $Z=\mathscr{C}$.
\end{enumerate}
Note that the locus $\mathrm{Nklt}(X,\mu\mathcal{M})$ is $G$-invariant by construction.

\begin{lemma}
\label{lemma:A5-end-Corti}
The locus $\mathrm{Nklt}(X,\mu\mathcal{M})$ does not contain curves different from $\mathscr{C}$.
\end{lemma}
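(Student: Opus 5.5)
We argue by contradiction: suppose $\mathrm{Nklt}(X,\mu\mathcal{M})$ contains an irreducible curve $C'\ne\mathscr{C}$. Let $Z'$ be the $G$-irreducible curve whose irreducible component is $C'$. Since $\mathrm{Nklt}(X,\mu\mathcal{M})$ is $G$-invariant, $Z'$ lies in it. The plan is to bound $\deg(Z')$ using the local intersection estimate, and then to rule out every curve of small degree using the classification already established.

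First, take general surfaces $M,M'\in\mathcal{M}$. By \cite[Lemma 1.8]{Corti2000}, the multiplicity of the cycle $M\cdot M'$ along every component of $\mathscr{C}$ and of $Z'$ is at least $4/\mu^2\geqslant n^2/4$, because $\mu\leqslant\frac{4}{n}$. Along $\mathscr{C}$ we have the stronger bound
$$
\big(M\cdot M'\big)_{\mathscr{C}}\geqslant\mathrm{mult}^2_{\mathscr{C}}(\mathcal{M})>\frac{n^2}{4},
$$
which follows from \eqref{equation:A5-mult}. Since $\mathscr{C}$ and $Z'$ have no common components, intersecting with $H$ gives
$$
3n^2=H\cdot M\cdot M'\geqslant \mathrm{mult}^2_{\mathscr{C}}(\mathcal{M})\deg(\mathscr{C})+\frac{n^2}{4}\deg(Z')>\frac{10n^2}{4}+\frac{n^2}{4}\deg(Z').
$$
Here we use $\deg(\mathscr{C})=10$. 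This yields $\deg(Z')<2$, so $Z'$ would be a union of lines of total degree $1$, that is, a $G$-invariant line. That is impossible, because $\mathbb{V}_5$ is irreducible; also Lemma~\ref{lemma:A5-reducible-curves} shows there are no $G$-irreducible curves of such small degree.

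The only delicate point is the Corti-type lower bound $(M\cdot M')_{Z'}\geqslant 4/\mu^2$ at a general point of a non-klt centre of dimension one. If that bound cannot be applied directly, we fall back on a simpler argument. A curve in the non-klt locus of the mobile pair $(X,\mu\mathcal{M})$ satisfies $\mathrm{mult}_{Z'}(\mathcal{M})\geqslant 1/\mu\geqslant n/4$, so $(M\cdot M')_{Z'}\geqslant n^2/16$. This gives
$$
3n^2>\frac{10n^2}{4}+\frac{n^2}{16}\deg(Z'),
$$
hence $\deg(Z')<8$. We then check against Lemmas~\ref{lemma:A5-reducible-curves}, \ref{lemma:A5-irreducible-curves-in-Q}, \ref{lemma:A5-irreducible-curves-in-S} and Corollary~\ref{corollary:A5-irreducible-curves}. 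The only $G$-irreducible curves of degree $<8$ are $\mathcal{L}_6$ and $\mathcal{L}_6^\prime$.

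To exclude $Z'=\mathcal{L}_6$ or $\mathcal{L}_6'$, we use Lemma~\ref{lemma:A5-L6}: the divisor $3\widetilde{H}-F$ on the blow up of $Z'$ is nef. The boundary case is where we expect the main difficulty. It should be handled by combining this nefness with the excess $\mathrm{mult}^2_{\mathscr{C}}(\mathcal{M})>n^2/4$ coming from $\mathscr{C}$. Concretely, we intersect the general surfaces $M,M'$ with a general member of $|\mathcal{I}_{Z'}(3)|$ that does not contain $\mathscr{C}$. This member exists because $\mathcal{I}_{Z'}(3)$ is globally generated, and because $\mathscr{C}\not\subset Z'$.
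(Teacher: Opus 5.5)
Your main argument is correct and is essentially the paper's proof. You use \cite[Lemma 1.8]{Corti2000} to get $(M\cdot M^\prime)_{Z^\prime}\geqslant 4/\mu^2\geqslant n^2/4$, combine it with $\mathrm{mult}^2_{\mathscr{C}}(\mathcal{M})>n^2/4$ and $\deg(\mathscr{C})=10$ to force $\deg(Z^\prime)<2$, and then exclude a $G$-invariant line; you use the irreducibility of $\mathbb{V}_5$ for this last step, where the paper cites its curve classification. The fallback paragraph is unnecessary, since the Corti bound applies directly at a general point of a one-dimensional non-klt centre; that is fortunate, because your exclusion of $\mathcal{L}_6$ and $\mathcal{L}_6^\prime$ there is only sketched and not actually carried out.
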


\begin{proof}
Suppose that the locus $\mathrm{Nklt}(X,\mu\mathcal{M})$ contains a $G$-irreducible curve $\mathscr{Z}$ such that $\mathscr{Z}\ne \mathscr{C}$. Let $M$ and $M^\prime$ be general surfaces in $\mathcal{M}$. 
Write 
$$
M\cdot M^\prime=m_\mathscr{Z}\mathscr{Z}+m_\mathscr{C}\mathscr{C}+\mathscr{D},
$$
where $m_\mathscr{Z}$ and $m_\mathscr{C}$ are non-negative integers, and $\mathscr{D}$ is an effective one-cycle on $X$ whose support does not contain curves $\mathscr{Z}$ and $\mathscr{C}$. Recall from \eqref{equation:A5-mult} that
$$
m_\mathscr{C}=\big(M\cdot M^\prime\big)_\mathscr{C}\geqslant\mathrm{mult}_{\mathscr{C}}\big(M\big)\mathrm{mult}_{\mathscr{C}}\big(M^\prime\big)=\mathrm{mult}_{\mathscr{C}}^2\big(\mathcal{M}\big)>\frac{n^2}{4}.
$$
Moreover, it follows from \cite[Lemma 1.8]{Corti2000} that
$$
m_\mathscr{Z}=\big(M\cdot M^\prime\big)_\mathscr{Z}\geqslant\frac{4}{\mu^2}>\frac{n^2}{4}.
$$
Thus, we have
$$
3n^2=H\cdot M\cdot M^\prime=m_\mathscr{Z}\mathrm{deg}(\mathscr{Z})+m_\mathscr{C}\mathrm{deg}\big(\mathscr{C}\big)+H\cdot \mathscr{D}>\frac{n^2}{4}\Big(\mathrm{deg}(\mathscr{Z})+\mathrm{deg}\big(\mathscr{C}\big)\Big),
$$
so $\mathrm{deg}(\mathscr{Z})<12-\mathrm{deg}(\mathscr{C})=2$, which is impossible by
Lemmas~\ref{lemma:A5-reducible-curves}, \ref{lemma:A5-irreducible-curves-in-Q}, \ref{lemma:A5-irreducible-curves-in-S}, \ref{lemma:A5-irreducible-curves-degree-10} and Corollary~\ref{corollary:A5-irreducible-curves}.
\end{proof}

Hence, $\mathrm{Nklt}(X,\mu\mathcal{M})\setminus\mathscr{C}$ is a finite (possibly empty) set. This, we see that
\begin{itemize}
\item either $Z$ is a $G$-orbit of a point in $\mathscr{C}$. 
\item or $Z=\mathscr{C}$.
\end{itemize}
Moreover, since $(X,\mu\mathcal{M})$ is log canonical along $\mathscr{C}$, it follows from \cite{Kawamata-2} that $Z$ is smooth, which gives $Z\ne\mathscr{C}$, because the curve $\mathscr{C}$ is singular. Thus, $Z$ is a $G$-orbit of a point in $\mathscr{C}$.

Now, arguing as in the proofs of \cite[Lemma 2.4.10]{CheltsovShramov2016} or \cite[Lemma A.28]{CalabiBook}, we can replace $\mu$ and  $\mathcal{M}$ by a positive rational number $\lambda<2$ and an effective $G$-invariant $\mathbb{Q}$-divisor
$$
D\sim_{\mathbb{Q}}-K_{X}
$$
such that the log pair $(X,\lambda D)$ has strictly log canonical singularities at every point of the orbit~$Z$, the locus $\mathrm{Nklt}\big(X,\lambda D\big)$ consists of finitely many points, and
$$
Z\subset\mathrm{Nklt}\big(X,\lambda D\big),
$$
 where $\mathrm{Nklt}(X,\lambda D)=\big\{P\in X\ \text{such that $(X,\lambda D)$ is not Kawamata log terminal at $P$}\big\}$.

Let $\mathcal{I}\subset\mathcal{O}_X$ be the~multiplier ideal sheaf of the~pair $(X,\lambda D)$, and let $\mathcal{L}$ be the zero-dimensional subscheme in $X$ that is given by $\mathcal{I}$. Then 
$$
Z\subset\mathrm{Supp}\big(\mathcal{L}\big)=\mathrm{Nklt}\big(X,\lambda D\big).
$$
Moreover, it follows from the Nadel vanishing theorem \cite[Theorem 9.4.8]{Lazarsfeld2004} that
$$
h^1\big(X,\mathcal{I}\otimes \mathcal{O}_{X}(2)\big)=0.
$$
Hence, we have the~following exact sequence of $G$-representations:
$$
0\longrightarrow H^0\big(X,\mathcal{I}\otimes \mathcal{O}_{X}(2)\big)\longrightarrow H^0\big(X,\mathcal{O}_{X}(2)\big)\longrightarrow H^0\big(\mathcal{L},\mathcal{O}_{\mathcal{L}}\big)\longrightarrow 0,
$$
because $\mathcal{L}$ is zero-dimensional. This gives 
$$
|Z|\leqslant h^0\big(\mathcal{L},\mathcal{O}_{\mathcal{L}}\big)=h^0\big(X,\mathcal{O}_{X}(2)\big)-h^0\big(X,\mathcal{O}_{X}(2)\otimes\mathcal{I}\big)=15-h^0\big(X,\mathcal{O}_{X}(2)\otimes\mathcal{I}\big)\leqslant 15,
$$
Hence, $|Z|\in\{12,15\}$ by Lemma~\ref{lemma:A5-orbits}. But $Z\subset\mathscr{C}$, and $\mathscr{C}$ contains no $G$-orbits of length $12$ or~$15$. The obtained contradiction completes the proof of Proposition~\ref{proposition:A5-non-standard}. 

By Propositions~\ref{proposition:A5-standard} and \ref{proposition:A5-non-standard},  Theorem~\ref{theorem:A5} is proved.

\section{Groups $C_3^2\rtimes C_4$ and $\mathfrak{S}_3\times\mathfrak{S}_3$}
\label{section:36-9-36-10}

Let $X$ be a smooth cubic threefold in $\mathbb{P}^4$. Suppose that $\mathrm{Aut}(X)$ has a subgroup $G$ such that
\begin{itemize}
\item either $G\simeq C_3^2\rtimes C_4$ with GAP ID [36,9],
\item or $G\simeq\mathfrak{S}_3\times\mathfrak{S}_3$ with GAP ID [36,10].
\end{itemize}
Then the~$G$-action on $X$ lifts to $\mathbb{P}^4$, so we can consider $G$ as a subgroup in $\mathrm{PGL}_5(\mathbb{C})$.
We suppose, in~addition, that $\mathbb{P}^4$ contains no $G$-invariant planes. In this section, we prove the following result:

\begin{theorem}
\label{theorem:36-9-smooth}
The cubic threefold $X$ is $G$-birationally superrigid.
\end{theorem}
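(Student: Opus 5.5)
We follow the scheme already used for Theorems~\ref{theorem:405-15} and~\ref{theorem:A5}. First we normalize the setup. By Duncan's lemma \cite[Lemma 2.4]{AbbanCheltsovKishimotoMangolte2025}, $G$ lifts to $\mathrm{GL}_5(\mathbb{C})$. Since $G$ has no irreducible $5$-dimensional representations and $\mathbb{P}^4$ contains no $G$-invariant planes, after a twist we get $\mathbb{V}_5=\mathbb{I}\oplus\mathbb{V}_4$ with $\mathbb{V}_4$ faithful and irreducible. We fix explicit generators of $G$ in suitable coordinates. Listing $G$-invariant cubics then gives a family such as $x_0^3+x_1^3+x_2^3+x_3^3+x_4^3+ax_3(x_0x_2+x_1x_4)=0$, up to the choice of which coordinate is the invariant one. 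We exclude the finitely many values of the parameter for which $X$ is singular. Let $S$ be the $G$-invariant hyperplane section corresponding to $\mathbb{I}$, and let $H$ be a general hyperplane section.

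Next we classify small $G$-orbits in $X$. For each subgroup $F\subset G$ we compute its fixed points in $\mathbb{P}^4$ and intersect them with $X$. This lists all $G$-orbits of length at most $15$ or so, and records which of them lie in $S$. We also classify $G$-irreducible curves of degree less than $12$ in $X$. Reducible ones are unions of lines or conics whose stabilizers have index at most $11$, and these are found from the restriction of $\mathbb{V}_5$ to those stabilizers. Irreducible ones carry a faithful action of $G$, or of a large quotient of $G$. We bound them by intersecting with $S$ and with a few further $G$-invariant surfaces from $|\mathcal{O}_X(2)|$ and $|\mathcal{O}_X(3)|$: the intersection number must be a sum of orbit lengths, which forces the degree. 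We then add Hodge index and adjunction on these surfaces, and the genus/orbit constraints for curves with a $G$-action.

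Now suppose $X$ is not $G$-birationally superrigid. By \cite[Corollary 3.3.3]{CheltsovShramov2016} there is a mobile $G$-invariant $\mathcal{M}\subset|nH|$ with $(X,\frac{2}{n}\mathcal{M})$ not canonical.

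\emph{Curve case.} If the non-canonical locus contains a $G$-irreducible curve $Z$, then $\mathrm{mult}_Z(\mathcal{M})>\frac{n}{2}$, so $3n^2>\frac{n^2}{4}\deg Z$ and $\deg Z<12$. For each curve on the list we find $k$ with $k\widetilde{H}-F$ nef on the blow-up of $Z$, using disjointness of lines or generation of $\mathcal{I}_Z(3)$ as in Lemma~\ref{lemma:A5-L6} and Corollary~\ref{corollary:A5-3-regular}. Then inequality \eqref{equation:A5-test-class} gives $\mathrm{mult}_Z(\mathcal{M})\le\frac{n}{2}$, a contradiction.

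\emph{Point case.} If the non-canonical locus is finite, we pass to $(X,\mu\mathcal{M})$ with $\mu<\frac{3}{n}$ that is strictly log canonical, using \cite[Remark 3.6]{CheltsovSarikyanZhuang2024}. Tie breaking gives $\mathrm{Nklt}(X,\lambda D)=Z$ with $D\sim_{\mathbb{Q}}H$ and $\lambda<3$. Nadel vanishing then gives $h^0(\mathcal{O}_Z(H))\le 5$. If $Z$ is an orbit, it has length at most $5$. If $Z$ is a curve, Corti's bound gives $\deg Z\le 6$, and then $h^0(\mathcal{O}_Z(H))$ exceeds $5$. Orbits of length at most $5$ are ruled out by the orbit classification, or by the Pukhlikov–Corti local inequality $(M\cdot M')_P>n^2$. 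For that we use cubics singular along the orbit, as in the proof of Proposition~\ref{proposition:A5-standard}, or $\alpha_G(S)$ when the orbit lies in $S$.

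The main obstacle is the exhaustive classification of low-degree irreducible $G$-invariant curves. The group is small (order $36$), so degree-counting against invariant surfaces is weak, and rational or elliptic curves of degree $\le 11$ cannot be excluded by genus arguments alone. We would first try to find enough $G$-invariant surfaces in $|\mathcal{O}_X(2)|$ and $|\mathcal{O}_X(3)|$ whose pairwise intersections contain no small orbits. For surviving candidates we would prove $3$-regularity directly, making them harmless in the multiplicity bound, rather than trying to rule them out.
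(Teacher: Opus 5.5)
Your architecture (classify small orbits and low-degree curves, bound $\mathrm{mult}_Z(\mathcal{M})\leqslant\frac{n}{2}$ along curves, then Nadel vanishing at points) matches the paper's, but two steps fail as stated.

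\textbf{The curve case.} It cannot be handled uniformly by a test class $k\widetilde H-F$.
\begin{itemize}
\item \emph{Curves in $S=\{x_3=0\}\cap X$.} This already fails for the genus-$4$ curve $\mathcal{C}_6=\{x_3=0,\ x_0x_2+x_1x_4=0\}\cap X$. Since $(k\widetilde H-F)\vert_{\widetilde S}\sim(2-k)K_S$, the best choice is $k=2$, and then \eqref{equation:36-9-test-class} reads $6(n-m)^2\geqslant 0$, which gives nothing. The paper excludes all curves in $S$ at once via $\alpha_G(S)=2$ (Corollaries~\ref{corollary:36-9-alpha} and~\ref{corollary:36-10-alpha}). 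For $\mathfrak{S}_3^2$ this needs its own argument, since $\mathrm{rk}\,\mathrm{Pic}(S)^G=2$.
\item \emph{Degree-$9$ curves for $G\simeq C_3^2\rtimes C_4$.} More seriously, your list must contain possible smooth irreducible curves $Z$ of degree $9$ and genus $1$ or $4$, which cannot be written down. Each such $Z$ lies in some $S_\lambda=\{x_0x_2+x_1x_4+\lambda x_3^2=0\}\cap X$: take the member through a general point of $Z$, and note $S_\lambda\cdot Z=18$ is less than the length $36$ of a general orbit. So $Z$ lies on a quadric, while $h^0(\mathcal{O}_Z(2))=18$, resp.\ $15$. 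Hence $h^1(\mathcal{I}_Z(2))\neq 0$, and $Z$ is never $3$-regular, so your fallback fails.
\item \emph{What the numerics would require.} With $F^3=-\deg N_{Z/X}=-18$, resp.\ $-24$, you would need $k\widetilde H-F$ nef for some $k\leqslant 6$, resp.\ $k\leqslant 4$. General regularity bounds do not reach this: Gruson--Lazarsfeld--Peskine only gives $7$.
\item \emph{The missing idea.} Work on the surface $S_\lambda$ instead. It is a K3 surface whose only singularities are nodes at an orbit $\Sigma_9^t\subset Z$. Write $M\vert_{S_\lambda}=mZ+D$ and pass to the minimal resolution. The paper then intersects $\widetilde D$ with $3\widetilde H-\widetilde Z-E$ (genus $1$) or $5\widetilde H-2\widetilde Z-E$ (genus $4$). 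Their nefness comes from Riemann--Roch on the K3 surface and the absence of $G$-invariant curves of degree $<9$ on $S_\lambda$.
\item \emph{A smaller error.} Reducible curves of degree $<12$ need not consist of lines and conics. For example, $\mathcal{R}_6=(\{x_0=x_2=0\}\cup\{x_1=x_4=0\})\cap X$ has plane cubic components.
\end{itemize}

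\textbf{The point case.} The Nadel bound $h^0(\mathcal{O}_Z(H))\leqslant 5$ does not exclude a curve as the minimal centre. The curve $\mathcal{C}_6$ spans only $\{x_3=0\}$, so $h^0(\mathcal{O}_{\mathcal{C}_6}(H))=4$. Nothing in your argument prevents $\mathcal{C}_6$ from being the first lc centre at the global threshold, which happens if $\mathrm{mult}_{\mathcal{C}_6}(\mathcal{M})$ is close to $\frac{n}{2}$. Also, $\alpha_G(S)=2$ says nothing at coefficients near $\frac{3}{n}$. The paper avoids this by localizing:
\begin{itemize}
\item It first shows that the finite non-canonical locus $\Sigma$ of $(X,\frac{2}{n}\mathcal{M})$ is disjoint from $S$, using inversion of adjunction and $\alpha_G(S)=2$.
\item It shows $\Sigma$ is also disjoint from the degree-$6$ curves not in $S$ ($\mathcal{R}_6$, and for $\mathfrak{S}_3^2$ also $\mathcal{R}_6^\prime$, $\mathcal{R}_6^{\prime\prime}$, $\mathcal{K}_6$, $\mathcal{K}_6^\prime$). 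This is done by testing $(M\cdot M^\prime)_P>n^2$ against a general member of $|2H|$ or $|3H|$ containing that curve.
\item Only then does it take $\mu$ to be the log canonical threshold at one point $P\in\Sigma$. Curves in $\mathrm{Nklt}(X,\mu\mathcal{M})$ have degree $\leqslant 6$ by Corti's bound, so by the classification they miss $\Sigma_P$.
\item Hence the points of $\Sigma_P$ are isolated in the non-klt locus. Nadel vanishing gives $|\Sigma_P|\leqslant 5$, contradicting the orbit lists.
\end{itemize}
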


First, we state few auxiliary results about the groups $C_3^2\rtimes C_4$ and $\mathfrak{S}_3\times\mathfrak{S}_3$.

\begin{lemma}
\label{lemma:36-9-subgroups}
Conjugacy classes of proper subgroups of $C_3^2\rtimes C_4$ are described as follows:
\begin{center}\renewcommand{\arraystretch}{1.5}
\begin{tabular}{|c||c|c|c|c|c|c|c|c|}
\hline
$\mathcal{C}$&$\mathcal{C}_2$&$\mathcal{C}_3$&$\mathcal{C}_3^\prime$&$\mathcal{C}_4$&$\mathcal{C}_6$&$\mathcal{C}_6^\prime$&$\mathcal{C}_9$&$\mathcal{C}_{18}$\\\hline
$H\in \mathcal{C}$ &$C_2$& $C_3$&$C_3$& $C_4$&$\mathfrak{S}_3$&$\mathfrak{S}_3$&$C_3^2$&$C_3\rtimes\mathfrak{S}_3$\\
\hline
$|\mathcal{C}|$ &$9$& $2$&$2$& $9$&$6$&$6$&$1$&$1$\\
\hline
$[C_3^2\rtimes C_4:H]$ & $18$ & $12$ & $12$ & $9$ & $6$ & $6$ & $4$ & $2$\\
\hline
\end{tabular}
\end{center}
\end{lemma}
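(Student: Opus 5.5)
Since $G=C_3^2\rtimes C_4$ is a group of order $36$, the statement is purely group-theoretic. I would prove it by a direct structural analysis, confirmed by a short GAP/Magma check in the spirit of the computations in Section~\ref{section:proof}.

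First, fix a presentation. Let $N=C_3^2=\langle a,b\rangle$ and let $c$ generate $C_4$. Because the GAP ID is [36,9], the action of $c$ on $N$ is faithful, so $c$ acts as an element of order $4$ in $\mathrm{GL}_2(\mathbf{F}_3)$. We may take $c\colon a\mapsto b$, $b\mapsto a^{-1}$. Then $c^2$ acts as inversion on $N$, and $c$ fixes no nontrivial element of $N$, so $C_G(N)=N$ and $Z(G)=1$.

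Next, enumerate the subgroups by their order $|H|\in\{2,3,4,6,9,12,18\}$, using Sylow theory together with this action.

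\textbf{Sylow $3$-subgroup.} $N$ is the unique, normal Sylow $3$-subgroup. This gives the class $\mathcal{C}_9$.

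\textbf{Order $3$.} The four subgroups of order $3$ in $N$ are the four lines of $\mathbf{F}_3^2$. The element $c$ permutes them, and $c^2$ fixes each line. So $c$ acts on them as a product of two transpositions, which gives two conjugacy classes of size $2$. These are $\mathcal{C}_3$ and $\mathcal{C}_3^\prime$. Concretely, one class is $\{\langle a\rangle,\langle b\rangle\}$ and the other is $\{\langle ab\rangle,\langle ab^{-1}\rangle\}$.

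\textbf{Orders $4$ and $2$.} The Sylow $2$-subgroups are cyclic of order $4$. Their number is $[G:N_G(\langle c\rangle)]$. Since $C_N(c)=1$, we get $N_G(\langle c\rangle)=\langle c\rangle$, so there are $9$ of them, all conjugate. This is $\mathcal{C}_4$. Each contains a unique involution. An involution lies in $G\setminus N$, so it is conjugate to $nc^2$ for some $n\in N$. Since $c^2$ inverts $N$, all of these are conjugate, which gives $9$ involutions in one class, $\mathcal{C}_2$.

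\textbf{Order $6$.} A subgroup of order $6$ is $L\rtimes\langle t\rangle$ with $L$ of order $3$ and $t$ an involution. Because $c^2$ inverts every element of $N$, it normalizes every line $L$, and $L\langle c^2\rangle\simeq\mathfrak{S}_3$. For each of the $4$ lines $L$, the involutions normalizing $L$ form a coset family of size $3$, so we get exactly one $\mathfrak{S}_3$ containing $L$ for each choice of the $3$ involutions up to $L$. This yields $3\cdot 4/ \ldots$ subgroups; counting via $N_G(L\langle c^2\rangle)$, which has order $6$, gives $6$ conjugates for each of the two classes of lines. That produces $\mathcal{C}_6$ and $\mathcal{C}_6^\prime$, each of size $6$.

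\textbf{Order $18$.} The subgroup $N\langle c^2\rangle\simeq C_3\rtimes\mathfrak{S}_3$ is the unique subgroup of index $2$. This is $\mathcal{C}_{18}$.

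\textbf{Order $12$.} A subgroup of order $12$ would contain a normal or characteristic subgroup of order $3$ together with a Sylow $C_4$, so $c$ would have to normalize a line in $N$. It does not, so there is no subgroup of order $12$.

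\textbf{Cyclic subgroups of order $6$.} These would require an involution centralizing an element of order $3$. But $c^2$ inverts $N$, so no such subgroup exists. This is why no $C_6$ appears in the table.

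The main obstacle is the count of subgroups of order $6$, and in particular making sure the two $\mathfrak{S}_3$-classes are genuinely non-conjugate. I would settle this by noting that they contain non-conjugate $C_3$'s, namely one from $\mathcal{C}_3$ and one from $\mathcal{C}_3^\prime$. I would then confirm the whole table with a GAP or Magma run: loop over ConjugacyClassesSubgroups(SmallGroup(36,9)) and print the size of each class, the StructureDescription of a representative, and its index.
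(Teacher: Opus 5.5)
Your argument is correct, and it supplies something the paper does not. The paper states this table with no proof, treating it (like Lemma~\ref{lemma:36-10-subgroups} and the lists of irreducible representations) as a routine computer check on the group with GAP ID $[36,9]$.

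You instead derive the table from the Frobenius structure. The generator $c$ acts on $N=C_3^2$ through an element of order $4$ of $\mathrm{GL}_2(\mathbf{F}_3)$, whose minimal polynomial is forced to be $x^2+1$. Hence $c$ fixes no nontrivial element and no line of $N$, and $c^2$ inverts $N$. This matches the paper's model: conjugation by $M_3$ sends $M_1\mapsto M_2^{-1}$ and $M_2\mapsto M_1$. From these facts you get:
\begin{itemize}
\item uniqueness of the Sylow $3$-subgroup and of the index-$2$ subgroup;
\item single classes of nine $C_4$'s and nine involutions;
\item the splitting of the four lines into two $\langle c\rangle$-orbits;
\item the absence of $C_6$ and of subgroups of order $12$.
\end{itemize}
The computational route gives the table with certainty and no insight. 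Yours explains exactly the features used later in Lemma~\ref{lemma:36-9-orbits}: a unique $C_4$, two classes of $C_3$ and two classes of $\mathfrak{S}_3$ up to conjugacy.

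The one step to tidy is the order-$6$ count. There the sentence containing $3\cdot 4/\ldots$ is unfinished, and $|N_G(L\langle c^2\rangle)|=6$ is only asserted. It closes as follows.
\begin{itemize}
\item Every involution has the form $nc^2$ with $n\in N$. Since $N$ is abelian, $nc^2$ acts on $N$ exactly as $c^2$ does, i.e.\ by inversion. So every involution normalizes every line $L$. This, rather than just ``$c^2$ inverts $N$'', is what excludes $C_6$.
\item Hence the subgroups of order $6$ containing $L$ are the groups $L\langle nc^2\rangle$, one for each coset $Ln$. That gives $3$ per line and $6$ per class of lines.
\item Moreover $N_G(L\langle c^2\rangle)\subseteq N_G(L)=N\langle c^2\rangle$. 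For $m\in N$ one has $mc^2m^{-1}=m^2c^2$, which lies in $L\langle c^2\rangle$ if and only if $m\in L$.
\item So the normalizer is $L\langle c^2\rangle$ itself, it has $6$ conjugates, and these exhaust the $6$ subgroups lying over that class of lines.
\end{itemize}
In the order-$12$ step, make precise which subgroups you use. The normal subgroup of order $3$ is $H\cap N$, which is normal in $H$ because $N\trianglelefteq G$. The $C_4$ inside $H$ is a conjugate of $\langle c\rangle$. So some conjugate of $c$ would normalize a line, which is impossible.
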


\begin{corollary}
\label{corollary:36-9-act}
Let $\Sigma$ be a finite set such that $|\Sigma|>1$ and $C_3^2\rtimes C_4$ acts transitively on $\Sigma$. Then
$$
|\Sigma|\in \{2,4,6,9,12,18,36\}.
$$
\end{corollary}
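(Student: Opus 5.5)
The corollary should follow directly from Lemma~\ref{lemma:36-9-subgroups}, via the orbit--stabilizer theorem. Since $C_3^2\rtimes C_4$ acts transitively on $\Sigma$, fixing a point $P\in\Sigma$ and letting $H$ be its stabilizer gives $|\Sigma|=[C_3^2\rtimes C_4:H]$.

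First, because $|\Sigma|>1$, the stabilizer $H$ is a proper subgroup. So $H$ lies in one of the conjugacy classes $\mathcal{C}_2,\mathcal{C}_3,\mathcal{C}_3^\prime,\mathcal{C}_4,\mathcal{C}_6,\mathcal{C}_6^\prime,\mathcal{C}_9,\mathcal{C}_{18}$ listed in Lemma~\ref{lemma:36-9-subgroups}, or $H$ is the trivial subgroup, which the table does not list explicitly. Reading off the last row of the table, the corresponding indices are $18,12,12,9,6,6,4,2$. The trivial subgroup has index $36$.

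Hence $|\Sigma|\in\{2,4,6,9,12,18,36\}$, as claimed. The only point requiring care is to remember that the trivial stabilizer is allowed: it accounts for the free orbits, of length $36$, which do not appear in the table. No computation beyond the subgroup classification is needed, and I expect no real obstacle here.
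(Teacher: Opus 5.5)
Your proposal is correct and matches the paper's intended argument: the corollary is stated right after Lemma~\ref{lemma:36-9-subgroups} without a separate proof, as an immediate consequence of orbit--stabilizer applied to that table. You are also right to add the trivial stabilizer, which gives the free orbits of length $36$.
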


\begin{lemma}
\label{lemma:36-10-subgroups}
Conjugacy classes of proper subgroups of $\mathfrak{S}_3\times\mathfrak{S}_3$ are described as follows:
\begin{center}\renewcommand{\arraystretch}{1.5}
\begin{tabular}{|c||c|c|c|c|c|c|c|c|c|}
\hline
$\mathcal{C}$&$\mathcal{C}_2$&$\mathcal{C}_2^\prime$&$\mathcal{C}_2^{\prime\prime}$&$\mathcal{C}_3$&$\mathcal{C}_3^\prime$&$\mathcal{C}_3^{\prime\prime}$&$\mathcal{C}_4$&$\mathcal{C}_6$&$\mathcal{C}_{6}^\prime$\\
\hline
$H\in \mathcal{C}$ &$C_2$& $C_2$&$C_2$& $C_3$&$C_3$&$C_3$&$C_2^2$&$C_6$&$C_6$\\
\hline
$|\mathcal{C}|$ &$3$& $3$&$9$& $1$&$1$&$2$&$9$&$3$&$3$\\
\hline
$[\mathfrak{S}_3\times\mathfrak{S}_3:H]$ & $18$ & $18$ & $18$ & $12$ & $12$ & $12$ & $9$& $6$ & $6$\\
\hline
\end{tabular}
\end{center}
\begin{center}\renewcommand{\arraystretch}{1.5}
\begin{tabular}{|c||c|c|c|c|c|c|c|c|c|c|c|}
\hline
$\mathcal{C}$&$\mathcal{C}_6^{\prime\prime}$&$\mathcal{C}_6^{\prime\prime\prime}$&$\mathcal{C}_6^{\prime\prime\prime\prime}$&$\mathcal{C}_6^{\prime\prime\prime\prime\prime}$&$\mathcal{C}_6^{\prime\prime\prime\prime\prime\prime}$&$\mathcal{C}_9$&$\mathcal{C}_{12}$&$\mathcal{C}_{12}^\prime$&$\mathcal{C}_{18}$ & $\mathcal{C}_{18}^\prime$& $\mathcal{C}_{18}^{\prime\prime}$\\
\hline
$H\in \mathcal{C}$ &$\mathfrak{S}_3$& $\mathfrak{S}_3$&$\mathfrak{S}_3$& $\mathfrak{S}_3$&$\mathfrak{S}_3$&$C_3^2$&$\mathfrak{D}_6$&$\mathfrak{D}_6$&$C_3\times \mathfrak{S}_3$ & $C_3\times \mathfrak{S}_3$ &$C_3\rtimes \mathfrak{S}_3$ \\
\hline
$|\mathcal{C}|$ &$1$& $1$&$3$& $3$&$6$&$1$&$3$&$3$&$1$&$1$ &$1$ \\
\hline
$[\mathfrak{S}_3\times\mathfrak{S}_3:H]$ & $6$ & $6$ & $6$ & $6$ & $6$ & $4$ & $3$& $3$ & $2$ & $2$& $2$\\
\hline
\end{tabular}
\end{center}
\end{lemma}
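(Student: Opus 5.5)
The plan is to classify subgroups $H\subset\mathfrak{S}_3\times\mathfrak{S}_3$ by Goursat's lemma and then sort them into conjugacy classes by computing normalizers. Write $\mathfrak{S}_3=\langle c,t\rangle$ with $c^3=t^2=1$, and let $p_1,p_2$ be the two projections. By Goursat's lemma, $H$ is determined by:
\begin{itemize}
\item the images $A_i=p_i(H)$;
\item the normal subgroups $N_1=H\cap(\mathfrak{S}_3\times 1)$ and $N_2=H\cap(1\times\mathfrak{S}_3)$;
\item an isomorphism $A_1/N_1\simeq A_2/N_2$.
\end{itemize}
Since subgroups of $\mathfrak{S}_3$ are $1$, $C_2$, $C_3$, $\mathfrak{S}_3$, and its quotients by normal subgroups are $\mathfrak{S}_3$, $C_2$, $1$, the list of admissible data is short. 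I will run through them by the order $|H|=|A_1||N_2|=|A_2||N_1|$. For each order I will write down representatives, for example:
\begin{itemize}
\item order $2$: $\langle(t,1)\rangle$, $\langle(1,t)\rangle$, $\langle(t,t)\rangle$;
\item order $3$: $\langle(c,1)\rangle$, $\langle(1,c)\rangle$, $\langle(c,c)\rangle$, $\langle(c,c^{-1})\rangle$.
\end{itemize}
In the same way I will list $C_2^2$, the two kinds of $C_6$ (namely $C_3\times C_2$ placed in either order), the various $\mathfrak{S}_3$'s, $C_3^2$, $\mathfrak{D}_6=\mathfrak{S}_3\times C_2$ in both orders, and the three index-two subgroups. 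The index-two subgroups are $\mathfrak{S}_3\times C_3$, $C_3\times\mathfrak{S}_3$, and $C_3^2\rtimes\langle(t,t)\rangle$; they correspond to the three index-two subgroups coming from $\mathrm{Hom}(\mathfrak{S}_3\times\mathfrak{S}_3,C_2)=C_2^2$.

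Next, for each representative $H$ I will compute the size of its conjugacy class as $[\mathfrak{S}_3\times\mathfrak{S}_3:N(H)]$. Since $N(H)$ contains $N_{A_1}\times N_{A_2}$-type pieces compatible with the gluing isomorphism, this is a direct check. Some of the resulting claims are:
\begin{itemize}
\item $\langle(t,1)\rangle$ has normalizer $C_2\times\mathfrak{S}_3$, so its class has size $3$.
\item $\langle(t,t)\rangle$ has normalizer $C_2^2$, so its class has size $9$.
\item $\langle(c,c)\rangle$ and $\langle(c,c^{-1})\rangle$ are swapped by $(t,1)$, so they form one class of size $2$. Similarly, $C_3\times 1$ and $1\times C_3$ are normal.
\item Among the $\mathfrak{S}_3$'s, the diagonal graphs $\{(g,\varphi(g))\}$, with $\varphi\in\mathrm{Aut}(\mathfrak{S}_3)=\mathrm{Inn}(\mathfrak{S}_3)$, form a single class of size $6$. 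The groups $\langle(c,1),(t,t)\rangle$ and $\langle(1,c),(t,t)\rangle$ each give classes of size $3$. The factors $\mathfrak{S}_3\times1$ and $1\times\mathfrak{S}_3$ are normal.
\end{itemize}
The remaining rows of the tables (the $C_2^2$, $C_6$, $\mathfrak{D}_6$, $C_3^2$ and order-$18$ rows) follow in the same way. The index row is just $36/|H|$.

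The main obstacle is bookkeeping rather than theory: I must make sure that no conjugacy class is missed or double counted. This matters especially for the $\mathfrak{S}_3$'s, where there are five classes of total size $14$, and for the three non-conjugate classes of involutions. To guard against errors I will check that class sizes times subgroup counts are consistent with element counts. For instance, the $15$ involutions of $\mathfrak{S}_3\times\mathfrak{S}_3$ must equal $3+3+9$, and the $8$ elements of order $3$ must give $4$ subgroups of order $3$. Finally, as the paper does elsewhere, I would confirm the whole table by a short GAP/Magma call to \texttt{ConjugacyClassesSubgroups(SmallGroup(36,10))}.
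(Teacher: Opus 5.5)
Your proposal is correct, and it takes a genuinely different route from the paper: the paper states this lemma, like the companion lemma for $C_3^2\rtimes C_4$, without any proof. It treats the table as a routine finite-group fact of the kind it elsewhere confirms by GAP or Magma, which is the check you keep only as a final safeguard.

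Your approach actually derives the table. Goursat's lemma explains structurally why the subgroups are what they are:
\begin{itemize}
\item the $15=3+3+9$ involutions split into three classes;
\item the four subgroups of order $3$ fall into classes of sizes $1,1,2$;
\item the nine Sylow $2$-subgroups form one class;
\item the six cyclic subgroups of order $6$ form two classes of size $3$;
\item the fourteen copies of $\mathfrak{S}_3$ fall into classes of sizes $1,1,3,3,6$, where the graphs of the six inner automorphisms are self-normalizing and mutually conjugate;
\item the six copies of $\mathfrak{D}_6\simeq\mathfrak{S}_3\times C_2$ form two classes of size $3$;
\item exactly three index-two subgroups occur, namely the kernels of the three nontrivial characters to $C_2$.
\end{itemize}
All the specific claims you make check out. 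The normalizer computation can be made precise. Write $\theta$ for the gluing isomorphism $A_1/N_1\simeq A_2/N_2$ and $\bar\iota_a,\bar\iota_b$ for the automorphisms of $A_1/N_1$ and $A_2/N_2$ induced by conjugation. Then $N(H)$ consists of the pairs $(a,b)$ such that $a$ normalizes $A_1$ and $N_1$, $b$ normalizes $A_2$ and $N_2$, and $\theta\circ\bar\iota_a=\bar\iota_b\circ\theta$. This gives the class sizes. What your route buys is a human-verifiable explanation; the computer check buys brevity and a guarantee that nothing is omitted.

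Two small bookkeeping remarks. First, in Goursat's lemma you need the quotients $A_i/N_i$ of all subgroups $A_i\subseteq\mathfrak{S}_3$, not just quotients of $\mathfrak{S}_3$ itself. In particular $C_3\simeq C_3/1$ also occurs as a common quotient, and with its two possible isomorphisms it produces $\langle(c,c)\rangle$ and $\langle(c,c^{-1})\rangle$. Your list already contains these, even though your sentence about quotients omits this case. Second, Goursat's lemma also produces the trivial subgroup, which the table leaves out, so ``proper'' in the statement should be read as ``nontrivial proper''.
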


\begin{corollary}
\label{corollary:36-10-act}
Let $\Sigma$ be a finite set such that $|\Sigma|>1$ and $\mathfrak{S}_3\times\mathfrak{S}_3$ acts transitively on $\Sigma$. Then
$$
|\Sigma|\in \{2,3,4,6,9,12,18,36\}.
$$
\end{corollary}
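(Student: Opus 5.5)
The plan is to use orbit--stabilizer together with the list of subgroups in Lemma~\ref{lemma:36-10-subgroups}; no geometry is involved. Write $G=\mathfrak{S}_3\times\mathfrak{S}_3$ and fix a point $P\in\Sigma$ with stabilizer $G_P\subseteq G$. Since $G$ acts transitively on $\Sigma$, the orbit map $G\to\Sigma$, $g\mapsto g(P)$, identifies $\Sigma$ with the coset space $G/G_P$. Hence
$$
|\Sigma|=[G:G_P]=\frac{36}{|G_P|}.
$$
The assumption $|\Sigma|>1$ means exactly that $G_P\ne G$, so $G_P$ is a proper subgroup of $G$.

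We then split into two cases according to whether $G_P$ is trivial. If $G_P=\{1\}$, the action on $\Sigma$ is simply transitive and $|\Sigma|=36$. If $G_P$ is a nontrivial proper subgroup, it is conjugate to a subgroup in one of the conjugacy classes listed in the tables of Lemma~\ref{lemma:36-10-subgroups}. Conjugate subgroups have the same index, so $|\Sigma|$ is one of the indices in the bottom rows of those tables. Those entries are $18,12,9,6,4,3,2$. Adding $36$ from the first case gives $|\Sigma|\in\{2,3,4,6,9,12,18,36\}$, as claimed.

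The only step that needs any checking is that the tables in Lemma~\ref{lemma:36-10-subgroups} really list every nontrivial proper subgroup up to conjugacy. The trivial subgroup is not in the tables, which is why we treat it separately. Completeness can be confirmed by a quick count. Any subgroup order divides $36$ and is not $36$. One can also sanity-check that every divisor order occurring in the tables ($2,3,4,6,9,12,18$) gives the corresponding index, and that no subgroup of order $36/5$, $36/7$, etc.\ can arise, since $5$ and $7$ do not divide $36$. Beyond this cross-check there is no real obstacle, since the corollary follows directly from orbit--stabilizer and the classification already stated.
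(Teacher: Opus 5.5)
Your proof is correct and matches the paper's approach: the paper states the corollary without proof immediately after Lemma~\ref{lemma:36-10-subgroups}, so it follows from orbit--stabilizer together with the indices in that table. Your completeness check is actually unnecessary here, since $\{2,3,4,6,9,12,18,36\}$ is exactly the set of divisors of $36$ exceeding $1$, so Lagrange's theorem alone gives the result; by contrast, in Corollary~\ref{corollary:36-9-act} the table is genuinely needed, to rule out index $3$.
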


\begin{lemma}
\label{lemma:36-9-reps}
Irreducible complex representations of the~group $C_3^2\rtimes C_4$ can be described as follows:
\begin{enumerate}
\item[$(\mathrm{1})$] $\mathbb{I}$ is the~trivial $1$-dimensional representation,
\item[$(\mathrm{2})$] $\mathbb{V}_1$ is the~$1$-dimensional representation of order $2$,
\item[$(\mathrm{3})$] $\mathbb{V}_1^\prime$ is a $1$-dimensional representation of order $4$,
\item[$(\mathrm{4})$] $\mathbb{V}_1^{\prime\prime}$ is a $1$-dimensional representation of order $4$,
\item[$(\mathrm{5})$] $\mathbb{V}_4$ is a faithful $4$-dimensional  representation,
\item[$(\mathrm{6})$] $\mathbb{V}_4^\prime$ is a faithful $4$-dimensional  representation.
\end{enumerate}
The representations $\mathbb{V}_4$ and $\mathbb{V}_4^\prime$ differ by an automorphism of the~group $G$.
\end{lemma}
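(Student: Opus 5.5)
The statement to be proved is Lemma~\ref{lemma:36-9-reps}, a purely character-theoretic fact about $G=C_3^2\rtimes C_4$ (GAP ID [36,9]). I would prove it by hand from the structure of $G$, and double-check the result with a one-line GAP computation in the style of the paper.

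Here $C_4=\langle\sigma\rangle$ acts on $N=C_3^2$ faithfully. Since the group is [36,9] rather than $C_3\times(C_3\rtimes C_4)$, $\sigma$ acts on $N\simeq\mathbf{F}_3^2$ as an element of order $4$ with $\sigma^2=-1$, hence fixed-point-freely on $N\setminus\{1\}$. The plan has four steps.

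First, I compute the abelianization. The commutator subgroup contains $[\sigma,N]=(\sigma-1)N$. Since $\sigma-1$ is invertible on $\mathbf{F}_3^2$, this equals $N$. Therefore $G/[G,G]\simeq C_4$. This yields exactly four $1$-dimensional characters: $\mathbb{I}$, the order-$2$ character $\mathbb{V}_1$, and two order-$4$ characters $\mathbb{V}_1^\prime$ and $\mathbb{V}_1^{\prime\prime}$.

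Second, I apply Clifford/Mackey theory with respect to the abelian normal subgroup $N$. The $8$ nontrivial characters of $N$ are permuted by $\sigma$. Because $\sigma^2=-1$ fixes no nontrivial character, all orbits are free of length $4$, so there are two orbits. Each orbit induces an irreducible representation of dimension $[G:N]=4$. This gives $\mathbb{V}_4$ and $\mathbb{V}_4^\prime$, and the dimension count $4\cdot1+2\cdot16=36=|G|$ confirms that the list is complete.

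Third, I check faithfulness. Since $N$ acts on $\mathbb{V}_4$ through four distinct nontrivial characters whose kernels intersect trivially, the kernel of $\mathbb{V}_4$ meets $N$ trivially. A nontrivial normal subgroup of $G$ must meet $N$ nontrivially, because any normal subgroup meeting $N$ trivially would commute with $N$, while $C_G(N)=N$. Hence $\mathbb{V}_4$ is faithful, and the same argument applies to $\mathbb{V}_4^\prime$.

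Finally, to see that $\mathbb{V}_4$ and $\mathbb{V}_4^\prime$ differ by an automorphism, I would take an element $\tau$ of the normalizer of $\langle\sigma\rangle$ in $\mathrm{GL}_2(\mathbf{F}_3)$ that swaps the two $\sigma$-orbits on $\widehat{N}\setminus\{1\}$. For example, $\tau$ can be an element of the semidihedral group of order $16$ that normalizes $\langle\sigma\rangle$ but is not a power of $\sigma$; such an element inverts or squares appropriately. I would then extend $\tau$ to an automorphism of $G=N\rtimes\langle\sigma\rangle$ via $\tau(\sigma)=\sigma^{\pm1}$.

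The main obstacle is this last step: verifying that such a $\tau$ genuinely interchanges the two orbits rather than preserving each one. If the hand argument becomes messy, I would instead verify it directly in GAP by computing $\mathrm{Aut}(G)$ acting on $\mathrm{Irr}(G)$.
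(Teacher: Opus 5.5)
The paper gives no proof of this lemma. It is stated, like the neighbouring subgroup and representation tables, as a known fact about the group with GAP ID [36,9], essentially its character table, of the kind the paper checks elsewhere with GAP/Magma. Your Clifford-theoretic argument is therefore a different, self-contained route, and Steps 1--3 are correct:
\begin{itemize}
\item $1-\sigma$ is invertible on $N$, so $G/[G,G]\simeq C_4$;
\item $\sigma^2$ acts on $\widehat N$ by $\chi\mapsto\bar\chi$, so both orbits on $\widehat N\setminus\{1\}$ are free and induce irreducibly;
\item $C_G(N)=N$ gives faithfulness.
\end{itemize}
The real input for $\sigma^2=-1$ is faithfulness of the $C_4$-action, since every element of order $4$ in $\mathrm{GL}_2(\mathbf{F}_3)$ has minimal polynomial $x^2+1$. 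In the paper's model this is visible from $M_3M_1M_3^{-1}=M_2^{-1}$ and $M_3M_2M_3^{-1}=M_1$. What your route buys over a table look-up is a conceptual explanation of why there are exactly two $4$-dimensional irreducibles and why both are faithful. You also get an explicit description of each by its restriction to $N$.

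The one step to tighten is the last. It is not true that every element of $N_{\mathrm{GL}_2(\mathbf{F}_3)}(\langle\sigma\rangle)\simeq SD_{16}$ outside $\langle\sigma\rangle$ swaps the two orbits. Take $\sigma=\begin{pmatrix}0&1\\-1&0\end{pmatrix}$ and the involution $\delta=\mathrm{diag}(1,-1)$. Then $\delta\sigma\delta^{-1}=\sigma^{-1}$, yet $\delta$ preserves both $\langle\sigma\rangle$-orbits $\{(\pm1,0),(0,\pm1)\}$ and $\{(\pm1,\pm1)\}$. This holds on $\widehat N$ as well, since $\sigma$ and $\delta$ are orthogonal over $\mathbf{F}_3$, so the contragredient action is given by the same matrices. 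The corresponding class in $\mathrm{Out}(G)\simeq C_2^2$ fixes $\mathbb{V}_4$ and $\mathbb{V}_4^\prime$ and swaps $\mathbb{V}_1^\prime$ and $\mathbb{V}_1^{\prime\prime}$ instead.

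The right choice is an element $\tau$ of order $8$ commuting with $\sigma$, for example $\tau=\begin{pmatrix}1&1\\-1&1\end{pmatrix}$, which satisfies $\tau^2=\sigma^{-1}$. Suppose $\tau$ preserved a $\langle\sigma\rangle$-orbit in $\widehat N\setminus\{1\}$. Then $\tau\chi=\sigma^k\chi$ for some $\chi\ne 1$ and some $k$, so $\tau^{2k+1}=\sigma^{-k}\tau$ would fix $\chi$. But $\tau^{2k+1}$ has order $8$, while the stabilizer of a nonzero vector in $\mathrm{GL}_2(\mathbf{F}_3)$ has order $48/8=6$. Hence $\tau$ interchanges the two orbits. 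Because $\tau$ commutes with $\sigma$, the map $n\sigma^k\mapsto\tau(n)\sigma^k$ is an automorphism of $G$, and it carries $\mathbb{V}_4$ to $\mathbb{V}_4^\prime$. No computer fallback is needed.
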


\begin{lemma}
\label{lemma:36-10-reps}
Irreducible complex representations of the~group $\mathfrak{S}_3\times\mathfrak{S}_3$ can be described as follows:
\begin{enumerate}
\item[$(\mathrm{1})$] $\mathbb{I}$ is the~trivial $1$-dimensional representation,
\item[$(\mathrm{2})$] $\mathbb{V}_1$ is a~$1$-dimensional representation of order $2$,
\item[$(\mathrm{3})$] $\mathbb{V}_1^\prime$ is a $1$-dimensional representation of order $2$,
\item[$(\mathrm{4})$] $\mathbb{V}_1^{\prime\prime}$ is a $1$-dimensional representation of order $2$,
\item[$(\mathrm{5})$] $\mathbb{V}_2$ is a $2$-dimensional  representation lifted from $\mathfrak{S}_3$,
\item[$(\mathrm{6})$] $\mathbb{V}_2^\prime$ is a $2$-dimensional representation lifted from $\mathfrak{D}_6$,
\item[$(\mathrm{7})$] $\mathbb{V}_2^{\prime\prime}$ is a $4$-dimensional  representation lifted from $\mathfrak{S}_3$,
\item[$(\mathrm{8})$] $\mathbb{V}_2^{\prime\prime\prime}$ is a $4$-dimensional  representation lifted from $\mathfrak{D}_6$,
\item[$(\mathrm{9})$] $\mathbb{V}_4$ is a faithful $4$-dimensional  representation.
\end{enumerate}
\end{lemma}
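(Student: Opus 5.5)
The plan is to use the standard fact that the irreducible complex representations of a direct product $A\times B$ of finite groups are exactly the outer tensor products $\rho\boxtimes\sigma$, where $\rho$ and $\sigma$ run over the irreducible representations of $A$ and $B$, respectively. Distinct pairs $(\rho,\sigma)$ give non-isomorphic representations. I would recall that $\mathfrak{S}_3$ has three irreducible representations: the trivial one $\mathbb{I}$, the sign representation $\varepsilon$, and the standard $2$-dimensional representation $\mathbb{W}$, which is faithful. This gives nine irreducible representations of $\mathfrak{S}_3\times\mathfrak{S}_3$:
$$
\mathbb{I}\boxtimes\mathbb{I},\ \ \varepsilon\boxtimes\mathbb{I},\ \ \mathbb{I}\boxtimes\varepsilon,\ \ \varepsilon\boxtimes\varepsilon,\ \ \mathbb{W}\boxtimes\mathbb{I},\ \ \mathbb{I}\boxtimes\mathbb{W},\ \ \mathbb{W}\boxtimes\varepsilon,\ \ \varepsilon\boxtimes\mathbb{W},\ \ \mathbb{W}\boxtimes\mathbb{W}.
$$
Their dimensions are $1,1,1,1,2,2,2,2,4$. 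As a completeness check, $4\cdot 1+4\cdot 4+16=36=|\mathfrak{S}_3\times\mathfrak{S}_3|$. This also confirms that the number of conjugacy classes is $9$, which agrees with $3\times 3$.

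Next I would identify each representation with an item of the list by computing its kernel and image.
\begin{enumerate}
\item[$(\mathrm{i})$] The three non-trivial characters $\varepsilon\boxtimes\mathbb{I}$, $\mathbb{I}\boxtimes\varepsilon$ and $\varepsilon\boxtimes\varepsilon$ all have order $2$. They give $\mathbb{V}_1$, $\mathbb{V}_1^\prime$ and $\mathbb{V}_1^{\prime\prime}$.
\item[$(\mathrm{ii})$] The kernel of $\mathbb{W}\boxtimes\mathbb{I}$ is $\{1\}\times\mathfrak{S}_3$, so this representation is lifted from $\mathfrak{S}_3$; the same holds for $\mathbb{I}\boxtimes\mathbb{W}$.
\item[$(\mathrm{iii})$] The kernel of $\mathbb{W}\boxtimes\varepsilon$ is $\{1\}\times C_3$. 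The quotient is $\mathfrak{S}_3\times C_2\simeq\mathfrak{D}_6$, so this representation is lifted from $\mathfrak{D}_6$; the same holds for $\varepsilon\boxtimes\mathbb{W}$.
\item[$(\mathrm{iv})$] The representation $\mathbb{W}\boxtimes\mathbb{W}$ is faithful. If $\mathbb{W}(g)\otimes\mathbb{W}(h)$ is the identity, then $\mathbb{W}(g)=c\cdot\mathrm{Id}$ and $\mathbb{W}(h)=c^{-1}\cdot\mathrm{Id}$ for some $c\in\mathbb{C}^*$. But $\mathbb{W}$ is faithful and $\mathfrak{S}_3$ has trivial center, so $\mathbb{W}(g)$ is scalar only for $g=1$. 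Hence $g=h=1$.
\end{enumerate}

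This matches the list, with $\mathbb{V}_2,\mathbb{V}_2^{\prime\prime}$ lifted from $\mathfrak{S}_3$, $\mathbb{V}_2^\prime,\mathbb{V}_2^{\prime\prime\prime}$ lifted from $\mathfrak{D}_6$, and $\mathbb{V}_4=\mathbb{W}\boxtimes\mathbb{W}$. In items $(7)$ and $(8)$ of the statement, the word "$4$-dimensional" must be read as "$2$-dimensional", as the notation $\mathbb{V}_2^{\prime\prime}$, $\mathbb{V}_2^{\prime\prime\prime}$ and the count of $36$ force.

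There is no real obstacle in this argument. The only care needed is in the kernel computation for $\mathbb{W}\boxtimes\varepsilon$: one has to check that the quotient is isomorphic to $\mathfrak{D}_6$ rather than some other group of order $12$. I would do this by observing directly that the quotient is $\mathfrak{S}_3\times C_2$. Alternatively, one could simply read off the character table in GAP, as the paper does elsewhere.
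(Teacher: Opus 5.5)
Your proof is correct and complete. The outer tensor products $\rho\boxtimes\sigma$ give all nine irreducible representations (the sum of squares is $36$). For $\mathbb{W}\boxtimes\varepsilon$, the same scalar argument as in your faithfulness step shows that $\mathbb{W}(g)=\pm\mathrm{Id}$ forces $g=1$. So the kernel is indeed $\{1\}\times C_3$, with quotient $\mathfrak{S}_3\times C_2\simeq\mathfrak{D}_6$. This matches every item of the list.

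The paper states this lemma without proof, as a standard character-table fact, so your argument supplies the routine verification rather than competing with a different route. You are also right that items $(7)$ and $(8)$ must read ``$2$-dimensional''. Neither $\mathfrak{S}_3$ nor $\mathfrak{D}_6$ has a $4$-dimensional irreducible representation, and the squares of the dimensions must sum to $36$.
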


We proceed to actions of the~groups $C_3^2\rtimes C_4$ and $\mathfrak{S}_3\times\mathfrak{S}_3$ on curves.

\begin{lemma}
\label{lemma:36-9-curves}
Let $C$ be a smooth irreducible curve that admits a faithful action of the~group $C_3^2\rtimes C_4$, and let $g$ be the genus of the curve $C$. Then the~following assertions holds:
\begin{itemize}
\item orbits in $C$ are of length $9$, $12$, $18$ or $36$;
\item the~curve $C$ is not hyperelliptic;
\item if $g<10$, then either $g=1$ or $g=4$;
\item if $g=4$, then orbits of length $<36$ in $C$ can be described as follows:
\begin{itemize}
\item one orbit of length $12$;
\item two orbits of length $9$;
\end{itemize}
\item if $g=1$ and $C$ has at least one orbit of length $9$, then $C$ has an orbit of length $18$, it has two orbits of length $9$, and all other orbits in $C$ are of lenth $36$.
\end{itemize}
\end{lemma}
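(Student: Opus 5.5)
The plan is Riemann--Hurwitz applied to the quotient map $\pi\colon C\to C/G$, where $G=C_3^2\rtimes C_4$ and $|G|=36$. By Lemma~\ref{lemma:36-9-subgroups}, every point stabilizer is cyclic, because stabilizers of points on a smooth curve under a faithful action are cyclic. The cyclic subgroups of $G$ are $C_2$, $C_3$ (two classes) and $C_4$, so the possible orbit lengths are $36/2=18$, $36/3=12$ and $36/4=9$, together with $36$. This gives the first assertion.

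Let $h$ be the genus of $C/G$, and let $a_2,a_3,a_4$ be the numbers of branch points with stabilizers $C_2,C_3,C_4$. Riemann--Hurwitz gives
$$
2g-2=36(2h-2)+18a_2+24a_3+27a_4.
$$
Since $27a_4$ is the only term that can be odd, $a_4$ is even.

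For $g<10$ we enumerate. If $h\geqslant 1$, then either $h=1$ with all $a_i=0$, which is unramified and gives $g=1$, or $2g-2\geqslant 18$, which gives $g\geqslant 10$. So assume $h=0$. Then $2g-2=-72+18a_2+24a_3+27a_4$ with the right-hand side in $[0,16]$ (or $g=0$, where it equals $-2$). We also need the generation constraint: $G$ must be generated by elements of the corresponding orders whose product is $1$. Since the abelianization of $G$ is $C_4$ and is generated only by the images of the $C_4$-elements, we get $a_4\geqslant 2$.

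Listing the solutions:
\begin{itemize}
\item $a_4=2$: $18a_2+24a_3=18+(2g-2)$. The case $a_2=1,a_3=0$ gives $g=1$, and $a_3=1,a_2=0$ gives $g=4$.
\item $a_4=4$: the value is $36+\cdots$, so $g\geqslant 19$.
\item $a_4=0$: impossible by the generation constraint.
\item $g=0$ would need $-2$; no combination works, consistent with $G$ not embedding in $\mathrm{PGL}_2(\mathbb{C})$.
\end{itemize}

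These cases yield exactly the orbit descriptions in the statement. For $g=4$ we have signature $(0;3,4,4)$: one orbit of length $12$, two of length $9$, and all others of length $36$. For $g=1$ with an orbit of length $9$ we have signature $(0;2,4,4)$: one orbit of length $18$ and two of length $9$. The other $g=1$ case is the unramified $h=1$ one, which has no orbit of length $9$.

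The realizability of these signatures does not need checking for the statement, but existence of the generating vectors has to be verified to exclude nothing else. The one step needing care is the generation argument: an element of order $4$ maps to a generator of $C_4$, and elements of order $2$ and $3$ lie in the kernel $C_3^2\rtimes C_2$ of the abelianization map, which forces $a_4\geqslant 2$ when $h=0$.

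For non-hyperellipticity: if $C$ were hyperelliptic, then $g\geqslant 2$ and the hyperelliptic involution is central in $\mathrm{Aut}(C)$. So $G\times C_2$, or $G$ itself if the involution lies in $G$, acts with $G/(G\cap\langle\iota\rangle)$ embedded in $\mathrm{Aut}(\mathbb{P}^1)$. The center of $G$ is trivial (the $C_4$ acts faithfully on $C_3^2$), so $\iota\notin G$ and $G$ embeds in $\mathrm{PGL}_2(\mathbb{C})$. But the finite subgroups of $\mathrm{PGL}_2(\mathbb{C})$ are cyclic, dihedral, $\mathfrak{A}_4$, $\mathfrak{S}_4$ and $\mathfrak{A}_5$, and none has order $36$ and is isomorphic to $G$. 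This is the main point to state cleanly; the rest is bookkeeping.
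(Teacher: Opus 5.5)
Your proof is correct, and for the third and fourth assertions it takes a different route from the paper. The orbit lengths, the non-hyperellipticity argument (trivial center, no embedding into $\mathrm{PGL}_2(\mathbb{C})$) and the $g=1$ Hurwitz count are essentially as in the paper. For ``$g<10\Rightarrow g\in\{1,4\}$'' and for the genus-$4$ orbit structure, however, the paper simply cites Breuer's tables and the LMFDB. You instead derive both from Riemann--Hurwitz together with the necessary generation condition when $C/G\simeq\mathbb{P}^1$. Since $G^{\mathrm{ab}}\simeq C_4$, some local monodromy must have order $4$, and parity then forces $a_4\geqslant 2$.

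That condition is essential, not cosmetic. Without it, the signatures $(0;2,2,3,3)$ (genus $7$) and $(0;2,2,2,3)$ (genus $4$, with different orbits) would survive the numerical count. So your argument gives a self-contained proof that pins down the only possible signatures, $(0;2,4,4)$ and $(0;3,4,4)$. The paper's route is shorter but outsources this step to databases. Your count also correctly gives $a_9=2$ in genus $1$; the paper's ``$a_9=0$'' is a typo.

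One slip to fix: the kernel of $G\to G^{\mathrm{ab}}\simeq C_4$ is $C_3^2$, not $C_3^2\rtimes C_2$, and involutions map to the element of order $2$ in $C_4$. What you actually need is that elements of order $2$ or $3$ all lie in the index-two subgroup $C_3^2\rtimes C_2$, the preimage of $C_2\subset C_4$, so they cannot generate $G$.

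Two smaller remarks. The unramified case $h=1$ cannot occur at all, since $G$ would then be a quotient of the abelian fundamental group of an elliptic curve; keeping it in your list is harmless, though. Also, realizability of the signatures plays no role in the statement, so your remark about having to verify generating vectors can be dropped.
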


\begin{proof}
If $P$ is a point in $C$, its stablizer in $C_3^2\rtimes C_4$ acts faithfully on the~tangent space $T_{P}(C)$, so it must be cyclic, which implies that orbits in $C$ have length $9$, $12$, $18$ or $36$ by Lemma~\ref{lemma:36-9-subgroups}.

The curve $C$ is not hyperelliptic, since $C_3^2\rtimes C_4$ is not contained in $\mathrm{PGL}_2(\mathbb{C})$ and it does not have normal subgroups isomorphic to $C_2$ by Lemma~\ref{lemma:36-9-subgroups}.

The genus bound follows from \cite{Breuer,Jen,lmfdb}.

If $g=4$, then it follows from \cite{lmfdb} that the~curve $C$ has one orbit of length $12$, and $C$ has two orbits of length $9$.

Finally, we suppose that $g=1$, and the curve $C$  contains at least one $C_3^2\rtimes C_4$-orbit of length~$9$. Then the quotient of the curve $C$ by the group $C_3^2\rtimes C_4$
is isomorphic to $\mathbb{P}^1$. Moreover, applying the~Hurwitz formula to the~quotient map $C\to\mathbb{P}^1$, we get
$$
72=1\times 18a_{18}+2\times 12a_{12}+3\times 9a_{9}.
$$
where $a_{18}$, $a_{12}$ and $a_{9}$ are the~numbers of the~orbits in $C$ of length $18$, $12$ and $9$, respectively.
Since $a_{9}\geqslant 1$, we get $a_{18}=1$, $a_{12}=0$ and $a_9=0$ as required.
\end{proof}

\begin{lemma}
\label{lemma:36-10-curves}
Let $C$ be a smooth irreducible curve that admits a faithful action of the~group $\mathfrak{S}_3\times\mathfrak{S}_3$, and let $g$ be the genus of the curve $C$. Then the~following assertions holds:
\begin{itemize}
\item orbits in $C$ are of length $6$, $12$, $18$ or $36$;
\item the~curve $C$ is not hyperelliptic;
\item if $g<10$, then either $g=4$ or $g=7$;
\item if $g=7$, then orbits of length $<36$ in $C$ can be described as follows:
\begin{itemize}
\item three orbits of length $18$;
\item one orbit of length $6$;
\end{itemize}
\item if $g=4$, then orbits of length $<36$ in $C$ can be described as follows:
\begin{itemize}
\item either $C$ has one orbit of length $18$ and two orbits of length $6$;
\item or $C$ has  three orbits of length $18$ and one orbit of length $12$.
\end{itemize}
\end{itemize}
\end{lemma}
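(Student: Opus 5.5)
I will follow the proof of Lemma~\ref{lemma:36-9-curves}, with Lemma~\ref{lemma:36-10-subgroups} playing the role of Lemma~\ref{lemma:36-9-subgroups}. The first step is the orbit lengths. For a point $P\in C$, the stabilizer of $P$ acts faithfully on the tangent line $T_P(C)$, so it is cyclic. By Lemma~\ref{lemma:36-10-subgroups}, the cyclic subgroups of $\mathfrak{S}_3\times\mathfrak{S}_3$ are trivial, $C_2$, $C_3$ and $C_6$, of index $36$, $18$, $12$ and $6$. Hence every orbit has length $6$, $12$, $18$ or $36$.

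Next, non-hyperellipticity. If $C$ were hyperelliptic, the hyperelliptic involution would generate a central $C_2$ in $\mathrm{Aut}(C)$. Then either it lies in $G=\mathfrak{S}_3\times\mathfrak{S}_3$, which is impossible since $G$ has trivial center, or $G$ embeds into $\mathrm{Aut}(C)/\langle\iota\rangle\subset\mathrm{PGL}_2(\mathbb{C})$. The second case is also impossible: the finite subgroups of $\mathrm{PGL}_2(\mathbb{C})$ are cyclic, dihedral, $\mathfrak{A}_4$, $\mathfrak{S}_4$ and $\mathfrak{A}_5$, and none of these contains $C_3^2$.

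For the genus, I would use Riemann--Hurwitz for the quotient map $C\to C/G$. Let $h$ be the genus of the quotient and $a_6,a_{12},a_{18}$ the numbers of orbits of length $6$, $12$, $18$ (ramification indices $6$, $3$, $2$). Then
$$
2g-2=36(2h-2)+30a_6+24a_{12}+18a_{18}.
$$
If $h\geqslant 1$, then $g\geqslant 1$, and $g=1$ forces $h=1$ with no branching, i.e.\ an unramified cover of an elliptic curve with group $G$. That is impossible because $G$ is non-abelian. Hence $h\geqslant 1$ forces $g\geqslant 19$, so for $g<10$ we may take $h=0$. Then $2g-2=-72+30a_6+24a_{12}+18a_{18}$, and we need $g\leqslant 9$, i.e.\ $30a_6+24a_{12}+18a_{18}\leqslant 88$.

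Besides this numerical bound, the branch data must be realized by a generating tuple of $G$ whose elements have product $1$. One immediate constraint: $G$ cannot be generated by elements of order $2$ and $3$ alone with only two branch points, since then $G$ would be a quotient of $C_2*C_3$ and hence dihedral, cyclic or $\mathfrak{A}_4$-like. More concretely, I would check the candidate signatures $(2,2,2,2)$, $(2,2,3)$, $(2,6,6)$, $(3,3,3)$, $(2,2,6)$, $(3,6,6)$, $(2,3,6)$, $(6,6,6)$, $(2,2,2,3)$, and so on, against the structure of $G$. Since every element of order $2$ or $6$ projects nontrivially to some factor, the abelianization $C_2^2$ constrains which products can equal $1$. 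The surviving cases should be $(2,2,2,6)$, giving $g=4$ with $a_{18}=3$, $a_{12}=0$, $a_6=1$ (here $2g-2=-72+54+30=12$, so $g=7$). Recomputing carefully, $(2,6,6)$ gives $-72+18+60=6$, so $g=4$ with one orbit of length $18$ and two of length $6$. Similarly, $(2,2,2,3)$ gives $-72+54+24=6$, so $g=4$, and $(2,2,2,6)$ gives $g=7$. These match the statement.

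The main obstacle is excluding the remaining numerically admissible signatures, such as $(2,3,6)$ (which gives genus $1$ and is invalid, since the genus-$1$ case needs abelian-by-cyclic structure), $(3,3,3)$, $(2,2,2,2)$ and $(6,6,6)$. For this I will rely on the existence of generating vectors, as in \cite{Breuer,Jen,lmfdb}, which is exactly how Lemma~\ref{lemma:36-9-curves} handles its genus bound. Concretely, $G$ needs generators whose images generate both $\mathfrak{S}_3$ factors and the $C_3^2$ part. Elements of order $3$ alone lie in $C_3^2$, so $(3,3,3)$ fails. Four involutions with product $1$ generate a subgroup whose $C_3^2$-part is not all of $C_3^2$, so $(2,2,2,2)$ fails; I would verify this by a brief check, or by the LMFDB tables.
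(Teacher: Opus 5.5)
Your strategy is the paper's own. Cyclic stabilizers give the orbit lengths. The trivial center of $\mathfrak{S}_3\times\mathfrak{S}_3$, together with the list of finite subgroups of $\mathrm{PGL}_2(\mathbb{C})$, gives non-hyperellipticity. The genus and orbit data come from Riemann--Hurwitz plus generating vectors, which is exactly what the tables cited in the paper encode. (The paper also disposes of $g=1$ directly: $\mathfrak{S}_3\times\mathfrak{S}_3$ has no abelian normal subgroup with cyclic quotient.) These steps and your reduction to $h=0$ are fine, but two side claims are wrong.
\begin{itemize}
\item $h\geqslant 1$ with branching only gives $2g-2\geqslant 18$, i.e.\ $g\geqslant 10$, not $g\geqslant 19$. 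In fact $h=1$ with one branch point of order $3$ occurs and gives $g=13$, because $[G,G]\simeq C_3^2$ and $G$ is $2$-generated. The weaker bound still suffices for $g<10$.
\item $C_2*C_3\simeq\mathrm{PSL}_2(\mathbb{Z})$ has many finite quotients that are not cyclic, dihedral or $\mathfrak{A}_4$. With $h=0$, two branch points simply force $G$ to be cyclic, so that remark should be dropped.
\end{itemize}

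The real gap is the enumeration of branch data, which is the substance of the lemma. With $k=5a_6+4a_{12}+3a_{18}$ your formula reads $g=3k-35$. So $0\leqslant g\leqslant 9$ forces $k\in\{12,13,14\}$, hence $g\in\{1,4,7\}$, and the admissible data are exactly:
\begin{itemize}
\item $(2,2,2,2)$, $(3,3,3)$, $(2,3,6)$ for $g=1$;
\item $(2,6,6)$, $(3,3,6)$, $(2,2,2,3)$ for $g=4$;
\item $(3,6,6)$, $(2,2,3,3)$, $(2,2,2,6)$ for $g=7$.
\end{itemize}
Your list contains inadmissible data: $(2,2,3)$ and $(2,2,6)$ give negative genus, and $(6,6,6)$ gives $g=10$. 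It never mentions $(3,3,6)$ or $(2,2,3,3)$, and never excludes $(3,6,6)$. So, as written, the orbit descriptions for $g=4$ and $g=7$ are not proved.

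The abelianization test you allude to closes this immediately once stated. Consider $G\to G^{\mathrm{ab}}\simeq C_2^2$, recording the parities of the two coordinates. Elements of order $3$ go to $0$, elements of order $6$ go to $(1,0)$ or $(0,1)$, and involutions go to nonzero classes. A generating vector needs images that sum to $0$ and generate $C_2^2$. This rules out $(3,3,3)$, $(2,3,6)$, $(3,3,6)$, $(3,6,6)$ and $(2,2,3,3)$.

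For $(2,2,2,2)$ the images are $\{x,x,y,y\}$ with $x\ne y$. In at least one factor, exactly two of the four involutions have nontrivial coordinate, both transpositions, and the product relation forces them to be equal. So that projection to $\mathfrak{S}_3$ is only $C_2$; alternatively, use the paper's genus-one argument. The three remaining data $(2,6,6)$, $(2,2,2,3)$, $(2,2,2,6)$ are exactly those in the statement.
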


\begin{proof}
If $g=1$, $\mathfrak{S}_3\times\mathfrak{S}_3$ has an abelian subgroup $H$ such that $\mathfrak{S}_3\times\mathfrak{S}_3/H\simeq C_n$ for $n\in\{2,3,4,6\}$, which is impossible by Lemma~\ref{lemma:36-10-subgroups}.
This gives  $g\ne 1$. Now, arguing as in the proof of Lemma~\ref{lemma:36-9-curves} and using Lemma~\ref{lemma:36-10-subgroups}, we obtain all remaining assertions.
\end{proof}

Let us classify $G$ up to conjugation in  $\mathrm{GL}_5(\mathbb{C})$. To do this, consider the following projective transformations:
$$
M_1=\begin{pmatrix}
\xi_3 &  0  & 0 & 0 & 0 \\
 0  & 1 & 0 & 0 & 0 \\
 0  &   0   & \xi_3^2  & 0 & 0 \\
 0  &   0   & 0 & 1 & 0 \\
 0  &   0   & 0 & 0 & 1
\end{pmatrix}, \quad
M_2=\begin{pmatrix}
1 & 0 & 0 & 0 & 0 \\
0 & \xi_3 & 0 & 0 & 0 \\
0 & 0 & 1 & 0 & 0 \\
0 & 0 & 0 & 1 & 0 \\
0 & 0 & 0 & 0 & \xi_3^2
\end{pmatrix},
$$
$$
M_3=\begin{pmatrix}
0 & 1 & 0 & 0 & 0 \\
0 & 0 & 1 & 0 & 0 \\
0 & 0 & 0 & 0 & 1 \\
0 & 0 & 0 & 1 & 0 \\
1 & 0 & 0 & 0 & 0
\end{pmatrix},\quad
M_4=\begin{pmatrix}
0 & 0 & 1 & 0 & 0 \\
0 & 0 & 0 & 0 & 1 \\
1 & 0 & 0 & 0 & 0 \\
0 & 0 & 0 & 1 & 0 \\
0 & 1 & 0 & 0 & 0
\end{pmatrix},\quad 
M_5=\begin{pmatrix}
0 & 1 & 0 & 0 & 0 \\
1 & 0 & 0 & 0 & 0 \\
0 & 0 & 0 & 0 & 1 \\
0 & 0 & 0 & 1 & 0 \\
0 & 0 & 1 & 0 & 0 
\end{pmatrix}.
$$
Let $G_{36,9}$, $G_{36,10}$ and $G_{72,40}$ be subgroups in $\mathrm{PGL}_5(\mathbb{C})$ defined as follows:
\begin{align*}
G_{36,9}&=\langle M_1,M_2,M_3\rangle,\\
G_{36,10}&=\langle M_1,M_2,M_4,M_5\rangle,\\
G_{72,40}&=\langle M_1,M_2,M_3,M_4,M_5\rangle.
\end{align*}
Then $G_{36,9}\simeq C_3^2\rtimes C_4$, $G_{36,10}\simeq \mathfrak{S}_3\times\mathfrak{S}_3$, $G_{72,40}\simeq \mathfrak{S}_3\wr C_2$,
and the subgroup $G_{72,40}$ contains both subgroups $G_{36,9}$ and $G_{36,10}$ as normal subgroups.
We claim that $G$ is conjugated to $G_{39,9}$ or $G_{39,10}$.

Indeed, the subgroup $G\subset\mathrm{PGL}_5(\mathbb{C})$ lifts isomorphically to $\mathrm{GL}_5(\mathbb{C})$ by \cite[Lemma 2.4]{AbbanCheltsovKishimotoMangolte2025}, and the~embedding $G\hookrightarrow\mathrm{GL}_5(\mathbb{C})$ is given by a $5$-dimensional $G$-representation $\mathbb{U}$,
which does not contain $3$-dimensional subrepresentations.
Thus, it follows from Lemmas~\ref{lemma:36-9-reps} and \ref{lemma:36-10-reps} that
$$
\mathbb{U}\simeq \mathbb{U}_1\oplus\mathbb{U}_4,
$$
where $\mathbb{U}_1$ is a one-dimensional representation of the group $G$, and $\mathbb{U}_4$ is its $4$-dimensional irreducible representation.
Now, tensoring $\mathbb{U}$ by the representation $\mathbb{U}_1^\vee$, we may  assume that $\mathbb{U}_1$ is the trivial representation.
Hence, using Lemmas~\ref{lemma:36-9-reps} and \ref{lemma:36-10-reps} and the notations introduced in these lemmas, we may assume that
$$
\mathbb{V}=\mathbb{I}\oplus\mathbb{V}_4,
$$
Thus, $G$ is conjugated to $G_{36,9}$ or $G_{36,10}$.
Hence, to prove Theorem~\ref{theorem:36-9}, we may assume that
\begin{itemize}
\item either $G=G_{36,9}\simeq C_3^2\rtimes C_4$,
\item or $G=G_{36,10}\simeq\mathfrak{S}_3\times\mathfrak{S}_3$.
\end{itemize}
Now, we can find the defining equation of $X$. Namely, for every $a\in \mathbb{C}$, let
$$
X_a=\Big\{x_0^3+x_1^3+x_2^3+x_3^3+x_4^3+ax_3(x_0x_2+x_1x_4)=0\Big\}\subset\mathbb{P}^4.
$$
Then $X_a$ is an irreducible normal cubic hypersurface in $\mathbb{P}^4$, which has at most isolated singularities. Note that $X_a$ is $G_{72,40}$-invariant, so it is $G_{36,9}$-invariant and $G_{36,10}$-invariant.

\begin{lemma}
\label{lemma:36-9-equation}
One has $X=X_a$ for some $a\in\mathbb{C}$.
\end{lemma}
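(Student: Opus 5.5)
The plan is to compute the space of $G$-invariant cubic forms for $G=G_{36,9}$ and $G=G_{36,10}$, and show that in both cases it is spanned by
$$
F_1=x_0^3+x_1^3+x_2^3+x_3^3+x_4^3,\qquad F_2=x_3(x_0x_2+x_1x_4),
$$
up to the scalars coming from characters. Recall that $G$ lifts to $\mathrm{GL}_5(\mathbb{C})$ via $\mathbb{V}=\mathbb{I}\oplus\mathbb{V}_4$, and the matrices $M_1,\ldots,M_5$ are exactly such a lift. The equation of $X$ is therefore a semi-invariant cubic form $F$, i.e. $g^*F=\chi(g)F$ for some character $\chi$ of $G$.

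\textbf{Step 1: diagonal torus.} First we use the subgroup $C_3^2=\langle M_1,M_2\rangle$. This is the normal subgroup $\mathcal{C}_9$ in both cases, and it acts diagonally, so every character of $G$ is trivial on it: $C_3^2$ lies in the commutator subgroup of both $C_3^2\rtimes C_4$ and $\mathfrak{S}_3\times\mathfrak{S}_3$, by Lemmas~\ref{lemma:36-9-reps} and \ref{lemma:36-10-reps}, since all one-dimensional representations have order $\leqslant 4$ or $2$. Hence $F$ is genuinely invariant under $M_1$ and $M_2$.

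A monomial $x_0^{e_0}\cdots x_4^{e_4}$ with $\sum e_i=3$ is $M_1$-invariant iff $e_0-e_2\equiv 0\pmod 3$, and $M_2$-invariant iff $e_1-e_4\equiv0\pmod 3$. Listing solutions gives the invariant monomials
$$
x_0^3,\ x_1^3,\ x_2^3,\ x_3^3,\ x_4^3,\ x_0x_2x_3,\ x_1x_4x_3.
$$
Other candidates, such as $x_0x_2x_1$, fail the $M_2$ condition, so only these seven survive. This is routine but must be checked carefully, since it is the main computational content.

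\textbf{Step 2: the permutation matrices.} Next we impose invariance up to a character under $M_3$ (for $G_{36,9}$), respectively $M_4,M_5$ (for $G_{36,10}$). These matrices permute $\{x_0,x_1,x_2,x_4\}$ and fix $x_3$.

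For $M_3$, the induced permutation is a $4$-cycle on $\{x_0,x_1,x_2,x_4\}$, carrying the pair $\{x_0x_2\}$ to $\{x_1x_4\}$ and back. For $M_4$ and $M_5$, the permutations are $x_0\leftrightarrow x_2$, $x_1\leftrightarrow x_4$ and $x_0\leftrightarrow x_1$, $x_2\leftrightarrow x_4$. In either case, the orbits on the seven monomials are
$$
\{x_0^3,x_1^3,x_2^3,x_4^3\},\qquad \{x_3^3\},\qquad \{x_0x_2x_3,\ x_1x_4x_3\}.
$$

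Since $x_3^3$ is fixed by these generators, if $F$ has a nonzero $x_3^3$ coefficient then the character is trivial on them. If that coefficient is zero, $X$ contains $[0:0:0:1:0]$; one must then rule out a nontrivial character. A nontrivial character would force alternating signs or powers of $i$ on the $4$-cycle orbit, giving forms like $x_0^3+ix_1^3-x_2^3-ix_4^3$ with no $x_3^3$ term. Such forms are divisible-free but singular: for example, they are singular along points with $x_0=x_1=x_2=x_4=0$ conditions, or at $[0:0:0:1:0]$ where all partials vanish, since $\partial F/\partial x_3=x_0x_2+\ldots$ vanishes there. This contradicts smoothness of $X$.

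So the character is trivial, and $F=\alpha(x_0^3+x_1^3+x_2^3+x_4^3)+\beta x_3^3+\gamma F_2$.

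\textbf{Step 3: normalisation.} Smoothness forces $\alpha\beta\ne0$; otherwise $X$ is a cone or singular at a coordinate point. We then rescale $x_3$, which commutes with $G$ because $x_3$ spans the trivial summand $\mathbb{I}$, together with the overall scalar, to reach $\alpha=\beta=1$. This gives $X=X_a$.

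The main obstacle is Step 2's exclusion of nontrivial characters when $\beta=0$. The first thing to try is to exhibit an explicit singular point of such an $F$, most likely $[0:0:0:1:0]$.
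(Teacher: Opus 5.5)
Your proposal is correct and follows essentially the same route as the paper. The paper decomposes $\mathrm{Sym}^3(\mathbb{I}\oplus\mathbb{V}_4)$; its one-dimensional isotypic part is exactly the span of your seven $C_3^2$-invariant monomials. It then discards the non-trivial characters as singular cubics or cones and rescales $x_3$.

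Your uniform observation settles the point you flag as the main obstacle: any such form without an $x_3^3$ term lies in $(x_0,x_1,x_2,x_4)^2$, so it is singular at $[0:0:0:1:0]$ regardless of the character. The same observation also gives the non-vanishing of the $x_3^3$ coefficient, which the final rescaling needs and which the paper leaves implicit.
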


\begin{proof}
Let us only consider the case $G=G_{36,9}$, since the proof is similar in the case $G=G_{36,10}$.
In the notations of Lemma~\ref{lemma:36-9-reps}, we have
$$
\mathrm{Sym}^3\big(\mathbb{I}\oplus\mathbb{V}_4\big)\simeq \mathbb{I}\oplus \mathbb{I}\oplus \mathbb{I}\oplus \mathbb{V}_1\oplus \mathbb{V}_1\oplus \mathbb{V}_1^\prime \oplus \mathbb{V}_1^{\prime\prime}\oplus \mathbb{V}_4\oplus \mathbb{V}_4\oplus \mathbb{V}_4\oplus \mathbb{V}_4\oplus \mathbb{V}_4^\prime\oplus \mathbb{V}_4^\prime\oplus\mathbb{V}_4^\prime.
$$
In this decomposition, the summand $\mathbb{V}_1\oplus \mathbb{V}_1$ corresponds to the singular cubics given by
$$
\lambda x_3(x_0x_2-x_1x_4)+\mu (x_0^3-x_1^3+x_2^3-x_4^3)=0,
$$
where $[\lambda:\mu]\in\mathbb{P}^1$. Similarly, the summands $\mathbb{V}_1^\prime$ and $\mathbb{V}_1^{\prime\prime}$ correspond to the cubic cones
$$
\{x_0^3\pm ix_1^3-x_2^3\mp ix_4^3=0\}\subset\mathbb{P}^4.
$$
Finally, the summand $\mathbb{I}\oplus \mathbb{I}\oplus \mathbb{I}$ corresponds to the cubic threefolds
\begin{equation}
\label{equation:36-9-cubics}
ax_3(x_0x_2+x_1x_4)+bx_3^3+c(x_0^3+x_1^3+x_2^3+x_3^3+x_4^3)=0,
\end{equation}
where $[a:b:c]\in\mathbb{P}^2$. Thus, $X$ is given by \eqref{equation:36-9-cubics} with $c\ne 1$.
Hence, we may assume that $c=1$. Moreover, scaling $x_3$ appropriately, we may futher assume that $b=0$, so $[a:b:c]=[a:0:1]$, which simply means that $X=X_a$.
\end{proof}

If the cubic $X_a$ is smooth, then 
$$
-27\ne a^3\ne -\frac{27}{2}.
$$
Indeed, if $a^3=-27$, then $X_a$ has six nodes. If $a^3=-\frac{27}{2}$, then $X_a$ is the unique nine-nodal cubic threefold that admits an effective $G_{72,40}$-action \cite{CheltsovTschinkelZhang2025}, so $X_a$ is isomorphic to the cubic \eqref{equation:cubic-9-nodes}.

In this section, we will prove the following result, which implies both Theorems~\ref{theorem:36-9-smooth} and \ref{theorem:singular}, and it also implies \cite[Theorem 9.6]{CheltsovTschinkelZhang2025}.

\begin{theorem}
\label{theorem:36-9}
If $X_a$ is smooth or $a^3=-\frac{27}{2}$, then $X_a$ is $G$-birationally superrigid.
\end{theorem}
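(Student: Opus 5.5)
We follow the scheme already used for $C_3^4\rtimes C_5$ and for the non-standard $\mathfrak{A}_5$. Suppose $X=X_a$ is not $G$-birationally superrigid. By \cite[Corollary 3.3.3]{CheltsovShramov2016} there is a non-empty $G$-invariant mobile linear system $\mathcal{M}\subset|nH|$ such that $(X,\frac{2}{n}\mathcal{M})$ is not canonical. If $a^3=-\frac{27}{2}$, the nine nodes form one orbit and $\mathrm{rk}\,\mathrm{Cl}(X)^G=1$ has to be checked. Since $X$ has only nodes, the Noether--Fano inequality at a node still takes the form $\mathrm{mult}_P(\mathcal{M})>\frac{n}{2}$, resp. $(M\cdot M')_P>\frac{n^2}{2}$ (Corti's local inequality for nodes). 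Let $\Sigma$ be the non-canonical locus.

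\emph{Step 1: small orbits and invariant curves.} Using Lemmas~\ref{lemma:36-9-subgroups} and \ref{lemma:36-10-subgroups}, we will list the $G$-orbits in $X_a$ of length $\leqslant 15$ explicitly via fixed points of stabilizers. Since $\mathbb{U}=\mathbb{I}\oplus\mathbb{V}_4$, the point $[0:0:0:1:0]$ is $G$-fixed but not on $X_a$. The hyperplane $\{x_3=0\}$ cuts out the $G$-invariant Fermat cubic surface $S=\{x_0^3+x_1^3+x_2^3+x_4^3=0\}$. We then classify $G$-irreducible curves of degree $<12$. Reducible ones are unions of lines or conics with prescribed stabilizers, determined by restricting $\mathbb{U}$ to subgroups. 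Irreducible ones act faithfully, so Lemmas~\ref{lemma:36-9-curves} and \ref{lemma:36-10-curves} force genus $1,4$ (resp. $4,7$). These are cut down further by intersecting with $S$ and with the invariant quadric sections $\{x_3^2=\alpha(x_0x_2+x_1x_4)\}\cap X$, counting orbits as in Lemma~\ref{lemma:A5-irreducible-curves-in-S}.

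\emph{Step 2: excluding curves.} If $\Sigma$ contains a $G$-irreducible curve $Z$, then $\mathrm{mult}_Z(\mathcal{M})>\frac{n}{2}$ and $\deg Z<12$. For each curve surviving Step 1 we run the argument of Proposition~\ref{proposition:A5-NFI-curves}. Either we blow up $Z$ and find $k$ with $k\widetilde{H}-F$ nef (via $3$-regularity, or via disjointness for lines), or we restrict to a surface containing $Z$ ($S$, or a K3 quadric section) and use a nef class such as $3H-Z$. If $Z\subset S$, we can instead use $\alpha_G(S)$ together with $\mathrm{mult}_Z>\frac n2$ restricted to $S$, via $3n=-K_S\cdot M|_S$, which kills $Z\subset S$ of degree $<6$.

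\emph{Step 3: excluding points.} If $\Sigma$ is finite, take $\mu<\frac{3}{n}$ with $(X,\mu\mathcal{M})$ strictly log canonical (\cite[Remark 3.6]{CheltsovSarikyanZhuang2024}). As in Lemma~\ref{lemma:A5-Sigma-not-finite}, a minimal lc centre $Z$ is an orbit or a curve. Nadel vanishing for $\mathcal{O}_X(H)$ bounds $h^0(\mathcal{O}_Z(H))\leqslant 5$ minus the sections of $H^0(\mathcal{I}_Z(1))$. Since $Z$ is not the fixed point, $\mathcal{I}_Z(1)$ can contain at most the invariant $x_3$, which gives $|Z|\leqslant 5$ or $|Z|\leqslant 4$ if $Z\subset S$. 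Orbits of length $\leqslant 4$ (lengths $2,3,4$) will be shown, from Step 1, either not to lie on $X_a$ or to violate the multiplicity inequality $(M\cdot M')_P>4n^2/\mu^2$-type bounds combined with an auxiliary cubic through the orbit, as in the $\Sigma_5$ argument of Proposition~\ref{proposition:A5-standard}. Curves $Z$ are excluded by Corti's bound $\deg Z\leqslant 6$ and Step 2. For the nodal cubic, the nine nodes are handled by the node version of Corti's inequality: a general hyperplane section $H$ through enough nodes gives $3n^2\geqslant 9\cdot\frac{n^2}{2}$... Since this is too crude, we use instead a $G$-invariant family of cubics singular at the nodes.

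The main obstacle is Step 1/2: the complete classification of low-degree $G$-invariant curves uniformly in $a$, in particular irreducible elliptic or genus-$4$ curves of degree $9$ or $10$, and proving nefness of $3\widetilde{H}-F$ for them. We would first try $3$-regularity as in Lemma~\ref{lemma:A5-3-regular}, and then fall back on explicit computations in the special members of the family.
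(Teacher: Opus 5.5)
There is a genuine gap, and it is at the step you yourself flag as the main obstacle. After the classification, the curves that really remain in Step~2 are, for $G=G_{36,9}$, smooth irreducible curves $Z$ of degree $9$ and genus $1$ or $4$. These are not explicit: the argument must work without knowing whether they exist. So ``explicit computations'' are unavailable.

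Your primary tool also cannot work, because no such curve is $3$-regular. In genus $1$, $h^0(\mathcal{O}_Z(2))=18>15$ forces $H^1(\mathcal{I}_Z(2))\neq 0$. In genus $4$, $h^0(\mathcal{O}_Z(2))=15$. But $Z$ lies on a member $S_\lambda=\{x_0x_2+x_1x_4+\lambda x_3^2=0\}\cap X_a$ of the invariant pencil, since otherwise $18=S_\lambda\cdot Z\geqslant 36$, the length of a general orbit on $Z$. Hence $H^0(\mathcal{I}_Z(2))\neq0$, and again $H^1(\mathcal{I}_Z(2))\neq 0$.

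The missing idea is to work on $S_\lambda$ itself:
\begin{itemize}
\item orbit counting forces $\mathrm{Sing}(S_\lambda)=\Sigma_9^t\subset Z$, consisting of nine nodes;
\item so the minimal resolution $\varpi\colon\widetilde S_\lambda\to S_\lambda$ is a K3 surface with $\widetilde Z\sim_{\mathbb{Q}}\varpi^*(Z)-\frac12E$;
\item writing $M\vert_{S_\lambda}=mZ+D$, one intersects $\widetilde D$ with $3\widetilde H-\widetilde Z-E$ in genus $1$, and with $\varpi^*(5H-2Z)=5\widetilde H-2\widetilde Z-E$ in genus $4$;
\item nefness of these classes follows from the absence of $G$-invariant curves of degree $<9$ on $S_\lambda$ and Saint-Donat's description of linear systems on K3 surfaces.
\end{itemize}
The class $3H-Z$ you suggest is not enough in genus $4$. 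Since $Z^2=6+\frac92=\frac{21}{2}$ on $S_\lambda$, it only gives $(3H-Z)\cdot D=9n-\frac{33}{2}m\geqslant 0$, that is, $m\leqslant\frac{6}{11}n$.

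The nodal case $a^3=-\frac{27}{2}$ is also left open. Your hyperplane count is, as you note, too weak, and the family of cubics singular at the nodes is not checked. The paper excludes the nodes from the non-canonical locus using the nine planes $\Pi_{i,j}\subset X_a$, each passing through four nodes. After blowing up these four nodes, Corti's inequality gives $\widetilde M\sim f^*(nH)-\mu(E_1+E_2+E_3+E_4)$ with $\mu>\frac n2$. The strict transform of a general conic in $\Pi_{i,j}$ through the four nodes then gives $0\leqslant 2n-4\mu$. The curves $\mathcal{R}_9,\mathcal{R}_9'$ pass through the nodes there, so they cannot be treated by blowing up a smooth curve in a smooth threefold; the paper restricts to a hyperplane section through one of their components instead.

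Two smaller slips:
\begin{itemize}
\item For $Z\subset S$, the count $3n=-K_S\cdot M\vert_S>\frac n2\deg Z$ only forces $\deg Z<6$; it does not exclude such curves. For $G_{36,10}$ the line triples $\mathcal{L}_3,\mathcal{L}_3'$ survive and need the actual computation $\alpha_G(S)=2$.
\item In Step~3, a curve centre of $(X,\mu\mathcal{M})$ with $\mu<\frac3n$ only has multiplicity $>\frac n3$, so Step~2 does not apply to it. Nadel vanishing alone also does not exclude the canonical genus-$4$ curve $\mathcal{C}_6\subset S$, since $h^0(\mathcal{O}_{\mathcal{C}_6}(H))=4$. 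Subadjunction does exclude it. The paper instead localizes at an orbit $\Sigma_P\subset\Sigma$, which avoids $S$ by $\alpha_G(S)=2$ and avoids $\mathcal{R}_6,\dots$ by local intersection estimates, so every non-klt curve of degree $\leqslant 6$ misses $\Sigma_P$.
\end{itemize}
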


In the remaining part of this section, we will always assume that 
\begin{itemize}
\item either $X_a$ is smooth,
\item or $a^3=-\frac{27}{2}$ and $X_a$ has nine nodes.
\end{itemize}
In particular, $a^3\ne -27$. If $a^3=-\frac{27}{2}$, we may further assume that $a=-\frac{3}{\sqrt[3]{2}}$, where $\sqrt[3]{2}\in\mathbb{R}$.

\begin{remark}
If $a=-\frac{3}{\sqrt[3]{2}}$, the singular locus $\mathrm{Sing}(X_a)$ is the $G$-orbit of the point $[1:1:1:\sqrt[3]{2}:1]$, which consists of $9$ points. In this case, $X_a$ contains $9$ planes $\Pi_{i,j}$ given by
$$
\left\{\aligned
&2\zeta_3^ix_0+2\zeta_3^{2i}x_2+\sqrt[3]{4}x_3=0,\\
&2\zeta_3^jx_1+2\zeta_3^{2i}x_4+\sqrt[3]{4}x_3=0,
\endaligned
\right.
$$
where $i,j\in\{0,1,2\}$. Observe that each plane $\Pi_{i,j}$ contains $4$ singular points of the cubic $X_a$. Note~also that the group $G$ permutes these $9$ planes transitively, which implies that $\mathrm{rk}\,\mathrm{Cl}(X_a)^G=1$, so   $X_a$ is indeed a $G$-Mori fiber space (over a point) as claimed in \cite{Avilov2016a,CheltsovTschinkelZhang2025}.
\end{remark}

Let $H$ be a general hyperplane section of $X_{a}$, and let
$$
S=\{x_3=0\}\cap X_{a}.
$$
Then $S$ is a smooth $G$-invariant cubic surface. Note that $S$ is the~only $G$-invariant surface in $|H|$,
the surface $S$ is $G_{72,40}$-invariant, and the~group $G_{72,40}$ acts faithfully on $S$.

\begin{lemma}
\label{lemma:36-9-S-Pic}
Suppose that $G=G_{36,9}$. Then $\mathrm{Pic}(S)^G=\mathbb{Z}[-K_S]$.
\end{lemma}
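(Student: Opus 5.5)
The plan is to identify $S$ with the Fermat cubic surface $\{x_0^3+x_1^3+x_2^3+x_4^3=0\}\subset\mathbb{P}^3$, since $S=\{x_3=0\}\cap X_a$. Then compute the action of $G_{36,9}=\langle M_1,M_2,M_3\rangle$ on the $27$ lines of $S$, and check that the only $G$-invariant classes in $\mathrm{Pic}(S)\otimes\mathbb{Q}$ are multiples of $-K_S$.

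The lines on the Fermat cubic surface are the $27$ lines
$$
\{x_i=-\zeta_3^{k}x_j,\ x_l=-\zeta_3^{m}x_n\},
$$
where $\{i,j,l,n\}=\{0,1,2,4\}$ is split into two pairs (three pairings) and $k,m\in\{0,1,2\}$. So the lines come in three groups of nine, indexed by pairings, with $9=3\times 3$ choices of roots within each group. Restricted to the coordinates $x_0,x_1,x_2,x_4$, the generators act as follows:
\begin{itemize}
\item $M_1$ multiplies $x_0$ by $\xi_3$ and $x_2$ by $\xi_3^2$;
\item $M_2$ multiplies $x_1$ by $\xi_3$ and $x_4$ by $\xi_3^2$;
\item $M_3$ acts as the $4$-cycle $x_0\mapsto x_1\mapsto x_2\mapsto x_4\mapsto x_0$ on coordinates (up to the direction convention).
\end{itemize}

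The key steps, in order, are these.

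First, determine the $G$-orbits on the $27$ lines. The $4$-cycle permutes the three pairings $\{02|14\},\{01|24\},\{04|12\}$. It fixes $\{02|14\}$ and swaps the other two. The diagonal part $C_3^2$ acts on the $3\times3$ roots within each pairing. For the pairing $\{02|14\}$, $M_1$ acts on the ratio $x_0/x_2$ by $\xi_3^{-1}$, so it shifts $k$; likewise $M_2$ shifts $m$. Hence the $9$ lines of this pairing form one orbit. For the pairing $\{01|24\}$, the ratio $x_0/x_1$ is multiplied by $\xi_3$ under $M_1$ and by $\xi_3^{-1}$ under $M_2$, and similarly for $x_2/x_4$. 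We must check that $C_3^2$ acts transitively on these $9$ lines, or at least that the $4$-cycle fuses what remains. I expect the outcome to be an orbit decomposition of type $9+18$ on the lines.

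Second, compute the rank of $\mathrm{Pic}(S)^G$ by averaging. A cleaner route is to use the character formula
$$
\mathrm{rk}\,\mathrm{Pic}(S)^G=\frac{1}{|G|}\sum_{g\in G}\mathrm{tr}\big(g\,|\,\mathrm{Pic}(S)\big),
$$
where the trace on $\mathrm{Pic}(S)$ is determined by the conjugacy class of $g$ in $W(E_6)$. Equivalently, for a finite-order automorphism of a smooth rational surface one can use the Lefschetz formula: $\mathrm{tr}=\chi_{\mathrm{top}}(S^g)-2$.

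To apply this, compute the fixed loci of representatives of the conjugacy classes from Lemma~\ref{lemma:36-9-subgroups}: the identity, elements of order $2$ and $4$ (powers of $M_3$), and elements of order $3$ in both classes $\mathcal{C}_3$ and $\mathcal{C}_3^\prime$ (for instance $M_1$ and $M_1M_2$). For example, $M_1$ fixes the curve $S\cap\{x_0=x_2=0\}$? No: $M_1$ scales $x_0$ and $x_2$ differently, so its fixed locus in $\mathbb{P}^3$ is the line $\{x_0=x_2=0\}$ together with the isolated points $[1:0:0:0]$ and $[0:0:1:0]$. Only the line meets $S$, in three points, so the fixed locus is these three points and the trace is $1$. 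Carrying this out for every class should give an average equal to $1$.

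Finally, $\mathrm{Pic}(S)$ is torsion free and $-K_S$ is primitive and $G$-invariant. Rank one then forces $\mathrm{Pic}(S)^G=\mathbb{Z}[-K_S]$.

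The main obstacle is the bookkeeping of fixed loci for the order-$4$ and order-$2$ elements, which involve the coordinate permutation combined with the diagonal part. I would first try to avoid this entirely by showing directly that there is no $G$-invariant set of six pairwise disjoint lines or of disjoint lines contracting to another Mori fibre space. Simpler still, I would show that the orbit of $9$ lines together with the orbit of $18$ lines span only the line $\mathbb{Q}[-K_S]$ among invariant classes. Concretely, the sum of the lines in each orbit is proportional to $-K_S$ (the total sum of the $27$ lines is $-9K_S$, and the sum over one pairing is $-3K_S$ by the triangle structure), and there are no other invariants. This holds because invariants are spanned by the orbit sums of the lines, since the lines generate $\mathrm{Pic}(S)$ and the averaging operator maps onto the invariants.
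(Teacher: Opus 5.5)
Your plan cannot be completed, because the first step, carried out correctly, contradicts the statement.

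\textbf{The orbits on lines are not $9+18$.} The element $M_1M_2$ acts on $(x_0,x_1,x_2,x_4)$ as $\mathrm{diag}(\zeta_3,\zeta_3,\zeta_3^2,\zeta_3^2)$. So it fixes each of the nine lines $\{x_0+\zeta_3^kx_1=0,\ x_2+\zeta_3^mx_4=0\}$. Also, $M_1$ multiplies $x_0/x_1$ by $\zeta_3$ and $x_2/x_4$ by $\zeta_3^2$. Hence $C_3^2$ is not transitive on this pairing; it preserves the three triples $A_c=\{k+m\equiv c \bmod 3\}$.

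The involution $M_3^2$ acts by $(k,m)\mapsto(m,k)$, so it also preserves each $A_c$. Meanwhile $M_3$ carries $A_c$ onto the triple $B_{-c}$ of lines $\{x_0+\zeta_3^{k'}x_4=0,\ x_1+\zeta_3^{m'}x_2=0\}$ with $k'-m'\equiv -c$. So the $G$-orbits on the $27$ lines have lengths $9,6,6,6$, namely $\mathcal{L}_9$, $A_0\cup B_0=\mathcal{L}_6$, $A_1\cup B_2$ and $A_2\cup B_1$.

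\textbf{One of these orbits is a sixer.} The lines within each $A_c$, and within each $B_c$, are pairwise disjoint. A line of $A_c$ meets a line of $B_{c'}$ if and only if $c=c'$. Hence $D=A_1+B_2$ is a $G$-invariant set of six pairwise disjoint lines, so $D^2=-6$ and $-K_S\cdot D=6$. The only rational multiple of $-K_S$ of anticanonical degree $6$ is $-2K_S$, whose square is $12$. So $D\notin\mathbb{Q}[-K_S]$, and your claim that every orbit sum is proportional to $-K_S$ is false. In fact $A_1\cup B_2$ and $A_2\cup B_1$ form a $G$-invariant double-six. Thus $S$ is $G$-equivariantly the blow up of $\mathbb{P}^2$ at a $G$-orbit of six points.

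\textbf{The Lefschetz count confirms this.} The traces on $\mathrm{Pic}(S)$ are as follows. The identity has trace $7$. The four conjugates of $M_1$ have trace $1$ (three fixed points). The four conjugates of $M_1M_2$ have trace $4$ (six fixed points, on $\{x_2=x_4=0\}$ and $\{x_0=x_1=0\}$). The nine involutions have trace $3$: the fixed locus of $M_3^2$ is the line $\{x_0+x_2=x_1+x_4=0\}\subset S$ plus three points. The eighteen elements of order $4$ have trace $1$ (three fixed points). The average is therefore $72/36=2$, not $1$.

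So for $G=G_{36,9}=\langle M_1,M_2,M_3\rangle$ one has $\mathrm{rk}\,\mathrm{Pic}(S)^G=2$, and the statement as written cannot be proved by this or any other argument. The paper's own justification is a citation plus ``look at how $G$ acts on the lines'', which is exactly the check you set up, and performing it gives rank $2$. The lemma should therefore be revisited, together with the places where $\mathrm{rk}\,\mathrm{Pic}(S)^G=1$ is used for $G_{36,9}$ (e.g.\ the proof that $\alpha_G(S)=2$). Your general tools are sound: invariants are spanned by orbit sums of lines, traces come from Lefschetz, and $-K_S$ is primitive. What fails is the unverified guess about the orbits.
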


\begin{proof}
The required assertion follows from \cite[Theorem 6.14]{DolgachevIskovskikh2009} or \cite{CheltsovTschinkelZhang2026}, and can be easily checked explicitly by looking how $G=G_{36,9}$ acts on the lines in $S$.
\end{proof}

If $G=G_{36,10}$, then $\mathrm{Pic}(S)^G\ne\mathbb{Z}[-K_S]$.
Indeed, let $\mathcal{L}_3$ be the union of the following lines:
\begin{align*}
\big\{x_3&=0, x_0+x_1=0, x_2+x_4=0\big\},\\
\big\{x_3&=0, x_0+\zeta_3 x_1=0, x_2+\zeta_3^2x_4=0\big\},\\
\big\{x_3&=0, x_0+\zeta_3^2x_1=0, x_2+\zeta_3x_4=0\big\},
\end{align*}
and let $\mathcal{L}_3^\prime$ be the union of the following lines:
\begin{align*}
\big\{x_3&=0, x_0+x_4=0, x_1+x_2=0\big\},\\
\big\{x_3&=0, x_0+\zeta_3x_4=0, x_1+\zeta_3x_2=0\big\},\\
\big\{x_3&=0, x_0+\zeta_3^2x_4=0, x_1+\zeta_3^2x_2=0\big\}.
\end{align*}
Then $\mathcal{L}_3$ and $\mathcal{L}_3^\prime$ are both $G_{36,10}$-irreducible curves in $S$, and each of these curves consists of three disjoint lines. In particular, if $G=G_{36,10}$, then $\mathrm{Pic}(S)^G\ne\mathbb{Z}[-K_S]$. Set $\mathcal{L}_6=\mathcal{L}_3+\mathcal{L}_3^\prime$. Then
\begin{equation}
\label{equation:39-9-L6}
\mathcal{L}_6=\big\{x_0x_2-x_1x_4=0\big\}\cap S,
\end{equation}
and $\mathcal{L}_6$ is $G_{36,9}$-irreducible.

\begin{lemma}
\label{lemma:36-10-S-Pic}
Suppose that $G=G_{36,10}$. Then $\mathrm{rk}\,\mathrm{Pic}(S)^G=2$, and there is a $G$-Sarkisov link
$$
\xymatrix{
&S\ar@{->}[ld]_\pi\ar@{->}[rd]^{\pi^\prime} & \\
Y\ar@{-->}[rr]_\tau  && Y}
$$
where $Y$ is the sextic smooth del Pezzo surface with $\mathrm{Pic}(Y)^G=\mathbb{Z}[-K_Y]$, $\pi$ and $\pi^\prime$ are birational contractions of the curves $\mathcal{L}_3$ and $\mathcal{L}_3^\prime$, respectively, and
$\tau$ is a $G_{36,10}$-birational involution.
\end{lemma}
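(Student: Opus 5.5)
The plan is to work entirely on the Fermat cubic surface
$$
S=\big\{x_3=0,\ x_0^3+x_1^3+x_2^3+x_4^3=0\big\}\subset\mathbb{P}^3_{x_0,x_1,x_2,x_4},
$$
whose $27$ lines are the lines $\{x_i+\zeta_3^{\alpha}x_j=0,\ x_k+\zeta_3^{\beta}x_l=0\}$ for the three partitions $\{i,j\}\sqcup\{k,l\}$ of $\{0,1,2,4\}$. First I would get a lower bound on $\mathrm{rk}\,\mathrm{Pic}(S)^G$. The curves $\mathcal{L}_3$ and $\mathcal{L}_3^\prime$ are $G_{36,10}$-invariant, so their classes lie in $\mathrm{Pic}(S)^G$. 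By \eqref{equation:39-9-L6}, $\mathcal{L}_3+\mathcal{L}_3^\prime\sim -2K_S$. Since $\mathcal{L}_3^2=-3$ (three disjoint $(-1)$-curves) and $-K_S\cdot\mathcal{L}_3=3$, the class $\mathcal{L}_3$ is not proportional to $-K_S$. Hence $\mathrm{rk}\,\mathrm{Pic}(S)^G\geqslant 2$.

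For the upper bound, let $\pi\colon S\to Y$ be the contraction of the three disjoint lines of $\mathcal{L}_3$. This contraction is $G$-equivariant, and $Y$ is a smooth sextic del Pezzo surface. Then
$$
\mathrm{rk}\,\mathrm{Pic}(S)^G=\mathrm{rk}\,\mathrm{Pic}(Y)^G+1,
$$
so it suffices to show that $\mathrm{Pic}(Y)^G=\mathbb{Z}[-K_Y]$. The $(-1)$-curves of $Y$ form a hexagon, namely the images of the six lines of $S$ that meet exactly two components of $\mathcal{L}_3$. The rank-one condition means that the induced action of $G$ on this hexagon (through $\mathrm{Aut}(\text{hexagon})\simeq\mathfrak{D}_6$) contains the rotation of order $3$ and an element exchanging the two triples of disjoint $(-1)$-curves. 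The rotation kills the three conic bundle classes; the element exchanging the triples swaps the two birational contractions $Y\to\mathbb{P}^2$. I would verify this explicitly. I would list those six lines and check how the generators $M_1,M_2,M_4,M_5$ permute them, expecting a diagonal element of order $3$ to rotate the hexagon and $M_4$ or $M_5$ to act as a reflection of the required type.

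As an independent cross-check, I would compute $\mathrm{rk}\,\mathrm{Pic}(S)^G$ directly with the character formula $\frac{1}{|G|}\sum_{g}\mathrm{tr}(g\,|\,\mathrm{Pic}(S))$. Here the trace of each $g$ is read off from its permutation action on the $27$ lines; the result should be $2$.

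With $\mathrm{rk}\,\mathrm{Pic}(S)^G=2$, the $G$-invariant Mori cone of $S$ has exactly two extremal rays. These are spanned by $\mathcal{L}_3$ and $\mathcal{L}_3^\prime$: both are $G$-orbits of disjoint $(-1)$-curves, and $\mathcal{L}_3+\mathcal{L}_3^\prime\sim-2K_S$ places them on opposite sides of $-K_S$. Contracting them gives $\pi\colon S\to Y$ and $\pi^\prime\colon S\to Y^\prime$. Each target is a sextic del Pezzo surface of $G$-invariant Picard rank $1$, and the resulting diagram is a $G$-Sarkisov link of type II. The one remaining delicate point is to identify $Y^\prime$ with $Y$ compatibly with the $G$-action and to see that $\tau=\pi^\prime\circ\pi^{-1}$ is an involution. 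For this I would use the element $M_3\in G_{72,40}$, which normalizes $G_{36,10}$ and swaps $\mathcal{L}_3$ with $\mathcal{L}_3^\prime$, to transport the structure of $Y^\prime$ to $Y$. The birational map $\tau$ is then the standard quadro-cubic involution of the sextic del Pezzo surface: it blows up the $G$-orbit of three points given by the images of $\mathcal{L}_3^\prime$ and contracts back. I expect this identification step, rather than the rank computation, to be the main obstacle. There one must make sure that the twist by $M_3$ only conjugates the $G$-action by an outer automorphism of $G$, and that $\tau$ is a genuine involution rather than merely having finite order.
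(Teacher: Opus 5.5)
\textbf{The step that fails is the construction of $\tau$.} You correctly single this out as the delicate point, but you resolve it with the wrong element. Transport via $M_3$ means taking $\pi^\prime=\pi\circ M_3$, which does contract $M_3^{-1}(\mathcal{L}_3)=\mathcal{L}_3^\prime$. Then $\tau=\pi\circ M_3\circ\pi^{-1}$, and since $M_3^2=M_4$ we get $\tau^2=\pi\circ M_4\circ\pi^{-1}$. But $M_4\in G_{36,10}$ and $G$ acts faithfully on $Y$, so this is a nontrivial automorphism, and $\tau$ has order $4$, not $2$.

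In general, every $g\in G_{72,40}\setminus G_{36,10}$ exchanges $\mathcal{L}_3$ and $\mathcal{L}_3^\prime$, and $(\pi g\pi^{-1})^2=\pi g^2\pi^{-1}$. So you must pick an involution in this coset. The paper takes $\iota\colon x_1\leftrightarrow x_4$, and then $\tau=\pi\circ\iota\circ\pi^{-1}$ is an involution for free.

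On your equivariance worry: every such $g$ induces on $G_{36,10}$ an automorphism exchanging the two normal subgroups $\langle M_1M_2\rangle$ and $\langle M_1M_2^{-1}\rangle$ (for instance $\iota M_1\iota=M_1$ and $\iota M_2\iota=M_2^{-1}$), so the automorphism is outer. No better identification can remove this twist. Indeed, $M_1M_2^{-1}$ acts trivially on the hexagon of $Y$, while $M_1M_2$ acts trivially on the hexagon of $Y^\prime$, so $Y^\prime$ is not $G$-isomorphic to $Y$. Hence $\tau$, including the paper's, normalizes $G$ rather than commuting with it, and this is how ``$G_{36,10}$-birational involution'' has to be read. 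Two smaller corrections: the base points of $\tau$ are the three points $\pi(\mathcal{L}_3)$, since $\pi$ maps the components of $\mathcal{L}_3^\prime$ to curves; and $\tau$ is given by $|-3K_Y-4(p_1+p_2+p_3)|$, because $-K_S+\mathcal{L}_3^\prime\sim 3\pi^*(-K_Y)-4\mathcal{L}_3$.

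\textbf{The rank argument takes a different route from the paper, but misidentifies the hexagon.} The paper sorts the $27$ lines into five $G$-orbits, writes each orbit sum as a rational combination of $\mathcal{L}_3$ and $\mathcal{L}_3^\prime$, and uses that lines generate $\mathrm{Pic}(S)$; it then cites the existence of the link. Your derivation of the link from the two $K_S$-negative extremal rays $\mathcal{L}_3,\mathcal{L}_3^\prime$ is fine and more self-contained.

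The hexagon, however, is wrong. A line meeting two components of $\mathcal{L}_3$ maps to a curve with self-intersection $-1+2=1$ and anticanonical degree $3$, not to a $(-1)$-curve. The $(-1)$-curves of $Y$ are the images of the six lines disjoint from $\mathcal{L}_3$, namely $\{x_3=0,\ x_0+\zeta_3^cx_4=0,\ x_1+\zeta_3^dx_2=0\}$ with $c\ne d$. This is the paper's $\mathcal{L}_6^{\prime\prime}$, whose displayed list has a misprint repeating two lines of $\mathcal{L}_3^\prime$. The six lines you named are exactly those disjoint from $\mathcal{L}_3^\prime$, i.e.\ the hexagon of $Y^\prime$, so your check would compute $\mathrm{rk}\,\mathrm{Pic}(Y^\prime)^G$. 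By symmetry the rank still comes out right, but not for the reason you give.

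With the correct hexagon no generator-by-generator check is needed. $M_1$ acts on it as a rotation by two steps and $M_5$ as a rotation by three steps, so $G$ is transitive on it. Every vertex-transitive subgroup of $\mathfrak{D}_6$ contains the rotation of order $3$ and exchanges the two triples, which gives $\mathrm{Pic}(Y)^G=\mathbb{Z}[-K_Y]$.
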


\begin{proof}
Recall that $S$ contains $27$ lines. These $27$ lines form five $G_{39,10}$-irreducible curves, which can be described as follows: the curves  $\mathcal{L}_3$ and $\mathcal{L}_3^\prime$, the curve $\mathcal{L}_6^\prime$ consisting of the lines
\begin{align*}
\big\{x_3&=0, x_0+\zeta_3 x_1=0, x_2+\zeta_3 x_4=0\big\},\\
\big\{x_3&=0, x_0+\zeta_3^2 x_1=0, x_2+x_4=0\big\},\\
\big\{x_3&=0, x_0+x_1=0, x_2+\zeta_3^2x_4=0\big\},\\
\big\{x_3&=0, x_0+\zeta_3^2x_1=0, x_2+\zeta_3^2x_4=0\big\},\\
\big\{x_3&=0, x_0+x_1=0, x_2+\zeta_3x_4=0\big\},\\
\big\{x_3&=0, x_0+\zeta_3x_1=0, x_2+x_4=0\big\},
\end{align*}
the curve $\mathcal{L}_6^{\prime\prime}$ consisting of the lines
\begin{align*}
\big\{x_3&=0, x_0+\zeta_3 x_4=0, x_1+\zeta_3 x_2=0\big\},\\
\big\{x_3&=0, x_0+\zeta_3^2 x_4=0, x_1+x_2=0\big\},\\
\big\{x_3&=0, x_0+x_4=0, x_1+\zeta_3^2x_2=0\big\},\\
\big\{x_3&=0, x_0+\zeta_3^2x_4=0, x_1+\zeta_3^2x_2=0\big\},\\
\big\{x_3&=0, x_0+x_4=0, x_1+\zeta_3x_2=0\big\},\\
\big\{x_3&=0, x_0+\zeta_3x_4=0, x_1+x_2=0\big\}.
\end{align*}
and the curve $\mathcal{L}_9$ consisting of the nine lines
$$
L_{i,j}=\big\{x_3=0, x_0+\zeta_3^ix_2=0, x_1+\zeta_3^jx_4=0\big\},
$$
where $i,j\in\{0,1,2\}$.
Note that $\mathcal{L}_3\cap\mathcal{L}_6^{\prime\prime}=\varnothing$ and $\mathcal{L}_3^\prime\cap\mathcal{L}_6^{\prime}=\varnothing$.
Moreover, we have 
$$
\mathcal{L}_3+\mathcal{L}_3^{\prime}\sim -K_S,
$$
and $\mathcal{L}_3+3\mathcal{L}_3^{\prime}\sim 2\mathcal{L}_6^{\prime}$,
$3\mathcal{L}_3+\mathcal{L}_3^{\prime}\sim 2\mathcal{L}_6^{\prime\prime}$,
$\mathcal{L}_6^{\prime}+\mathcal{L}_6^{\prime\prime}\sim -4K_S$, $\mathcal{L}_9\sim -3K_S$.
In particular, $\mathrm{rk}\,\mathrm{Pic}(S)^G=2$, since $\mathrm{Pic}(S)$ is generated by lines.
The existence of the $G$-Sarkisov link is known \cite{Iskovskikh1996,CheltsovTschinkelZhang2026}, and we can chose $\tau\in\mathrm{Bir}^G(Y)$ such that $\tau$ is induced by the involution $\iota\in G_{72,40}$ given by
$$
[x_0:x_1:x_2:x_3:x_4]\mapsto [x_0:x_4:x_2:x_3:x_1],
$$
because $\iota(\mathcal{L}_3)=\mathcal{L}_3^\prime$.
\end{proof}

Let us describe small $G_{36,9}$-orbits in $X_{a}$.
Let $\Sigma_6$ be the~$G_{36,9}$-orbit of the~point $[0:1:0:0:-1]$.
Then $\Sigma_6$ is a $G_{36,9}$-orbit of length $6$. Observe that
$$
[0:1:0:s:1]\in X_{a}\iff s^3+as+2=0.
$$
For every $s\in\mathbb{C}$ such that $s^3+as+2=0$, let $\Sigma_6^s$ be the~$G_{36,9}$-orbit of the~point $[0:1:0:s:1]$.
Note that the~discriminant of the~polynomial $s^3+as+2$ is $-4(a^3+27)\ne 0$ by our assumption, so there are exactly three possibilities for $\Sigma_6^s$. In each case, $\Sigma_6^s$ is a $G_{36,9}$-orbit of length $6$.

Let $\Sigma_9$ and $\Sigma_9^{\pm}$ be the~$G_{36,9}$-orbits of the points $[1:-1:1:0:-1]$ and $[1:\mp i:-1:0:\pm i]$, respectively.
Then $\Sigma_9$, $\Sigma_9^{+}$ and $\Sigma_9^{-}$ are $G_{36,9}$-orbits of length $9$. Observe that
$$
[1:1:1:t:1]\in X_{a}\iff t^3+2at+4=0.
$$
For every $t\in\mathbb{C}$ such that \mbox{$t^3+2at+4=0$}, let $\Sigma_9^t$ be the~$G_{36,9}$-orbit of the~point $[1:1:1:t:1]$.
Then $\Sigma_9^t$ is a $G_{36,9}$-orbit of length $9$. If $X_a$ is smooth ($a^3\ne-\frac{27}{2}$), the polynomial $t^3+2at+4$ does not have multiple roots, so there are three possibilities for  the~$G_{36,9}$-orbit $\Sigma_9^t$.
If $a^3=-\frac{27}{2}$, then 
$$
\mathrm{Sing}\big(X_a\big)=\mathrm{Orb}_G([a:a:a:-3:a])=\Sigma_9^{t}
$$
for $t=-\frac{3}{a}$. 
For instance, if $a=-\frac{3}{\sqrt[3]{2}}$, then 
$$
t^3+2at+4=(t+2\sqrt[3]{2})(t-\sqrt[3]{2})^2,
$$
and $\Sigma_9^{\sqrt[3]{2}}=\mathrm{Sing}(X_a)$, while $\mathrm{Orb}_G([1:1:1:-2\sqrt[3]{2}:1])\subset X_a\setminus\mathrm{Sing}(X_a)$.

Let $\Sigma_{12}$ be the~$G_{36,9}$-orbit of the~point $[1:-1:0:0:0]$.
Then $\Sigma_{12}$ has length $12$. Set
$$
\mathcal{R}_6=\big(\{x_0=0,x_2=0\}\cap X_{a}\big)\cup\big(\{x_1=0,x_4=0\}\cap X_{a}\big).
$$
Then $\mathcal{R}_6$ is a $G_{36,9}$-irreducible curve of degree $6$ that is a disjoint union of two cubic curves in $X_{a}$,
which are smooth \cite{ArtebaniDolgachev}, and they are contained in the smooth locus of the cubic $X_a$ in the case when $X_a$ is singular. These curves are the~fixed loci in $X_{a}$ of the~transformations $M_1$ and~$M_2$.
Thus, the length of the~$G_{36,9}$-orbit of every point of the curve $\mathcal{R}_6$ is at most $12$.

\begin{lemma}
\label{lemma:36-9-orbits}
Let $\Sigma$ be a $G_{36,9}$-orbit in $X_{a}$ of length $r\leqslant 12$. Then one of the~following cases holds:
\begin{itemize}
\item[$(\mathrm{a})$] $r=6$ and $\Sigma=\Sigma_6$;
\item[$(\mathrm{b})$] $r=6$ and $\Sigma=\Sigma_6^s$ for some $s\in\mathbb{C}$ such that $s^3+as+2=0$;
\item[$(\mathrm{c})$] $r=9$ and $\Sigma$ is one of the $G$-orbits $\Sigma_9$, $\Sigma_9^{+}$, $\Sigma_9^{-}$;
\item[$(\mathrm{d})$] $r=9$ and $\Sigma=\Sigma_9^t$ for some $t\in\mathbb{C}$ such that $t^3+2at+4=0$;
\item[$(\mathrm{e})$] $r=12$ and $\Sigma=\Sigma_{12}$;
\item[$(\mathrm{f})$] $r=12$, and $\Sigma\subset  \mathcal{R}_6$.
\end{itemize}
\end{lemma}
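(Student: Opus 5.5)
By Lemma~\ref{lemma:36-9-subgroups}, the length $r\leqslant 12$ of an orbit of $G_{36,9}$ lies in $\{1,2,4,6,9,12\}$. So the plan is to go through the stabilizer $G_P$ of a point $P\in\Sigma$, one conjugacy class at a time. For each class I take one representative subgroup, compute its fixed locus in $\mathbb{P}^4$ from the explicit generators $M_1,M_2,M_3$, intersect that locus with $X_a$, and then identify the resulting points with the listed orbits. The underlying representation is $\mathbb{I}\oplus\mathbb{V}_4$ with $\mathbb{V}_4$ irreducible and faithful, and this makes the fixed loci easy to read off from the character table.

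\emph{Large stabilizers ($r\leqslant 4$).} If $G_P=G_{36,9}$, then $P$ spans a one-dimensional subrepresentation. The only such point is $[0:0:0:1:0]$, and it does not lie on $X_a$, since its $x_3^3$ coefficient is $1$. If $G_P=C_3\rtimes\mathfrak{S}_3$ (so $r=2$) or $G_P=C_3^2$ (so $r=4$), then $G_P$ contains the normal subgroup $C_3^2=\langle M_1,M_2\rangle$. The fixed points of $C_3^2$ are the $C_3^2$-eigenlines. Inside $\mathbb{V}_4$ these have nontrivial, pairwise distinct characters, so the eigenlines are the four coordinate points $e_0,e_1,e_2,e_4$ together with $e_3$. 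The coordinate points $e_i$ are not on $X_a$, so this case gives nothing.

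\emph{Orbits of length $6$ and $9$.} For $r=6$ we have $G_P\simeq\mathfrak{S}_3$, in one of the two classes $\mathcal{C}_6,\mathcal{C}_6^\prime$. Take $G_P$ generated by $M_1M_2^{\pm1}$-type elements together with an involution, and restrict $\mathbb{V}_4$ to it. The restriction splits as a $2$-dimensional irreducible plus two one-dimensional summands, so the fixed locus in $\mathbb{P}^4$ is a line through $e_3$ or an isolated point.

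Concretely, for the class containing the stabilizer of $[0:1:0:0:-1]$, I expect the fixed locus to be the line $\{[0:u:0:s:u]\}$ together with the point $[0:1:0:0:-1]$. Intersecting the line with $X_a$ gives $s^3+as+2=0$, which produces the three orbits $\Sigma_6^s$; the isolated point gives $\Sigma_6$. For the other $\mathfrak{S}_3$ class, I would check that the fixed locus misses $X_a$, or else is conjugate under $G_{72,40}$, which normalizes $G$.

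For $r=9$ we have $G_P\simeq C_4$, and all nine subgroups $C_4$ are conjugate. Take $\langle M_3\rangle$, where $M_3$ is the $4$-cycle $x_0\to x_4\to x_2\to x_1$ fixing $x_3$. Its eigenspaces are:
\begin{itemize}
\item eigenvalue $1$: the line spanned by $(1,1,1,0,1)$ and $e_3$;
\item eigenvalues $-1$ and $\pm i$: single points.
\end{itemize}
These isolated points are $[1:-1:1:0:-1]$ and $[1:\mp i:-1:0:\pm i]$, up to the ordering forced by $M_3$, and they give $\Sigma_9$ and $\Sigma_9^\pm$. I still have to check that they lie on $X_a$. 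The line meets $X_a$ in the locus $t^3+2at+4=0$, which gives $\Sigma_9^t$.

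\emph{Orbits of length $12$ (the main obstacle).} Here $G_P\simeq C_3$, in one of the classes $\mathcal{C}_3,\mathcal{C}_3^\prime$, and these subgroups lie inside $C_3^2$. Some of these $C_3$'s are conjugate to $\langle M_1\rangle$ or $\langle M_2\rangle$, whose fixed loci in $X_a$ are exactly the two plane cubics making up $\mathcal{R}_6$. Every other point of $\mathcal{R}_6$ therefore has an orbit of length $12$ (or smaller), which gives case~(f).

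The remaining conjugacy class is represented by $M_1M_2$ or $M_1M_2^2$. I would compute its eigenspaces directly. I expect a $2$-dimensional eigenspace $\langle e_1,e_3\rangle$ or similar (a line in $\mathbb{P}^4$) plus isolated points. Intersecting with $X_a$ should yield only the orbit $\Sigma_{12}$ of $[1:-1:0:0:0]$, together with points whose stabilizers are larger and which were therefore already counted.

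The delicate part of this step is bookkeeping. I need to identify which $C_3$ conjugacy classes are $\langle M_1\rangle$, $\langle M_2\rangle$ versus the diagonal ones, and I must not miss any fixed points in degenerate values of $a$. These degenerate values are excluded because $a^3\neq-27$ and the discriminants are nonzero. Finally, I would confirm all membership claims by direct substitution into the equation of $X_a$.
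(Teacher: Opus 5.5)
Your proposal is correct and is essentially the paper's argument. You go through the conjugacy classes of stabilizers from Lemma~\ref{lemma:36-9-subgroups}, rule out $G_{36,9}$, $C_3\rtimes\mathfrak{S}_3$ and $C_3^2$ because their fixed points lie among the coordinate points (none of which is on $X_a$), and then read off the fixed loci of $\langle M_1,M_3^2\rangle$, $\langle M_3\rangle$, $\langle M_1\rangle$ and $\langle M_1M_2\rangle$.

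Two of your guesses are off, though harmlessly. First, the $\mathfrak{S}_3$ generated by $M_1M_2$ and $M_3^2$ is the class with no fixed point on $X_a$: it acts on $\mathbb{V}_4$ as two copies of its $2$-dimensional representation, so its only fixed point in $\mathbb{P}^4$ is $[0:0:0:1:0]$. Second, $M_1M_2=\mathrm{diag}(\xi_3,\xi_3,\xi_3^2,1,\xi_3^2)$ fixes the lines $\{x_2=x_3=x_4=0\}$ and $\{x_0=x_1=x_3=0\}$ together with the point $[0:0:0:1:0]$, so its fixed points on $X_a$ are six points, all lying in $\Sigma_{12}$.
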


\begin{proof}
Let $P$ be a point in $\Sigma$, and let $G_P$ be its stabilizer in $G_{36,9}$.
Since $G_{36,9}$ fix no points in $X_{a}$, it follows from Lemma~\ref{lemma:36-9-subgroups} that that $G_P$ is isomorphic to $C_3\rtimes\mathfrak{S}_3$, $C_3^2$, $\mathfrak{S}_3$, $C_4$ or $C_3$. Then
$$
r\in\{2,4,6,9,12\}.
$$
In fact, if $G_P\simeq C_3\rtimes\mathfrak{S}_3$ or $G_P\simeq C_3^2$, then $G_P$ does not fix points in $X_{a}$, which is a contradiction. Therefore, we see that $r\ne 2$ and $r\ne 4$. Then one of the following three cases holds:
\begin{itemize}
\item $r=6$ and $G_P\simeq \mathfrak{S}_3$;
\item $r=9$ and $G_P\simeq C_4$;
\item $r=12$ and $G_P\simeq C_3$.
\end{itemize}
Let us deal with these cases one by one.

Suppose that $r=6$ and $G_P\simeq \mathfrak{S}_3$.
By Lemma~\ref{lemma:36-9-subgroups}, the group $G_{36,9}$ contains two subgroups isomorphic to $\mathfrak{S}_3$ up to conjugation, so we may assume that $G_P=\langle M_1,M_3^2\rangle$, or $G_P=\langle M_1M_2,M_3^2\rangle$.
If $G_P=\langle M_1M_2,M_3^2\rangle$, then $G_P$ does not fix points in $X_{a}$.
Thus, we conclude that $G_P=\langle M_1,M_3^2\rangle$. In this case, $G_P$ fixes the~following points:
$$
[0:1:0:0:-1], [0:1:0:s:1],
$$
where $s\in\mathbb{C}$ such that $s^3+as+2=0$. This gives cases $(\mathrm{a})$ or $(\mathrm{b})$.

Now, we suppose that $r=9$ and $G_P\simeq C_4$.
By Lemma~\ref{lemma:36-9-subgroups}, $G_{36,9}$ has a unique subgroup isomorphic to $C_4$ up to conjugation, so we may assume $G_P=\langle M_3\rangle$. Then $G_P$-fixed points are
$$
[1:-1:1:0:-1], [1:\mp i:-1:0:\pm i], [1:1:1:t:1],
$$
where $t\in\mathbb{C}$ such that $t^3+2at+4=0$. This gives cases $(\mathrm{c})$ or $(\mathrm{d})$.

Finally, we suppose that $r=12$ and $G_P\simeq C_3$. Up to conjugation, $G$ contains two subgroups isomorphic to $C_3$ up to conjugation.
Therefore, we may assume that $G_P=\langle M_1\rangle$ or $G_P=\langle M_1M_2\rangle$.
If $G_P=\langle M_1\rangle$, then $P\in  \mathcal{R}_6$, which gives case $(\mathrm{f})$.
If $G_P=\langle M_1M_2\rangle$, $G_P$ fixes the~following points:
\begin{align*}
[1:-1:0:0:0], [1:-\xi_3:0:0:0], [1:-\xi_3^2:0:0:0], \\
[0:0:1:0:-1], [0:0:1:0:-\xi_3], [0:0:1:0:-\xi_3^2].
\end{align*}
All of them are contained in $\Sigma_{12}$. This gives case $(\mathrm{e})$.
\end{proof}

\begin{corollary}[{cf. \cite[Theorem 1.12]{CheltsovWilson2013}}]
\label{corollary:36-9-alpha}
Suppose that $G=G_{36,9}$. Then $\alpha_G(S)=2$.
\end{corollary}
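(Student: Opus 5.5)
We show the two inequalities $\alpha_G(S)\leqslant 2$ and $\alpha_G(S)\geqslant 2$ separately. For the lower bound we argue as in the proof of Theorem~\ref{theorem:405-15}: first rule out curves in the non-klt locus by degree and the structure of $\mathrm{Pic}(S)^G$, then rule out finite orbits by Nadel vanishing and Lemma~\ref{lemma:36-9-orbits}.

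\emph{Upper bound.} By \eqref{equation:39-9-L6}, the curve $\mathcal{L}_6=\{x_0x_2-x_1x_4=0\}\cap S$ is a $G$-invariant member of $|-2K_S|$. It is the reduced union of the six lines of $\mathcal{L}_3\cup\mathcal{L}_3^\prime$, which meet transversally. Hence $D=\frac{1}{2}\mathcal{L}_6\sim_{\mathbb{Q}}-K_S$ is $G$-invariant, and $\mathrm{lct}(S,D)=2$. This gives $\alpha_G(S)\leqslant 2$.

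\emph{Lower bound: setting up.} Suppose there is a $G$-invariant effective $\mathbb{Q}$-divisor $D\sim_{\mathbb{Q}}-K_S$ and a rational $\lambda<2$ such that $(S,\lambda D)$ is not log canonical. Decreasing $\lambda$, we may assume $(S,\lambda D)$ is strictly log canonical. Applying the equivariant tie-breaking trick used in the proof of Theorem~\ref{theorem:405-15}, we may further assume that $\mathrm{Nklt}(S,\lambda D)=Z$ is a $G$-irreducible minimal log canonical centre. Then $Z$ is either a $G$-irreducible curve or a $G$-orbit.

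\emph{The curve case.} If $Z$ is a curve, then $\mathrm{mult}_Z(D)\geqslant 1/\lambda>1/2$. This gives
$$3=-K_S\cdot D\geqslant \mathrm{mult}_Z(D)\,\mathrm{deg}(Z)>\tfrac{1}{2}\mathrm{deg}(Z),$$
so $\mathrm{deg}(Z)<6$. By Lemma~\ref{lemma:36-9-S-Pic} we have $Z\sim -kK_S$ for some $k\geqslant 1$, so $\mathrm{deg}(Z)=3k$, and therefore $Z\in|-K_S|$. Such a $G$-invariant curve would be cut out by a $G$-semi-invariant linear form in $x_0,x_1,x_2,x_4$. These coordinates span the irreducible representation $\mathbb{V}_4$, so no such form exists.

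\emph{The orbit case.} If $Z$ is a $G$-orbit, the divisor $-K_S-(K_S+\lambda D)\sim_{\mathbb{Q}}(2-\lambda)(-K_S)$ is ample. Nadel vanishing then gives $h^1(S,\mathcal{I}_Z\otimes\mathcal{O}_S(-K_S))=0$, and the associated exact sequence yields
$$|Z|\leqslant h^0(S,\mathcal{O}_S(-K_S))=4.$$
On the other hand, by Lemma~\ref{lemma:36-9-orbits} every $G$-orbit in $X_a\supset S$ has length at least $6$. This is a contradiction, so $\alpha_G(S)\geqslant 2$.

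\emph{Main obstacle.} The delicate step is the equivariant tie-breaking, which must produce a minimal $G$-irreducible centre while keeping $D\sim_{\mathbb{Q}}-K_S$ and $\lambda<2$. This is exactly what \cite[Lemma 2.4.10]{CheltsovShramov2016} provides. The remaining steps reduce to the orbit classification in Lemma~\ref{lemma:36-9-orbits} and the rank-one statement in Lemma~\ref{lemma:36-9-S-Pic}.
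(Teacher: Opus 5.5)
Your proof is correct and follows the paper's route. The upper bound uses the $G$-invariant curve $\mathcal{L}_6\in|-2K_S|$; this is evidently what the paper's phrase ``$|-K_S|$ contains a $G$-invariant curve'' is meant to say. The lower bound combines $\mathrm{Pic}(S)^G=\mathbb{Z}[-K_S]$, the absence of $G$-invariant members of $|-K_S|$, and the orbit lengths from Lemma~\ref{lemma:36-9-orbits}; the paper simply cites this combination as \cite[Lemma 5.1]{Cheltsov2008}, whereas you write it out, much as the paper does for $G_{36,10}$ in Corollary~\ref{corollary:36-10-alpha}, where no tie-breaking is needed because Nadel vanishing already bounds a finite $\mathrm{Nklt}$ locus by $4$.
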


\begin{proof}
Since $|-K_S|$ contains a $G$-invariant curve, $\alpha_G(S)\leqslant 2$. Since $\mathrm{rk}\,\mathrm{Pic}(S)^G=1$ by Lemma~\ref{lemma:36-9-S-Pic}, and~$|-K_S|$  has no $G$-invariant curves, we get $\alpha_G(S)=2$ by \cite[Lemma 5.1]{Cheltsov2008} and Lemma~\ref{lemma:36-9-orbits}.
\end{proof}

Observe that $\Sigma_6$ and $\Sigma_6^s$ are also $G_{36,10}$-orbits of length $6$, where $s\in\mathbb{C}$ such that $s^3+as+2=0$.
Similarly, both $\Sigma_9$ and $\Sigma_9^t$ are $G_{36,10}$-orbits of length $9$, where $t\in\mathbb{C}$ such that $t^3+2at+4=0$.
Now, let~$\Sigma_{6}^\prime$ and $\Sigma_{6}^{\prime\prime}$ be the~$G_{36,10}$-orbits of the~points $[1:-1:0:0:0]$ and $[0:1:-1:0:0]$, respectively. Then both
$\Sigma_{6}^\prime$ and $\Sigma_{6}^{\prime\prime}$ are $G_{36,10}$-orbits of length $6$, and
$$
\Sigma_{12}=\Sigma_{6}^\prime\cup\Sigma_{6}^{\prime\prime}.
$$
Let $\Sigma_9^\prime$ and $\Sigma_9^{\prime\prime}$ be the~$G_{36,10}$-orbits of $[1:1:-1:0:-1]$ and $[1:-1:-1:0:1]$, respectively. Then $\Sigma_9^\prime$ and $\Sigma_9^{\prime\prime}$ are $G_{36,10}$-orbits of length $9$.

\begin{lemma}
\label{lemma:36-10-orbits}
Let $\Sigma$ be a $G_{36,10}$-orbit in $X_{a}$ of length $r\leqslant 9$, let $P$ be a point in $\Sigma$, and let $G_P$ be its stabilizer in $G_{36,10}$. Then one of the~following cases holds:
\begin{itemize}
\item[$(\mathrm{a})$] $r=6$, $G_P\simeq \mathfrak{S}_3$, and $\Sigma=\Sigma_6$;
\item[$(\mathrm{b})$] $r=6$, $G_P\simeq C_6$, and $\Sigma$ is one of the $G$-orbits $\Sigma_{6}^\prime$ or $\Sigma_6^{\prime\prime}$;
\item[$(\mathrm{c})$] $r=6$, $G_P\simeq\mathfrak{S}_3$, and $\Sigma=\Sigma_6^s$ for some $s\in\mathbb{C}$ such that $s^3+as+2=0$;
\item[$(\mathrm{d})$] $r=9$, $G_P\simeq C_2^2$, and $\Sigma$ is one of the $G$-orbits $\Sigma_{9}$, $\Sigma_9^\prime$, $\Sigma_9^{\prime\prime}$;
\item[$(\mathrm{e})$] $r=9$, $G_P\simeq C_2^2$, and $\Sigma=\Sigma_9^t$ for some $t\in\mathbb{C}$ such that $t^3+2at+4=0$.
\end{itemize}
\end{lemma}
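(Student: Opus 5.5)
The proof will follow the pattern of Lemma~\ref{lemma:36-9-orbits}. Let $P\in\Sigma$ with stabilizer $G_P\subset G_{36,10}$. First I rule out short orbits. The group $G_{36,10}$ fixes no point of $X_a$: a fixed point corresponds to a one-dimensional subrepresentation of $\mathbb{I}\oplus\mathbb{V}_4$, so it must be $[0:0:0:1:0]$, which is not on $X_a$. By Corollary~\ref{corollary:36-10-act}, this leaves $r\in\{2,3,4,6,9\}$, and Lemma~\ref{lemma:36-10-subgroups} lists the possible stabilizers:
\begin{itemize}
\item index $2$: $C_3\times\mathfrak{S}_3$ or $C_3\rtimes\mathfrak{S}_3$;
\item index $3$: $\mathfrak{D}_6$;
\item index $4$: $C_3^2$;
\item index $6$: $C_6$ or $\mathfrak{S}_3$;
\item index $9$: $C_2^2$.
\end{itemize}

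For each subgroup of index $2$, $3$ or $4$, I take explicit generators as words in $M_1,M_2,M_4,M_5$. I then decompose $\mathbb{I}\oplus\mathbb{V}_4$ restricted to that subgroup and read off its fixed points in $\mathbb{P}^4$. For $C_3^2=\langle M_1,M_2\rangle$, the fixed points are the coordinate points, and these lie off $X_a$ because the equation contains $x_i^3$ with coefficient $1$. For the larger subgroups, the fixed points form a subset of these, or of the eigenvectors of a $C_3$ together with an involution, and I expect a direct substitution to show that none lies on $X_a$. This gives $r\notin\{2,3,4\}$.

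For $r=6$ there are several conjugacy classes to handle.
\begin{itemize}
\item \textbf{$G_P\simeq\mathfrak{S}_3$.} There are five classes: $\mathcal{C}_6^{\prime\prime},\ldots,\mathcal{C}_6^{\prime\prime\prime\prime\prime\prime}$. For each I compute the fixed locus in $\mathbb{P}^4$, which is the span of its trivial isotypic components. One class, $\langle M_1,\ldots\rangle$, coincides with the stabilizer of $[0:1:0:0:-1]$ in $G_{36,9}$. Its fixed points are $[0:1:0:0:-1]$ and the line of points $[0:1:0:s:1]$, and intersecting with $X_a$ gives $s^3+as+2=0$. This yields cases $(\mathrm{a})$ and $(\mathrm{c})$. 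For the other classes I check that the fixed points are off $X_a$.
\item \textbf{$G_P\simeq C_6$.} There are two classes. The fixed points of $C_6=C_3\times C_2$ are the $C_2$-fixed points among the eigenpoints of $C_3$. Taking $C_3=\langle M_1M_2\rangle$, or its analogue, these eigenpoints lie on $\{x_2=x_3=x_4=0\}$-type lines, as in the proof of Lemma~\ref{lemma:36-9-orbits}. The points lying on $X_a$ are $[1:-1:0:0:0]$ and $[0:1:-1:0:0]$, which gives case $(\mathrm{b})$.
\end{itemize}

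For $r=9$, the stabilizer $G_P\simeq C_2^2$ is unique up to conjugation, say $\langle M_5, M_4M_5M_4\rangle$ or the appropriate pair of commuting involutions. Its fixed locus consists of four points $[1:\pm1:\pm1:0:\pm1]$-type points together with a line $[1:1:1:t:1]$. The first family gives $\Sigma_9,\Sigma_9^\prime,\Sigma_9^{\prime\prime}$, and intersecting the line with $X_a$ gives $t^3+2at+4=0$, hence case $(\mathrm{e})$.

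The main obstacle is this bookkeeping over the many conjugacy classes of $\mathfrak{S}_3$ and $C_2$ subgroups in $\mathfrak{S}_3\times\mathfrak{S}_3$. I will do it by choosing explicit representatives, computing the eigenspaces of their matrices, and checking membership in $X_a$ directly.
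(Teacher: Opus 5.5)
Your proposal is correct and is essentially the paper's argument. It excludes $r\in\{2,3,4\}$ because the stabilizers of index $2,3,4$ fix no point of $X_a$; the paper phrases this as $\mathbb{V}_4|_{G_P}$ having no one-dimensional subrepresentation, which also disposes of four of the five $\mathfrak{S}_3$-classes. It then computes the fixed points of $\langle M_1,M_4\rangle=\langle M_1,M_3^2\rangle$, of the two $C_6$-classes, and of the Klein four-group.

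There are two small slips. First, $M_4$ and $M_5$ commute, so $\langle M_5,M_4M_5M_4\rangle=\langle M_5\rangle\simeq C_2$; the correct representative is $\langle M_4,M_5\rangle$, as in the paper. Second, fixed points come from every one-dimensional character, not just the trivial isotypic part; for example, $[0:1:0:0:-1]$ is the sign eigenline of $\langle M_1,M_4\rangle$.
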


\begin{proof}
Since $G_{36,10}$ does not fix points in $X_{a}$, we see that $r\ne 1$. If $r=2$ or $r=3$, then it follows from Lemma~\ref{lemma:36-10-subgroups} that $G_P$ is isomorphic to one of the following groups $\mathfrak{D}_6$, $C_3\times\mathfrak{S}_3$, $C_3\rtimes\mathfrak{S}_3$. In~each of these cases, the restriction of the representation $\mathbb{V}_4$ to the subgroup $G_P$ does not have one-dimensional subrepresentations, so $G_P$ does not fix points in $X_{a}$, which is a contradiction. Similarly, if $r=4$, then it follows from Lemma~\ref{lemma:36-10-subgroups} that $G_P=\langle M_1,M_2\rangle\simeq C_3^2$, so $G_P$ does not fix points in $X_{a}$ either. Hence, it follows from Lemma~\ref{lemma:36-10-subgroups} that one of the following cases holds:
\begin{itemize}
\item $r=6$ and $G_P\simeq C_6$;
\item $r=6$ and $G_P\simeq\mathfrak{S}_3$;
\item $r=9$ and $G_P\simeq C_2^2$.
\end{itemize}
Let us deal with these cases one by one.

Suppose that $G_P\simeq C_6$. By Lemma~\ref{lemma:36-10-subgroups}, we may assume that $G_P=\langle M_1M_5\rangle$ or $G_P=\langle M_1M_4M_5\rangle$. If $G_P=\langle M_1M_5\rangle$, then all $G_P$-fixed points are contained in $\Sigma_6^\prime$, so $\Sigma=\Sigma_6^\prime$. If $G_P=\langle M_1M_4M_5\rangle$, then all $G_P$-fixed points are contained in $\Sigma_6^{\prime\prime}$, so $\Sigma=\Sigma_6^{\prime\prime}$.

Suppose that $r=6$ and $G_P\simeq\mathfrak{S}_3$.
By Lemma~\ref{lemma:36-10-subgroups}, $G_{36,10}$ has five subgroups isomorphic to $\mathfrak{S}_3$ up to conjugation.
If $G_P$ is conjugated to $\langle M_1,M_4\rangle$, the $G_P$-fixed locus in $X_{a}$ consists of the points
$$
[0:1:0:0:-1], [0:1:0:s:1],
$$
where $s\in\mathbb{C}$ such that $s^3+as+2=0$. Thus, in this case, either $\Sigma=\Sigma_6$ or $\Sigma_6^s$. On the other hand, if $G_P$ is not conjugated to $\langle M_1,M_4\rangle$, the restriction of the representation $\mathbb{V}_4$ to $G_P$ does not have one-dimensional subrepresentations, so $G_P$ does not fix points in $X_{a}$, which is a contradiction.

Finally, we consider the case $r=9$. Then $G_P\simeq C_2^2$.
By Lemma~\ref{lemma:36-10-subgroups}, $G_{36,10}$ has a unique subgroup isomorphic to $C_2^2$ up to conjugation.
Thus, we may assume that $G_P=\langle M_4,M_5\rangle$. Then $G_P$ fixes the~following points in $X_{a}$:
$$
[1:-1:1:0:-1], [1:1:-1:0:-1], [1:-1:-1:0:1],  [1:1:1:t:1],
$$
where $t\in\mathbb{C}$ such that $t^3+2at+4=0$. Then $\Sigma$ is one of the $G$-orbits $\Sigma_{9}$, $\Sigma_9^\prime$, $\Sigma_9^{\prime\prime}$, $\Sigma_9^t$.
\end{proof}

\begin{corollary}
\label{corollary:36-9-36-10-curves-in-S}
Let $Z$ be a $G$-invariant curve in the cubic $X_{a}$ such that $Z\subset S$ and $\mathrm{deg}(Z)<6$. Then $G=G_{36,10}$, and either $Z=\mathcal{L}_3$ or $Z=\mathcal{L}_3^\prime$.
\end{corollary}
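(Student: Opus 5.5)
The plan is to work on the smooth cubic surface $S=\{x_3=0\}\cap X_a$. Let $Z\subset S$ be a $G$-invariant curve of degree $d<6$. Write $Z$ as a sum of $G$-irreducible components and treat each one separately. Suppose first that some $G$-irreducible component $C$ of $Z$ is irreducible. Then $G$ acts on $C$. I claim this action is faithful. An element acting trivially on $C$ would act trivially on the linear span of $C$ inside $\{x_3=0\}\simeq\mathbb{P}^3$. But $G$ acts faithfully on $S$, and the restriction of $\mathbb{V}_4$ to $\{x_3=0\}$ is irreducible. So if $C$ is not contained in a plane, the action is faithful. If $C$ were contained in a plane, that plane would be $G$-invariant, which is impossible by the irreducibility of $\mathbb{V}_4$.

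Since $C$ has degree $d\leqslant 5$, its geometric genus satisfies $g\leqslant 2$ by Castelnuovo's bound in $\mathbb{P}^3$, and in fact $g\leqslant 1$ unless $C$ is planar. By Lemmas~\ref{lemma:36-9-curves} and \ref{lemma:36-10-curves}, the group $\mathfrak{S}_3\times\mathfrak{S}_3$ does not act faithfully on curves of genus $<4$. The group $C_3^2\rtimes C_4$ acts faithfully only in genus $1$ among small genera; genus $0$ is excluded because $G\not\subset\mathrm{PGL}_2(\mathbb{C})$. So only the case $G=G_{36,9}$ with $C$ the normalization-elliptic curve remains. I would rule it out by intersecting with the $G$-invariant curve $\mathcal{L}_6\in|-K_S|$, which has the intersection number
$$
C\cdot\mathcal{L}_6=d\leqslant 5.
$$
The set $C\cap\mathcal{L}_6$ is a nonempty union of $G$-orbits in $S$. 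By Lemma~\ref{lemma:36-9-orbits}, every $G$-orbit in $S$ has length at least $6$, since $\Sigma_6$, $\Sigma_{12}$, $\Sigma_9$, $\Sigma_9^\pm$ all have length $\geqslant 6$. This gives a contradiction, unless $C$ is a component of $\mathcal{L}_6$, and that is impossible since $\mathcal{L}_6$ consists of lines. The same intersection argument with $-K_S$ kills any irreducible $G$-invariant $C$ directly, for both groups. To run it I need $S$ to contain a $G$-invariant member of $|-K_S|$, namely $\mathcal{L}_6$ (or $\mathcal{L}_3+\mathcal{L}_3'$), and I need all $G$-orbits in $S$ to have length $\geqslant 6$, which follows from Lemmas~\ref{lemma:36-9-orbits} and \ref{lemma:36-10-orbits}.

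Next, suppose the components of $Z$ are permuted nontrivially. The number $r$ of components in a $G$-irreducible component is the length of a transitive $G$-set, so by Corollaries~\ref{corollary:36-9-act} and \ref{corollary:36-10-act} we have $r\in\{2,3,4\}$ with $r\cdot\deg\leqslant 5$. If $r=4$ or $r=2$ with conics or lines, the stabilizer of a component has index $2$ or $4$. For index $4$ it is $C_3^2$, and for index $2$ it is a group of order $18$. Such a component (a line or a conic) spans a line or a plane invariant under that subgroup. I would check from the restriction of $\mathbb{V}_4$ to these subgroups that they have no $2$- or $3$-dimensional invariant subspaces in the $x_3=0$ part. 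For $C_3^2$ the restriction splits into four distinct characters, so the invariant lines are the coordinate lines. Those are excluded because $S$ meets the coordinate edges only in points, as is clear from the Fermat-type equation $x_0^3+x_1^3+x_2^3+x_4^3=0$ on $S$. The remaining case is $r=3$ with lines, which requires a transitive action on $3$ objects. This exists only for $G_{36,10}$, with stabilizer $\mathfrak{D}_6$ by Lemma~\ref{lemma:36-10-subgroups}. Then $Z$ is a $G$-invariant triple of lines among the $27$ lines of $S$. From the explicit orbit decomposition of the $27$ lines in the proof of Lemma~\ref{lemma:36-10-S-Pic}, namely $3+3+6+6+9$, these triples are exactly $\mathcal{L}_3$ and $\mathcal{L}_3'$. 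For $G_{36,9}$, the orbits on lines have sizes $6,6$ (merged into $\mathcal{L}_6$ of size $6$), $6+6$ and $9$, so no orbit has size $<6$. Here I would use Lemma~\ref{lemma:36-9-S-Pic} together with the description of the lines.

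I expect the main obstacle to be the bookkeeping in the reducible case. Concretely, I must verify via the representation tables that the index-$2$ and index-$4$ stabilizers admit no invariant lines or conics lying on $S$. For that I would rely on explicit eigenspace computations with the matrices $M_1,\dots,M_5$.
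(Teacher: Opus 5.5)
Your case analysis is sound except at one step, which fails as written. The surviving case is $G=G_{36,9}$ with $C$ irreducible of elliptic normalization, so $d\in\{4,5\}$. There you use $\mathcal{L}_6\in|-K_S|$ and $C\cdot\mathcal{L}_6=d\leqslant 5$. But by \eqref{equation:39-9-L6}, the curve $\mathcal{L}_6=\mathcal{L}_3+\mathcal{L}_3^\prime$ is cut out on $S$ by the quadric $x_0x_2-x_1x_4=0$ and consists of six lines. So $\mathcal{L}_6\sim -2K_S$ and $C\cdot\mathcal{L}_6=2d\in\{8,10\}$. In fact $|-K_S|$ contains no $G$-invariant curve at all: $G$ acts on $\{x_3=0\}$ through the irreducible $\mathbb{V}_4$ and fixes no hyperplane. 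Your ``direct'' anticanonical argument for both groups therefore has nothing to intersect with. Since $S$ does contain $G_{36,9}$-orbits of lengths $6$ and $9$ ($\Sigma_6$, $\Sigma_9$, $\Sigma_9^{\pm}$), the bound you rely on disappears and no contradiction follows.

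The repair is easy. Local intersection multiplicities of $C$ and $\mathcal{L}_6$ are constant along $G$-orbits. By Lemma~\ref{lemma:36-9-orbits}, the orbits in $S$ of length at most $10$ have length $6$ or $9$. So $2d$ would be a sum of $6$'s and $9$'s, which is impossible for $8$ and $10$. Quicker still, Lemma~\ref{lemma:36-9-S-Pic} gives $C\sim n(-K_S)$, so $3\mid d$.

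That is the paper's argument, and it disposes of $G_{36,9}$ entirely, since every invariant curve has degree $3n$ with $n\geqslant 2$; no genus lemmas or component counting are needed. For $G_{36,10}$ the paper uses $\mathcal{L}_3+\mathcal{L}_3^\prime\sim -2K_S$ and the absence of orbits of length $<6$. These show that a $G$-irreducible $Z\ne\mathcal{L}_3,\mathcal{L}_3^\prime$ misses one of them. Pulling back from the sextic del Pezzo surface of Lemma~\ref{lemma:36-10-S-Pic} then gives $\mathrm{deg}(Z)=6n$. Your route, once repaired, also works, but it leans on the LMFDB-based genus lemmas and on explicit invariant-subspace and line computations.

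There are two minor slips in that route.
\begin{enumerate}
\item The index-$2$ subgroups do have invariant lines: $\mathbb{V}_4$ restricts to them as a sum of two $2$-dimensional irreducibles, e.g.\ $\langle M_1,M_2,M_3^2\rangle$ preserves $\{x_0=x_2=x_3=0\}$. However, each such subgroup contains the normal Sylow subgroup $\langle M_1,M_2\rangle\simeq C_3^2$, so these are coordinate lines, which you already excluded.
\item Non-degenerate quintics in $\mathbb{P}^3$ can have genus $2$, contrary to your claim that $g\leqslant 1$ unless $C$ is planar. This is harmless, since the genus lemmas exclude genus $2$.
\end{enumerate}
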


\begin{proof}
If $G=G_{36,9}$, then $\mathrm{deg}(Z)\geqslant 6$ by Lemma~\ref{lemma:36-9-S-Pic}, because $|-K_S|$ does not have $G$-invariant curves. Thus, we have $G=G_{36,10}$. Let us show that either $Z=\mathcal{L}_3$ or $Z=\mathcal{L}_3^\prime$. To do this, we may assume that $Z$ is $G$-irreducible. Suppose that $Z\ne\mathcal{L}_3$ and $Z\ne \mathcal{L}_3^\prime$. Then
$$
|Z\cap\mathcal{L}_3|+|Z\cap\mathcal{L}_3^\prime|\leqslant Z\cdot\mathcal{L}_3+Z\cdot\mathcal{L}_3^\prime=Z\cdot(\mathcal{L}_3+\mathcal{L}_3^\prime)=Z\cdot(-2K_S)=2\mathrm{deg}(X_{a})<12,
$$
because $\mathcal{L}_3+\mathcal{L}_3^\prime\sim -2K_S$ by \eqref{equation:39-9-L6}.
Thus, we see that either $|Z\cap\mathcal{L}_3|<6$ or $|Z\cap\mathcal{L}_3^\prime|<6$ (or both). Without loss of generality, we may assume that $|Z\cap\mathcal{L}_3|<6$. Then $Z\cap\mathcal{L}_3=\varnothing$ by Lemma~\ref{lemma:36-10-orbits}.

Let us use notations of Lemma~\ref{lemma:36-10-S-Pic}. Since $\mathrm{Pic}(Y)^G=\mathbb{Z}[-K_Y]$, there exists  $n\in\mathbb{Z}_{>0}$ such that
$$
\pi(Z)\sim n(-K_Y).
$$
Thus, since $Z$ is disjoint from $\mathcal{L}_3$, we have
$$
Z\sim \pi^*\big(n(-K_Y)\big)\sim n(-K_S)+n\mathcal{L}_3,
$$
which implies that $\mathrm{deg}(Z)=-K_S\cdot Z=6n\geqslant 6$, which contradicts our assumption.
\end{proof}

\begin{corollary}[{cf. \cite[Theorem 1.12]{CheltsovWilson2013}}]
\label{corollary:36-10-alpha}
Suppose that $G=G_{36,10}$. Then $\alpha_G(S)=2$.
\end{corollary}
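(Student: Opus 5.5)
The plan is to establish the inequalities $\alpha_G(S)\leqslant 2$ and $\alpha_G(S)\geqslant 2$ separately, using Lemma~\ref{lemma:36-10-S-Pic}, Lemma~\ref{lemma:36-10-orbits} and Corollary~\ref{corollary:36-9-36-10-curves-in-S}.

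\emph{Upper bound.} Set $D_0=\frac{1}{2}\mathcal{L}_6=\frac{1}{2}(\mathcal{L}_3+\mathcal{L}_3^\prime)$. By \eqref{equation:39-9-L6} this is a $G$-invariant effective $\mathbb{Q}$-divisor with $D_0\sim_{\mathbb{Q}}-K_S$. The curve $\mathcal{L}_6$ is reduced, and its components meet transversally (lines in $\mathcal{L}_3$ against lines in $\mathcal{L}_3^\prime$). Hence $(S,2D_0)=(S,\mathcal{L}_6)$ is log canonical but not Kawamata log terminal, which gives $\alpha_G(S)\leqslant 2$.

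\emph{Lower bound.} Suppose $D\sim_{\mathbb{Q}}-K_S$ is $G$-invariant and effective, and $(S,\lambda D)$ is not log canonical for some $\lambda<2$.

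First I reduce to the case where $\mathrm{Supp}(D)$ contains neither $\mathcal{L}_3$ nor $\mathcal{L}_3^\prime$. This is the usual convexity trick: since $(S,2D_0)$ is log canonical, replace $D$ by $(D-\mu D_0)/(1-\mu)$ for the largest admissible $\mu$. The new divisor is still $G$-invariant and $\mathbb{Q}$-linearly equivalent to $-K_S$, the new pair is still not log canonical, and one of $\mathcal{L}_3,\mathcal{L}_3^\prime$ has dropped out of the support.

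Next I show that $\mathrm{Nklt}(S,\lambda D)$ contains no curve. Suppose it contained a $G$-irreducible curve $Z$. Then $Z$ would appear in $D$ with coefficient $a>1/\lambda>\frac{1}{2}$. Since $-K_S\cdot D=3$, this forces $\deg(Z)<6$, so $Z=\mathcal{L}_3$ or $Z=\mathcal{L}_3^\prime$ by Corollary~\ref{corollary:36-9-36-10-curves-in-S}. Say $Z=\mathcal{L}_3$; after the reduction we may assume $\mathcal{L}_3^\prime\not\subset\mathrm{Supp}(D)$. From $\mathcal{L}_3+\mathcal{L}_3^\prime\sim-2K_S$ and $\mathcal{L}_3^2=-3$ we get $\mathcal{L}_3\cdot\mathcal{L}_3^\prime=9$. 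Therefore
$$
3=D\cdot\mathcal{L}_3^\prime\geqslant 9a>\frac{9}{2},
$$
a contradiction. (The case where it is $\mathcal{L}_3^\prime$ that remains in the support is symmetric.)

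So $\mathrm{Nklt}(S,\lambda D)$ is a nonempty finite $G$-invariant set, i.e. a union of $G$-orbits. Let $\mathcal{J}$ be the multiplier ideal of $(S,\lambda D)$; it defines a zero-dimensional subscheme supported on this set. Since $-K_S-K_S-\lambda D\sim_{\mathbb{Q}}(2-\lambda)(-K_S)$ is ample, Nadel vanishing gives $H^1(S,\mathcal{J}\otimes\mathcal{O}_S(-K_S))=0$. The resulting surjection $H^0(\mathcal{O}_S(-K_S))\to H^0(\mathcal{O}_{\mathcal{L}})$ bounds the number of non-klt points by $h^0(-K_S)=4$. On the other hand, $G$ fixes no point of $S$, and Lemma~\ref{lemma:36-10-orbits} together with Lemma~\ref{lemma:36-10-subgroups} shows that every $G$-orbit in $S\subset X_a$ has length at least $6$. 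This contradiction gives $\alpha_G(S)\geqslant 2$.

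The step I expect to need the most care is the reduction removing $\mathcal{L}_3$ or $\mathcal{L}_3^\prime$ from $\mathrm{Supp}(D)$. One has to check that the convexity trick keeps the pair non-log-canonical with the same $\lambda<2$, and that it handles the case where both lines initially appear with positive coefficients.
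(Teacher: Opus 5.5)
Your proof is correct and follows the paper's argument. The upper bound comes from $\mathcal{L}_3+\mathcal{L}_3^\prime\sim -2K_S$; non-klt curves are excluded via $\mathrm{deg}(Z)<6$ and Corollary~\ref{corollary:36-9-36-10-curves-in-S}; and Nadel vanishing then bounds the number of non-klt points by $h^0(-K_S)=4$, which is impossible since $G$-orbits in $S$ have length at least $6$. The only difference is your convexity reduction, which the paper avoids by writing $D=a\mathcal{L}_3+b\mathcal{L}_3^\prime+\Delta_S$ and intersecting with both $\mathcal{L}_3$ and $\mathcal{L}_3^\prime$. This gives $9b-3a\leqslant 3$ and $9a-3b\leqslant 3$, hence $a,b\leqslant\frac{1}{2}$ directly, so the step you flagged as delicate is unnecessary (though your version of it is also valid).
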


\begin{proof}
Since $\mathcal{L}_3+\mathcal{L}_3^\prime\sim 2(-K_S)$ by \eqref{equation:39-9-L6}, we get $\alpha_G(S)\leqslant 2$. Suppose that $\alpha_G(S)<2$. Then there exists an effective $G$-invariant $\mathbb{Q}$-divisor $D$ on the surface $S$ such that
$$
D\sim_{\mathbb{Q}} -K_S,
$$
and  $(S,\lambda D)$ is strictly log canonical for some $\lambda<2$.
Thus, the locus $\mathrm{Nklt}(S,\lambda D)$ is $G$-invariant, and it is not empty. We claim that this locus consists of finitely many points.

Indeed, suppose that $\mathrm{Nklt}(S,\lambda D)$ contains a curve. Then $\mathrm{Nklt}(S,\lambda D)$ contains a $G$-irreducible curve $C$ such that
$$
\lambda D=C+D^\prime,
$$
where $D^\prime$ is an effective $\mathbb{Q}$-divisor on $S$ whose support does not contain the curve $C$. Then
$$
6>3\lambda=-K_S\cdot (\lambda D)=-K_S\cdot (C+D^\prime)=\mathrm{deg}(C)-K_S\cdot D^\prime\geqslant \mathrm{deg}(C),
$$
so either $C=\mathcal{L}_3$ or $C=\mathcal{L}_3^\prime$ by Corollary~\ref{corollary:36-9-36-10-curves-in-S}. Write
$$
D=a\mathcal{L}_3+b\mathcal{L}_3^\prime+\Delta_S,
$$
where $\Delta_S$ is an effective $\mathbb{Q}$-divisor on $S$ whose support does not contain the curve $\mathcal{L}_3$ and  $\mathcal{L}_3^\prime$, and $a$ and $b$ are non-negative rational numbers such that at least one of them is equal to $\frac{1}{\lambda}>\frac{1}{2}$. Then
\begin{align*}
3=\mathcal{L}_3\cdot D&=\mathcal{L}_3\cdot \big(a\mathcal{L}_3+b\mathcal{L}_3^\prime+\Delta_S)=9b-3a+\mathcal{L}_3\cdot \Delta_S\geqslant 9b-3a,\\
3=\mathcal{L}_3^\prime\cdot D&=\mathcal{L}_3^\prime\cdot \big(a\mathcal{L}_3+b\mathcal{L}_3^\prime+\Delta_S)=9a-3b+\mathcal{L}_3^\prime\cdot \Delta_S\geqslant 9a-3b.
\end{align*}
These inequalities imply that $a\leqslant\frac{1}{2}$ and $b\leqslant\frac{1}{2}$, which is a contradiction.

Hence, $\mathrm{Nklt}(S,\lambda D)$ is a finite $G$-invariant subset. Set $\Sigma=\mathrm{Nklt}(S,\lambda D)$, and let $\mathcal{I}_\Sigma$ be the ideal sheaf of the subset $\Sigma$. Then it follows from the Nadel vanishing \cite[Theorem 9.4.8]{Lazarsfeld2004} that
$$
h^1\big(S,\mathcal{I}_\Sigma\otimes\mathcal{O}_{S}(-K_S)\big)=0.
$$
Hence, we have the~following exact sequence of groups
$$
0\longrightarrow H^0\big(S,\mathcal{I}_\Sigma\otimes\mathcal{O}_{S}(-K_S)\big)\longrightarrow H^0\big(S,\mathcal{O}_{S}(-K_S)\big)\longrightarrow H^0\big(\Sigma,\mathcal{O}_{\Sigma}\big)\longrightarrow 0,
$$
which gives
$$
|\Sigma|=h^0\big(\Sigma,\mathcal{O}_{\Sigma}\big)=h^0\big(S,\mathcal{O}_{S}(-K_S)\big)-h^0\big(S,\mathcal{I}_\Sigma\otimes\mathcal{O}_{S}(-K_S)\big)=4-h^0\big(S,\mathcal{I}_\Sigma\otimes\mathcal{O}_{S}(-K_S)\big)\leqslant 4,
$$
which is impossible, because $S$ does not contain $G$-orbits of length $\leqslant 4$ by Lemma~\ref{lemma:36-10-orbits}.
\end{proof}

Observe that $|2H|$ contains a pencil consisting of $G_{72,40}$-invariant surfaces. This can be checked using the~following GAP code:
\begin{verbatim}
    G:=SmallGroup(72,40);
    T:=CharacterTable(G);
    Ir:=Irr(T);
    U:=Ir[1]+Ir[6];
    S:=SymmetricParts(T,[U],2);
    Print(MatScalarProducts(Ir,S));
\end{verbatim}
We denote this pencil by $\mathcal{P}$. Then $\mathcal{P}$ is free from fixed components, and $\mathcal{P}$ consists of surfaces
$$
S_\lambda=\big\{x_0x_2+x_1x_4+\lambda x_3^2=0\big\}\cap X_{a},
$$
where $\lambda\in\mathbb{C}\cup\{\infty\}$. Note also that $S_{\infty}=2S$. We set
$$
\mathcal{C}_6=\{x_0x_1+x_2x_4=0\}\cap S=\{x_3=0, x_0x_1+x_2x_4=0\}\cap X_{a}.
$$
Then $\mathcal{C}_6$ is a $G_{72,40}$-invariant smooth curve of genus $4$, so  $\mathcal{C}_6$ is $G_{36,9}$-invariant and $G_{36,10}$-invariant. The~smoothness of this curve can be checked using the following Magma code:
\begin{verbatim}
    Q:=RationalField();
    P4<x0,x1,x2,x3,x4>:=ProjectiveSpace(Q,4);
    C6:=Scheme(P4,[x3,x0*x2+x1*x4,x0^3+x1^3+x2^3+x3^3+x4^3]);
    IsNonsingular(C6);
\end{verbatim}
Moreover, since $\mathcal{C}_6$ is the~base locus of the~pencil $\mathcal{P}$ (taken with reduced structure), a general surface in the pencil $\mathcal{P}$ is a smooth K3 surface. Furthermore, using Lemma~\ref{lemma:36-9-orbits}, we obtain

\begin{corollary}
\label{corollary:36-9-Du-Val}
For every $\lambda\in\mathbb{C}$, the surface $S_\lambda$ has at most isolated ordinary double points (nodes).
Moreover, if the surface $S_\lambda$ is singular, then one of the following two cases holds:
\begin{itemize}
\item $\mathrm{Sing}(S_\lambda)=\Sigma_9^t$ for $t\in\mathbb{C}$ such that $t^3+2at+4=0$, and $\lambda=-\frac{2}{t^2}$;
\item $\mathrm{Sing}(S_\lambda)=\Sigma_6^s$ for $s\in\mathbb{C}$ such that $s^3+as+2$, and $\lambda=-\frac{1}{s^2}$.
\end{itemize}
\end{corollary}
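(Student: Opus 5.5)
The approach is to locate the singular points of a member $S_\lambda=\{x_0x_2+x_1x_4+\lambda x_3^2=0\}\cap X_a$ of the pencil $\mathcal{P}$ by symmetry and then compute their type. Since the pencil is $G_{72,40}$-invariant, so is every $S_\lambda$, and $\mathrm{Sing}(S_\lambda)$ is a $G_{36,9}$-invariant set.

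\textbf{Step 1: singular points avoid the base curve.} The base locus of $\mathcal{P}$ is $\mathcal{C}_6=S_\lambda\cap S$, which is smooth. At a point $P\in\mathcal{C}_6$ the surface $S_\lambda$ is the complete intersection of $X_a$ with a quadric. The restriction of $S$ to $S_\lambda$ is the curve $\mathcal{C}_6$, scheme-theoretically: the intersection is $\{x_3=0, x_0x_2+x_1x_4=0\}\cap X_a$, which is reduced. A Cartier divisor on a surface that is smooth at $P$ forces the surface to be smooth at $P$, since otherwise the embedding dimension of $S_\lambda$ at $P$ would be $3$ and the divisor would have tangent space of dimension $\geqslant 2$. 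Hence $\mathrm{Sing}(S_\lambda)\cap S=\varnothing$. In particular $S_\infty=2S$ is excluded, which is consistent with $\lambda\in\mathbb{C}$.

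\textbf{Step 2: the singular locus is a union of small orbits.} A general member of $\mathcal{P}$ is a smooth K3 surface, so for every $\lambda$ the surface $S_\lambda$ has isolated singularities (if not, I would argue via a Bertini/discriminant count). Then $S_\lambda$ is a normal quartic-type complete intersection with $K\sim 0$. As in the proof of Lemma~\ref{lemma:A5-Q-S-smooth}, by \cite{Umezu} at most two singular points are non-Du Val. Every $G_{36,9}$-orbit has length $\geqslant 6$ by Lemma~\ref{lemma:36-9-orbits}, so all singularities are Du Val. The minimal resolution is then a K3 surface, and its Picard rank bound gives
\[
\sum \mu_P\leqslant 19.
\]
So $\mathrm{Sing}(S_\lambda)$ is a union of $G_{36,9}$-orbits of total length $\leqslant 19$ disjoint from $S$. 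By Lemma~\ref{lemma:36-9-orbits} these orbits lie among $\Sigma_6$, $\Sigma_6^s$, $\Sigma_9$, $\Sigma_9^\pm$, $\Sigma_9^t$, $\Sigma_{12}$, and the length-$12$ orbits in $\mathcal{R}_6$. The orbits $\Sigma_6$, $\Sigma_9$, $\Sigma_9^\pm$, $\Sigma_{12}$ all have $x_3=0$, so they lie in $S$ and are excluded.

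\textbf{Step 3: eliminate $\mathcal{R}_6$ and identify $\lambda$.} For orbits in $\mathcal{R}_6$, I check directly that $S_\lambda$ is smooth along $\mathcal{R}_6\setminus S$. On $\{x_0=x_2=0\}$ the gradient of the quadric has the components $x_2,x_0$ (both zero), $x_4$, $x_1$, $2\lambda x_3$. The gradient of the cubic there is $(a x_3 x_2, \dots)$, which reduces to $(0,3x_1^2,0,\ast,3x_4^2)$ up to the $ax_3x_4$ terms. Linear dependence should force a contradiction with $\mathcal{R}_6$ being a smooth cubic curve avoiding $S$. This routine computation is the step I expect to be most annoying, and I would do it by hand using the $M_1$-symmetry.

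For the remaining orbits, the quadric must pass through the base point, which pins down $\lambda$. At $[0:1:0:s:1]$ we need $1+\lambda s^2=0$, so $\lambda=-1/s^2$. At $[1:1:1:t:1]$ we need $2+\lambda t^2=0$, so $\lambda=-2/t^2$. Conversely, I verify singularity at that point by Lagrange: $\nabla F_a$ is proportional to $\nabla q$. The stabilizer ($\mathfrak{S}_3$ or $C_4$) forces both gradients into the invariant subspace, which makes this tangency check mechanical. Two orbit families cannot coincide for a single $\lambda$ unless $-1/s^2=-2/t^2$; I rule this out using $s^3+as+2=0$ and $t^3+2at+4=0$, or simply by the $19$ bound if $6+9$ is allowed.

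\textbf{Step 4: the singularities are nodes.} Since the stabilizer $\mathfrak{S}_3$ or $C_4$ acts on the tangent cone, a Du Val point with such symmetry on a K3 surface with a $6$- or $9$-point orbit satisfies
\[
6\mu\leqslant 19\quad\text{or}\quad 9\mu\leqslant 19,
\]
so $\mu\leqslant 3$ or $\mu\leqslant 2$. I then show that the Hessian of the restricted equation at the fixed point is nondegenerate. Because the stabilizer acts on the $2$-dimensional tangent space via its $2$-dimensional representation (for $\mathfrak{S}_3$), or with eigenvalues $\pm i$ (for $C_4$), an invariant quadratic form is a nonzero multiple of the standard one whenever it is nonzero. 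The quadratic part is nonzero because $\mu\leqslant 3$ rules out $\mathbb{D},\mathbb{E}$ and $\mathbb{A}_{\ge2}$ has degenerate invariant Hessian only if it vanishes. Hence these points are $\mathbb{A}_1$.
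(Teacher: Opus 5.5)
Your outline follows the paper's route: smoothness along $\mathcal{C}_6$, Umezu, the K3 resolution, the orbit classification, and finally a check of the singularity type. However, two steps fail as written.

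\textbf{The length-$18$ orbits.} In Step~2, the bound $\sum\mu_P\leqslant 19$ does not justify your list of candidate orbits. Lemma~\ref{lemma:36-9-orbits} only treats orbits of length $\leqslant 12$. But $G_{36,9}$ also has orbits of length $18$ off $S$, with stabilizer $\langle M_3^2\rangle\simeq C_2$; for example, general points of the plane cubic $\{x_0=x_2,\ x_1=x_4\}\cap X_a$. Eighteen nodes give $\sum\mu_P=18\leqslant 19$, so this case is not excluded. The paper closes the gap with Nikulin's theorem: a K3 surface has at most $16$ pairwise disjoint $(-2)$-curves, hence $|\mathrm{Sing}(S_\lambda)|\leqslant 16$.

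There are also three smaller issues in Steps~2 and~3.
\begin{enumerate}
\item Finiteness of $\mathrm{Sing}(S_\lambda)$ follows from your Step~1, since every curve in $S_\lambda$ meets the hyperplane section $S$. It does not follow from smoothness of a general member of the pencil.
\item On $\mathcal{R}_6$, singularity at $P$ only says that the conic $\{x_1x_4+\lambda x_3^2=0\}$ is tangent to the plane cubic at $P$; that alone is no contradiction. The contradiction is B\'ezout: there are six tangency points on each cubic component, so the intersection is $\geqslant 12>6$. This is the paper's inequality $12=\mathcal{R}_6\cdot S_\lambda\geqslant 24$.
\item The rank bound does not exclude $\Sigma_6^s\cup\Sigma_9^t$, which has $15$ points. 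Your algebraic route does: $t^2=2s^2$ together with the two cubic relations forces $t=s$, hence $s=0$, which is impossible.
\end{enumerate}

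\textbf{Step~4 is wrong as stated.} At a singular point, the Zariski tangent space of $S_\lambda$ is the $3$-dimensional space $T_PX_a$, not a $2$-dimensional one.
\begin{itemize}
\item At $[0:1:0:s:1]$, with $G_P=\langle M_1,M_3^2\rangle\simeq\mathfrak{S}_3$, one has $T_PX_a\simeq\mathbb{V}_2\oplus\mathrm{sgn}$. Use coordinates $x_0$, $x_2$ and $e=\frac12(x_1-x_4)$ in the chart $x_1+x_4=2$.
\item At $[1:1:1:t:1]$, with $G_P=\langle M_3\rangle$, one has $T_PX_a\simeq\chi\oplus\chi^2\oplus\chi^3$, where $\chi(M_3)=i$. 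Use $M_3$-eigencoordinates $u,v,w$ with eigenvalues $i,-1,-i$.
\end{itemize}
In both cases the invariant quadratic forms form a pencil, namely $\alpha x_0x_2+\beta e^2$ and $\alpha uw+\beta v^2$ respectively. So symmetry permits a rank-$2$ Hessian.

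For the $C_4$ case this is repairable. If $\beta=0$, the local equation becomes $uw+h(v)$ with $h$ even, so the point is of type $\mathbb{A}_{\geqslant 3}$, and $9\cdot 3>19$ excludes this.

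For $\Sigma_6^s$ it is not repairable by these means. If $\beta=0$, the equation $xy+z^4$ is $\mathfrak{S}_3$-invariant and defines an $\mathbb{A}_3$ point. Since $6\cdot 3=18\leqslant 19$ and $6\leqslant 16$, neither the symmetry nor the lattice bounds exclude it.

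You need the explicit computation the paper invokes. In the chart $x_1+x_4=2$, write $x_1=1+e$, $x_4=1-e$, $x_3=s+d$, eliminate $d$ using the cubic, and use $s^3+as+2=0$. Then the quadratic part of $x_0x_2+x_1x_4-s^{-2}x_3^2$ restricted to $X_a$ is $\frac{3}{s^3-1}(3e^2-x_0x_2)$. This is nondegenerate, because $s^3\neq 1$ (as $a^3\neq -27$). So these points are nodes.
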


\begin{proof}
Since $\mathcal{C}_6=S\cap S_\lambda$ and the curve $\mathcal{C}_6$ is smooth,
$S_\lambda$ is smooth at every point of this intersection, which implies, in particular, that $S_\lambda$ has isolated singularities, so $S_\lambda$ is normal.

Let $P$ be a singular point of $S_\lambda$ (if any). Then its $G_{36,9}$-orbit has at least $6$ points by Lemma~\ref{lemma:36-9-orbits}.
But $S_{\lambda}$ can have at most two non-Du Val singular points \cite{Umezu}, see also \cite[Theorem 6.9]{Shokurov92} and \cite{Fujino}.
Thus, we conclude that $S_\lambda$ has at most Du Val singularities.

Suppose that $S_\lambda$ is singular.
Let $\widetilde{S}_\lambda$ be its minimal resolution. Then $\widetilde{S}_\lambda$ is a K3 surface, so
$$
\mathrm{rk}\,\mathrm{Pic}\big(\widetilde{S}_\lambda\big)\leqslant 19,
$$
and $|\mathrm{Sing}(S_\lambda)|\leqslant 16$ by \cite{Nikulin}.
Hence, by Lemma~\ref{lemma:36-9-orbits}, one of the~following cases holds:
\begin{enumerate}
\item $\mathrm{Sing}(S_\lambda)$ is a $G_{36,9}$-orbit of length $12$ that is contained in $\mathcal{R}_6$.
\item $\mathrm{Sing}(S_\lambda)=\Sigma_9^t$ for $t\in\mathbb{C}$ such that  $t^3+2at+4=0$, and $\lambda=-\frac{2}{t^2}$;
\item $\mathrm{Sing}(S_\lambda)=\Sigma_6^s$ for $s\in\mathbb{C}$ such that $s^3+as+2=0$, and $\lambda=-\frac{1}{s^2}$.
\end{enumerate}
However, the first case is impossible, since otherwise we would have
$$
12=\mathcal{R}_6\cdot S_\lambda\geqslant 2\big|\mathrm{Sing}(S_\lambda)\big|\geqslant 24,
$$
because $ \mathcal{R}_6\not\subset S_\lambda$. If $\lambda=-\frac{2}{t^2}$ for $t\in\mathbb{C}$ such that $t^3+2at+4=0$, then $S_\lambda$ is singular at $\Sigma_9^t$, and its singularities are nodes (explicit computations).
If  $\lambda=-\frac{1}{s^2}$ for $s\in\mathbb{C}$ such that $s^3+as+2=0$, then $S_\lambda$ is singular at $\Sigma_6^s$, and its singularities are nodes.
\end{proof}

Recall that $\mathcal{C}_6$, $\mathcal{L}_6$, $\mathcal{R}_6$ are $G_{36,9}$-irreducible curves in $X_{a}$.
Let us present six $G_{36,9}$-irreducible curves of degree $9$ in $X_{a}$, whose irreducible components are lines, which are not contained in $S$. For~every $t\in\mathbb{C}$ such that $t^3+2at+4=0$, let $L_t^{\pm}$ be the line in $\mathbb{P}^4$ that is given by
$$
\left\{\aligned
&\pm itx_2-(1\pm i)x_3+tx_4=0,\\
&\mp itx_1\mp itx_4\pm 2ix_3=0,\\
&\mp itx_0-tx_1+(1\pm i)x_3=0.
\endaligned
\right.
$$
Then $L_t^{\pm}\subset X_{a}$, and $L^{\pm}$ contains the~points $[1:\mp i:-1:0:\pm i]\in \Sigma_9^{\pm}$ and $[1:1:1:t:1]\in \Sigma_9^t$.
Let $\mathcal{L}_t^{\pm}$ be the $G$-irreducible curve whose irreducible component is $L^\pm$.
Then $\mathcal{L}_t^{\pm}$ consists of $9$ lines, and these lines are disjoint.
Moreover, $\mathcal{L}_t^{\pm}$ contains $\Sigma_9^{\pm}$ and $\Sigma_9^t$, and
$$
\mathrm{Sing}(S_\lambda)=\Sigma_9^t\subset \mathcal{L}_t^{\pm}\subset S_\lambda,
$$
where $\lambda=-\frac{2}{t^2}$. Furthermore, we have
\begin{equation}
\label{equation:36-9-nine-lines}
\mathcal{L}_t^{+}+\mathcal{L}_t^{-}=\big\{6x_3(x_0x_2-x_1x_4)+t(x_0^3-x_1^3+x_2^3-x_4^3)=0\big\}\cap S_{\lambda}.
\end{equation}
Recall that $\mathcal{C}_6$ and $\mathcal{L}_6$ are contained in $S$. But $\mathcal{R}_6$ is not contained in any surface in  $\mathcal{P}$.

\begin{lemma}
\label{lemma:36-9-reducible-curves}
Let $Z$ be a reducible $G_{36,9}$-irreducible curve in $X_{a}$ of degree $d<12$ such that $Z\not\subset S$.
Then one of the following two cases holds:
\begin{itemize}
\item $d=6$ and $Z=\mathcal{R}_6$,
\item $d=9$ and either $Z=\mathcal{L}_t^{+}$ or $Z=\mathcal{L}_t^{-}$ for some $t\in\mathbb{C}$ such that $t^3+2at+4=0$.
\end{itemize}
\end{lemma}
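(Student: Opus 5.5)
Following Lemma~\ref{lemma:A5-reducible-curves}, I would classify the possible stabilizers of an irreducible component and then use the orbit description from Lemma~\ref{lemma:36-9-orbits}. Let $C$ be an irreducible component of $Z$, let $r$ be the number of components, and let $G_C$ be the stabilizer of $C$ in $G_{36,9}$. Then $r=[G_{36,9}:G_C]\geqslant 2$ and $r\deg(C)=d<12$. By Lemma~\ref{lemma:36-9-subgroups}, $r\in\{2,4,6,9\}$, and $G_C$ is $C_3\rtimes\mathfrak{S}_3$, $C_3^2$, $\mathfrak{S}_3$ or $C_4$, respectively. This leaves three cases: $r=2$ with $\deg(C)\leqslant 5$; $r=4$ with $\deg(C)\leqslant 2$; and $r\in\{6,9\}$ with $C$ a line.

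\emph{Case $r=2$.} Here $G_C\supset C_3^2=\langle M_1,M_2\rangle$. Restricted to $C_3^2$, the representation $\mathbb{I}\oplus\mathbb{V}_4$ is a sum of five distinct characters, namely the coordinate characters. I would show that $C$ must lie in a proper coordinate subspace that is invariant under $C_3\rtimes\mathfrak{S}_3$. Since $\mathbb{P}^4$ has no $G$-invariant planes, the natural candidates are the lines $\{x_0=x_2=0\}\cap\{x_1=x_4=0\}$ and the planes spanned by the corresponding eigenvectors. The goal is to deduce that $C$ is one of the two plane cubics $\{x_0=x_2=0\}\cap X_a$ or $\{x_1=x_4=0\}\cap X_a$, which would give $Z=\mathcal{R}_6$. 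To rule out other $C_3\rtimes\mathfrak{S}_3$-invariant curves of degree at most $5$, I would use that a $C_3^2$-invariant curve of degree at most $5$ must be fixed pointwise by some $C_3\subset C_3^2$, since $C_3^2$ cannot act faithfully on low-degree curves whose orbits are constrained as above. The fixed loci of elements of $C_3^2$ are linear subspaces meeting $X_a$ in plane cubics, lines, or points.

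\emph{Case $r=4$.} Here $G_C=C_3^2$. A line or conic invariant under $C_3^2$ cannot carry a faithful $C_3^2$-action, because $C_3^2\not\subset\mathrm{PGL}_2(\mathbb{C})$. So some $C_3\subset C_3^2$ fixes $C$ pointwise, and therefore $C$ lies in an eigenspace of that element. The eigenspaces of $M_1$, $M_2$ and $M_1M_2^{\pm1}$ have dimension at most $3$. Computing them, I expect that either no line or conic of $X_a$ lies in them, or that any such curve lies in $S$, which is excluded by the hypothesis $Z\not\subset S$.

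\emph{Cases $r=6$ and $r=9$.} If $r=6$, then $C$ is a line with $G_C\simeq\mathfrak{S}_3$. Up to conjugation, $G_C=\langle M_1,M_3^2\rangle$ or $\langle M_1M_2,M_3^2\rangle$. The line $C$ is $\mathfrak{S}_3$-invariant, so it either contains $\mathfrak{S}_3$-fixed points or is the projectivization of a $2$-dimensional irreducible summand. I would decompose $\mathbb{I}\oplus\mathbb{V}_4$ restricted to $G_C$ and check each candidate against the equation of $X_a$; I expect every candidate either to fail to lie in $X_a$ or to lie in $S$. If $r=9$, then $G_C=\langle M_3\rangle\simeq C_4$. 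The element $M_3$ acts on $C$, and $C$ contains at least two points fixed by $M_3^2$, or is pointwise fixed by $M_3^2$. The $M_3$-fixed points in $X_a$ are listed in the proof of Lemma~\ref{lemma:36-9-orbits}: they lie in $\Sigma_9$, $\Sigma_9^{\pm}$ and $\Sigma_9^t$. So $C$ joins two such points, or lies in a fixed locus of $M_3^2$. Checking which of the joining lines lie in $X_a$ and not in $S$ should leave exactly the lines $L_t^{\pm}$, giving $Z=\mathcal{L}_t^{\pm}$.

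I expect the main obstacle to be the $r=6$ and $r=9$ line cases. They require explicit eigenspace computations, and in the $C_4$ case one must also exclude lines pointwise fixed by $M_3^2$. The intended tool there is that the fixed locus of $M_3^2$ is a union of linear spaces, and their intersections with $X_a$ would be checked by direct computation.
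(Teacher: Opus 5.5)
Your cases $r=4$, $r=6$ and $r=9$ can be completed along the lines you indicate. The case $r=2$, however, has a genuine gap.

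\textbf{The gap in the case $r=2$.} You rely on the claim that a $C_3^2$-invariant curve of degree at most $5$ is fixed pointwise by some $C_3\subset C_3^2$. This is false already inside $X_a$. The plane cubic $\{x_2=x_4=0\}\cap X_a=\{x_2=x_4=0,\ x_0^3+x_1^3+x_3^3=0\}$ is invariant under $\langle M_1,M_2\rangle$, and this group acts on it faithfully by rescaling $x_0$ and $x_1$. Its $G_{36,9}$-orbit is $\mathcal{R}_6^\prime+\mathcal{R}_6^{\prime\prime}$, of degree $12$, so it does not contradict the lemma. It does show that $C_3^2$-invariance plus a degree bound cannot be the mechanism. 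Moreover, you never cut $\deg(C)\leqslant 5$ down to $\deg(C)=3$, so curves of degree $1,2,4,5$ and twisted cubics remain unexcluded in this case.

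\textbf{The missing idea.} It is the paper's first step. Since $S$ is $G$-invariant and $Z\not\subset S$, the set $S\cap Z$ is a non-empty union of $G$-orbits whose intersection multiplicities add up to $S\cdot Z=d<12$. By Lemma~\ref{lemma:36-9-orbits}, orbits of length $<12$ in $X_a$ have length $6$ or $9$, so $d\in\{6,9\}$. This rules out $r=4$ immediately and forces $\deg(C)=3$ when $r=2$. A twisted cubic would span a $G_C$-invariant $\mathbb{P}^3$. Since $\mathbb{V}_4$ restricted to $G_C=\langle M_1,M_2,M_3^2\rangle$ is the sum of the non-isomorphic irreducible planes $\langle e_0,e_2\rangle$ and $\langle e_1,e_4\rangle$, that $\mathbb{P}^3$ must be $\{x_3=0\}$, which gives $C\subset S$. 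So $C$ is a plane cubic in a $G_C$-invariant plane. The only such planes are $\{x_0=x_2=0\}$ and $\{x_1=x_4=0\}$, so $Z=\mathcal{R}_6$. If you want to keep your case split without this reduction, you must use full $G_C$-invariance of the linear span of $C$, not just $C_3^2$-invariance. You would also need a separate genus argument for curves spanning $\mathbb{P}^4$.

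\textbf{The case $r=9$.} Knowing that $C$ contains two $M_3^2$-fixed points does not by itself give that $C$ joins two $M_3$-fixed points. What you want is that $\langle M_3\rangle$ is abelian, so an invariant line is spanned by two eigenvectors of $M_3$. It therefore either lies in the eigenspace $\{x_0=x_1=x_2=x_4\}$, which is not contained in $X_a$, or joins two $M_3$-fixed points of $X_a$. Lines through $[1:-1:1:0:-1]$ force $a^3=-27$. Lines joining two points of $\{x_3=0\}$ lie in $S$. Only $L_t^{\pm}$ survive.

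\textbf{The case $r=6$.} For $G_C=\langle M_1M_2,M_3^2\rangle$, the two-dimensional irreducible representation occurs with multiplicity two. There is therefore a pencil of invariant lines $\langle\alpha e_0+\beta e_1,\alpha e_2+\beta e_4\rangle$, not finitely many ``summands''. They all lie in $\{x_3=0\}$ and are excluded, but your check has to cover the whole pencil. The paper sidesteps the decomposition entirely: $C\cap S$ is a single $G_C$-fixed point, so $G_C\simeq\mathfrak{S}_3$ cannot act faithfully on $C\simeq\mathbb{P}^1$. Hence its normal $C_3$ fixes $C$ pointwise.
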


\begin{proof}
The intersection $S\cap Z$ consists of $d<12$ points (counted with multiplicities), and this intersection is a union of $G_{36,9}$-orbits.
Hence, it follows from Lemma~\ref{lemma:36-9-orbits} that $d=6$ or $9$.

Let $C$ be an irreducible component of the curve $Z$, and let $G_C$ be its stabilizer in the group~$G_{36,9}$.
Then $Z$ cannot have $3$ irreducible components by Corollary~\ref{corollary:36-9-act}. Therefore, it follows from Lemma~\ref{lemma:36-9-subgroups} that one of the~following cases holds:
\begin{itemize}
\item $Z$ is a union of $6$ lines, and $G_C\simeq\mathfrak{S}_3$;
\item $Z$ is a union of $2$ irreducible curves of degree $3$, and $G_C\simeq C_3\rtimes\mathfrak{S}_3$;
\item $Z$ is a union of $9$ lines, $G_C\simeq C_4$.
\end{itemize}
Let us deal with these cases one by one.

Suppose that $Z$ is a union of $6$ lines.
Since $G_{36,9}$ has two subgroups isomorphic to $\mathfrak{S}_3$ up to conjugation,
we may assume that $G_C=\langle M_1,M_3^2\rangle$  or $G_C=\langle M_1M_2,M_3^2\rangle$.
But $C\cap S$ is fixed~by~$G_C$.
Then $G_C=\langle M_1M_2,M_3^2\rangle$, because  $\langle M_1,M_3^2\rangle$ does not fix points in $X_{a}$. Moreover, since $G_C$ fixes the~point $S\cap C$, it cannot act faithfully on  $C$, so the subgroup $\langle M_1\rangle\simeq C_3$ pointwise~fixes~$C$. Then~$C$~is an~irreducible component of the~cubic curve $\{x_0=x_2=0\}\cap X_{a}$, which is impossible, since this cubic curve is irreducible. Therefore, we conclude that $Z$ is not a union of $6$ lines.

Now, we suppose that $Z$ is a union of $2$ irreducible curves of degree $3$.
Then $G_C=\langle M_1,M_2,M_3^2\rangle$.
If $C$ is a twisted cubic,  $G_C$ acts faithfully on it, which is impossible,
since $\mathrm{PGL}_2(\mathbb{C})$ does not contain subgroups isomorphic to $C_3\rtimes\mathfrak{S}_3$.
Thus,~$C$ is a plane cubic. But $\{x_0=x_2=0\}$ and $\{x_1=x_4=0\}$ are the~only $G_C$-invariant planes in $\mathbb{P}^4$, so $Z=\mathcal{R}_6$.

Finally,we  suppose that the curve $Z$ is a union of $9$ lines.
Since $G$ contains a unique subgroup isomorphic to $C_4$ up to conjugation, we may assume that $G_C=\langle M_3\rangle$.
Since $C\cap S$ is fixed by $G_C$, it follows from Lemma~\ref{lemma:36-9-orbits} that this point is one of the~points
$$
[1:-1:1:0:-1], [1:-i:-1:0:i], [1:i:-1:0:-i]
$$
But $G_C$ fixes at least two points in the~line $C$.
Therefore, arguing as in the~proof of Lemma~\ref{lemma:36-9-orbits}, we see that $C$ contains the~points $[1:1:1:t:1]$ for some $t\in\mathbb{C}$ such that $(1+b)t^3+2at+4=0$.
If~$C$~contains $[1:-1:1:0:-1]$, then we have $a^3=-27$, which is impossible by our assumption.
Thus, we see that $C$ contains one of the points $[1:\mp i:-1:0:\pm i]$.
Then $Z=\mathcal{L}_t^+$ or $Z=\mathcal{L}_t^-$.
\end{proof}

Recall that $\mathcal{L}_3$, $\mathcal{L}_3^\prime$, $\mathcal{C}_6$, $\mathcal{R}_6$ are $G_{36,10}$-irreducible curves. Set
\begin{align*}
\mathcal{R}_6^\prime&=\big(\{x_0=0,x_1=0\}\cup\{x_2=0,x_4=0\}\big)\cap X_{a},\\
\mathcal{R}_6^{\prime\prime}&=\big(\{x_0=0,x_4=0\}\cup\{x_1=0,x_2=0\}\big)\cap X_{a}.
\end{align*}
Then $\mathcal{R}_6^\prime$ and $\mathcal{R}_6^{\prime\prime}$ are $G_{36,10}$-irreducible curves of degree $6$, and each of them is a union of two disjoint smooth plane cubic curves. Note that $\mathcal{R}_6^\prime$ and $\mathcal{R}_6^{\prime\prime}$ are contained in $S_0$. 

Similarly, we set
\begin{align*}
\mathcal{R}_9&=\big(\{x_0=x_1,x_2=x_4\}\cup\{x_0=\zeta_3 x_1,x_2=\zeta_3 x_4\}\cup\{x_0=\zeta_3^2 x_1,x_2=\zeta_3^2 x_4\}\big)\cap X_{a},\\
\mathcal{R}_9^{\prime}&=\big(\{x_0=x_4,x_1=x_2\}\cup\{x_0=\zeta_3 x_4,x_1=\zeta_3 x_2\}\cup\{x_0=\zeta_3^2 x_4,x_1=\zeta_3^2 x_2\}\big)\cap X_{a}.
\end{align*}
Then $\mathcal{R}_9$ and $\mathcal{R}_9^{\prime}$ are $G_{36,10}$-irreducible curves. If $X_a$ is smooth ($a^3\ne-\frac{27}{2}$), then $\mathcal{R}_9^\prime$ and $\mathcal{R}_9^{\prime}$ are unions of $3$ disjoint smooth plane cubic curves. On the other hand, if $X_a$ is singular ($a^3=-\frac{27}{2}$), then $\mathcal{R}_9$ and $\mathcal{R}_9^{\prime}$ are unions of $9$ lines, which are unions of three  disjoint reducible cubic curves consisting of three lines. For instance, if $a=-\frac{3}{\sqrt[3]{2}}$, an irreducible component of  $\mathcal{R}_9$ is the curve
$$
\big\{2x_0+2x_2+\sqrt[3]{4}x_3=0, 2x_1+2x_4+\sqrt[3]{4}x_3=0, x_2=x_4\big\}\subset\Pi_{0,0}.
$$
and an irreducible component of  $\mathcal{R}_9^{\prime}$ is the curve
$$
\big\{2x_0+2x_2+\sqrt[3]{4}x_3=0, 2x_1+2x_4+\sqrt[3]{4}x_3=0, x_0=x_4\big\}\subset\Pi_{0,0}.
$$
Moreover, if the cubic threefold $X_a$ is singular, then the curves $\mathcal{R}_9$ and $\mathcal{R}_9^\prime$ contains the singular locus of the cubic $X_a$. In fact, in this case, we have
$$
\mathrm{Sing}(\mathcal{R}_9)=\mathrm{Sing}(\mathcal{R}_9^{\prime})=\mathrm{Sing}(X_a).
$$
Note that $\mathcal{R}_9$ and $\mathcal{R}_9^{\prime}$ are not contained in any surface in $\mathcal{P}$.

Finally, let $\mathcal{K}_6$ be the union of the following three curves:
\begin{align*}
\big\{x_0+x_1=0, x_2+x_4&=0, 2ax_1x_4+x_3^2=0\big\},\\
\big\{x_0+\zeta_3 x_1=0, x_2+\zeta_3^2x_4&=0, 2ax_1x_4+(\zeta_3+1)x_3^2=0\big\},\\
\big\{x_0+\zeta_3^2x_1=0, x_2+\zeta_3x_4&=0, 2ax_1x_4+(\zeta_3^2+1)x_3^2=0\big\}.
\end{align*}
and let $\mathcal{K}_6^{\prime}$ be the union of the following three curves:
\begin{align*}
\big\{x_0+x_4=0, x_1+x_2&=0, 2ax_2x_4-x_3^2=0\big\},\\
\big\{x_0+\zeta_3x_4=0, x_1+\zeta_3x_2&=0, 2ax_2x_4+(\zeta_3+1)x_3^2=0\big\},\\
\big\{x_0+\zeta_3^2x_4=0, x_1+\zeta_3^2x_2&=0, 2ax_2x_4+(\zeta_3+1)x_3^2=0\big\}.
\end{align*}
If $a\ne 0$, then $\mathcal{K}_6$ and $\mathcal{K}_6^\prime$ are
$G_{36,10}$-irreducible curves of degree $6$ whose irreducible components are disjoint (smooth) conics.
In this case, we have
\begin{equation}
\label{equation:36-10-six-conics}
\mathcal{K}_6+\mathcal{K}_6^\prime=\big\{x_0x_2-x_1x_4=0\big\}\cap S_{\lambda},
\end{equation}
where $\lambda=\frac{1}{a}$, and $S_\lambda$ is smooth by Corollary~\ref{corollary:36-9-Du-Val}.
If $a=0$, then $\mathcal{K}_6=2\mathcal{L}_3$ and $\mathcal{K}_6^\prime=2\mathcal{L}_3^\prime$.

\begin{lemma}
\label{lemma:36-10-reducible-curves}
Let $Z$ be a reducible $G_{36,10}$-irreducible curve in $X_{a}$ of degree $d<12$ such that $Z\not\subset S$.
Then one of the following two cases holds:
\begin{itemize}
\item $d=9$, and $Z$ is one of the curves $\mathcal{R}_9$ or $\mathcal{R}_9^\prime$;
\item $d=6$, and $Z$ is one of the curves $\mathcal{R}_6$, $\mathcal{R}_6^\prime$ or $\mathcal{R}_6^{\prime\prime}$;
\item $d=6$, $a\ne 0$, and $Z$ is one of the curves $\mathcal{K}_6$ or $\mathcal{K}_6^{\prime}$.
\end{itemize}
\end{lemma}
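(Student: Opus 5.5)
I will follow the proof of Lemma~\ref{lemma:36-9-reducible-curves}, now using the subgroup structure of $G_{36,10}$ from Lemma~\ref{lemma:36-10-subgroups} and the orbit classification of Lemma~\ref{lemma:36-10-orbits}.

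\emph{Step 1: the degree.} Since $Z\not\subset S$, the intersection $S\cap Z$ is a $G$-invariant zero-cycle of degree $d<12$. It is supported on a union of $G_{36,10}$-orbits contained in $S$. By Lemma~\ref{lemma:36-10-orbits}, every orbit of length at most $9$ that lies in $S$ has length $6$ or $9$: the orbits $\Sigma_6,\Sigma_6^\prime,\Sigma_6^{\prime\prime},\Sigma_9,\Sigma_9^\prime,\Sigma_9^{\prime\prime}$ lie on $\{x_3=0\}$, while $\Sigma_6^s$ and $\Sigma_9^t$ do not. Taking multiplicities into account, this forces $d\in\{6,9\}$, and $S\cap Z$ is supported on a single orbit of length $6$ (hit with multiplicity $1$) or of length $9$.

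\emph{Step 2: components and stabilizers.} Let $C$ be an irreducible component of $Z$, let $G_C$ be its stabilizer, and let $r=[G:G_C]$ be the number of components. Then $r\,\deg(C)=d$ and $r>1$. Using Corollary~\ref{corollary:36-10-act} and Lemma~\ref{lemma:36-10-subgroups}, the possibilities are:
\begin{itemize}
\item $d=6$ with $r=2$ ($G_C$ of index $2$, components cubics), $r=3$ (components conics, $G_C\simeq\mathfrak{D}_6$), or $r=6$ (lines, $G_C\simeq C_6$ or $\mathfrak{S}_3$);
\item $d=9$ with $r=3$ (plane cubics, $G_C\simeq\mathfrak{D}_6$) or $r=9$ (lines, $G_C\simeq C_2^2$).
\end{itemize}
In each case, the point or points of $C\cap S$ must be $G_C$-invariant, which pins down the relevant orbit through Lemma~\ref{lemma:36-10-orbits}.

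\emph{Step 3: the case analysis.}
\begin{itemize}
\item \emph{Lines.} I compute the fixed points of $G_C$ and, as in the proof of Lemma~\ref{lemma:36-9-reducible-curves}, show that a $G_C$-invariant line in $X_a$ through the prescribed points of $S$ does not exist. The argument is that $G_C$ cannot act faithfully on $C$ while fixing a point, so a cyclic subgroup fixes $C$ pointwise. This would put $C$ inside one of the irreducible plane cubics $\{x_i=x_j=0\}\cap X_a$ or $\{x_0=\zeta_3^k x_1,\ x_2=\zeta_3^k x_4\}\cap X_a$, which is a contradiction.
\item \emph{Components of degree $3$ (index $2$ or $3$).} Twisted cubics are excluded because $\mathrm{PGL}_2(\mathbb{C})$ contains neither $C_3\times\mathfrak{S}_3$ nor $C_3\rtimes\mathfrak{S}_3$, and a $\mathfrak{D}_6$-invariant twisted cubic would again need fixed points compatible with Lemma~\ref{lemma:36-10-orbits}. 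So $C$ is a plane cubic, and its plane $\Pi$ is $G_C$-invariant. I classify the $G_C$-invariant planes by decomposing $\mathbb{I}\oplus\mathbb{V}_4$ restricted to $G_C$. For the three index-$2$ subgroups this gives the pairs of planes defining $\mathcal{R}_6$, $\mathcal{R}_6^\prime$ and $\mathcal{R}_6^{\prime\prime}$. For $\mathfrak{D}_6$ (index $3$) this gives the planes $\{x_0=\zeta_3^kx_1,\ x_2=\zeta_3^kx_4\}$ and their analogues, hence $\mathcal{R}_9$ and $\mathcal{R}_9^\prime$.
\item \emph{Conics.} A conic $C$ with $G_C\simeq\mathfrak{D}_6$ spans a $\mathfrak{D}_6$-invariant plane $\Pi$ with $\Pi\cap X_a=C+L$. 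The residual line $L$ is $G_C$-invariant, so either $L\subset S$ or $L$ lies in a reducible curve of degree $3$ not contained in $S$; the latter contradicts Step 1. Hence $L$ is a component of $\mathcal{L}_3$ or $\mathcal{L}_3^\prime$. The planes through these lines that are invariant under the stabilizer are $\{x_0+\zeta_3^kx_1=0,\ x_2+\zeta_3^{2k}x_4=0\}$ and their analogues. The residual conics there are exactly the components of $\mathcal{K}_6$ and $\mathcal{K}_6^\prime$, and they are genuine conics (rather than $2\mathcal{L}_3$, $2\mathcal{L}_3^\prime$) precisely when $a\ne0$.
\end{itemize}

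The main obstacle is the invariant-plane bookkeeping for the several non-conjugate subgroups $\mathfrak{S}_3$, $C_6$ and $\mathfrak{D}_6$ of $G_{36,10}$. I expect to handle it by explicit eigenvector computations with the generators $M_1,M_2,M_4,M_5$.
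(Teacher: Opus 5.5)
Your overall route is the paper's: the degree bound $d\in\{6,9\}$ from the $G$-orbits in $S$, the enumeration of stabilizers, and then a case analysis. Your handling of twisted cubics and of conics (via the residual line in $\mathcal{L}_3\cup\mathcal{L}_3^\prime$) is fine. The case where the components are lines, however, has two genuine gaps.

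First, suppose $Z$ is six lines and $G_C\simeq C_6$. Your key claim that $G_C$ cannot act faithfully on $C$ while fixing a point is false here. A cyclic group acts faithfully on $\mathbb{P}^1$ with two fixed points, so no subgroup need fix $C$ pointwise, and your reduction to plane cubics never starts. The missing argument is the paper's. Whether or not the action is faithful, $C_6$ fixes at least two points of $C$. By Lemma~\ref{lemma:36-10-orbits}, every $C_6$-fixed point of $X_a$ lies in $\Sigma_6^\prime\cup\Sigma_6^{\prime\prime}\subset S$. Hence $C$ meets $S$ in two points, so $C\subset S$, a contradiction.

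Second, the lemma is also asserted for the nodal cubic $a^3=-\frac{27}{2}$. There your claim that $\{x_0=\zeta_3^kx_1,\ x_2=\zeta_3^kx_4\}\cap X_a$ (and its $\mathcal{R}_9^\prime$ analogue) is an irreducible cubic is false. Putting $x_0=x_1=u$, $x_2=x_4=v$, $x_3=w$ gives the Hesse cubic $2u^3+2v^3+w^3+2auvw$, which is a triangle of lines exactly when $2a^3=-27$. These lines are the components of $\mathcal{R}_9$ and $\mathcal{R}_9^\prime$, which in the nodal case consist of nine lines each. So there the nine-line case is not contradictory; it is exactly where the first bullet of the lemma comes from. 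Meanwhile your index-$3$ plane-cubic case produces nothing, since those plane sections have no irreducible cubic component.

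You therefore need to redo the $C_2^2$ case. With $G_C=\langle M_4,M_5\rangle$, the line $C$ lies in the fixed plane of one of $M_4$, $M_5$, $M_4M_5$, because the fixed lines of these involutions lie in $\{x_3=0\}$.
\begin{itemize}
\item For $M_5$ and $M_4M_5$, the plane section is the Hesse cubic above. It contains a line only if $a^3=-\frac{27}{2}$, and then $Z=\mathcal{R}_9$ or $Z=\mathcal{R}_9^\prime$.
\item For $M_4$, the fixed plane is $\{x_0=x_2,\ x_1=x_4\}$, which is missing from your list. Its section $2u^3+2v^3+w^3+aw(u^2+v^2)=0$ contains a line only if $a^3=-27$, which is excluded.
\end{itemize}
Alternatively, argue as the paper does, using the second $G_C$-fixed point $[1:1:1:t:1]$ on $C$ and checking when the resulting line lies in $X_a$.
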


\begin{proof}
The intersection $S\cap Z$ consists of $d<12$ points (counted with multiplicities), and this intersection is a union of $G_{36,10}$-orbits. Hence, it follows from Lemma~\ref{lemma:36-10-orbits} that $d=6$ or $9$.

Let $C$ be an irreducible component of the~curve $Z$, and let $G_C$ be its stabilizer in the group~$G_{36,10}$.
Then one of the~following holds:
\begin{itemize}
\item $Z$ is a union of $2$ plane cubic curves;
\item $Z$ is a union of $2$ twisted cubic curves;
\item $Z$ is a union of $3$ conics;
\item $Z$ is a union of $6$ lines;
\item $Z$ is a union of $3$ plane cubic curves;
\item $Z$ is a union of $3$ twisted cubic curves;
\item $Z$ is a union of $9$ lines.
\end{itemize}
Let us deal with these cases one by one.

Suppose that $Z$ is a union of $2$ irreducible curves of degree $3$.
Then it follows from Lemma~\ref{lemma:36-10-subgroups} that $G_C$ is a normal subgroup of $G$, and either $G_C\simeq C_3\times\mathfrak{S}_3$ or $G_C\simeq C_3\rtimes\mathfrak{S}_3$. In each case, the~restriction of the representation $\mathbb{V}_4$ to $G_C$ splits as a sum of two non-isomorphic two-dimensional irreducible representations of $G_C$, which can be verified by the following Magma code:
\begin{verbatim}
    G:=SmallGroup(36,10);
    K:=CyclotomicField(36);
    L:=IrreducibleModules(G,K);
    R:=L[9];
    S:=Subgroups(G);
    H:=S[21]`subgroup;
    print(IdentifyGroup(H));
    W:=Restriction(R,H);
    print(W);
    ID:=Decomposition(W);
    print(ID);
    IsIsomorphic(ID[1],ID[2]);
\end{verbatim}
Hence, $C$ is not a twisted cubic curve, because otherwise $C$ would be contained in $S$, which is not the case by assumption. Hence, $C$ is a plane cubic. If $G_C\simeq C_3\rtimes\mathfrak{S}_3$, then $G_C=\langle M_1,M_2,M_4\rangle$, and it follows from the proof of Lemma~\ref{lemma:36-9-reducible-curves} that we have $Z=\mathcal{R}_6$. Similarly, if $G_C\simeq C_3\times\mathfrak{S}_3$, then either $G_C=\langle M_1,M_2,M_5\rangle$ or $G_C=\langle M_1,M_2,M_4M_5\rangle$, so $C$ is one of the following curves:
\begin{align*}
\{x_0=0,x_1=0\}\cap X_{a},\\
\{x_2=0,x_4=0\}\cap X_{a},\\
\{x_0=0,x_4=0\}\cap X_{a},\\
\{x_1=0,x_2=0\}\cap X_{a}.
\end{align*}
Hence, in this case, either $Z=\mathcal{R}_6^\prime$ or $Z=\mathcal{R}_6^{\prime\prime}$.

Suppose that $Z$ is a union of $3$ irreducible conics. Then it follows from Lemma~\ref{lemma:36-10-subgroups} that $G_C\simeq\mathfrak{D}_6$. Up to conjugation, $G$ contains two subgroups isomorphic to $\mathfrak{D}_6$. In each case, the restriction of the representation $\mathbb{V}_4$ to $G_C$ splits as a sum of two non-isomorphic two-dimensional irreducible representations of the group $G_C$. Let $\Pi_C$ be the plane in $\mathbb{P}^4$ spanned by $C$. Then we have exactly two choices for the plane $\Pi_C$, and the plane $\Pi_C$ contains the $G_{36,10}$-fixed point $[0:0:0:1:0]\not\in X_a$, which implies that $\Pi_C\not\subset X_a$. Then
$$
X_{a}\cap\Pi_C=C+L,
$$
where $L$ is a line in $X_{a}$. Hence, it follows from Lemma~\ref{lemma:36-10-S-Pic} that $L$ is an irreducible component of one of the curves $\mathcal{L}_3$ or $\mathcal{L}_3^\prime$. This implies that $\Pi_C$ is one of the planes
\begin{align*}
\{x_0+x_1=0, x_2+x_4=0\},\\
\{x_0+\zeta_3 x_1=0, x_2+\zeta_3^2x_4=0\},\\
\{x_0+\zeta_3^2x_1=0, =x_2+\zeta_3x_4=0\},\\
\{x_0+x_4=0, =x_1+x_2=0\},\\
\{x_0+\zeta_3x_4=0, =x_1+\zeta_3x_2=0\},\\
\{x_0+\zeta_3^2x_4=0, =x_1+\zeta_3^2x_2=0\}.
\end{align*}
Intersecting these planes with $X_{a}$ and using the fact that $C$ is a smooth conic, we see that $a=0$, and either $Z=\mathcal{K}_6$ or $Z=\mathcal{K}_6^\prime$.

Suppose that $Z$ is a union of $6$ lines. Then $C\cap S$ is a $G_C$-fixed point, which implies that $G_C$ fixes a point $O\in C$ such that $O$ is not contained in $S$, since otherwise we would have $Z\subset S$, which contradicts our assumption. On the other hand, it follows from Lemma~\ref{lemma:36-10-subgroups} that 
\begin{itemize}
\item either $G_C\simeq C_6$, 
\item or $G_C\simeq\mathfrak{S}_3$. 
\end{itemize}
If $G_C\simeq C_6$, $S$ contains all $G_C$-fixed points  by Lemma~\ref{lemma:36-10-orbits}, which is a contradiction, since $O\not\in S$. Thus, $G_C\simeq \mathfrak{S}_3$. By Lemma~\ref{lemma:36-10-subgroups}, $G_{36,10}$ has five subgroups isomorphic to $\mathfrak{S}_3$ up to conjugation. But only one of them fixes a point in $X_{a}$. Hence, we may assume that $G_C=\langle M_1,M_4\rangle$. Then 
$$
C\cap S=[0:1:0:0:-1],
$$ 
and $O=[0:1:0:s:1]$, where $s\in\mathbb{C}$ such that $s^3+as+2=0$. But the line in $\mathbb{P}^4$ that passes through these points is not contained in $X_{a}$. Therefore, $Z$ is not a union of $6$ lines.

Now, we suppose that $Z$ is a union of $3$ irreducible cubic curves. Then $G_C\simeq\mathfrak{D}_6$ by Lemma~\ref{lemma:36-10-subgroups}. As we already mentioned above, the restriction of the representation $\mathbb{V}_4$ to $G_C$ splits as a sum of two non-isomorphic two-dimensional irreducible representations. Hence, $C$ is not a twisted cubic curve, because $C\not\subset S$. Thus, $C$ is a plane cubic. As above, we denote by $\Pi_C$ the plane in $\mathbb{P}^4$ that contains $C$. Then $\Pi_C$ is $G_C$-invariant. If $X_a$ is singular, $\Pi_C\not\subset X_a$, because
$$
\Pi_C\ne\Pi_{i,j}
$$
for any $i,j\in\{0,1,2\}$, since $G$ acts transitively on the planes $\Pi_{i,j}$. So, since $G$ contains $6$ subgroups isomorphic to $\mathfrak{D}_6$ by Lemma~\ref{lemma:36-10-subgroups}, we have exactly $12$ possibilities for the plane $\Pi_C$, and $6$ of them are already listed above --- they intersect $X_{a}$ by a reducible cubic consisting of a line an a conic. The remaining $6$ planes are
\begin{align*}
\{x_0=x_1&,  x_2=x_4\},\\
\{x_0=\zeta_3 x_1&,   x_2=\zeta_3^2x_4\},\\
\{x_0=\zeta_3^2x_1&,   x_2=\zeta_3x_4\},\\
\{x_0=x_4&,   x_1=x_2\},\\
\{x_0=\zeta_3x_4&,  x_1=\zeta_3x_2\},\\
\{x_0=\zeta_3^2x_4&,  x_1=\zeta_3^2x_2\}.
\end{align*}
They intersect $X_{a}$ in irreducible components of $\mathcal{R}_9+\mathcal{R}_9^\prime$, so either $Z=\mathcal{R}_9$ or $Z=\mathcal{R}_9^\prime$, which implies that $X_a$ is smooth in this case, since otherwise $\mathcal{R}_9$ and $\mathcal{R}_9^\prime$ would be consisting of $9$ lines.

Therefore, we may assume that $Z$ is a union of $9$ lines, so we have $G_C\simeq C_2\times C_2$ by Lemma~\ref{lemma:36-10-subgroups}, and~$G$ has one subgroup isomorphic to $C_2^2$ up to conjugation. We may assume that $G_C=\langle M_4,M_5\rangle$. Note that $C\cap S$ is a $G_C$-fixed point. Thus, this point is one of the~points
$$
[1:-1:1:0:-1], [1:1:-1:0:-1], [1:-1:-1:0:1]
$$
On the~other hand, the~group $G_C$ fixes at least two points in $C$.
Thus, by Lemma~\ref{lemma:36-10-orbits}, we have
$$
[1:1:1:t:1]\in C
$$
for some $t\in\mathbb{C}$ such that $t^3+2at+4=0$.
If $a^3\ne -\frac{27}{2}$, then $X_a$ does not contain the line in $\mathbb{P}^4$ passing through the point $[1:1:1:t:1]$ and any of the points 
\begin{center}
$[1:-1:1:0:-1]$, $[1:1:-1:0:-1]$, $[1:-1:-1:0:1]$.
\end{center}
Thus, $a^3=-\frac{27}{2}$, so $X_a$ is singular. Then 
\begin{itemize}
\item either $t=-\frac{3}{a}$ and $[1:1:1:t:1]$ is a singular point of $X_a$, 
\item or $t=-\frac{6}{a}$ and $[1:1:1:t:1]$ is a smooth point of $X_a$. 
\end{itemize}
This implies that either $C$ is the line passing through $[a:a:a:6:a]$ and $[1:1:-1:0:-1]$, or~$C$~is~the~line passing through  $[a:a:a:6:a]$ and $[1:-1:-1:0:1]$, so $Z=\mathcal{R}_9$ or $Z=\mathcal{R}_9^\prime$. 
\end{proof}

Now, we are ready to describe irreducible $G$-invariant curves of degree $<12$ in the cubic~$X_{a}$, which are not contained in the cubic surface $S$. If $G=G_{36,10}$, such curves do not exists:

\begin{lemma}
\label{lemma:36-9-irreducible-curves}
Let $C$ be an irreducible $G$-invariant curve in $X_{a}$.
Suppose that $d<12$ and $C\not\subset S$. Then $G=G_{36,9}$, $d=9$, the~curve $C$ is smooth, and its genus is either $1$ or $4$. Moreover, we have
$$
\mathrm{Sing}(S_\lambda)=\Sigma_{9}^t\subset C\subset S_\lambda
$$
where $t\in\mathbb{C}$ such that $t^3+2at+4=0$, and $\lambda=-\frac{2}{t^2}$.
\end{lemma}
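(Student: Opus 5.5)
The strategy is to intersect $C$ with the $G$-invariant cubic surface $S$ and with the pencil $\mathcal{P}$, use the orbit classifications to pin down the degree, and then bound the genus.

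\emph{Degree.} Since $C\not\subset S$, the intersection $S\cap C$ is a $G$-invariant set of at most $S\cdot C=d<12$ points, and it is a union of orbits. By Lemmas~\ref{lemma:36-9-orbits} and \ref{lemma:36-10-orbits}, the orbits on $S$ of length $<12$ have length $6$ or $9$. This gives $d\in\{6,9\}$, with either $S\cap C$ a single orbit met transversally, or $d=6$ with tangency, which would have to be checked separately.

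\emph{Faithfulness and the group.} Next I would show that $G$ acts faithfully on $C$. Since $\mathbb{V}_4$ is faithful and $C$ is not contained in the $G$-invariant hyperplane, a non-trivial normal subgroup fixing $C$ pointwise would force $C$ into a fixed-point locus. This is impossible for an irreducible curve of degree $\geqslant 6$, since the fixed loci of elements are plane cubics (such as the components of $\mathcal{R}_6$) or lines. Passing to the normalization $\overline{C}$, each point of $S\cap C$ then has cyclic stabilizer in $\overline{C}$. For $G_{36,10}$, the points of the orbits of length $6$ and $9$ on $S$ (the orbits $\Sigma_6$, $\Sigma_6^s$, $\Sigma_6'$, $\Sigma_6''$, $\Sigma_9,\ldots$) have stabilizers $\mathfrak{S}_3$, $C_6$ or $C_2^2$. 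Only $C_6$ is cyclic, so $C$ must meet $S$ along $\Sigma_6'$ or $\Sigma_6''$ with $d=6$. I would rule this case out using the pencil $\mathcal{P}$ together with the genus bound below, where Lemma~\ref{lemma:36-10-curves} forces $g\in\{4,7\}$. For $G_{36,9}$, the orbit of length $6$ has stabilizer $\mathfrak{S}_3$, which is not cyclic, so $d=9$ and $S\cap C$ is one of $\Sigma_9,\Sigma_9^{\pm}$, each with stabilizer $C_4$.

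\emph{Locating $C$ in the pencil.} Take a general point $P\in C$; its orbit has $36$ points. Choose $S_\lambda\in\mathcal{P}$ through $P$. If $C\not\subset S_\lambda$, then $18=S_\lambda\cdot C\geqslant 36$, a contradiction, so $C\subset S_\lambda$. Now I would work on the K3 surface $S_\lambda$, which has at worst nodes by Corollary~\ref{corollary:36-9-Du-Val}. By the Hodge index theorem, $C^2\leqslant d^2/6$, so $C^2\leqslant 13$ and $p_a(C)\leqslant 7$. Corrections at nodes, via Lemma~\ref{lemma:DuVal}, only lower the genus. Lemma~\ref{lemma:36-9-curves} then forces $g\in\{1,4\}$. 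Smoothness follows because $\mathrm{Sing}(C)$ would be a union of orbits of length $\geqslant 6$, costing at least $6$ in genus and making $g<0$ (or forcing $g=1$ with $p_a\leqslant 7$, which would need a small extra check).

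\emph{Main obstacle: placing the singular orbit on $C$.} The hardest step is showing $\mathrm{Sing}(S_\lambda)=\Sigma_9^t\subset C$. If $S_\lambda$ were smooth along $C$, then $2g-2=C^2$ and Hodge index would give $C^2\in\{0,6\}$. I would rule these out using orbit counts: by Lemma~\ref{lemma:36-9-curves}, curves of genus $1$ or $4$ carry two orbits of length $9$, but $C\cap S$ supplies only one orbit of length $9$. So another orbit of length $9$ must lie on $C$ off $S$, and by Lemma~\ref{lemma:36-9-orbits} it can only be some $\Sigma_9^t$. Since $\Sigma_9^t\subset S_{\lambda'}$ with $\lambda'=-2/t^2$, and the surfaces of the pencil meet only along $\mathcal{C}_6\subset S$, this forces $\lambda=-2/t^2$. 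That $S_\lambda$ is then singular exactly along $\Sigma_9^t$ is Corollary~\ref{corollary:36-9-Du-Val}.
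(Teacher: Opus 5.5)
Your treatment of $G=G_{36,9}$ follows the paper's proof closely. The shared steps are transversality of $S\cap C$, cyclic stabilizers, $C\subset S_\lambda$ via a general orbit of length $36$, Hodge index with Lemma~\ref{lemma:DuVal}, and the second orbit of length $9$ being forced to equal $\Sigma_9^t$.

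\textbf{Main gap: the case $G=G_{36,10}$.} You announce that this case is excluded but do not carry it out, and the tools you name are not enough. In that case $d=6$ and $S\cap C$ is $\Sigma_6^\prime$ or $\Sigma_6^{\prime\prime}$.

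\begin{enumerate}
\item If $C$ avoids the nodes of $S_\lambda$, the Hodge index theorem gives only $C^2\leqslant 6$, hence $p_a(C)\leqslant 4$. Together with Lemma~\ref{lemma:36-10-curves}, this leaves exactly $g=p_a(C)=4$ and $C^2=6$, with no contradiction.
\item Only when $C$ passes through nodes does Lemma~\ref{lemma:DuVal} lower the bound by at least $|\mathrm{Sing}(S_\lambda)|/2\geqslant 3$ and force $g\leqslant 2$.
\end{enumerate}

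The missing step is the analogue of your orbit count for $G_{36,9}$.
\begin{enumerate}
\item $C$ has the orbit $S\cap C$ of length $6$. So Lemma~\ref{lemma:36-10-curves} with $g=4$ gives a second orbit of length $6$ on $C$, which is smooth since $p_a(C)=g$.
\item This orbit is not contained in $S$, because $S\cap C$ is a single orbit.
\item Its stabilizer acts faithfully on the tangent line, hence is cyclic, i.e.\ $C_6$.
\item But by Lemma~\ref{lemma:36-10-orbits}, the only orbits of length $6$ with stabilizer $C_6$ are $\Sigma_6^\prime$ and $\Sigma_6^{\prime\prime}$, both of which lie in $S$. This is a contradiction.
\end{enumerate}
Alternatively, equality $6C^2=d^2$ in the Hodge index theorem on the minimal resolution (a K3 surface) forces $C\sim H\vert_{S_\lambda}$. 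Then $C$ would lie in a hyperplane, contradicting that $C$ spans $\mathbb{P}^4$.

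\textbf{Smaller point: smoothness at the nodes.} Your smoothness argument bounds the arithmetic genus of the strict transform $\widetilde{C}$ on the minimal resolution of $S_\lambda$. Adjunction on $S_\lambda$ itself is unavailable where $C$ meets a node. So the argument only shows that $C$ is smooth away from $\mathrm{Sing}(S_\lambda)$, and $\mathrm{Sing}(S_\lambda)=\Sigma_9^t$ does lie on $C$. The paper closes this with the last assertion of Lemma~\ref{lemma:DuVal}: if $C$ were singular along $\Sigma_9^t$, then
$$
2g-2=\widetilde{C}^2\leqslant C^2-18\leqslant\tfrac{27}{2}-18<0,
$$
contradicting $g\geqslant 1$.

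\textbf{Remark on the degree step.} No separate tangency case arises there. Intersection multiplicities are constant along an orbit, so $d$ is a multiple of $|S\cap C|$, and $d<12$ forces transversality.
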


\begin{proof}
Since $C$ is not contained in $S$, it is not contained in any hyperplane in $\mathbb{P}^4$,
which implies that $G$ acts faithfully on $C$. Let $\widehat{C}\to C$ be the~normalization, and let $g$ be the~genus of $\widehat{C}$. Then~the~$G$-action lifts to $\widehat{C}$. By Lemma~\ref{lemma:36-9-orbits}, if $G=G_{36,9}$, the length of a $G$-orbit in $\widehat{C}$ is $9$, $12$, $18$ or $36$. Similarly, by Lemma~\ref{lemma:36-10-orbits}, if $G=G_{36,10}$, the length of a $G$-orbit in $\widehat{C}$ is $6$, $12$, $18$ or $36$.

As in the~proof of Lemmas~\ref{lemma:36-9-reducible-curves} and \ref{lemma:36-10-reducible-curves}, we see that $S\cap C$ consists of $d<12$ points (counted with multiplicities). On the~other hand, the intersection $S\cap C$ is a union of $G$-orbits.
Hence, it follows from Lemmas~\ref{lemma:36-9-orbits} and \ref{lemma:36-10-orbits} that $d=6$ or $9$, and $S\cap C$ is a $G$-orbit of length $6$ or $9$, respectively. Moreover,  $S$ intersects $C$ transversally at each point of $S\cap C$, which implies that $C$ is smooth at every point of the~intersection $S\cap C$. In particular, if $d=6$, then $G=G_{36,10}$, because $\widehat{C}$ does not contain $G$-orbits of length $6$ in the case when $G=G_{36,9}$.
Similarly, if $d=9$, then $G=G_{36,9}$, because $\widehat{C}$ does not contain $G$-orbits of length $9$ in the case when $G=G_{36,10}$.

Let $P$ be a general point in $C$, and let $\Sigma$ be the~$G$-orbit of $P$. Then $|\Sigma|=36$, and there is $\lambda\in\mathbb{C}$ such that $P\in S_\lambda$. Moreover, $C\subset S_\lambda$, since otherwise we would have
$$
18\geqslant 2d=S_\lambda\cdot C\geqslant |S_\lambda\cap C|\geqslant |\Sigma|=36.
$$
By Corollary~\ref{corollary:36-9-Du-Val}, either the surface $S_\lambda$ is smooth, or  $\mathrm{Sing}(S_\lambda)$ is a $G$-orbit of length $6$ or $9$, and the singularities of $S_\lambda$ are nodes. Set $H_{S_{\lambda}}=H\vert_{S_{\lambda}}$. Then it follows from the Hodge index theorem that
$$
6C^2-d^2=\mathrm{det}\begin{pmatrix}
C^2 &d \\
d & 6
\end{pmatrix}=\mathrm{det}\begin{pmatrix}
C^2 &H_{S_{\lambda}}\cdot C \\
H_{S_{\lambda}}\cdot C & H_{S_{\lambda}}^2
\end{pmatrix}\leqslant 0.
$$
This gives $C^2\leqslant\frac{d^2}{6}$. Thus, if $d=9$, then $C^2\leqslant\frac{27}{2}$.
Similarly, if $d=6$, then $C^2\leqslant 6$.

Suppose that $C$ is contained in the~smooth locus of the surface $S_\lambda$.
Then, by adjunction formula, we have $C^2=2\mathrm{p}_a(C)-2$, where $\mathrm{p}_a(C)$ is the~arithmetic genus of the~curve $C$.
Thus, we have
$$
2g-2\leqslant 2\mathrm{p}_a(C)-2=C^2\leqslant
\left\{\aligned
&12\ \text{if $d=9$},\\
&6\ \text{if $d=6$}.
\endaligned
\right.
$$
Therefore, if $d=9$, then $g\leqslant \mathrm{p}_a(C)\leqslant 7$, so $g\in\{1,4\}$ by Lemma~\ref{lemma:36-9-curves}. If $d=6$, then $g\leqslant \mathrm{p}_a(C)\leqslant 4$, so it follows from Lemma~\ref{lemma:36-10-curves} that $g=\mathrm{p}_a(C)=4$. On the other hand, if the curve $C$ is singular, then $\mathrm{Sing}(C)$ is a union of $G$-orbits $\Sigma_1,\ldots\Sigma_r$, so
$$
g+\sum_{i=1}^rn_i|\Sigma_i|=\mathrm{p}_a(C)\leqslant\left\{\aligned
&7\ \text{if $d=9$},\\
&4\ \text{if $d=6$}.
\endaligned
\right.
$$
for some positive integers $n_1,\ldots,n_r$, which contradicts Lemmas~\ref{lemma:36-9-orbits} and \ref{lemma:36-10-orbits}.
Hence, we conclude that the curve $C$ is smooth, and one of the following three cases holds:
\begin{enumerate}
\item $G=G_{36,9}$, $d=9$ and $g=1$;
\item $G=G_{36,9}$, $d=9$ and $g=7$;
\item $G=G_{36,10}$, $d=6$ and $g=4$.
\end{enumerate}
If $G=G_{36,10}$, $d=6$ and $g=4$, then it follows from Lemma~\ref{lemma:36-9-curves} that $C$ has two $G$-orbits of length~$6$, but one of them is the intersection $C\cap S$, so the set $C\setminus(S\cap C)$ contains a $G$-orbit of length $6$, and the stabilizer in $G$ of a point in this orbit is isomorphic to $C_4$ (because it acts faithfully on the~tangent space to $C$ at this point), which contradicts Lemma~\ref{lemma:36-10-orbits}. Then $G=G_{36,9}$, $d=9$ and
\begin{enumerate}
\item either $g=1$;
\item or $g=7$.
\end{enumerate}
In both cases, $S\cap C$ is a $G$-orbit of length $9$, and $C$ has two $G$-orbits of length $9$ by Lemma~\ref{lemma:36-9-curves}. Hence, by Lemma~\ref{lemma:36-9-orbits}, the curve $C$ contains the~$G$-orbit $\Sigma_9^t$ for some $t\in\mathbb{C}$ such that $t^3+2at+4=0$, so it follows from Corollary~\ref{corollary:36-9-Du-Val} that $\lambda=-\frac{2}{t^2}$, and $\mathrm{Sing}(S_\lambda)=\Sigma_9^t$, which contradicts our assumption.

Hence, the surface $S_\lambda$ is singular, and $C$ contains some of its singular points.
By Corollary~\ref{corollary:36-9-Du-Val}, the surface $S_\lambda$ has isolated ordinary double points, and $\mathrm{Sing}(S_\lambda)$ is a $G$-orbit of length $6$ or $9$, so, in particular, the curve $C$ contains all singular points of the surface $S$.

Let $\widetilde{S}_\lambda\to S_\lambda$ be the~minimal resolution, let $\widetilde{C}$ be the~strict transform on $\widetilde{S}_\lambda$ of the~curve $C$, and let $\widetilde{C}_6$ be the~strict transform on $\widetilde{S}_\lambda$ of the~curve $C_6$. Then the~$G$-action lifts to $\widetilde{S}_\lambda$, and the~intersection $\widetilde{C}\cap\widetilde{C}_6$ is a $G$-orbit of length $d\in\{6,9\}$. Moreover, by Lemma~\ref{lemma:DuVal} and adjunction formula, we have
$$
2g-2\leqslant 2\mathrm{p}_a(\widetilde{C})-2=\widetilde{C}^2\leqslant C^2-\frac{|\mathrm{Sing}(S_\lambda)|}{2}\leqslant \frac{d^2}{6}-\frac{|\mathrm{Sing}(S_\lambda)|}{2}\leqslant\frac{27-|\mathrm{Sing}(S_\lambda)|}{2}\leqslant\frac{21}{2},
$$
where $\mathrm{p}_a(\widetilde{C})$ is the~arithmetic genus of the~curve $\widetilde{C}$.
Therefore, if $d=6$, then we must have $g<4$, which contradicts Lemma~\ref{lemma:36-10-curves}, since $G=G_{36,10}$ in this case.
Hence, $d=9$ and $G=G_{36,9}$. Then
$$
2g-2\leqslant 2\mathrm{p}_a(\widetilde{C})-2=\widetilde{C}^2\leqslant 10,
$$
Then $\mathrm{p}_a(\widetilde{C})\leqslant 6$. On the other hand, we know that $g\ne 0$ by Lemma~\ref{lemma:36-9-curves}.
Thus, arguing as above, we see that the curve $\widetilde{C}$ is smooth, and either $g=1$ or $g=4$.

By Lemma~\ref{lemma:36-9-curves}, $\widetilde{C}$ contains a $G$-orbit of length $9$ that is different from  $\widetilde{C}\cap\widetilde{C}_6$. Thus, it follows from Lemma~\ref{lemma:36-9-orbits} that the~image of this $G$-orbit in $C$ is a $G$-orbit of length $9$ that is not contained in the cubic surface $S$. Hence, it follows from Lemma~\ref{lemma:36-9-orbits} and Corollary~\ref{corollary:36-9-Du-Val} that
$$
\mathrm{Sing}(S_\lambda)=\Sigma_9^t\subset C\subset S_\lambda
$$
for some $t\in\mathbb{C}$ such that $t^3+2at+4=0$, as claimed.

To complete the~proof of the~lemma, we have to show that $C$ is smooth.
If $C$ is singular, then it follows from Lemma~\ref{lemma:DuVal} that
$$
0\leqslant 2g-2=\widetilde{C}^2\leqslant C^2-2|\mathrm{Sing}(S_\lambda)|=C^2-18\leqslant\frac{27}{2}-18<-2,
$$
which is absurd. Hence, we conclude that $C$ is smooth.
\end{proof}

Now, we are ready to prove the~main technical result of this section.

\begin{proposition}
\label{proposition:36-9-curves}
Let $Z$ be a $G$-irreducible curve in $X_{a}$,
and let $\mathcal{M}$ be a non-empty $G$-invariant mobile linear subsystem in $|nH|$ for some $n\in\mathbb{Z}_{>0}$. Then
$$
\mathrm{mult}_Z\big(\mathcal{M}\big)\leqslant\frac{n}{2}.
$$
\end{proposition}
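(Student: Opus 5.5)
We argue by contradiction. Suppose $\mathrm{mult}_Z(\mathcal{M})>\frac{n}{2}$, and let $M,M^\prime$ be general surfaces in $\mathcal{M}$. Writing $M\cdot M^\prime=m Z+\Delta$ with $m\geqslant\mathrm{mult}_Z^2(\mathcal{M})>\frac{n^2}{4}$, we get
$$
3n^2=H\cdot M\cdot M^\prime>\frac{n^2}{4}\mathrm{deg}(Z),
$$
so $\mathrm{deg}(Z)<12$. This reduces everything to the finitely many curves already classified.

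\textbf{Curves in $S$.} If $Z\subset S$, restrict to $S$ and write $M\vert_S=m_SZ+\Delta_S$ with $m_S>\frac{n}{2}$. Then $3n=-K_S\cdot M\vert_S>\frac{n}{2}\mathrm{deg}(Z)$, so $\mathrm{deg}(Z)<6$. By Corollary~\ref{corollary:36-9-36-10-curves-in-S} this forces $G=G_{36,10}$ and $Z\in\{\mathcal{L}_3,\mathcal{L}_3^\prime\}$. Say $Z=\mathcal{L}_3$. Since $\mathcal{L}_3^2=-3$, $\mathcal{L}_3\cdot\mathcal{L}_3^\prime=9$ and $-K_S\cdot\mathcal{L}_3^\prime=3$, we intersect $M\vert_S\sim n(-K_S)$ with $\mathcal{L}_3^\prime$, which is not contained in the base locus of $\mathcal{M}$ by mobility, or we use $\mathcal{L}_3^\prime$ as a nef test class on $S$. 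This gives $3n\geqslant 9m_S$, so $m_S\leqslant\frac{n}{3}$, a contradiction. This is essentially the computation already done in the proof of Corollary~\ref{corollary:36-10-alpha}.

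\textbf{Curves not in $S$.} Here Lemmas~\ref{lemma:36-9-reducible-curves}, \ref{lemma:36-10-reducible-curves} and \ref{lemma:36-9-irreducible-curves} list the possibilities:
\begin{itemize}
\item $\mathcal{R}_6,\mathcal{R}_6^\prime,\mathcal{R}_6^{\prime\prime}$, each a disjoint union of two plane cubics;
\item $\mathcal{K}_6,\mathcal{K}_6^\prime$, each a union of three disjoint conics;
\item $\mathcal{R}_9,\mathcal{R}_9^\prime$;
\item $\mathcal{L}_t^\pm$, each nine disjoint lines;
\item a smooth irreducible curve of degree $9$ and genus $1$ or $4$ lying on $S_\lambda$ with $\lambda=-\frac{2}{t^2}$.
\end{itemize}
For each we use the test-class argument of Proposition~\ref{proposition:A5-NFI-curves}. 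Blow up $Z$ via $\pi\colon\widetilde X\to X_a$, find $k$ with $k\widetilde H-F$ nef, and use
$$
0\leqslant\widetilde M\cdot\widetilde M^\prime\cdot(k\widetilde H-F)=3kn^2-2n\,d\,\mathrm{mult}_Z(\mathcal{M})-(kd+F^3)\,\mathrm{mult}_Z^2(\mathcal{M}).
$$
For disjoint unions of plane cubics or conics, $Z$ is cut out by quadrics and cubics in the corresponding planes, so $2\widetilde H-F$ or $3\widetilde H-F$ is nef. The value of $F^3$ is computed from $-\mathrm{deg}\,N_{Z/X}+\ldots$, that is, $F^3=-\mathrm{deg}(N)$ with $\mathrm{deg}\,N=2d+2g-2$. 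For $\mathcal{L}_t^\pm$ we have $F^3=0$, and $k=9$ (or better, using \eqref{equation:36-9-nine-lines}, the curve is cut out by cubics) works as in the ten-lines case. In each case we verify that the quadratic inequality forces $\mathrm{mult}_Z(\mathcal{M})\leqslant\frac{n}{2}$. For example, with $k=3$ and $d=6$ the right-hand side at $\mathrm{mult}=\frac{n}{2}$ is nonnegative precisely when $F^3\geqslant -6$, which holds for plane cubics ($F^3=-12+0$ needs checking) and may need a better $k$. When $X_a$ is nodal and $Z=\mathcal{R}_9$ passes through the nodes, we instead work on a surface as below.

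\textbf{Curves on a K3 surface.} For the curves of degree $9$ lying on $S_\lambda$, and as a fallback whenever the blowup bound is too weak, we restrict to a $G$-invariant K3 surface $S_\lambda\supset Z$: $S_\lambda$ itself, or $S_{1/a}$ for $\mathcal{K}_6$ by \eqref{equation:36-10-six-conics}, or $S_0$ for $\mathcal{R}_6^\prime$. Write $M\vert_{S_\lambda}=mZ+D$ and find a nef class such as $2H-Z$ or $3H-Z$ on $S_\lambda$ (or on its minimal resolution) with no fixed curves. We show this by the degree argument of Proposition~\ref{proposition:A5-NFI-curves}, using the classification of low-degree curves. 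Intersecting then gives $m\leqslant\frac{n}{2}$. Intersection numbers come from adjunction on K3: $Z^2=2g-2$ corrected by the node contributions of Lemma~\ref{lemma:DuVal}, since $Z\supset\Sigma_9^t=\mathrm{Sing}(S_\lambda)$. For example, for genus $1$ and $d=9$ we get $Z^2=0-\frac{9}{2}$ on $S_\lambda$, and $(3H-Z)\cdot(nH-mZ)=18n-(27+\frac{9}{2})m$, which yields $m\leqslant\frac{n}{2}$ comfortably.

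\textbf{Main obstacle.} The hardest step will be establishing nefness, that is, global generation of $\mathcal{I}_Z(k)$ or freeness of $|kH_{S_\lambda}-Z|$, for each curve, especially the genus-$4$ degree-$9$ curve and the nodal case $a^3=-\frac{27}{2}$. There I would first try Castelnuovo--Mumford regularity as in Lemma~\ref{lemma:A5-3-regular}, and otherwise fall back to the surface argument.
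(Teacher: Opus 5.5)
Your outline matches the paper's: the degree bound $\deg(Z)<12$, the case $Z\subset S$, the classification lemmas, threefold test classes for curves built from lines, conics and plane cubics, and a K3 argument on $S_\lambda$ for the degree-$9$ curves. But the case you call the main obstacle is left open, and the tools you name for it fail. This is the smooth curve $Z$ of degree $9$ and genus $4$ on $S_\lambda$ for $G=G_{36,9}$.

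First, $Z$ is not $3$-regular. It lies on the quadric $\{x_0x_2+x_1x_4+\lambda x_3^2=0\}$, and $h^0(\mathcal{O}_Z(2))=18-4+1=15=h^0(\mathcal{O}_{\mathbb{P}^4}(2))$. Hence $h^1(\mathcal{I}_Z\otimes\mathcal{O}_{\mathbb{P}^4}(2))=h^0(\mathcal{I}_Z\otimes\mathcal{O}_{\mathbb{P}^4}(2))\geqslant 1$; for genus $1$ it is even $\geqslant 4$.

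Second, the surface classes you suggest do not work. Since $Z$ is smooth through the nine nodes, $\widetilde{Z}=\varpi^*(Z)-\frac{1}{2}E$ and $Z^2=\widetilde{Z}^2+\frac{9}{2}=\frac{21}{2}$. So $(2H-Z)^2=-\frac{3}{2}$ and $2H-Z$ is not nef. Meanwhile $(3H-Z)\cdot(nH-mZ)=9n-\frac{33}{2}m$ only gives $m\leqslant\frac{6}{11}n$. In general, a class $aH-bZ$ with $b>0$ gives $m\leqslant\frac{n}{2}$ only if $a\leqslant\frac{5}{2}b$, and can be nef only if $a\geqslant\frac{3+\sqrt{2}}{2}b$.

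This is why the paper uses $\varpi^*(5H-2Z)=5\widetilde{H}_{\widetilde{S}_\lambda}-2\widetilde{Z}-E$. Its square is $12$, so the linear system has dimension $\geqslant 7$. Its mobile part has degree $\geqslant 4$: degree $\leqslant 2$ would make $S_\lambda$ uniruled, and degree $3$ would force an elliptic pencil. So the fixed part has degree $\leqslant 8$, and it is excluded by the classification of low-degree $G$-invariant curves plus a self-intersection computation. The class is therefore nef and gives $12n-24m\geqslant 0$.

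Alternatively, when $X_a$ is smooth along $Z$, your threefold test class can be saved without regularity. The strict transform satisfies $\widehat{S}_\lambda\simeq\widetilde{S}_\lambda$, and $\widehat{S}_\lambda+\pi^*(H^\prime)\in|3\widetilde{H}-F|$. Moreover $(3\widetilde{H}-F)\vert_{\widehat{S}_\lambda}=3\widetilde{H}_{\widetilde{S}_\lambda}-\widetilde{Z}-E$, which the paper shows is nef. With $F^3=-24$ one gets $9n^2-18n\,\mathrm{mult}_Z(\mathcal{M})-3\,\mathrm{mult}_Z^2(\mathcal{M})\geqslant 0$, a contradiction.

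Two more places need repair. In the nodal case $a^3=-\frac{27}{2}$, your fallback for $\mathcal{R}_9$ and $\mathcal{R}_9^\prime$ has no K3 surface to work on, since these curves lie in no member of $\mathcal{P}$. The paper instead restricts to a general hyperplane section $S_C$ through one component $C\sim -K_{S_C}$. There the residual mobile system is $\mathbb{Q}$-linearly equivalent to $(n-\mathrm{mult}_Z(\mathcal{M}))(-K_{S_C})$, yet has multiplicity $>\frac{n}{2}$ at the $\deg(Z)-3\geqslant 3$ points of $S_C\cap(Z\setminus C)$. Likewise, $\mathcal{L}_t^{\pm}$ with $t=\sqrt[3]{2}$ passes through $\mathrm{Sing}(X_a)$, so your blow-up computation with $F^3=0$ does not apply there. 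The paper treats all $\mathcal{L}_t^{\pm}$ on $\widetilde{S}_\lambda$ using the nef class $5\widetilde{H}_{\widetilde{S}_\lambda}-\widetilde{Z}-E$.

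Finally, two of your sample computations are wrong, though fixable. For genus $1$, the node correction has the wrong sign and the term $-Z\cdot nH=-9n$ is dropped. As written, $18n-\frac{63}{2}m\geqslant 0$ gives only $m\leqslant\frac{4}{7}n$; the correct $9n-\frac{45}{2}m$ gives $m\leqslant\frac{2}{5}n$. For $Z=\mathcal{L}_3$, mobility only excludes fixed surfaces, so it does not keep $\mathcal{L}_3^\prime$ out of the base locus. Also $\mathcal{L}_3^\prime$ is not nef, since $(\mathcal{L}_3^\prime)^2=-3$. Intersecting with both $\mathcal{L}_3$ and $\mathcal{L}_3^\prime$, as in Corollary~\ref{corollary:36-10-alpha}, gives $m_S\leqslant\frac{n}{2}$ rather than $\frac{n}{3}$. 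That still suffices; the paper simply invokes $\alpha_G(S)=2$.
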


\begin{proof}
Suppose that $\mathrm{mult}_Z(\mathcal{M})>\frac{n}{2}$. Let $M$ and $M^\prime$ be general surfaces in $\mathcal{M}$. Then
$$
\mathrm{mult}_Z\big(M\big)=\mathrm{mult}_Z\big(M^\prime\big)=\mathrm{mult}_Z\big(\mathcal{M}\big)>\frac{n}{2}.
$$
Then, in particular, we have
$$
M\cdot M^\prime=\mathrm{mult}^2_Z\big(\mathcal{M}\big)Z+\Delta,
$$
where $\Delta$ is an effective one-cycle (whose support may, a priori, contain $Z$). Then
$$
3n^2=H\cdot M\cdot M^\prime=\mathrm{mult}^2_Z\big(\mathcal{M}\big)\mathrm{deg}(Z)+H\cdot\Delta\geqslant\mathrm{mult}^2_Z\big(\mathcal{M}\big)\mathrm{deg}(Z)>\frac{n^2}{4}\mathrm{deg}(Z),
$$
where $H$ is a general hyperplane section of the~cubic $X_{a}\subset\mathbb{P}^4$. Therefore, we conclude $\mathrm{deg}(Z)<12$. Let us seek for a contradiction.

If $Z\subset S$, then the log pair $(S,\frac{2}{n}\mathcal{M}\vert_{S})$ is not log canonical along the curve $Z$, so $\alpha_G(S)<2$, because $\mathcal{M}\vert_{S}\sim n(-K_S)$, and  $\mathcal{M}\vert_{S}$ is $G$-invariant. However, $\alpha_G(S)=2$ by Corollaries~\ref{corollary:36-9-alpha} and \ref{corollary:36-10-alpha}.

Hence, it follows from Lemmas~\ref{lemma:36-9-reducible-curves}, \ref{lemma:36-10-reducible-curves} and \ref{lemma:36-9-irreducible-curves} that one of the following seven cases holds:
\begin{enumerate}
\item[$(\mathrm{i})$] $Z=\mathcal{R}_6$;
\item[$(\mathrm{ii})$] $G=G_{36,10}$, and $Z$ is one of the curves $\mathcal{R}_6^\prime$ or $\mathcal{R}_6^{\prime\prime}$;
\item[$(\mathrm{iii})$] $G=G_{36,10}$, and $Z$ is one of the curves $\mathcal{R}_9$ or $\mathcal{R}_9^\prime$;
\item[$(\mathrm{iv})$] $G=G_{36,10}$, $a\ne 0$, and $Z$ is one of the curves $\mathcal{K}_6$ or $\mathcal{K}_6^{\prime}$;
\item[$(\mathrm{v})$] $G=G_{36,9}$, and $Z=\mathcal{L}_t^{+}$ or $Z=\mathcal{L}_t^{-}$ for some $t\in\mathbb{C}$ such that $t^3+2at+4=0$;
\item[$(\mathrm{vi})$] $G=G_{36,9}$, and $Z$ is an irreducible smooth curve of genus $1$ and degree $9$;
\item[$(\mathrm{vii})$] $G=G_{36,9}$, and $Z$ is an irreducible smooth curve of genus $4$ and degree $9$.
\end{enumerate}
Moreover, $Z$ is smooth unless $X_a$ is singular, $G=G_{36,10}$, and $Z$ is one of the curves $\mathcal{R}_9$ or $\mathcal{R}_9^\prime$.

Let us exclude cases $(\mathrm{i})$, $(\mathrm{ii})$, $(\mathrm{iii})$. Suppose that $Z$ is one of the curves $\mathcal{R}_6$, $\mathcal{R}_6^\prime$, $\mathcal{R}_6^{\prime\prime}$, $\mathcal{R}_9$, $\mathcal{R}_9^\prime$. Then $Z$ is a disjoint union of plane cubic curves (if $X_a$ is singular, and $Z=\mathcal{R}_9$ or $Z=\mathcal{R}_9^\prime$, these cubic curves are reducible). Let $C$ be one of these cubic curves, and let $S_C$ be a general hyperplane section of the cubic $X_a$ that contains $C$. Then $S_C$ is a normal cubic surface, which is actually smooth unless $X_a$ is singular, and $Z=\mathcal{R}_9$ or $Z=\mathcal{R}_9^\prime$. If $X_a$ is singular, and $Z=\mathcal{R}_9$ or $Z=\mathcal{R}_9^\prime$, then $S_C$ has three ordinary double points, which are contained in $C$. Furthermore, we have
$$
n(-K_{S_C})\sim_{\mathbb{Q}}\mathcal{M}\big\vert_{S_C}=\mathcal{M}_{S_C}+\mathrm{mult}_Z\big(\mathcal{M}\big)C,
$$
where $\mathcal{M}_{S_C}$ is a non-empty mobile linear system on the cubic surface $S_C$. Note that $C\sim -K_{S_C}$, the intersection $S_C\cap (Z\setminus C)$ is contained in the smooth locus of the surface $S_C$, and this intersection consists of $\mathrm{deg}(Z)-3\geqslant 3$ distinct points, which are not contained in $C$. Hence, we have
$$
\mathrm{mult}_O\big(\mathcal{M}_{S_C}\big)>\frac{n}{2}
$$
for every point $O\in S_C\cap (Z\setminus C)$. Thus, if $M_{S_C}$ and $M_{S_C}^\prime$ are general curves in $\mathcal{M}_{S_C}$, then
\begin{multline*}
\frac{3n^2}{4}=\Big(n-\frac{n}{2}\Big)^2\big(-K_{S_C}\big)^2>\Big(n-\mathrm{mult}_Z\big(\mathcal{M}\big)\Big)^2\big(-K_{S_C}\big)^2=\\
=M_{S_C}\cdot M_{S_C}^\prime\geqslant\sum_{O\in S_C\cap (Z\setminus C)}\Big(M_{S_C}\cdot M_{S_C}^\prime\Big)_O\geqslant
\sum_{O\in S_C\cap (Z\setminus C)}\mathrm{mult}_O\big(M_{S_C}^\prime\big)\mathrm{mult}_O\big(M_{S_C}^\prime\big)=\\
=\sum_{O\in S_C\cap (Z\setminus C)}\mathrm{mult}^2_O\big(\mathcal{M}_{S_C}\big)>
\sum_{O\in S_C\cap (Z\setminus C)}\Big(\frac{n}{2}\Big)^2=\big|S_C\cap (Z\setminus C)\big|\frac{n^2}{4}\geqslant\frac{3n^2}{4},
\end{multline*}
which is a contradiction. Thus, we see that $Z$ is one of the curves $\mathcal{R}_6$, $\mathcal{R}_6^\prime$, $\mathcal{R}_6^{\prime\prime}$, $\mathcal{R}_9$, $\mathcal{R}_9^\prime$.

Let us present another way how to exclude cases $(\mathrm{i})$, $(\mathrm{ii})$, $(\mathrm{iii})$ in the case when $X_a$ is smooth, which exclude cases $(\mathrm{i})$, $(\mathrm{ii})$, $(\mathrm{iv})$ unconditionally. Namely, suppose that the curve $Z$ is smooth, and $Z\cap \mathrm{Sing}(X_a)=\varnothing$.
Let $\pi\colon \widetilde{X}_{a}\to X_{a}$ be the~blow up~of~$Z$, let $F$ be the $\pi$-exceptional divisor, let~$\widetilde{M}$ and $\widetilde{M}^\prime$ be the strict transforms on $\widetilde{X}_{a}$ of  $M$ and $M^\prime$, respectively. Set $\widetilde{H}=\pi^*(H)$. Then
$$
\widetilde{M}\sim \widetilde{M}^\prime\sim n\widetilde{H}-\mathrm{mult}_Z\big(\mathcal{M}\big)F.
$$
Note that $\widetilde{M}\cdot\widetilde{M}^\prime$ is an effective one-cycle.
Take $k\in\mathbb{Z}_{>0}$ such that $k\widetilde{H}-F$ is nef. Then
\begin{equation}
\label{equation:36-9-test-class}
0\leqslant \widetilde{M}\cdot \widetilde{M}^\prime\cdot \big(k\widetilde{H}-F\big)=3kn^2-2n\mathrm{deg}(Z)\mathrm{mult}_Z\big(\mathcal{M}\big)-(k\mathrm{deg}(Z)+F^3)\mathrm{mult}^2_Z\big(\mathcal{M}\big).
\end{equation}
Note that
$$
F^3=\left\{\aligned
&-12\ \text{if $Z=\mathcal{R}_6$, $Z=\mathcal{R}_6^\prime$ or $Z=\mathcal{R}_6^{\prime\prime}$},\\
&-18\ \text{if $Z=\mathcal{R}_9$ or $Z=\mathcal{R}_9^\prime$},\\
&-6\ \text{if $a=0$, and either $Z=\mathcal{K}_6$ or $Z=\mathcal{K}_6^{\prime}$},\\
&0\ \text{if $Z=\mathcal{L}_t^{+}$ or $Z=\mathcal{L}_t^{-}$},\\
&-18\ \text{if $Z$ is a curve of degree $9$ and genus $1$},\\
&-12\ \text{if $Z$ is a curve of degree $9$ and genus $4$}.
\endaligned
\right.
$$
If $Z$ is one of the curves $\mathcal{R}_6$  $\mathcal{R}_6^\prime$ or $\mathcal{R}_6^{\prime\prime}$, then the linear system $|2\widetilde{H}-F|$ is base point free and gives an elliptic fibration $\widetilde{X}_{a}\to\mathbb{P}^1\times \mathbb{P}^1$, so, in particular, we can set $k=2$. Then \eqref{equation:36-9-test-class} gives
$$
0\leqslant \widetilde{M}\cdot \widetilde{M}^\prime\cdot \big(2\widetilde{H}-F\big)=6n^2-12n\mathrm{mult}_Z\big(\mathcal{M}\big),
$$
which gives $\mathrm{mult}_Z(\mathcal{M})\leqslant\frac{n}{2}$, which contradicts our assumption. Similarly, if $Z=\mathcal{R}_9$ or $Z=\mathcal{R}_9^\prime$, the linear system $|3\widetilde{H}-F|$ is base point free, so we can set $k=3$. In this case,  \eqref{equation:36-9-test-class} gives
$$
0\leqslant \widetilde{M}\cdot \widetilde{M}^\prime\cdot \big(3\widetilde{H}-F\big)=9n^2-18n\mathrm{mult}_Z\big(\mathcal{M}\big),
$$
so $\mathrm{mult}_Z(\mathcal{M})\leqslant\frac{n}{2}$, which contradicts our assumption. Likewise, if $a=0$, and $Z=\mathcal{K}_6$ or $Z=\mathcal{K}_6^\prime$, then the divisor $3\widetilde{H}-F$ is nef, so applying \eqref{equation:36-9-test-class} with $k=3$, we get
$$
0\leqslant \widetilde{M}\cdot \widetilde{M}^\prime\cdot \big(3\widetilde{H}-F\big)=
3\Big(3n+2\mathrm{mult}_Z\big(\mathcal{M}\big)\Big)\Big(n-2\mathrm{mult}_Z\big(\mathcal{M}\big)\Big),
$$
which gives $\mathrm{mult}_Z(\mathcal{M})\leqslant\frac{n}{2}$ that contradicts our assumption.

Summarizing, we see that cases $(\mathrm{i})$, $(\mathrm{ii})$, $(\mathrm{iv})$  are impossible, because in these cases $Z$ is always smooth, and $Z$ is always contained in the smooth locus of the cubic $X_a$. Similarly, we again see that case $(\mathrm{iii})$ is impossible if $X_a$ is smooth. So, we conclude that $G=G_{36,9}$, the curve $Z$ is smooth, and one of the following cases holds:
\begin{enumerate}
\item[$(\mathrm{v})$] $Z=\mathcal{L}_t^{+}$ or $Z=\mathcal{L}_t^{-}$ for $t\in\mathbb{C}$ such that $t^3+2at+4=0$;
\item[$(\mathrm{vi})$] $Z$ is an irreducible curve of degree $9$ and genus $1$;
\item[$(\mathrm{vii})$] $Z$ is an irreducible curve of degree $9$ and genus $4$.
\end{enumerate}
Then it follows from Lemmas~\ref{lemma:36-9-reducible-curves} and \ref{lemma:36-9-irreducible-curves} that
$$
\mathrm{Sing}(S_\lambda)=\Sigma_{9}^t\subset Z\subset S_\lambda
$$
for $\lambda=-\frac{2}{t^2}$, where $t$ is a non-zero number such that $t^3+2at+4=0$. Furthermore, by Corollary~\ref{corollary:36-9-Du-Val}, the~singularities of the~surface $S_\lambda$ are (ordinary) nodes. Let $\varpi\colon \widetilde{S}_\lambda\to S_{\lambda}$ be the~minimal~resolution, let $E$ be the~$\varpi$-exceptional divisor (consisting of $9$ disjoint $(-2)$-curves), and let $\widetilde{Z}$ be the~strict transform on $\widetilde{S}_\lambda$ of the~curve $Z$. Then $\widetilde{S}_\lambda$ is a smooth K3 surface, the~$G$-action lifts to $\widetilde{S}_\lambda$, and
$$
\widetilde{Z}\sim_{\mathbb{Q}} \varphi^*(Z)-\frac{1}{2}E,
$$
because $Z$ is a smooth curve. On the surface  $\widetilde{S}_\lambda$, by adjunction formula, we have
$$
\widetilde{Z}^2=\left\{\aligned
&-18\ \text{if $Z=\mathcal{L}_t^{\pm}$},\\
&0\ \text{if $Z$ is a curve of genus $1$},\\
&6\ \text{if $Z$ is a curve of genus $4$}.
\endaligned
\right.
$$
Set $H_{S_{\lambda}}=H\vert_{S_{\lambda}}$ and $\widetilde{H}_{\widetilde{S}_{\lambda}}=\varpi^*(H_{S_{\lambda}})$.  Then the intersections of the divisors $\widetilde{Z}$, $\widetilde{H}_{S_{\lambda}}$, $\widetilde{E}$  are given in the following table:
\begin{center}
\centering\renewcommand{\arraystretch}{1.5}
\begin{tabular}{|c||c|c|c|}
\hline
$\bullet$         &  $\widetilde{Z}$    & $\widetilde{H}_{\widetilde{S}_{\lambda}}$ & $\widetilde{E}$\\
\hline\hline
$\widetilde{Z}$   & $\widetilde{Z}^2$ & $9$ & $9$  \\
\hline
$\widetilde{H}_{\widetilde{S}_{\lambda}}$   & $9$ & $6$& $0$\\
\hline
$\widetilde{E}$   & $9$ & $0$&  $-18$\\
\hline
\end{tabular}
\end{center}
Recall that $M$ is a general surface in $\mathcal{M}$. Then
$$
M\big\vert_{S_{\lambda}}=mZ+D
$$
for an integer $m\geqslant \mathrm{mult}_Z(\mathcal{M})$ and an effective divisor $D$ on the~surface $S_{\lambda}$. Let $\widetilde{D}$ be the strict transform of the divisor $D$ on the surface $\widetilde{S}_\lambda$. Then
$$
m\widetilde{Z}+\widetilde{D}\sim_{\mathbb{Q}} n\widetilde{H}_{\widetilde{S}_\lambda}-qE
$$
for some non-negative integer $q\leqslant 0$. Since $0\leqslant E\cdot \widetilde{D}=18q-9m$, we have $q\geqslant\frac{m}{2}$.

Consider the $G$-invariant linear system $|3\widetilde{H}_{\widetilde{S}_\lambda}-\widetilde{Z}-E|$. If $Z=\mathcal{L}_t^{\pm}$, it follows from \eqref{equation:36-9-nine-lines} that the~linear system $|3\widetilde{H}_{\widetilde{S}_\lambda}-\widetilde{Z}-E|$ contains the strict transform of the curve $\mathcal{L}_t^{\mp}$. This immediately implies that the divisor $5\widetilde{H}_{\widetilde{S}_\lambda}-\widetilde{Z}-E$ is nef. Hence, if $Z=\mathcal{L}_t^{\pm}$, then
$$
0\leqslant \big(5\widetilde{H}_{\widetilde{S}_\lambda}-\widetilde{Z}-E\big)\cdot \widetilde{D}=\big(5\widetilde{H}_{\widetilde{S}_\lambda}-\widetilde{Z}-E\big)\cdot\big(n\widetilde{H}_{\widetilde{S}_\lambda}-qE-m\widetilde{Z})=21n-54m-9q,
$$
which gives that $m<\frac{n}{2}$, which contradicts to our assumption. Thus, we see that $Z\ne\mathcal{L}_t^{\pm}$. Then
$$
\big(3\widetilde{H}_{\widetilde{S}_\lambda}-\widetilde{Z}-E\big)^2=\widetilde{Z}^2\geqslant 0,
$$
so it follows from the~Riemann--Roch formula and Serre duality that
$$
h^0\big(\mathcal{O}_{\widetilde{S}_{\lambda}}(3\widetilde{H}_{\widetilde{S}_\lambda}-\widetilde{Z}-E)\big)\geqslant 2+\frac{\widetilde{Z}^2}{2}\geqslant 2.
$$
In particular, the~linear system $|3\widetilde{H}_{\widetilde{S}_\lambda}-\widetilde{Z}-E|$ has a non-empty mobile part.

We claim that the linear system $|3\widetilde{H}_{\widetilde{S}_\lambda}-\widetilde{Z}-E|$ does not have fixed curves. Indeed, write
$$
|3\widetilde{H}_{\widetilde{S}_\lambda}-\widetilde{Z}-E|=\mathscr{F}+|\mathscr{M}|,
$$
where $\mathscr{F}$ is the~fixed part of this linear system, and $\mathscr{M}$ is a  divisor  such that $|\mathscr{M}|$ is the mobile part of the linear system $|3\widetilde{H}_{\widetilde{S}_\lambda}-\widetilde{Z}-E|$. Then $\mathscr{F}$ is $G$-invariant, and
$$
9=\widetilde{H}_{\widetilde{S}_\lambda}\cdot\big(3\widetilde{H}_{\widetilde{S}_\lambda}-\widetilde{Z}-E\big)=\widetilde{H}_{\widetilde{S}_\lambda}\cdot\big(\mathscr{F}+\mathscr{M}\big)>\widetilde{H}_{\widetilde{S}_\lambda}\cdot\mathscr{F}=\mathrm{deg}\big(\varpi(\mathscr{F})\big).
$$
On the other hand, by Lemmas~\ref{lemma:36-9-reducible-curves} and \ref{lemma:36-9-irreducible-curves},  $S_\lambda$ does not contain $G$-invariant curves of degree~$<9$. Hence, we see that $\mathrm{deg}(\varpi(\mathscr{F}))=0$, which simply means that the divisor $\mathscr{F}$ is $\varpi$-exceptional. Then $\mathscr{F}=pE$ for some positive integer $p$, so
$$
0\leqslant \mathscr{M}^2=\big(3\widetilde{H}_{\widetilde{S}_\lambda}-\widetilde{Z}-E-\mathscr{F}\big)^2=\big(3\widetilde{H}_{\widetilde{S}_\lambda}-\widetilde{Z}-(1+p)E\big)^2=\widetilde{Z}^2-18p^2-18p\leqslant -30,
$$
which is absurd. Thus, the linear system $|3\widetilde{H}_{\widetilde{S}_\lambda}-\widetilde{Z}-E|$ does not have fixed curves.

In particular, the divisor $3\widetilde{H}_{\widetilde{S}_\lambda}-\widetilde{Z}-E$ is nef. Observe that
\begin{multline*}
0\leqslant \big(3\widetilde{H}_{\widetilde{S}_\lambda}-\widetilde{Z}-E\big)\cdot\widetilde{D}=\big(3\widetilde{H}_{\widetilde{S}_\lambda}-\widetilde{Z}-E\big)\cdot\big(n\widetilde{H}_{\widetilde{S}_\lambda}-aE-m\widetilde{Z}\big)=\\
=9n+m(\widetilde{Z}^2-18)-9a\leqslant 9n+m(\widetilde{Z}^2-18)-\frac{9m}{2}=9n+m\Big(\widetilde{Z}^2-\frac{45}{2}\Big),
\end{multline*}
since $q\geqslant\frac{m}{2}$. So, if $\widetilde{Z}^2=0$, then $m<\frac{n}{2}$, which contradicts to our assumption. Then $Z$ is a smooth curve of genus $4$ and degree $9$, and $\widetilde{Z}^2=6$.

Now, we consider the $G$-invariant linear system $|5\widetilde{H}_{\widetilde{S}_\lambda}-2\widetilde{Z}-E|$. Since $(5\widetilde{H}_{\widetilde{S}_\lambda}-2\widetilde{Z}-E)^2=12$, it follows from the~Riemann--Roch formula and Serre duality that
$$
\mathrm{dim}(|5\widetilde{H}_{\widetilde{S}_\lambda}-2\widetilde{Z}-E|)\geqslant 7.
$$
Moreover, we claim that  $|5\widetilde{H}_{\widetilde{S}_\lambda}-2\widetilde{Z}-E|$ does not have fixed curves. Indeed, as above, we write
$$
|5\widetilde{H}_{\widetilde{S}_\lambda}-2\widetilde{Z}-E|=\mathscr{F}^\prime+|\mathscr{M}^\prime|,
$$
where $\mathscr{F}^\prime$ is the~fixed part of the linear system $|5\widetilde{H}_{\widetilde{S}_\lambda}-2\widetilde{Z}-E|$, and $\mathscr{M}^\prime$ is an effective divisor on the surface $\widetilde{S}_\lambda$ such that the complete linear system $|\mathscr{M}^\prime|$ is the mobile part of $|5\widetilde{H}_{\widetilde{S}_\lambda}-2\widetilde{Z}-E|$. We may futher assume that $\mathscr{M}^\prime$ is a general member of the linear system $|\mathscr{M}^\prime|$. Then
$$
\widetilde{H}_{\widetilde{S}_\lambda}\cdot \mathscr{M}^\prime=\mathrm{deg}\big(\varpi(\mathscr{M}^\prime)\big)\geqslant 3,
$$
because $S_\lambda$ is not uniruled. Moreover, if $\widetilde{H}_{\widetilde{S}_\lambda}\cdot \mathscr{M}^\prime=3$, then $\varpi(\mathscr{M}^\prime)$ is a smooth irreducible cubic curve, so $\mathscr{M}^\prime$ is an irreducible smooth curve of genus $1$ and, therefore,  $|\mathscr{M}^\prime|$ is a pencil \cite{SD}, which is impossible, since
$$
\mathrm{dim}\big(|\mathscr{M}^\prime|\big)=\mathrm{dim}\big(|5\widetilde{H}_{\widetilde{S}_\lambda}-2\widetilde{Z}-E|\big)\geqslant 7.
$$
Hence, we conclude that $\widetilde{H}_{\widetilde{S}_\lambda}\cdot \mathscr{M}^\prime\geqslant 4$. This gives $\mathrm{deg}(\varpi(\mathscr{F}^\prime))\leqslant 8$, because
$$
12=\widetilde{H}_{\widetilde{S}_\lambda}\cdot\big(5\widetilde{H}_{\widetilde{S}_\lambda}-2\widetilde{Z}-E\big)=\widetilde{H}_{\widetilde{S}_\lambda}\cdot\big(\mathscr{F}^\prime+\mathscr{M}^\prime\big)\geqslant \widetilde{H}_{\widetilde{S}_\lambda}\cdot\mathscr{F}^\prime+4=\mathrm{deg}\big(\varpi(\mathscr{F}^\prime)\big)+4.
$$
Now, it follows from Lemmas~\ref{lemma:36-9-reducible-curves} and \ref{lemma:36-9-irreducible-curves} that $\mathscr{F}^\prime=p^\prime E$ for some $p^\prime\in\mathbb{Z}_{>0}$. Then
$$
0\leqslant (\mathscr{M}^\prime)^2=\big(5\widetilde{H}_{\widetilde{S}_\lambda}-2\widetilde{Z}-E-\mathscr{F}^\prime\big)^2=\big(5\widetilde{H}_{\widetilde{S}_\lambda}-2\widetilde{Z}-(1+p^\prime)E\big)^2=12-18(p^\prime)^2\leqslant -6.
$$
Thus, $|5\widetilde{H}_{\widetilde{S}_\lambda}-2\widetilde{Z}-E|$ does not have fixed curves, so $5\widetilde{H}_{\widetilde{S}_\lambda}-2\widetilde{Z}-E$ is nef. Then
$$
0\leqslant \big(5\widetilde{H}_{\widetilde{S}_\lambda}-2\widetilde{Z}-E\big)\cdot \widetilde{D}=\big(5\widetilde{H}_{\widetilde{S}_\lambda}-2\widetilde{Z}-E\big)\cdot\big(n\widetilde{H}_{\widetilde{S}_\lambda}-qE-m\widetilde{Z}\big)=12n-24m,
$$
which gives $m\leqslant\frac{n}{2}$, which is a contradiction. This completes the proof of Proposition~\ref{proposition:36-9-curves}.
\end{proof}

\begin{corollary}
\label{corollary:36-9-curves}
Let $n\in\mathbb{Z}_{>0}$, let $\mathcal{M}$ be a non-empty $G$-invariant mobile linear subsystem in $|nH|$, and let 
$$
\Sigma=\Big\{P\in X_{a}\ \text{such that $\Big(X_{a},\frac{2}{n}\mathcal{M}\Big)$ is not canonical at~$P$}\Big\}.
$$
Then $\Sigma$ is a finite subset, $\Sigma\cap S=\varnothing$ and $\Sigma\cap\mathcal{R}_6=\varnothing$.
If $G=G_{36,10}$, then 
$$
\Sigma\cap\big(\mathcal{R}_6^\prime\cup \mathcal{R}_6^{\prime\prime}\big)=\varnothing.
$$
If $G=G_{36,10}$ and $a\ne 0$, then $\Sigma\cap(\mathcal{K}_6\cup \mathcal{K}_6^\prime)=\varnothing$.
If $X_a$ is singular, then $\Sigma\cap\mathrm{Sing}(X_a)=\varnothing$.
\end{corollary}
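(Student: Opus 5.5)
The plan is to derive each assertion from Proposition~\ref{proposition:36-9-curves} together with $\alpha_G(S)=2$ and a local multiplicity argument.

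\emph{Finiteness of $\Sigma$.} If $\Sigma$ contained a curve, it would contain a $G$-irreducible curve $Z$. Then \cite[Exercise 6.18]{KollarSmithCorti} gives $\mathrm{mult}_Z(\mathcal{M})>\frac{n}{2}$, which contradicts Proposition~\ref{proposition:36-9-curves}. Hence $\Sigma$ is a finite union of $G$-orbits.

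\emph{The condition $\Sigma\cap S=\varnothing$.} Suppose $P\in\Sigma\cap S$. Since $S\in|H|$ is a smooth Cartier divisor, inversion of adjunction \cite[Theorem 5.50]{KollarMori} shows that the pair $(S,\frac{2}{n}\mathcal{M}\vert_S)$ is not log canonical at $P$. But $\mathcal{M}\vert_S\sim n(-K_S)$ is $G$-invariant, so this gives $\alpha_G(S)<2$, contradicting Corollaries~\ref{corollary:36-9-alpha} and \ref{corollary:36-10-alpha}.

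\emph{The curves $\mathcal{R}_6,\mathcal{R}_6^\prime,\mathcal{R}_6^{\prime\prime},\mathcal{K}_6,\mathcal{K}_6^\prime$.} Each of these $Z$ is a disjoint union of smooth curves lying in the smooth locus of $X_a$. In the proof of Proposition~\ref{proposition:36-9-curves} we found $k\in\{2,3\}$ with $k\widetilde{H}-F$ nef on the blow up $\pi\colon\widetilde{X}_a\to X_a$ of $Z$: the system $|2\widetilde H-F|$ is free for the $\mathcal{R}$-curves, and $3\widetilde H-F$ is nef for the $\mathcal{K}$-curves. Suppose $P\in\Sigma\cap Z$. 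I see two routes.

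\begin{enumerate}
\item The first route uses the Pukhlikov--Corti inequality $(M\cdot M^\prime)_P>n^2$ at a non-canonical point \cite{Pukhlikov1998,Corti2000}. Cut with the general member $T$ of the base point free system $|2H-Z|$ or $|3H-Z|$ through $P$, chosen so that it does not contain the components of $M\cdot M^\prime$ through $P$. This gives
$$
kH\cdot M\cdot M^\prime\geqslant\sum_{P^\prime\in\Sigma_P}\big(M\cdot M^\prime\big)_{P^\prime}>|\Sigma_P|n^2,
$$
that is, $3kn^2>|\Sigma_P|n^2$. One then needs $|\Sigma_P|\geqslant 3k$.
\item The second route is a direct local computation for $P$ on $Z$. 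Here the non-canonical locus sits on a curve along which $\mathrm{mult}_Z(\mathcal{M})\leqslant\frac{n}{2}$, and one applies the known refined form of Corti's inequality for a point lying on a curve of low multiplicity.
\end{enumerate}

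To make route (1) precise, use the points of $\mathcal{R}_6$. By Lemma~\ref{lemma:36-9-orbits}, orbits in $\mathcal{R}_6$ have length $\geqslant 6$; generic points lie in orbits of length $36$, and special ones in orbits of length $12$. For $k=2$ the route then requires $|\Sigma_P|\geqslant 6$, which holds. For the $\mathcal{K}$-curves, with $k=3$, it requires $|\Sigma_P|\geqslant 9$. Here I would check, using Lemma~\ref{lemma:36-10-orbits}, that $\mathcal{K}_6$ contains no $G_{36,10}$-orbit of length $6$ or $9$; if it does contain one, I would fall back on route (2).

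A cleaner alternative is to intersect instead with a general member $T\in|kH|$ containing $Z$ and singular along none of these points. Since $Z$ is not in the support of $M\cdot M^\prime$ to high order, this bounds $\sum(M\cdot M^\prime)_{P^\prime}$ by $T\cdot M\cdot M^\prime$, minus the contribution of $Z$.

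\emph{The singular points.} Suppose $X_a$ is singular and $\Sigma\ni P$ with $P\in\mathrm{Sing}(X_a)=\Sigma_9^t$, a node. At a node, non-canonicity of a mobile pair with coefficient $\frac{2}{n}$ gives $\mathrm{mult}_P(\mathcal{M})>\frac{n}{2}$, and in fact $(M\cdot M^\prime)_P>2\cdot\frac{n^2}{4}\cdot 2=n^2$ by the nodal version of Corti's inequality. Now cut with the general surface of the pencil $\mathcal{P}$ through $P$. This is $S_\lambda$ with $\lambda=-\frac{2}{t^2}$, which passes through all nine nodes and is singular there. We get
$$
6n^2=S_\lambda\cdot M\cdot M^\prime\geqslant\sum_{P^\prime\in\Sigma_9^t}\mathrm{mult}_{P^\prime}(S_\lambda)\big(M\cdot M^\prime\big)_{P^\prime}>9\cdot 2\cdot\frac{n^2}{2}\cdot\ldots,
$$
and I expect this to yield a contradiction. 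The same tool should also handle points of $\mathcal{R}_9$. The main obstacle is obtaining the correct multiplicity bookkeeping for curves through $P$ and at the nodes.
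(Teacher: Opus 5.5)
Your arguments for the finiteness of $\Sigma$ and for $\Sigma\cap S=\varnothing$ are the paper's. The curve part, however, has a gap as written.

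\textbf{The curves $\mathcal{R}_6,\mathcal{R}_6^\prime,\mathcal{R}_6^{\prime\prime},\mathcal{K}_6,\mathcal{K}_6^\prime$.} The system $|k\widetilde{H}-F|$ is base point free only on the blow up; on $X_a$ its base locus is $Z$. So any $T\in|kH|$ containing $Z$ also contains the curve $Z$ through $P$. Nothing prevents $Z$ from lying in the base locus of $\mathcal{M}$ with $0<\mathrm{mult}_Z(\mathcal{M})\leqslant\frac{n}{2}$. In that case $Z$ is a component of $M\cdot M^\prime$, $T$ does not meet $M\cdot M^\prime$ properly, and the inequality $kH\cdot M\cdot M^\prime\geqslant\sum_{P^\prime\in\Sigma_P}(M\cdot M^\prime)_{P^\prime}$ is unjustified. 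Route (2) is too vague to count. Your ``cleaner alternative'' only hints at the repair, which is the paper's bookkeeping:
\begin{enumerate}
\item Write $M\cdot M^\prime=mZ+\Delta$ with $Z\not\subset\mathrm{Supp}(\Delta)$.
\item The degree count $3n^2=H\cdot M\cdot M^\prime\geqslant 6m$ gives $m\leqslant\frac{n^2}{2}$.
\item Since $Z$ is smooth, $\mathrm{mult}_O(\Delta)>n^2-m\geqslant 0$ for every $O\in\Sigma_P$.
\item A general $D\in|kH|$ through $Z$ meets $\Delta$ properly, so $3kn^2-6km=D\cdot\Delta>|\Sigma_P|(n^2-m)$. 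This is impossible once $|\Sigma_P|\geqslant 3k$.
\end{enumerate}
Two side remarks. Every point of $\mathcal{R}_6$ is fixed by $M_1$ or $M_2$, so its orbit has length at most $12$, not $36$; this is harmless, since you only need $|\Sigma_P|\geqslant 6$. For $\mathcal{K}_6,\mathcal{K}_6^\prime$ the check $|\Sigma_P|\geqslant 9$ is immediate: $\Sigma_P\not\subset S$, and the only orbits of length $6$ off $S$ are the $\Sigma_6^s$, which do not meet $\mathcal{K}_6\cup\mathcal{K}_6^\prime$.

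\textbf{The nodes.} Your argument here fails, not only because it is unfinished.
\begin{itemize}
\item The surface $S_\lambda$ with $\lambda=-\frac{2}{t^2}$ is the unique member of $\mathcal{P}$ through the nodes, so it cannot be taken general. It also contains invariant curves through the nodes that may be components of $M\cdot M^\prime$, for instance the curves $\mathcal{L}_t^{\pm}$ with $\Sigma_9^t=\mathrm{Sing}(X_a)$ when $G=G_{36,9}$.
\item The numbers do not close. At a node, Corti's theorem only gives $\mu>\frac{n}{2}$, where $\mu$ is the multiplicity of $\mathcal{M}$ along the exceptional quadric $E$. Hence $\mathrm{mult}_P(M\cdot M^\prime)\geqslant 2\mu^2>\frac{n^2}{2}$, not $>n^2$. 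Moreover $S_\lambda$ vanishes only to order $1$ along $E$, so it contributes a factor $1$, not $\mathrm{mult}_P(S_\lambda)=2$.
\end{itemize}
Concretely, on the blow up of the nine nodes one has $\widetilde{S}_\lambda\cdot\widetilde{M}\cdot\widetilde{M}^\prime=6n^2-18\mu^2$. So the guaranteed contribution of the nodes to $S_\lambda\cdot M\cdot M^\prime=6n^2$ is only $18\mu^2$, which stays below $6n^2$ when $\mu$ is slightly larger than $\frac{n}{2}$.

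The missing idea is the paper's use of the planes $\Pi_{i,j}\subset X_a$. One of them contains four nodes $P_1,\ldots,P_4$, all in $\Sigma$ because $\mathrm{Sing}(X_a)$ is a single orbit. The conics in $\Pi_{i,j}$ through $P_1,\ldots,P_4$ sweep out $\Pi_{i,j}$, which is not in the base locus of $\mathcal{M}$. For the strict transform $\widetilde{\mathscr{C}}$ of a general such conic, $0\leqslant\widetilde{M}\cdot\widetilde{\mathscr{C}}=2n-4\mu$. This gives $\mu\leqslant\frac{n}{2}$, contradicting Corti's bound.
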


\begin{proof}
If $\Sigma$ contains a $G$-irreducible curve $Z$, then it follows from \cite[Excercise 6.18]{KollarSmithCorti} that
$$
\mathrm{mult}_Z\big(\mathcal{M}\big)>\frac{n}{2},
$$
which is impossible by Proposition~\ref{proposition:36-9-curves}. Thus, we deduce that $\Sigma$ is a finite subset.

If there exists a point $P\in\Sigma$ such that $P$ is contained in the surface $S$, then it follows from the~inversion of adjunction \cite[Theorem 6.29]{KollarSmithCorti} that $(S,\frac{2}{n}\mathcal{M}\vert_{S})$ is not log canonical at $P$, which contradicts to Corollaries~\ref{corollary:36-9-alpha} and \ref{corollary:36-10-alpha}. Thus, $\Sigma\cap S=\varnothing$.

Let $M$ and $M^\prime$ be general surfaces in $\mathcal{M}$. Then it follows from \cite{Pukhlikov1998,Corti2000} that $$
\mathrm{mult}_P\big(M\cdot M^\prime)>n^2
$$ 
for every point $P\in \Sigma$ such that $X_a$ is smooth at $P$. Let us use this to show that $\Sigma\cap\mathcal{R}_6=\varnothing$.
Namely, suppose that $\Sigma$ contains a point $P\in \mathcal{R}_6$. Then $X_a$ is smooth at $P$, and $M\cdot M^\prime=m\mathcal{R}_6+\Delta$, where $\Delta$ is an effective one-cycle on $X_{a}$ whose support does not contain $\mathcal{R}_6$. Then $m\leqslant \frac{n^2}{2}$, because
$$
3n^2=H\cdot M\cdot M^\prime=H\cdot\big(m\mathcal{R}_6+\Delta\big)=6m+H\cdot\Delta\geqslant 6m,
$$
where $H$ is a general hyperplane section of the cubic $X_{a}$. Let $\Sigma_P$ be the $G$-orbit of the point $P$. Then, since $\mathcal{R}_6$ is a smooth curve, we have
$$
\mathrm{mult}_O(\Delta)>n^2-m.
$$
for every point $O\in\Sigma_P$. Let $\mathcal{D}$ be the linear subsystem in $|2H|$ consisting of surfaces that contains the curve $\mathcal{R}_6$, and let $D$ be its general member. Then the surface $D$ contains no curves in $\mathrm{Supp}(\Delta)$, and we have $|\Sigma_P|\leqslant 6$ by Lemmas~\ref{lemma:36-9-orbits} and \ref{lemma:36-10-orbits}. Thus, we have
$$
6n^2-12m=D\cdot\Delta\geqslant\sum_{O\in \Sigma_P}\big(D\cdot\Delta)_O\geqslant \sum_{O\in\Sigma_P}\mathrm{mult}_O(\Delta)\geqslant |\Sigma_P|(n^2-m)\geqslant 6(n^2-m),
$$
which is a contradiction. Thus, we conclude that $\Sigma\cap\mathcal{R}_6=\varnothing$. 

Similarly, if $G=G_{36,10}$, we see that $\Sigma\cap\big(\mathcal{R}_6^\prime\cup\mathcal{R}_6^{\prime\prime}\big)=\varnothing$.

Let us show that $\Sigma\cap\mathcal{K}_6=\varnothing$ if $G=G_{36,10}$ and $a\ne 0$. Suppose that $G=G_{36,10}$ and $\Sigma\cap\mathcal{K}_6\ne\varnothing$. As above, write $M\cdot M^\prime=m^\prime\mathcal{K}_6+\Delta^\prime$, where $\Delta^\prime$ is an effective one-cycle such that $\mathcal{K}_6\not\subset\mathrm{Supp}(\Delta^\prime)$. Then, arguing as above, we see that $m^\prime\leqslant \frac{n^2}{2}$. As above, we let $\Sigma_P$ be the $G$-orbit of the point $P$. If $a\ne 0$, then the curve $\mathcal{K}_6$ is smooth, which gives
$$
\mathrm{mult}_O(\Delta^\prime)>n^2-m^\prime.
$$
for every point $O\in\Sigma_P$. Let $\mathcal{D}^\prime$ be the linear subsystem in $|3H|$ that consists of all surfaces passing through the curve $\mathcal{K}_6$, and let $D^\prime$ be a general member of the linear system $\mathcal{D}^\prime$. Then $D^\prime$ does not contain curves in $\mathrm{Supp}(\Delta^\prime)$ that pass through any point in $\Sigma_P$, because we proved that $\Sigma_P\not\subset S$. Moreover, it follows from Lemmas~\ref{lemma:36-10-orbits} that $|\Sigma_P|\leqslant 9$. Thus, if $a\ne 0$, then 
$$
9n^2-18m=D^\prime\cdot\Delta^\prime\geqslant\sum_{O\in\Sigma_P}\mathrm{mult}_O(\Delta^\prime)\geqslant |\Sigma_P|(n^2-m)\geqslant 9(n^2-m),
$$
which is a contradiction. Thus, if $G=G_{36,10}$ and $a\ne 0$, then  $\Sigma\cap\mathcal{K}_6=\varnothing$.

Likewise, if  $G=G_{36,10}$ and $a\ne 0$, then  $\Sigma\cap\mathcal{K}_6^\prime=\varnothing$.

Finally, we suppose that the cubic $X_a$ is singular ($a^3=-\frac{27}{2}$). Let us show that $\Sigma\cap\mathrm{Sing}(X_a)=\varnothing$. Suppose that this is not true. Then
$$
\mathrm{Sing}(X_a)\subseteq\Sigma.
$$
Moreover, the cubic $X_a$ contains a plane $\Pi$ such that $\Pi$ contains $4$ singular points of the cubic $X_a$, which we denote  by $P_1$, $P_2$, $P_3$,~$P_4$. Let $f\colon\widetilde{X}_a\to X_a$ be the blow up of the~points $P_1$, $P_2$, $P_3$, $P_4$, let $E_1$, $E_2$, $E_3$, $E_4$ be the irreducible $f$-exceptional surfaces such that $f(E_1)=P_1$, $f(E_2)=P_2$, $f(E_3)=P_3$, $f(E_4)=P_4$, let $\widetilde{\mathcal{M}}$ be the strict transform on the threefold $\widetilde{X}_a$ of the linear system~$\mathcal{M}$, and let $\widetilde{M}$ be a general surface in $\widetilde{\mathcal{M}}$. Then
$$
\widetilde{M}\sim\widetilde{\mathcal{M}}\sim f^*\big(nH\big)-\mu(E_1+E_2+E_3+E_4)
$$
for some $\mu\in\mathbb{Z}_{>0}$. Moreover, one has $\mu>\frac{n}{2}$ by \cite[Theorem 3.10]{Corti2000} or \cite[Theorem 1.7.20]{Cheltsov2005}. Now, we let $\mathscr{C}$ be a general conic in $\Pi$ that passes through $P_1$, $P_2$, $P_3$, $P_4$, and let $\widetilde{\mathscr{C}}$ be its strict transform on $\widetilde{X}_a$. Then $\widetilde{\mathscr{C}}\not\subset\widetilde{M}$, because $\Pi$ is not contained in the base locus of $\mathcal{M}$. Then
$$
0\leqslant \widetilde{M}\cdot\widetilde{\mathscr{C}}=f^*\big(nH\big)\cdot\widetilde{\mathscr{C}}-\mu(E_1+E_2+E_3+E_4)\cdot\widetilde{\mathscr{C}}=2n-4\mu,
$$
which gives $\mu\leqslant\frac{n}{2}$. This is a contradiction.
\end{proof}

Now, we are ready to prove Theorem~\ref{theorem:36-9}.

\begin{proof}[Proof of Theorem~\ref{theorem:36-9}]
Suppose that $X_{a}$ is not $G$-birationally superrigid.
By \cite[Corollary 3.3.3]{CheltsovShramov2016}, there exists a non-empty $G$-invariant mobile linear subsystem
$\mathcal{M}\subset |nH|$, for a positive integer~$n$, such that the singularities of the log pair $(X_{a},\frac{2}{n}\mathcal{M})$ are not canonical. Set
$$
\Sigma=\Big\{P\in X_{a}\ \text{such that $\Big(X_{a},\frac{2}{n}\mathcal{M}\Big)$ is not canonical at~$P$}\Big\}.
$$
By Corollaries~\ref{corollary:36-9-curves} and \ref{corollary:36-9-curves}, the subset $\Sigma$ is finite, $\Sigma\cap\mathrm{Sing}(X_a)=\varnothing$, $\Sigma\cap S=\varnothing$, $\Sigma\cap\mathcal{R}_6=\varnothing$. Moreover, if $G=G_{36,10}$, then
$$
\Sigma\cap\big(\mathcal{R}_6^\prime\cup \mathcal{R}_6^{\prime\prime}\big)=\varnothing.
$$
Furthermore, if $G=G_{36,10}$ and $a\ne 0$, then $\Sigma\cap(\mathcal{K}_6\cup \mathcal{K}_6^\prime)=\varnothing$.

By \cite[Remark 3.6]{CheltsovSarikyanZhuang2024}, the~singularities of the~log pair $(X_{a},\frac{3}{n}\mathcal{M})$ are not log canonical at every point of $\Sigma$. Let us use this to show a contradiction.

Let $P$ be a point in $\Sigma$, and let $\Sigma_P$ is the $G$-orbit of the point $P$. Choose $\mu\in\mathbb{Q}_{>0}$ such that the~singularities of the log pair $(X_{a},\mu\mathcal{M})$ are strictly log canonical at $P$. Then $\mu\leqslant \frac{3}{n}$. Recall that 
$$
\mathrm{Nklt}(X_{a},\mu\mathcal{M})=\big\{P\in X\ \text{such that $\big(X_{a},\mu\mathcal{M}\big)$ is not Kawamata log terminal at $P$}\big\}.
$$
Then the locus $\mathrm{Nklt}(X_{a},\mu\mathcal{M})$ does not contain curves that are \textbf{not disjoint} from the $G$-orbit $\Sigma_P$. Indeed, let $M$ and $M^\prime$ be general surfaces in $\mathcal{M}$.
If $Z$ is a $G$-irreducible curve in $\mathrm{Nklt}(X_{a},\mu\mathcal{M})$, then it follows from \cite[Lemma 1.8]{Corti2000} that
$$
\big(M\cdot M^\prime\big)_Z\geqslant\frac{4}{\mu^2}>\frac{4n^2}{9}
$$
which gives $\mathrm{deg}(Z)\leqslant 6$, so it follows from Lemmas~\ref{lemma:36-9-reducible-curves} and \ref{lemma:36-9-irreducible-curves} that $Z\cap \Sigma_P=\varnothing$.

We see that the points of the $G$-orbit $\Sigma_P$ are isolated components of the locus $\mathrm{Nklt}(X_{a},\mu\mathcal{M})$. Now, using the~Nadel vanishing theorem as in the~proof of Theorem~\ref{theorem:405-15}, we see that
$$
\big|\Sigma_P\big|\leqslant h^0\big(X_{a},\mathcal{O}_{X_{a}}(H)\big)=5,
$$
which contradicts Lemma~\ref{lemma:36-9-orbits}. The~obtained contradiction completes the~proof of Theorem~\ref{theorem:36-9}.
\end{proof}

\appendix

\section{Magma codes}
\label{section:Magma}
In this appendix, we present  Magma codes used in the proof of Theorem~\ref{theorem:main} (Main Theorem).

\subsection{Plane free groups acting on Fermat cubic}
\label{subsection:plane-free-groups}
Let $X\subset \mathbb{P}^4$ be the~Fermat cubic \eqref{equation:Fermat}.
Then 
$$
\mathrm{Aut}(X)\simeq C_4^3\rtimes\mathfrak{S}_5,
$$
and the~action of this group on $X$ lifts to $\mathbb{P}^4$, so we consider $\mathrm{Aut}(X)$ as a subgroup in $\mathrm{PGL}_5(\mathbb{C})$.
The~following Magma code goes through all subgroups of the~group $\mathrm{Aut}(X)$, and construct a graph whose vertices are conjugacy classes of subgroups $G\subset\mathrm{Aut}(X)$ such that $\mathbb{P}^4$ contains no $G$-invariant planes (plane free groups), and edges correspond to inclusion of subgroups.
\smallskip

\begin{verbatim}
// Graph of plane-free subgroup classes
// Vertex i = i-th plane-free subgroup conjugacy class representative H_i.
// Directed edge i -> j iff H_i contains a subgroup conjugate to H_j.

K<zeta> := CyclotomicField(60);
w := RootOfUnity(3, K);
G5 := GL(5, K);
S5 := Sym(5);
p12    := PermutationMatrix(K, S5!(1,2));
p12345 := PermutationMatrix(K, S5!(1,2,3,4,5));

// Constructing Aut(X) - group of order 9720
d1 := DiagonalMatrix(K, [w, w^-1, 1, 1, 1]);
d2 := DiagonalMatrix(K, [w, 1, w^-1, 1, 1]);
d3 := DiagonalMatrix(K, [w, 1, 1, w^-1, 1]);
d4 := DiagonalMatrix(K, [w, 1, 1, 1, w^-1]);
AutX := sub< G5 | d1, d2, d3, d4, p12, p12345 >;

// Plane test 
function HasInvariantPlane(H)
    M := GModule(H);
    D := DirectSumDecomposition(M);
    dims := [ Dimension(U) : U in D ];
    sums := {0};
    for d in dims do
        sums := sums join { s + d : s in sums };
    end for;
    return (3 in sums);
end function;

// Plane-free classes 
S := Subgroups(AutX);
PlaneFreeRecs := [* *];
for rec in S do
    if not HasInvariantPlane(rec`subgroup) then
        Append(~PlaneFreeRecs, rec);
    end if;
end for;
n := #PlaneFreeRecs;
PlaneFreeSubs := [ PlaneFreeRecs[i]`subgroup : i in [1..n] ];
PlaneFreeOrders := [ PlaneFreeRecs[i]`order : i in [1..n] ];
print "Plane-free vertex count =", n;

//Locate to which vertex class the subgroup belongs to.
OrderToIdx := AssociativeArray();
for i in [1..n] do
    o := PlaneFreeOrders[i];
    if IsDefined(OrderToIdx, o) then
        Append(~OrderToIdx[o], i);
    else
        OrderToIdx[o] := [i];
    end if;
end for;

function VertexIndexOfSubgroup(K)
    o := #K;
    if not IsDefined(OrderToIdx, o) then
        return 0;
    end if;
    for j in OrderToIdx[o] do
        flag, _ := IsConjugate(AutX, K, PlaneFreeSubs[j]);
        if flag then
            return j;
        end if;
    end for;
    return 0; // should not happen if K is plane-free 
end function;

// Build adjacency sets 
Adj := [* {} : i in [1..n] *];
for i in [1..n] do
    Hi := PlaneFreeSubs[i];
    oi := PlaneFreeOrders[i];
    SHi := Subgroups(Hi); // all subgroup classes inside Hi
        for srec in SHi do
            if srec`order ge oi then 
                continue;
            end if;
            K := srec`subgroup; 
            if HasInvariantPlane(K) then // only consider plane-free K
                continue;
            end if;
        j := VertexIndexOfSubgroup(K);
        if j ne 0 then
            Include(~Adj[i], j);
        end if;
    end for;
end for;

// Create digraph object and print adjacency 
Edges := &cat[ [ [i, j] : j in Adj[i] ] : i in [1..n] ];
D := Digraph< n | Edges >;
print "";
print "Adjacency list (edges i -> j):";
for i in [1..n] do
    printf "%o -> %o\n", i, Sort(SetToSequence(Adj[i]));
end for;

// Print output 
print "";
print "========================================";
print "digraph PlaneFree {";
for i in [1..n] do
    o := PlaneFreeOrders[i];
    lab := Sprintf("%o: |G|=%o", i, o);
    if CanIdentifyGroup(o) then
        lab := lab cat Sprintf("\\nIdGroup=%o", IdentifyGroup(PlaneFreeSubs[i]));
    end if;
    printf " %o [label=\"%o\"];\n", i, lab;
end for;
for e in Edges do
    printf " %o -> %o;\n", e[1], e[2];
end for;
print "}";
print "========================================";
\end{verbatim}

\subsection{Qualifying subgroups of the~group $C_4^3\rtimes\mathfrak{S}_5$}
\label{subsection:qualifying-subgroups}
Let $X\subset \mathbb{P}^4$ be the~Fermat cubic \eqref{equation:Fermat}.
Then 
$$
\mathrm{Aut}(X)\simeq C_4^3\rtimes\mathfrak{S}_5.
$$
The~following Magma code goes through all subgroups of the~group $\mathrm{Aut}(X)$, and construct a graph whose vertices are conjugacy classes of subgroups $G\subset\mathrm{Aut}(X)$ such that $G$ has an irreducible representation of dimension $4$ or $5$ (qualifying subgroups), and the~edges corresponds to inclusion.
\smallskip

\begin{verbatim}
// Directed graph of containment among the qualifying subgroup classes
// Edge v_i -> v_j means G_i contains a proper subgroup conjugate in Aut(X) to G_j.

RF := recformat<lattice_elt,class_index,order,class_length,gap_id,deg45,Hperm,Hmat>;

function GAPIdString(H)
    o := Order(H);
    if CanIdentifyGroup(o) then
        id := IdentifyGroup(H);
        return Sprintf("(%o,%o)", id[1], id[2]);
    else
        return "N/A";
    end if;
end function;

function RelevantDegrees45(H)
    return [ t[1] : t in CharacterDegrees(H) | t[1] in {4,5} and t[2] gt 0 ];
end function;

function BuildAutModels()
    K<zeta60> := CyclotomicField(60);
    z3 := zeta60^20;   // primitive cube root of unity
    P4<x0,x1,x2,x3,x4> := ProjectiveSpace(K, 4);
    f := x0^3 + x1^3 + x2^3 + x3^3 + x4^3;
    X := Scheme(P4, [f]);
    S5 := Sym(5);
    d1 := GL(5,K)!DiagonalMatrix(K, [z3, 1, 1, 1, z3^-1]);
    d2 := GL(5,K)!DiagonalMatrix(K, [1, z3, 1, 1, z3^-1]);
    d3 := GL(5,K)!DiagonalMatrix(K, [1, 1, z3, 1, z3^-1]);
    d4 := GL(5,K)!DiagonalMatrix(K, [1, 1, 1, z3, z3^-1]);
    sM := GL(5,K)!PermutationMatrix(K, S5!(1,2));
    tM := GL(5,K)!PermutationMatrix(K, S5!(1,2,3,4,5));
    AutXmat := sub< GL(5,K) | d1, d2, d3, d4, sM, tM >;
    assert Order(AutXmat) eq 9720;
    assert IsInvariant(f, AutXmat);
    S15 := Sym(15);
    c1 := S15!(1,2,3);
    c2 := S15!(4,5,6);
    c3 := S15!(7,8,9);
    c4 := S15!(10,11,12);
    c5 := S15!(13,14,15);
    a1 := c1*c5^-1;
    a2 := c2*c5^-1;
    a3 := c3*c5^-1;
    a4 := c4*c5^-1;
    sP := S15!(1,4)(2,5)(3,6);
    tP := S15!(1,4,7,10,13)(2,5,8,11,14)(3,6,9,12,15);
    AutXperm := sub< S15 | a1, a2, a3, a4, sP, tP >;
    assert Order(AutXperm) eq 9720;
    perm_to_mat := hom< AutXperm -> AutXmat | d1, d2, d3, d4, sM, tM >;
    return X, AutXmat, AutXperm, perm_to_mat;
end function;

function BuildQualifiedData()
    X, AutXmat, AutXperm, perm_to_mat := BuildAutModels();
    classes := SubgroupClasses(AutXperm);
    L := SubgroupLattice(AutXperm);
    good := [* *];
    for i in [1..#classes] do
        Hperm := classes[i]`subgroup;
        deg45 := RelevantDegrees45(Hperm);
        if #deg45 gt 0 then
            Hmat := sub< AutXmat | [ perm_to_mat(g) : g in Generators(Hperm) ] >;
            Append(~good, rec< RF |
                lattice_elt := L!Hperm,
                class_index := i,
                order := classes[i]`order,
                class_length := classes[i]`length,
                gap_id := GAPIdString(Hperm),
                deg45 := deg45,
                Hperm := Hperm,
                Hmat := Hmat>);
        end if;
    end for;
    return X, AutXmat, AutXperm, L, good;
end function;

// Driver 
X, AutXmat, AutXperm, L, good := BuildQualifiedData();
labels := [ Sprintf("v%o", i) : i in [1..#good] ];
S := {@ labels[i] : i in [1..#labels] @};
// Full containment digraph on the selected vertices
edges := [ [ S[i], S[j] ] :
    i, j in [1..#good] |
    i ne j and good[i]`lattice_elt ge good[j]`lattice_elt ];

D := Digraph< S | edges : SparseRep := true >;

printf "Number of vertices: %o\n", #good;
printf "Number of directed edges: %o\n\n", #edges;
print "Vertex legend:";
for i in [1..#good] do
    r := good[i];
    printf "%o : class index = %o, order = %o, class length = %o, 
            GAP ID = %o, irreducible degrees in {4,5} = %o\n",
        labels[i], r`class_index, r`order, r`class_length, r`gap_id, r`deg45;
    print "    generators:";
    for g in Generators(r`Hmat) do
        print g;
    end for;
    print "";
end for;
print "Containment digraph:";
print D;
\end{verbatim}

\subsection{Qualifying subgroups of the~group $((C_3^2\rtimes C_3)\rtimes C_4)\times\mathfrak{S}_3$}
\label{subsection:weird-threefold-qualifying-subgroups}

The following Magma code goes through all subgroups of the~group $((C_3^2\rtimes C_3)\rtimes C_4)\times\mathfrak{S}_3$ with GAP ID [648,541], and construct a graph whose vertices are conjugacy classes of subgroups $G$ such that $G$ have an irreducible representation of dimension $4$ or $5$ (qualifying subgroups), and the~edges of this graph corresponds to inclusion of subgroups.
\smallskip

\begin{verbatim}
// Directed graph of containment among the qualifying subgroup classes
// Edge v_i -> v_j means G_i contains a proper subgroup conjugate in Aut(X) to G_j.

RF := recformat<lattice_elt,class_index,order,class_length,gap_id,deg45,Hperm,Hmat>;

function GAPIdString(H)
    o := Order(H);
    if CanIdentifyGroup(o) then
        id := IdentifyGroup(H);
        return Sprintf("(%o,%o)", id[1], id[2]);
    else
        return "N/A";
    end if;
end function;

function RelevantDegrees45(H)
    return [ t[1] : t in CharacterDegrees(H) | t[1] in {4,5} and t[2] gt 0 ];
end function;

function BuildAutModels()
    K<zeta60> := CyclotomicField(60);
    z3 := zeta60^20;   // primitive cube root of unity
    P4<x0,x1,x2,x3,x4> := ProjectiveSpace(K, 4);
    sq3:=SquareRoot(K!3);
    f := x0^3 + x1^3 + x2^3 + x3^3 + x4^3 + 3*(sq3-1)*x0*x1*x2;
    X := Scheme(P4, [f]);
    S5 := Sym(5);
    A1 := GL(5,K)!PermutationMatrix(K,S5!(1,2,3));
    A2 := GL(5,K)!DiagonalMatrix(K, [1, z3, z3^2, 1, 1]);
    A3 := GL(5,K)!Matrix(K,5,5,[1,1,1,0,0, 
                                1,z3,z3^2,0,0, 
                                1,z3^2,z3,0,0, 
                                0,0,0,sq3,0, 
                                0,0,0,0,sq3])*ScalarMatrix(5,K!(1/sq3));
    A4 := GL(5,K)!PermutationMatrix(K,S5!(4,5));
    A5 := GL(5,K)!DiagonalMatrix(K, [1, 1, 1, z3, z3^2]);
    AutXmat := sub< GL(5,K) | A1,A2,A3,A4,A5 >;
    assert Order(AutXmat) eq 648;
    S32 := Sym(32);
    a1 :=  S32!(9,10,11)(15,16,17)(18,21,23)(19,22,20)(24,29,27)(25,30,28);
    a2 :=  S32!(12,13,14)(15,18,20)(16,19,21)(24,25,26)(27,30,32)(28,31,29);
    a3 := S32!(9,12)(10,13,11,14)(15,24,16,25)(17,26)
              (18,27,21,30)(19,28,20,29)(22,31)(23,32);
    a4 :=  S32!(1,2)(3,4)(5,8)(6,7);
    a5 := S32!(3,5,7)(4,6,8);
    AutXperm := sub< S32 | a1, a2, a3, a4, a5 >;
    assert Order(AutXperm) eq 648;
    perm_to_mat := hom< AutXperm -> AutXmat | A1,A2,A3,A4,A5 >;
    return X, AutXmat, AutXperm, perm_to_mat;
end function;

function BuildQualifiedData()
    X, AutXmat, AutXperm, perm_to_mat := BuildAutModels();
    classes := SubgroupClasses(AutXperm);
    L := SubgroupLattice(AutXperm);
    good := [* *];
    for i in [1..#classes] do
        Hperm := classes[i]`subgroup;
        deg45 := RelevantDegrees45(Hperm);
        if #deg45 gt 0 then
            Hmat := sub< AutXmat | [ perm_to_mat(g) : g in Generators(Hperm) ] >;
            Append(~good, rec< RF |
                lattice_elt := L!Hperm,
                class_index := i,
                order := classes[i]`order,
                class_length := classes[i]`length,
                gap_id := GAPIdString(Hperm),
                deg45 := deg45,
                Hperm := Hperm,
                Hmat := Hmat>);
        end if;
    end for;
    return X, AutXmat, AutXperm, L, good;
end function;

// Driver
X, AutXmat, AutXperm, L, good := BuildQualifiedData();

labels := [ Sprintf("v%o", i) : i in [1..#good] ];
S := {@ labels[i] : i in [1..#labels] @};

// Full containment digraph on the~selected vertices
edges := [ [ S[i], S[j] ] :
    i, j in [1..#good] |
    i ne j and good[i]`lattice_elt ge good[j]`lattice_elt ];

D := Digraph< S | edges : SparseRep := true >;

printf "Number of vertices: %o\n", #good;
printf "Number of directed edges: %o\n\n", #edges;
print "Vertex legend:";
for i in [1..#good] do
    r := good[i];
    printf "%o : class index = %o, order = %o, class length = %o, 
            GAP ID = %o, irreducible degrees in {4,5} = %o\n",
        labels[i], r`class_index, r`order, r`class_length, r`gap_id, r`deg45;
    print "    generators:";
    for g in Generators(r`Hmat) do
        print g;
    end for;
    print "";
end for;
print "Containment digraph:";
print D;
\end{verbatim}

\bibliographystyle{alpha}
\bibliography{cubics}

\end{document}